\documentclass[11pt,reqno,oneside]{amsart}

\usepackage[utf8]{inputenc}
\usepackage[T1]{fontenc}
\usepackage[top=3cm,bottom=3cm,left=3cm,right=3cm]{geometry}

\PassOptionsToPackage{dvipsnames}{xcolor}

\usepackage{orcidlink}

\usepackage{hyperref}
\usepackage{mathtools}
\usepackage{thmtools}
\usepackage[capitalize, nameinlink]{cleveref}

\usepackage{xcolor}
\definecolor{red}{rgb}{1,0,0}

\hypersetup{
    colorlinks,
    linkcolor={red!50!black},
    citecolor={blue!50!black},
    urlcolor={blue!80!black}
}

\usepackage{amsfonts}
\usepackage{amssymb}
\usepackage{mathrsfs}
\usepackage{textcomp}
\usepackage{longtable}
\usepackage{booktabs}
\usepackage{graphicx}
\usepackage{scalerel}
\usepackage{setspace}
\usepackage{nicefrac}
\usepackage{indentfirst}
\usepackage{enumerate}
\usepackage[normalem]{ulem}
\usepackage{paralist}
\usepackage{etoolbox}
\usepackage{mathdots}
\usepackage{wrapfig}
\usepackage{floatflt}
\usepackage{tensor} 
\usepackage{parskip}
\usepackage{lscape}
\usepackage{float}
\usepackage{subcaption}

\usepackage{enumitem}

\usepackage{tikz}
\usetikzlibrary{arrows}
\usetikzlibrary{matrix}
\usetikzlibrary{positioning}
\usetikzlibrary{arrows.meta}
\usetikzlibrary{calc}
\usetikzlibrary{intersections}

\usepackage[all,cmtip]{xy} 
\usepackage[labelfont=bf, font={small}]{caption}[2005/07/16]

\newcommand{\xto}[1]{\xrightarrow{\phantom{a}{#1}{\phantom{a}}}}

\newcommand{\vvirg}{ , \dots , }
\newcommand{\ootimes}{ \otimes \ldots \otimes }
\newcommand{\ooplus}{ \oplus \ldots \oplus }
\newcommand{\ttimes}{ \times \ldots \times }

\newcommand{\Wedge}{\bigwedge}

\newcommand{\bfe}{\mathbf{e}}

\newcommand{\bfp}{\mathbf{p}}

\newcommand{\bft}{\mathbf{t}}

\newcommand{\calI}{\mathcal{I}}

\newcommand{\calL}{\mathcal{L}}

\newcommand{\calO}{\mathcal{O}}

\newcommand{\calQ}{\mathcal{Q}}

\newcommand{\calS}{\mathcal{S}}

\newcommand{\calU}{\mathcal{U}}

\newcommand{\calZ}{\mathcal{Z}}

\newcommand{\scrC}{\mathscr{C}}

\newcommand{\bbA}{\mathbb{A}}

\newcommand{\bbC}{\mathbb{C}}

\newcommand{\bbF}{\mathbb{F}}

\newcommand{\bbP}{\mathbb{P}}
\newcommand{\bbQ}{\mathbb{Q}}

\newcommand{\bbZ}{\mathbb{Z}}

\newcommand{\frakm}{\mathfrak{m}}

\newcommand{\rmN}{\mathrm{N}}

\newcommand{\bfepsilon}{\boldsymbol{\epsilon}}

\newcommand{\bfeta}{\boldsymbol{\eta}}

\renewcommand{\phi}{\varphi}
\newcommand{\eps}{\varepsilon}

\newcommand{\dashto}{\dashrightarrow}

\renewcommand{\tilde}[1]{\widetilde{#1}}
\renewcommand{\hat}[1]{\widehat{#1}}
\renewcommand{\bar}[1]{\overline{#1}}

\newcommand{\rank}{\mathrm{rank}}
\newcommand{\im}{\mathrm{im}}  
  
\newcommand{\image}{\mathrm{image}}  
\DeclareMathOperator{\codim}{codim}

\DeclareMathOperator{\Hom}{Hom}

\newcommand{\sat}{\mathrm{sat}}

\newcommand{\SO}{\mathrm{SO}}
\newcommand{\GL}{\mathrm{GL}}
\newcommand{\SL}{\mathrm{SL}}

\newcommand{\Bl}{\mathrm{Bl}}

\newcommand{\TC}{\mathrm{TC}}

\newcommand{\reg}{\mathrm{reg}}

\newcommand{\rmHF}{\mathrm{HF}}

\newcommand{\CH}{\mathrm{CH}}
\newcommand{\Mat}{\mathrm{Mat}}

\newtheorem{theorem}{Theorem}[section]

\newtheorem{lemma}[theorem]{Lemma}
\newtheorem{proposition}[theorem]{Proposition}
\newtheorem{corollary}[theorem]{Corollary}

\newtheorem{thmx}{Main Theorem}

\newtheorem*{Greenthm*}{Green's Theorem}
\newtheorem*{liftlemma*}{Lifting Lemma}

\theoremstyle{definition}
\newtheorem{definition}[theorem]{Definition}
\newtheorem{remark}[theorem]{Remark}
\newtheorem{example}[theorem]{Example}
\newtheorem{notation}[theorem]{Notation}

\crefname{theorem}{Theorem}{Theorems}
\crefname{lemma}{Lemma}{Lemmas}
\crefname{proposition}{Proposition}{Propositions}
\crefname{corollary}{Corollary}{Corollaries}
\crefname{definition}{Definition}{Definitions}
\crefname{remark}{Remark}{Remarks}
\crefname{example}{Example}{Examples}
\crefname{notation}{Notation}{Notations}
\crefname{conjecture}{Conjecture}{Conjectures}

\numberwithin{equation}{section}
\numberwithin{figure}{section}

\usepackage[textwidth=0.9in]{todonotes}

\newcommand{\PP}{\mathbb{P}}

\usepackage{comment}

\newcommand{\CC}{\mathbb{C}}

\newcommand{\Gr}{\mathrm{Gr}}
\newcommand{\CQ}{\mathit{CQ}}
\newcommand{\CCol}{\mathit{CC}}

\newcommand{\N}{\mathrm{N}}
\DeclareMathOperator{\YL}{\mathnormal{Y_L}}
\newcommand{\rmleft}{\mathrm{left}}
\newcommand{\rmright}{\mathrm{right}}
\newcommand{\Diag}{\mathrm{Diag}}
\newcommand{\Seg}{\mathrm{Seg}}

\newcommand{\WedgeScale}{1.1}
\newcommand{\WedgeVShift}{-0.5pt}
\renewcommand{\Wedge}{\smash{\raisebox{\WedgeVShift}{\scalebox{\WedgeScale}{\scalerel*{\bigwedge}{\Lambda}}}}}

\newcommand{\drawchain}[8]{
    \def\startX{#1}
    \def\N{#2}
    \def\colors{#3}
    \def\lineLabels{#4}
    \def\interLabels{#5}
    \def\twists{#6}
    \def\twisttype{#7}
    \def\lengths{#8}

    \foreach \n [evaluate=\n as \arrayindex using int(\n-1)] in {1,...,\N} {
        
        \pgfmathparse{\colors[\arrayindex]}
        \let\colorkey\pgfmathresult
        \if\colorkey B \def\drawcolor{blue!50!black} \fi
        \if\colorkey G \def\drawcolor{green!50!black} \fi
        \if\colorkey R \def\drawcolor{red!80!black}  \fi
        \if\colorkey K \def\drawcolor{black!50!black}  \fi

        \pgfmathparse{\lineLabels[\arrayindex]} \let\lineL\pgfmathresult
        \pgfmathparse{\interLabels[\arrayindex]} \let\interL\pgfmathresult

        \pgfmathsetmacro{\rotangle}{mod(\n, 2) == 1 ? 30 : -30}
        \pgfmathtruncatemacro{\isodd}{mod(\n,2)}
        \ifnum\isodd=0 \def\anchorval{east} \else \def\anchorval{west} \fi
        \pgfmathsetmacro{\xshiftval}{\isodd == 1 ? -5 : 5}

        \pgfmathparse{\lengths[\arrayindex]} \let\halfL\pgfmathresult
        \pgfmathparse{\twists[\arrayindex]} \let\twistamt\pgfmathresult
        \pgfmathsetmacro{\ctrloneX}{\startX + \twistamt}
        \pgfmathsetmacro{\ctrloneY}{\n - \halfL/3}
        \if\twisttype S
            \pgfmathsetmacro{\ctrltwoX}{\startX - \twistamt}
        \else
            \pgfmathsetmacro{\ctrltwoX}{\startX + \twistamt}
        \fi
        \pgfmathsetmacro{\ctrltwoY}{\n + \halfL/3}
        
        \ifnum\isodd=0 \def\labelyshift{\drawchainEvenYshift} \else \def\labelyshift{0pt} \fi
        \draw[thick, \drawcolor, rotate around={\rotangle:(\startX,\n)}, name path=seg\n] 
            (\startX, \n - \halfL) .. controls (\ctrloneX, \ctrloneY) and (\ctrltwoX, \ctrltwoY) .. (\startX, \n + \halfL)
            node[pos=0.5, anchor=south \anchorval, xshift=\xshiftval pt, yshift=\labelyshift, font=\tiny, text=black] {\lineL};
                }
}

\providecommand{\drawchainEvenYshift}{0pt}

\newcommand{\drawhchain}[3]{%
    \pgfmathtruncatemacro{\drawhcNval}{#1}%
    \ifnum\drawhcNval>0
        \foreach \drawhcal [count=\drawhck] in {#2}
            {\expandafter\xdef\csname drawhcA\drawhck\endcsname{\drawhcal}}%
        \foreach \drawhcbl [count=\drawhck] in {#3}
            {\expandafter\xdef\csname drawhcB\drawhck\endcsname{\drawhcbl}}%
        \foreach \drawhcn in {1,...,\drawhcNval}{%
            \pgfmathsetmacro{\drawhccx}{(\drawhcn-1)*0.7}%
            \pgfmathsetmacro{\drawhcrot}{mod(\drawhcn,2)==1 ? 30 : -30}%
            \draw[thick, rotate around={\drawhcrot:(\drawhccx,0)}]
                (\drawhccx-0.45, 0) -- (\drawhccx+0.45, 0);%
            \node[above, font=\tiny] at (\drawhccx, 0.35) {\csname drawhcA\drawhcn\endcsname};%
            \node[below, font=\tiny] at (\drawhccx, -0.35) {\csname drawhcB\drawhcn\endcsname};%
        }%
    \fi%
}

\title{Schubert compactifications of matrix pencils}

\author{Vincenzo Galgano, Fulvio Gesmundo and Hanieh Keneshlou}

\address[V. Galgano \orcidlink{0000-0001-8778-575X}]{Max Planck Institute of Molecular Cell Biology and Genetics, Dresden, Germany; Faculty of Mathematics, Technische Universität Dresden, Germany}
\email{galgano@mpi-cbg.de}

\address[F. Gesmundo \orcidlink{0000-0001-6402-021X}]{Institut de Mathématiques de Toulouse; UMR5219 -- Université de Toulouse; CNRS -- UPS, F-31062 Toulouse Cedex 9, France}
\email{fgesmund@math.univ-toulouse.fr}

\address[H. Keneshlou \orcidlink{0009-0008-4910-8422}]{Faculty of Mathematics and Computer Science, Universität Würzburg, Germany}
\email{hanieh.keneshlou@uni-wuerzburg.de}

\usepackage[new]{old-arrows}
\keywords{Schubert varieties, compactifications, surfaces, blow-ups, intersection products, rational singularities, quadratic equations}
\subjclass[2020]{14M27, 14C17, 14E05, 14J17, 14M15, 15A22.} 

\begin{document}

\begin{abstract} 
The \emph{Schubert compactification of a linear space of matrices} $L\subseteq \Mat_{n\times m}$ is a natural compactification of $L$ in the Grassmannian $\Gr(n, n+m)$. This construction generalises classical Schubert varieties and matroid Schubert varieties, and it turns invariants of linear matrix spaces into projective and intersection-theoretic invariants of the associated variety. We develop this geometry completely in the case of matrix pencils, that is $\dim L=2$. The resulting Schubert surfaces are singular analogues of rational surfaces obtained as blow-ups of $\bbP^2$. We show that they are normal with quotient singularities, compute their degree, cohomology ring, and describe the distinguished rational curves and their intersection pairing in terms of the Kronecker invariants of the matrix pencil. Remarkably, this dictionary can be reversed: the invariants of the pencil are read off from intersection numbers and other invariants of the variety. In particular, we prove that the Schubert surface determines the regular component of the pencil, up to the natural action of $\mathrm{GL}_2 \times \mathrm{GL}_n \times\mathrm{GL}_m$. Finally, we prove that the ideal of Schubert surfaces of matrix pencils in the Pl\"{u}cker embedding is generated in degree $2$ and satisfies Green's property $N_p$ in a certain range.
\end{abstract}

\maketitle

\tableofcontents

\section{Introduction}

A recurring theme in algebraic geometry is that different compactifications of interesting spaces retain and highlight different kinds of information about the space from which they are constructed. The points added \emph{at infinity} arise from degenerations of the original points of the space: this boundary, its stratification, its singularities, and the intersection theory of its components often reflect the structure of interesting problems taking place in the original space. Schubert calculus is the most classical and successful manifestation of this principle.

The Grassmannian $\Gr(k,N)$ of $k$-dimensional subspaces in $\bbC^N$ is covered by affine spaces of matrices. Incidence conditions with a fixed complete flag in $\bbC^N$ impose a prescribed pattern on these matrices, and the closures of the resulting affine loci are the classical Schubert varieties. Several aspects of this construction, including the multiplication of the classes in the cohomology of the ambient Grassmannian are governed by algebraic combinatorics, and cohomological identities translate combinatorial problems into enumerative incidence problems among Schubert varieties. Even more intrinsically, any individual Schubert variety is stratified by smaller Schubert cells, its singularities are constrained by the corresponding Bruhat interval, and its intersection cohomology is governed by Kazhdan--Lusztig theory.

An analogous picture arises from matroids. A $\bbC$-representable matroid of rank (at most) $n$ is encoded by the combinatorics of the intersection of a linear space $L \subseteq \bbC^n$ with the coordinate subspaces of $\bbC^n$. The compactification $\bbC^n \hookrightarrow (\bbP^1)^{\times n}$ makes $L$ into a subvariety $Y_L^{\mathrm{mat}}$ in $(\bbP^1)^{\times n}$, known as the \emph{matroid Schubert variety} of $L$. In \cite{ardila2016closure} it was shown that much of the geometry of $Y_L$ is controlled combinatorially by the matroid. On the other hand, cohomology and singularities of matroid Schubert varieties are central to some of the principal applications of geometry to matroid theory. They are part of the broader Hodge theory of matroids initiated in \cite{HuhKatz12}, and culminating in \cite{AHK18,BHMPW20}: this theory was used to prove the log-concavity of the characteristic polynomial and of the numbers of independent sets of matroids, and led to a Kazhdan--Lusztig theory for arbitrary matroids. 

The overall message that we take from these works is that, once a combinatorial invariant is realized as an intersection number in some suitable algebraic variety, one can use positivity, duality, and other tools from algebraic geometry to study it. It is natural to further generalise this picture, to include more general invariants in other moduli spaces. This idea is at the foundation of the ``Bodensee program'' \cite{MicBodensee} and leads to the introduction of a generalisation of Schubert varieties: for a given linear space of matrices $L \subseteq \Mat_{n \times m}$, the \textit{Schubert compactification}, or \textit{Schubert variety}, of $L$ is a projective variety $Y_L \subseteq \Gr(n,n+m)$ compactifying an open embedding of $L$: we refer to \Cref{sec: preliminaries and motivations} for the precise definition. In this perspective, classical Schubert varieties are the Schubert varieties of a linear space $L$ defined by a pattern of zero entries in the matrices, governed by intersection conditions with a fixed flag. At the same time, matroid Schubert varieties are the Schubert varieties of a linear space $L$ of diagonal matrices. Many invariants of the linear space $L$ can be encoded into the geometry of $Y_L$.

In this work, we study Schubert varieties of linear matrix spaces of dimension $2$, classically called \textit{matrix pencils}. Matrix pencils form one of the few geometric settings in which a complete orbit classification is available: they are completely classified by \emph{Kronecker invariants} under the action of left-right multiplication of two copies of the general linear group. This classification gives sufficiently explicit control of the associated compactification to make a complete description possible. The associated Schubert surfaces are singular generalisations of Del Pezzo surfaces, and their geometry is already rich enough to exhibit interesting singularities, resolutions, and nontrivial intersection theory, while remaining sufficiently concrete to be described explicitly.

\begin{thmx}\label{thm: main forward}
Let $L \subseteq \Mat_{n \times m}$ be a two-dimensional linear space of matrices, and let $Y_L \subseteq \Gr(n, n+m)$ be its Schubert variety. Then:
\begin{enumerate}
\item $Y_L$ is normal with quotient singularities (see \Cref{thm: rational singularities});
\item the degree of $Y_L$ in the Pl\"{u}cker embedding is computed in \Cref{theorem: degree of YL};
\item the Schubert class of $\YL$ in the Chow ring of $\Gr(n,n+m)$ is computed in \Cref{thm: schubert class};
\item the intersection cohomology ring of $Y_L$ is completely described in \Cref{thm: intersections general};
\item the ideal of $Y_L$ in the Pl\"{u}cker embedding is generated in degree $2$ (see \Cref{thm: quadratic generation});
\item the ideal of $Y_L$ satisfies Green's property $N_p$ in the range given in \Cref{corol: Np property}.
\end{enumerate} 
\end{thmx}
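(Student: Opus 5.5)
The plan is to reduce everything to the Kronecker normal form of the pencil and then build an explicit resolution of $Y_L$. The group $\GL_n\times\GL_m$ acts on $\Gr(n,n+m)$ through $\GL_{n+m}$ and carries $Y_L$ isomorphically onto $Y_{gLh^{-1}}$; a change of basis of $L$ does not change the set $L$ at all. Hence $Y_L$ depends only on the Kronecker invariants. We may therefore take $L$ to be block diagonal, built from Kronecker blocks $L_\epsilon$, transposed blocks $L^T_\eta$, and Jordan blocks $J_k(\lambda)$, including those at $\lambda=\infty$.

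For a block-diagonal pencil, the graph of $s A + t B$ splits as a direct sum of the graphs of the blocks. So $Y_L$ lies in the image of a product of smaller Grassmannians under the direct-sum map. It is the closure of the diagonal copy of $\bbC^2$, mapped through the Schubert curves, or surfaces, of the individual blocks.

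First I will describe $Y_L$ concretely. The open cell is $L\cong\bbA^2$. The boundary is the closure of the limits along lines $t\cdot(a,b)$ as $t\to\infty$, and it is governed by the rank stratification of the pencil. I expect this to exhibit $Y_L$ as the image of a smooth toric-like surface $\tilde Y$, obtained from $\bbP^2$ by blowing up the points of the line at infinity that correspond to eigenvalues of the regular part, iterated according to the Jordan block sizes. The singular part of the pencil contributes weighted twists given by the minimal indices $\epsilon_i,\eta_j$. The line at infinity and the exceptional chains give the distinguished rational curves. Contracting the chains that the Plücker map collapses gives $Y_L$.

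Normality with quotient singularities then follows by identifying each contracted configuration as a chain of rational curves with negative-definite intersection matrix, that is a Hirzebruch–Jung string. Locally these are cyclic quotient singularities, and the contraction is normal because the Plücker line bundle is the pullback of an ample bundle that is trivial exactly on the contracted curves. This settles (1).

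For (2) and (3), the degree is $(\pi^*\calO(1))^2$ on $\tilde Y$. Here $\pi^*\calO(1)=\sum_{\text{blocks}} (\text{block degree})\,H - \sum c_iE_i$, where the block degrees are $\epsilon_i,\eta_j$ and the Jordan sizes $k$, read from the Plücker coordinates, that is from the maximal minors of $[I\,|\,sA+tB]$. The Schubert class is then determined by pairing $[Y_L]$ against the dual Schubert classes of dimension two. These pairings reduce to counting points of $Y_L$ in special Schubert conditions, and the counts come from the same divisor computation.

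For (4), the rational cohomology of $Y_L$ agrees with intersection cohomology because the singularities are quotient singularities. It is computed as the orthogonal complement of the contracted curves inside $H^*(\tilde Y)$, with $\bbQ$-valued intersection pairing given by the inverse of the chain matrices.

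For (5) and (6), I would use a Gröbner degeneration. There is a torus inside $\GL_2\times\GL_n\times\GL_m$ that degenerates $Y_L$ flatly, preserving its Hilbert polynomial, which is known from the degree and from the rationality of the singularities. The degeneration goes to a union of classical Schubert, or toric Richardson, surfaces whose ideals have quadratic Gröbner bases in the standard monomial theory. Quadratic generation and $N_p$ lift from the special fibre by upper semicontinuity of Betti numbers, with the range of $p$ coming from the Koszul or Castelnuovo–Mumford regularity of that fibre.

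The main obstacle is this last step. It requires proving flatness, meaning that the straightening relations for the degenerate fibre actually lift, and that the special fibre is reduced. I would first try to show that the standard monomials of a suitable Bruhat-type interval span the coordinate ring of $Y_L$, and compare dimensions via the Hilbert polynomial computed in (2).
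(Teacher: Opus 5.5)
Your description of the resolution (iterated blow-ups at the eigenvalue points of $H_\infty$, with $\tilde H$-coefficient $\epsilon_\bullet+\eta_\bullet+n_{\reg}=n-\eta_{\#}$, once the base scheme $\prod_j(x_0,\theta_i^{k_{ij}})^{e_{ij}}$ of the maximal minors is computed) matches the paper, as does the $\bbQ$-valued pairing via pullbacks. The plan for (5) and (6), however, has a genuine gap, as you suspect.

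\textbf{Items (5) and (6).} The torus degeneration is never constructed, and nothing supports it.
\begin{itemize}
\item The $\GL_2$ factor acts trivially on $Y_L$.
\item A one-parameter limit of $Y_L$ under $\GL_n\times\GL_m$ need not be $Y_{L_0}$ for the limit pencil; it may be non-reduced or acquire extra components.
\item Even a reduced union of Richardson surfaces with a quadratic Gr\"obner basis would not obviously give the range $p\le 2n_{\reg}+3(\epsilon_\bullet+\eta_\bullet)-3$ of \Cref{corol: Np property}.
\end{itemize}
The missing idea is to use the divisor you already have, $D=(n-\eta_{\#})\tilde H-\sum\mu_{ij}E_{ij}$ with $\mu_{ij}=\sum_{k_{i\ell}\ge j}e_{i\ell}$. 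Since the singular points are isolated, a general hyperplane section is a smooth curve $C$ with $\deg C=D^2$. Adjunction with $K_{\tilde X}=-3\tilde H+\sum E_{ij}$ and $\sum_j\mu_{ij}=n_i$ gives $\deg C-2g(C)-2=3(n-\eta_{\#})-\sum_{ij}\mu_{ij}-4=2n_{\reg}+3(\epsilon_\bullet+\eta_\bullet)-4$. Green's theorem then gives quadrics and $N_p$ for $C$ (small cases aside). These lift to $Y_L$ because a general linear form is a nonzerodivisor on $S/I(Y_L)$ and $H^1(\calI_{Y_L}(1))=0$.

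That vanishing is linear normality, which you never address and which is not automatic. The paper shows $|D|$ equals the pulled-back series $I(Z)_{n-\eta_{\#}}$ via $\pi_*\calO_{\tilde X}(-\sum\mu_{ij}E_{ij})=\overline{\calI_Z}=\calI_Z$. This holds because $\calI_Z$ is locally a product of integrally closed monomial ideals in a $2$-dimensional regular local ring.

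\textbf{Items (1) and (3).} Two earlier steps also fail as written.

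For (1), triviality of the Pl\"ucker bundle exactly on the contracted curves only shows that the Stein factorization $\tilde X\to Y'$ is the normal contraction of Hirzebruch--Jung strings. The finite birational map $Y'\to Y_L$ is an isomorphism only if the descended bundle is very ample, and complete linear series of ample bundles can have non-normal birational images. You need a local argument on $Y_L$ itself. The paper gives one in two parts:
\begin{itemize}
\item each contracted $(-2)$-chain has a toric local model given by lattice points of a $2$-dimensional polytope, hence is normal of type $\mathsf{A}$;
\item via the chart $\{E\cap V=0\}$ and $X\mapsto X^{-1}$, $(Y_L,[W])$ is isomorphic to the affine cone over the top collineation variety, a rational normal curve of degree $\kappa-1$.
\end{itemize}

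For (3), $D$ only yields $[Y_L]\cdot\sigma_1^2=a+b$ and cannot separate $\sigma_{(2)}$ from $\sigma_{(1,1)}$. You also need $[Y_L]\cdot(\sigma_{(2)}-\sigma_{(1,1)})=[Y_L]\cdot(c_1(\calS)^2-2c_2(\calS))$, which is additive over Kronecker blocks by the direct-sum embedding. Its value on each block type is:
\begin{itemize}
\item $p$ for $S^{\rmleft}_p$, where $\phi^*\calS\cong\calO(-1)^{\oplus p}$;
\item $-p$ for $S^{\rmright}_p$, where $\phi^*\calS$ is the kernel of a surjection onto $\calO(1)^{\oplus p}$;
\item $0$ for Jordan blocks, by the symmetry $E\mapsto E^\perp$ exchanging $\calS$ and $\calQ^*$.
\end{itemize}
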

\Cref{thm: main forward} shows that the geometry of Schubert varieties of matrix pencils is extremely well-behaved, similarly to higher dimensional classical Schubert varieties and matroid Schubert varieties. However, several of these good geometric properties do not persist for higher dimensional Schubert varieties of linear matrix spaces. For instance, in general they are neither normal nor linearly normal (see \Cref{example: counterexample normality} and \Cref{example: counterexample linear normality}), nor cut out by quadrics, even set-theoretically (see \Cref{example: counterexample quadratic ideal}). Thus matrix pencils provide \textit{both} the first case in which the geometry can be worked out completely \textit{and} a distinguished class whose behaviour is substantially better than one might expect in general.

Since the orbit-classification for matrix pencils in terms of Kronecker invariants is complete, we address the \textit{inverse problem} of recovering the invariants of the pencil $L$ from the geometry of the Schubert variety $\YL$.
More generally, linear matrix spaces of dimension $a$ may be regarded as tensors in $\bbC^a \otimes \bbC^n \otimes \bbC^m$: the linear space is identified with the image of the linear map $M: {\bbC^a}^* \to   \bbC^n \otimes \bbC^m$. The natural action of $\GL_a\times\GL_n \times\GL_m$ on the tensor space defines a notion of equivalence of tensors, and a great deal of research is devoted to understanding invariants separating interesting orbits and their closures: see, e.g., \cite{Ng95,LanOtt13,BCLNWZ25} for some methods. 

It is therefore natural to ask to what extent the Schubert variety $Y_L$ determines the linear space $L$. Equivalent tensors under the action of $\GL_a \times \GL_n \times \GL_m$ give rise to linearly isomorphic Schubert varieties: the $\GL_a$-factor changes the parameterisation of $L$, whereas the $\GL_n \times \GL_m$-action induces the corresponding linear action on the ambient Grassmannian. The converse is less evident, since passing from $L$ to the closure $Y_L$ may forget part of the linear-algebraic data of $L$.

Analogous reconstruction questions have natural answers in the classical
setting and have been studied in the matroid setting. For classical Schubert varieties, the cohomology class in the Grassmannian determines the indexing partition and hence the corresponding zero pattern, up to a natural change of coordinates. For matroid Schubert varieties, the answer is more subtle: \cite{ardila2016closure} showed that the multidegree of the embedded variety in $(\mathbb{P}^1)^{\times n}$ records the bases of the matroid and \cite{HuhWang17} related an affine stratification of the matroid Schubert variety to the lattice of flats of the matroid.

We establish an analogous recovery result for matrix pencils, that is the case $a=2$. The Kronecker invariants of matrix pencils come in two types, and identify two \emph{components} of the matrix pencil, called the \emph{regular} and \emph{singular} components. We prove that $Y_L$ uniquely determines the regular component of the pencil. The reconstruction is built on geometric and intersection theoretic properties in $Y_L$.

\begin{thmx}\label{thm: reconstruction of invariants}
Let $L_1, L_2$ be two matrix pencils of the same size. Let $Y_{L_1}, Y_{L_2} \subseteq \Gr(n, n+m) \subseteq \bbP \Wedge^{n} (\bbC^{n} \oplus \bbC^m)$ be the associated Schubert varieties, in the Pl\"ucker embedding of $\Gr(n,n+m)$. If $Y_{L_1}$ and $Y_{L_2}$ are projectively equivalent, then the regular components of $L_1$ and $L_2$ are the same up to the action of $\GL_2 \times \GL_n \times \GL_m$.
\end{thmx}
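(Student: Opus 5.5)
The regular part of a pencil is encoded in its eigenvalue data: finitely many points $\lambda_1,\dots,\lambda_r\in\bbP^1=\bbP L$ where the rank drops, and at each one a partition (the sizes of the Jordan blocks). The invariants of the regular component up to $\GL_2\times\GL_n\times\GL_m$ are exactly this configuration of points, taken up to $\mathrm{PGL}_2$, together with the partitions attached to each point. Since Kronecker normal form gives block decompositions compatible with the construction, $Y_L$ can be described via the torus-like blow-up description developed in the body of the paper: it is a singular rational surface fibred over $\bbP^1 = \bbP L$, or birational to $\bbP^2 = \bbP(\bbC\oplus L)$ via the projective closure. The plan is to extract the eigenvalues and the partitions from intrinsic, projectively invariant data of $Y_L\subseteq \bbP\Wedge^n(\bbC^n\oplus\bbC^m)$.

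\textbf{Step 1.} A projective equivalence $Y_{L_1}\cong Y_{L_2}$ preserves several things:
\begin{itemize}
\item the degree;
\item the set of lines and, more generally, of rational curves of low degree on $Y$;
\item the singular locus and the analytic types of the quotient singularities (\Cref{thm: rational singularities});
\item the intersection pairing on the (intersection) cohomology (\Cref{thm: intersections general}).
\end{itemize}
I would use the description of distinguished rational curves from \Cref{thm: intersections general}. The boundary $Y_L\setminus L$ contains a line/conic arrangement: a curve at infinity $C_\infty\cong\bbP^1$, identified with $\bbP L$, together with exceptional-type curves $E_{\lambda_i}$ lying over the eigenvalues $\lambda_i$. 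These curves meet $C_\infty$ at the points $\lambda_i$ and carry chains whose self-intersections and degrees are governed by the Jordan block sizes. The key claim to establish is that this configuration is intrinsic. It should be characterised as the set of curves of negative self-intersection (or of given Plücker degree) passing through singular points. An isomorphism therefore maps $C_\infty$ to $C_\infty$ and carries the set $\{E_{\lambda_i}\}$ bijectively to the corresponding set.

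\textbf{Step 2.} From Step 1, the induced isomorphism $C_\infty^{(1)}\to C_\infty^{(2)}$ is an automorphism of $\bbP^1$ mapping eigenvalues to eigenvalues. This recovers the eigenvalue configuration up to $\mathrm{PGL}_2$, which is exactly the $\GL_2$ ambiguity.

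\textbf{Step 3.} For each eigenvalue, the partition must be read off from the local chain of curves over $\lambda_i$. The available invariants are their number, their degrees, their mutual intersection numbers and self-intersections, and the types of singular points along them. Using \Cref{thm: intersections general}, I would show these numbers form an injective function of the partition. The natural candidate is a formula of the form: the $k$-th curve has self-intersection or degree determined by the conjugate partition entry. From these one recovers successive differences, hence the partition itself.

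\textbf{Main obstacle.} The hard step is Step 1: proving that the curves encoding the regular part are distinguished intrinsically. In particular, they must not be confusable with curves coming from the singular (minimal index) blocks, which also contribute boundary curves. I would first try to separate them by their intersection behaviour with $C_\infty$. Regular-part curves meet $C_\infty$ at isolated points, whereas singular-block contributions should appear uniformly along the whole of $C_\infty$, or as a degree shift of $C_\infty$ itself. If that fails, the fallback is to use the singularity types at the points $\lambda_i$ as a distinguishing invariant.
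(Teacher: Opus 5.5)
Your approach fails for regular pencils, which is the central case of the statement. Steps 1 and 2 rely on a curve $C_\infty\cong\bbP L$ on which the eigenvalues appear as the attachment points of the chains. But by \Cref{prop: image of phi}, when $L$ is regular the line at infinity is contracted: $\phi(H_\infty)=[W]$. So $Y_L$ has no such curve, and all terminal chain curves $C_{ih_i}$ pass through the single point $[W]$. That point is a $\frac{1}{\kappa-1}(1,1)$ singularity, smooth exactly when $\kappa=2$ (\Cref{prop: W singularity}).

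The eigenvalue configuration therefore has to be read infinitesimally. The projectivised tangent cone $\bbP\TC_{[W]}Y_L$ is a rational normal curve of degree $\kappa-1$. The line $H_\infty$ maps onto it by the collineation map $[X]\mapsto[X^{-1}]$, which is the $(\kappa-1)$-th Veronese. The point $x_0=\theta_i=0$ goes to the tangent direction of $C_{ih_i}$ at $[W]$ (\Cref{prop: eigenvalue configuration regular}). A projective equivalence carrying $[W]$ to $[W]$ acts linearly on tangent spaces, so it preserves this configuration up to $\mathrm{PGL}_2$.

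The numerics change as well. The curves through $[W]$ satisfy $\gamma_{i_1h_{i_1}}\cdot\gamma_{i_2h_{i_2}}=\frac{1}{\kappa-1}$ and $\gamma_{ih_i}^2=\frac{1}{\kappa-1}-\frac{1}{k_{ih_i}-k_{i,h_i-1}}$. The latter can be positive: for $L=J_\kappa(x_1)^{\boxplus e}$ it equals $\frac{1}{\kappa-1}-\frac{1}{\kappa}>0$. So neither ``negative curves'' nor a Step~3 inversion without the $\frac{1}{\kappa-1}$ correction will identify these curves. None of this appears in your proposal.

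Second, Step~1 is the heart of the proof, and you leave it as an open claim. Your suggested criterion, negative curves through singular points, does not work: consecutive chain curves whose block sizes differ by one meet at smooth points, and $C_\infty^2=1-\kappa$ can equal $0,-1,-2$. The paper uses the following intrinsic markers:
\begin{enumerate}
\item A non-Du Val point can only be $[W]$, because every other singularity is of type $\mathsf A$ (\Cref{thm: rational singularities}).
\item A curve with self-intersection $<-2$ can only be $C_\infty$, because every chain curve has $\gamma_{ij}^2\ge-2$ (\Cref{thm: intersections general}).
\item Negative chain curves are the unique irreducible representatives of their classes, and $C_{ih_i}$ is the unique one through $[W]$ (\Cref{corol: uniqueness exceptional curves}).
\end{enumerate}
These markers are available only when $\kappa\ge4$. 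For $\kappa\le3$ neither exists, and you need the separate classification of \Cref{prop: smooth YL} and \Cref{prop: reconstruct case by case}: $\bbP^1\times\bbP^1$ with its bidegree, $\bbF_2$ with its embedding divisor, and the Du Val cases. In that range the configuration is automatic since $s\le\kappa\le3$, but the partition must still be recovered.

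In Step~3, the surviving curves correspond to the distinct block sizes $k_{ij}$, not to entries of a conjugate partition. Their degrees are the multiplicities $e_{ij}$ (\Cref{prop: contraction and embeddings}). The intermediate exceptional curves are contracted to $\mathsf A_{k_{i,j+1}-k_{ij}-1}$ points. The differences $k_{ij}-k_{i,j-1}$ come from inverting the self-intersection recursion.

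Finally, your worry about singular blocks is unfounded. By \Cref{theorem: linear series of YL} they create no base points; they only change $\deg C_\infty$.
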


In fact, the proof of \Cref{thm: reconstruction of invariants} provides an algorithmic procedure (see \Cref{fig: reconstruction-flowchart}) to reconstruct the regular Kronecker invariants of the pencil from the geometry of the associated Schubert variety. We will highlight that certain information regarding the singular invariants can be recovered as well, although there exist pencils with distinct singular invariants but the same associated Schubert variety.

\subsection{Context}  

The study of compactifications of spaces of matrices dates back to the nineteenth century. Algebraic geometers such as Chasles \cite{chasles1864determination}, Giambelli \cite{giambelli1903problema}, and Schubert \cite{schubert1979kalkul} introduced compact parameter spaces in which limits of geometric objects could be used to formulate and solve classical enumerative problems; some of the most celebrated examples involve the enumeration of quadrics tangent to a prescribed collection of linear spaces. These questions led to the introduction of the spaces of \emph{complete quadrics} $\CQ_n$, compactifying the space of full-rank symmetric matrices of size $n$, and the space of \emph{complete collineations} $\CCol_n$, compactifying the space of full-rank matrices of size $n \times n$. Initially, these spaces were described rather informally as parameter spaces containing all possible degenerations of the corresponding linear maps. A systematic study of their geometry was undertaken later in \cite{severi1940fondamenti,semple1952complete}. Their modern construction appears in \cite{vainsencher1982schubert}, in terms of iterative blow-ups
\[
    \CQ_n \longrightarrow \mathbb{P}(S^2\mathbb{C}^n)
    \qquad\text{and}\qquad
    \CCol_n \longrightarrow \mathbb{P}(\Mat_n)
\]
of spaces of matrices along the (strict transforms of) determinantal loci of bounded rank.

In \cite{deconcini1982complete}, the variety of complete quadrics was reinterpreted as the \emph{wonderful compactification} of the symmetric space $\SL_n/\SO_n$, whereas \cite{thaddeus1998complete} described the variety of complete collineations as the \emph{Chow quotient} of $\Gr(n,2n)$ by a natural $\mathbb{C}^{\times}$-action. An alternative compactification of $\GL_n$ was introduced in \cite{kausz2000modular}. It is obtained by iteratively blowing up $\mathbb{P}(\mathbb{C}\oplus\Mat_n)$ along the closures of the determinantal varieties of bounded rank contained in the affine chart $\Mat_n$. More recently, \cite{fang2025canonical} related this compactification to the space of complete collineations through canonical blow-ups of Grassmannians associated with a torus action. This approach has also been extended to Lagrangian and orthogonal Grassmannians in \cite{fangmassarenti2026canonical}. 

Schubert varieties of linear matrix spaces belong to the same circle of ideas, but shift the emphasis from compactifying an entire space of matrices or a rank stratum to compactifying a prescribed linear subspace of matrices inside a Grassmannian. From this perspective, one may ask how the linear-algebraic and orbit-theoretic invariants of the original subspace are reflected in the geometry of its compactification.

A parallel development comes from matroid theory. In \cite{ardila2016closure}, matroid Schubert varieties were introduced by compactifying an affine linear subspace $L\subseteq \bbC^n$ in the coordinate-wise compactification $\mathbb{C}^n \hookrightarrow (\mathbb{P}^1)^{\times n}$. When $L$ represents a matroid $M$, the geometry of the matroid Schubert variety $Y_L^{\mathrm{mat}}$ is controlled to a remarkable extent by the combinatorics of $M$. Its defining equations, multidegree, and multigraded Betti numbers are determined by the matroid; moreover, its multidegree records the bases of $M$ and hence determines the labelled matroid \cite{ardila2016closure}. We refer to \Cref{subsec: matroid schubert varieties} for details on this construction. The variety $Y_L^{\mathrm{mat}}$ admits an affine stratification indexed by the flats of the matroid: the resulting distinguished basis in cohomology and its combinatorics was used in \cite{HuhWang17} to settle the ``top-heavy conjecture'' from \cite{DW74}. In \cite{EPW16}, the authors introduce Kazhdan--Lusztig polynomials of matroids and connect their behaviour to the intersection cohomology of certain varieties related to the matroid Schubert varieties. In \cite{stellahedral}, this construction was placed in the broader geometry of the stellahedral toric variety $\mathrm{Stell}_n$. This variety can be obtained from $(\mathbb{P}^1)^{\times n}$ by a sequence of torus-equivariant blow-ups, and the closure of a realisation of a matroid lifts to its augmented wonderful variety. This work identifies the valuative group of matroids with the cohomology ring of $\mathrm{Stell}_n$ and shows that valuative, homological, and numerical equivalence for matroids coincide. Thus the stellahedral variety provides a single intersection-theoretic framework in which a broad collection of matroid invariants can be studied.

These developments all sit in a more general framework, whose aim is to associate projective varieties to increasingly general classes of linear matrix spaces and tensors, extending classical Schubert calculus, discrete geometry of matroids and algebraic combinatorics \cite{MicBodensee}. Schubert varieties of linear matrix spaces form one of the basic constructions in this unifying program. The present work provides a first detailed test of this philosophy. In the case of matrix pencils, we show that the resulting compactifications are sufficiently explicit to admit a precise geometric analysis, while at the same time retaining substantial information about the original object. Our results therefore give initial evidence that Schubert varieties can serve as effective geometric tools for the study of linear matrix spaces and, more generally, of tensors.

\subsection{Applications.} 
A central problem in the study of tensors is to construct invariants that distinguish tensors up to change of coordinates. In contrast with the case of matrices, where the only invariant under left-right multiplication is the rank of the matrix, tensors do not admit a finite classification by normal forms, except in very few formats. It is therefore natural to study numerical, algebraic, or geometric invariants that retain meaningful information about the orbit structure in the tensor spaces.

Tensor rank and border rank are among the most investigated invariants of this form. They were studied already in \cite{Sylvester,Segre20}, and play an important role in areas ranging from algebraic complexity theory to statistics and quantum information. At the same time, there are strong hardness results for computing tensor rank and other invariants of the same form \cite{NP-hardTensors}. This provides one motivation for constructing auxiliary invariants that can either distinguish tensor orbits or yield obstructions to low rank. More generally, such invariants may be used to organise tensor spaces into finite geometrically interesting strata, even when a complete orbit classification is unavailable.

One approach is to associate intersection-theoretic invariants to linear matrix spaces arising from tensors. A tensor $T \in \bbC^a \otimes \bbC^n \otimes \bbC^n $ is identified with an $a$-dimensional linear space of matrices $T( {\bbC^a}^*) \subseteq \bbC^n \otimes \bbC^n$. The natural map to the space of complete collineations defines a subvariety $\CCol(T) \subseteq \CCol_n$ which retains information about the original tensor. Inspired by the corresponding construction in matroid theory, where the permutahedral variety resolves the classical Cremona map \cite{HuhKatz12}, one may define the \emph{volume polynomial} of a tensor
\[
    P_T(x_1,\ldots,x_{n-1}):= \CCol(T) \cdot \bigl(x_1H_1+\cdots+x_{n-1}H_{n-1}\bigr)^{a-1}
\]
where $H_1 \vvirg H_{n-1}$ are the distinguished classes in the cohomology of $\CCol_n$. This was introduced in \cite{conner2026characteristic}: the coefficients of $P_T$ are called the \emph{characteristic numbers} of $T$ and they form a collection of intersection-theoretic invariants. Among them is the degree of the reciprocal variety of $T$, obtained by applying matrix inversion to the invertible elements of $\mathbb{P}(T)$ and taking the closure. This construction, as well as similar ones for the complete quadrics $\CQ_n$, appears in broad applications, including invariant theory \cite{AKRS24}, matroid theory \cite{LMW25}, the theory of hypergeometric systems \cite{MT26} and algebraic statistics \cite{manivel2024complete}. 

Characteristic numbers provide computable invariants that may separate different tensor orbits and orbit closures. However, they are numerical in nature and therefore necessarily retain only part of the geometry of the tensor. The approach of \cite{GeKe:collineations} proposed replacing individual intersection numbers by the full geometric information of the projective varieties from which they arise. This motivated the introduction of the $k$-th collineation variety of a tensor that is the closure of the image of $\mathbb{P}(T({\bbC^a}^*))$ under the rational map defined by the $k\times k$ minors, or equivalently the projection of $\CCol(T)$ to the $k$-th factor. The dimensions, degrees, equations, and geometric features of these varieties provide invariants of tensor orbits and orbit closures and induce corresponding stratifications of tensor spaces. It was observed that collineation varieties provide richer invariants than characteristic numbers alone. However, in some cases, and notably in the case of pencils of matrices, even collineation varieties are far from distinguishing elements within continuous families of orbits. 

The Schubert compactifications introduced in this work provide a substantially finer geometric invariant: \Cref{thm: reconstruction of invariants} shows that the regular component of a matrix pencil, which is the one depending on the continuous parameters, is uniquely determined by the Schubert surface of the pencil. This provides a first indication that the broader geometric constructions proposed in the Bodensee program \cite{MicBodensee} can produce effective and increasingly refined invariants of tensor orbits and orbit closures.

\subsection{Outline.} 

We summarise the main results of the paper in the flowchart \Cref{fig: flowchart forward}, showing how they contribute to the proof of \Cref{thm: main forward}. Similarly, \Cref{fig: reconstruction-flowchart} in \Cref{sec: recovering} outlines the reconstruction procedure yielding the proof of \Cref{thm: reconstruction of invariants}.

\begin{figure}[H]
\centering
\begin{tikzpicture}[
    node distance=1cm and 2cm,
    every node/.style={
        font=\small,
        align=center
    },
    box/.style={
        rectangle,
        draw,
        rounded corners=2pt,
        minimum width=5cm,
        minimum height=1.1cm,
        text width=4.6cm,
        inner sep=6pt
    },
    result/.style={
        rectangle,
        draw,
        rounded corners=2pt,
        minimum width=4cm,
        minimum height=1.2cm,
        text width=5.1cm,
        inner sep=6pt
    },
    arrow/.style={
        -{Latex[length=2.5mm]},
        thick
    },
    label/.style={
        font=\small,
        fill=white,
        inner sep=2pt
    },
    scale=0.85
]

\node[box] (start) at (0,0) {\footnotesize
    Given the Kronecker \\ invariants of a pencil $L$
};

\node[box] (parameterisation) at (0,-2.5) {\footnotesize
    Determine the parameterisation
    $\varphi:\bbP ( \bbC \oplus L) \dashrightarrow Y_L$ and \\
    the indeterminacy scheme $Z$\\
    \Cref{theorem: linear series of YL}
};

\node[box] (resolution) at (-3.5,-5) {\footnotesize
    Resolve $\phi$ by a sequence of \\
    iterated blowups \\
    \Cref{prop: resolution of phi}
};

\node[box] (infinityW) at (3.5,-5) {\footnotesize
    Determine the image of $H_\infty$ \\
    and the singularity at $[W]$ \\
    \Cref{prop: W singularity}
};

\node[box] (degree) at (-6.5,-7.75) {\footnotesize
    Determine the degree of $Y_L$ \\
    \Cref{theorem: degree of YL}
};

\node[box] (singularities) at (0,-7.75) {\footnotesize
    Determine exceptional curves \\ 
    and singularities of $Y_L$ \\
    \Cref{thm: rational singularities}
};

\node[box] (tangent cone) at (6.5,-7.75) {\footnotesize
    Tangent cone at $[W]$ \\ 
    and eigenvalues configuration \\
    \Cref{subsec: eigenvalue configuration}
};

\node[box] (Schubertclass) at (-6.5,-10.5) {\footnotesize
    Determine the Schubert class \\
    of $Y_L$ in $\Gr(n,n+m)$ \\
    \Cref{thm: schubert class}
};

\node[box] (quadrics) at (0,-10.5) {\footnotesize
    Quadratic equations and \\
    $N_p$ properties for $Y_L$ \\
    \Cref{sec: quadrics}
};

\node[box] (intersections) at (6.5,-10.5) {\footnotesize
    Determine intersection pairing \\
    of exceptional curves in $Y_L$ \\
    \Cref{thm: intersections general}
};

\draw[arrow] (start) -- (parameterisation);

\draw[arrow] (parameterisation) -- (resolution);

\draw[arrow] (parameterisation) -- (infinityW);

\draw[arrow] (resolution) -- (degree);

\draw[arrow] (resolution) -- (singularities);

\draw[arrow] (resolution) -- (tangent cone);

\draw[arrow] (infinityW) -- (singularities);

\draw[arrow] (infinityW) -- (tangent cone);

\draw[arrow] (degree) -- (Schubertclass);

\draw[arrow] (degree) -- (quadrics);

\draw[arrow] (singularities) -- (intersections);

\end{tikzpicture}
\caption{Flowchart of the results and the contribution to \Cref{thm: main forward}.}
\label{fig: flowchart forward}
\end{figure}

\section{Preliminaries and motivations}\label{sec: preliminaries and motivations}
In this section, we introduce \emph{Schubert varieties of linear matrix spaces} as compactifications of linear matrix spaces in suitable Grassmannians. We show that this framework indeed generalises both classical Schubert varieties and matroid Schubert varieties.

Let $V,W$ be two vector spaces with $n = \dim V$ and $m = \dim W$. Let $\Gr(n,V \oplus W)$ be the Grassmannian of $n$-dimensional linear subspaces of $V \oplus W$. Denote by $[E]$ the point of $\Gr(n,V\oplus W)$ defined by the subspace $E \subset V\oplus W$. The tensor space $V^\ast \otimes W = \Hom(V,W)$ is identified with the affine open subset in $\Gr(n,V\oplus W)$
\[ 
\mathcal{U}_W := \left\{ [E] \in \Gr(n,V\oplus W) \ | \ E \cap W = 0 \right\} \ 
\] 
via the map
\begin{equation} \label{eqn: open embedding}
\begin{matrix}
\phi : & V^* \otimes W & \to & \calU_W \\
&X &\mapsto & [\Gamma_X]
\end{matrix} 
\end{equation}
where  $\Gamma_X := \left\{ v + Xv  \in V \oplus W | \ v \in V\right\} $ is the graph of the linear map $X : V \to W$. The map $\phi$ is an open embedding of $V^* \otimes W$ into the Grassmannian. In coordinates, fixing bases of $V$ and $W$, one may identify $V^* \otimes W$ with the space of matrices $\Mat_{n \times m}$ of $n \times m$ matrices. In this way $V^* \otimes W$ is identified with the affine chart of $\Gr(n,n+m)$ parameterised by
\begin{equation}\label{eqn: open embedding chart}
\phi: X \mapsto \big[ I_n | X \big] 
\end{equation}
where $I_n$ is the $n\times n$ identity matrix, the (transpose of the) matrix $X \in \Mat_{n \times m}$ is regarded as a map $V \to W$ in the fixed bases and $\Gamma_X$ is identified with the rowspan of $[I_n | X ]$. 

\begin{definition}\label{def: Schubert variety of matrix space}
Let $L\subseteq V^\ast \otimes W$ be a linear subspace. The {\em Schubert variety of} $L$ is the projective variety 
\[ 
\YL \coloneqq \overline{\YL^\circ} \subset \Gr(n,V\oplus W) , 
\]
where $\YL^\circ$ is the image of $L$ in $\Gr(n,V\oplus W)$ via the open embedding $\phi$.
\end{definition}
Since $\phi$ is an open embedding, $\dim \YL = \dim L$ and $\YL$ is a compactification of $L$. In principle, one can define Schubert varieties of linear matrix spaces with respect to any direct sum decomposition of $V \oplus W$.

\begin{remark}
One way to define classical Schubert varieties in a Grassmannian $\Gr(n,N)$ is to fix a complete flag of $\mathbb C^N$ and consider points of $\Gr(n,N)$ which intersect the subspaces of the flag with certain fixed dimensions. Then, all classical Schubert varieties in $\Gr(n,N)$ are obtained by moving the complete flag via the action of $\GL_N$. Similarly, \Cref{def: Schubert variety of matrix space} depends on the direct sum decomposition $V\oplus W$. More generally, one can define Schubert varieties of linear spaces in $\Gr(n,n+m)$ to be those subvarieties $Y$ for which there exists a direct sum decomposition $V \oplus W = \bbC^{n+m}$ and a linear space $L \subseteq V^* \otimes W$ such that $Y = \YL$. 
\end{remark}
The Schubert varieties of linear matrix spaces are a generalisation of the classical and the matroid Schubert varieties. Several other classical constructions in geometry arise as Schubert varieties of linear matrix spaces, see \Cref{subsec:examples}.

\subsection{Classical Schubert varieties}

To see that classical Schubert varieties are special cases of Schubert varieties of linear matrix spaces, we choose, for a given partition $\lambda$, a flag $F_\bullet$ of $V \oplus W$ and verify that the Schubert variety $\Sigma_\lambda$ associated to $\lambda$ coincides with the Schubert variety $Y_L$ of a suitable $L \subseteq \Hom(V,W)$. We present the construction explicitly, following \cite[Sec. 9.4]{FultonYT}.

Let $\lambda = (\lambda_1 \geq \cdots \geq \lambda_n)$ be a partition contained in the $n \times m$ box and define $p_i = m+i-\lambda_i$. Let $v_1 \vvirg v_n$ be a basis of $V$ and $w_1 \vvirg w_m$ a basis of $W$. Fix an ordered basis $u_1 \vvirg u_{n+m}$ of $V \oplus W$ by choosing $u_{p_i} = v_i$ and the remaining $u_j$ to be the vectors $w_m \vvirg w_1$. Let $F_\bullet$ be the flag of $V \oplus W$ with $F_k = \langle u_1 \vvirg u_k\rangle$. In particular, 
\[
F_{p_i} = \langle v_1 \vvirg v_i \rangle + \langle w_m \vvirg w_{\lambda_i + 1}\rangle.
\]
The Schubert variety $\Sigma_\lambda$, with respect to this fixed flag, is given by 
\[
\Sigma_\lambda = \{ [E]\in \Gr(n,V\oplus W) : \dim(E \cap F_{p_i}) \geq i \text{ for $i =1 \vvirg n$}\}.
\]
On the affine chart $\calU_W$, every linear space $E$ is identified with the graph $\Gamma_X$ of a linear map $X : V \to W$. One can verify 
\[
\dim (\Gamma_X \cap F_{p_i}) = \dim \{ v \in  \langle v_1 \vvirg v_i\rangle : Xv \in \langle w_m \vvirg w_{\lambda_i + 1}\rangle \},
\]
therefore $\Gamma_X \in \Sigma_\lambda$ if and only if $X ( \langle v_1 \vvirg v_i\rangle) \subseteq \langle w_m \vvirg w_{\lambda_i + 1}\rangle$. In coordinates, this means that $X$ belongs to the linear space $L = \{ X \in \Mat_{n \times m} : X_{ij} = 0 \text{ for $j > \lambda_i$}\}$, described by a zero pattern with the same shape as the partition $\lambda$.  In turn, this shows $\Sigma_\lambda = Y_L$.

\begin{example}
Let $ n = m = 3$ and $(\lambda_1,\lambda_2,\lambda_3)=(3,1,0)$, so that $(p_1,p_2,p_3) =(1,4,6)$, the ordered basis of $V \oplus W$ is $(v_1, w_3, w_2, v_2, w_1,v_3)$ and the relevant components of the flag described above are
\[
F_1 = \langle v_1\rangle, \quad F_4 = F_1 + \langle w_3,  w_2,  v_2 \rangle, \quad F_6 = V \oplus W.
\]
For a linear space $\Gamma_X \in \Gr(3, 6)$ representing the graph of a linear map $X : V \to W$, the Schubert conditions are 
\begin{align*}
\dim(\Gamma_X \cap F_1) \geq 1 & \quad \Rightarrow \quad Xv_1 = 0, \\
\dim(\Gamma_X \cap F_4) \geq 2 & \quad \Rightarrow \quad X\langle v_1,v_2 \rangle \subseteq \langle w_2,w_3\rangle.
\end{align*}
These conditions define a linear subspace $L \subseteq \Mat_{n \times n} \simeq \Hom(V,W)$. Recall that in coordinates the map $X : V \to W$ is given by the transpose of the matrix on the right hand side of \eqref{eqn: open embedding chart}. Therefore, the open embedding \eqref{eqn: open embedding} of $L$ is given by the rowspan of the matrix
{\small \[
\left[\begin{array}{ccc|ccc}
1 &0 & 0  & 0 & 0 & 0 \\
0 & 1 & 0  & 0 & * & * \\
0 & 0 & 1  & * & * & *
\end{array} \right].
\]}

The zero pattern on the right-hand side has the shape of the Young diagram of the partition $\lambda = (3,1,0)$. The Schubert variety $Y_L$ of the linear space $L$ coincides with the Schubert variety $\Sigma_\lambda$ defined by the partition $\lambda$ and the fixed choice of flag.
\end{example}

\subsection{Matroid Schubert varieties}\label{subsec: matroid schubert varieties}

Matroid Schubert varieties were introduced in \cite{ardila2016closure} as compactifications of linear spaces arising from realisations of matroids. We show that they are special cases of Schubert varieties of linear matrix spaces as well. The vector space $\bbC^n$ has a compactification 
\[ \begin{matrix}
\phi : & \bbC^n & \to & (\bbP^1)^{\times n} \\
    & (x_1,\ldots,x_n) & \mapsto & ([1:x_1],\ldots,[1:x_n]) .
\end{matrix} \]
The \emph{matroid Schubert variety} associated to a linear subspace $L \subseteq \bbC^n$ is the closure
\[
Y_L^{\mathrm{mat}} = \bar{\phi (L)} \subseteq (\bbP^1)^{\times n}.
\]
When $L$ is a realisation of a representable matroid on $[n]$ elements, the intersections of $L$ with the coordinate subspaces of $\bbC^n$, that is the combinatorial structure of the matroid, can be recovered from intersection-theoretic properties of $Y_L^{\mathrm{mat}}$; in turn this provides a geometric framework to study the matroid \cite{ardila2016closure}.

We explain explicitly how this construction is a special case of \Cref{def: Schubert variety of matrix space}. Set
\[
    V= \bbC^n=\langle v_1,\ldots,v_n\rangle,
    \qquad
    W=\bbC^n=\langle w_1,\ldots,w_n\rangle,
\]
and identify $\mathbb C^n$ with the linear subspace
\[
\Diag_n= \left\{ {\scriptsize \begin{pmatrix}
x_1 & &  \\
& \ddots &  \\
&  & x_n
\end{pmatrix} } : (x_1 \vvirg x_n) \in \mathbb C^n \right\} \subseteq V^*\otimes W
\]
of diagonal linear maps in the fixed bases. The Schubert variety $Y_{\Diag_n}$ coincides with a Segre embedding of $(\mathbb P^1)^{\times n}$ into $\Gr(n,V\oplus W) \subseteq \bbP \Wedge^n (V \oplus W)$. Indeed, the open embedding $\phi : \Diag_n \to \Gr(n,V \oplus W)$ extends to a closed embedding 
\[ \begin{matrix}
\iota : & (\mathbb P^1)^{\times n} & \to & \Gr(n,V\oplus W) \\
     & \bigl(\left[\begin{smallmatrix}a_1 \\ b_1 \end{smallmatrix} \right] \vvirg \left[\begin{smallmatrix}a_n \\ b_n \end{smallmatrix} \right]  \bigr)
     & \mapsto &
    {\scriptsize \left[
    \begin{array}{ccc|ccc}
    a_1 &     & & b_1 &  \\
       & \ddots & &  & \ddots &  \\
        &      & a_n &  &     & b_n
    \end{array}
    \right]}
\end{matrix} \]
 defined by the maximal minors of an $n \times 2n$ matrix. On the open set $a_1 \cdots a_n \neq 0$ the map $\iota$ coincides with $\phi$, hence its image is the closure of $\phi(\Diag_n)$, that is $Y_{\Diag_n}$. Thus, after identifying $(\mathbb P^1)^{\times n}$ with its image under $\iota$, matroid Schubert varieties are exactly the Schubert varieties associated to linear spaces of diagonal matrices.

For a representable matroid $M$ on $[n]$ let $L \subseteq \bbC^n$ be a representation of $M$ in the following sense. Let $z_i : L\longrightarrow\mathbb C$ be the coordinate functions of $\bbC^n$ restricted to $L$. Then $z_{i_1} \vvirg z_{i_s}$ are linearly independent as elements of $L^*$ if and only if $\{i_1,\ldots,i_s\}$ is an independent set of $M$.

The combinatorics of the matroid is visible in the equations of $Y_L$. For example, let $c_1x_{i_1}+\cdots+c_sx_{i_s}=0$ be a linear equation of $L$. On the chart $a_i \neq 0$, this defines an equation whose homogenisation is 
\[ \textstyle 
    \sum_{k \in [s]}
    c_k a_{i_1}\cdots a_{i_{k-1}} b_{i_k} a_{i_{k+1}}\cdots a_{i_s} = 0.
\]
In particular, the minimal-support linear relations among the coordinate functions give distinguished equations for $Y_L \subseteq (\bbP^1)^{\times n}$. One of the main results of \cite{ardila2016closure} is that these equations control the whole defining ideal, and in fact form a universal Gr\"obner basis.

The bases of the matroid are recorded by the multidegree. More precisely, one has
\[
    [Y_L^{\mathrm{mat}}]    =
    \sum_{B\in\mathcal B(M)} \prod_{i\notin B} h_i
   \qquad\text{in}\qquad
    \CH^\bullet((\mathbb P^1)^{\times n})
    =
    \frac{\mathbb Z[h_1,\ldots,h_n]}
         {(h_1^2 \vvirg h_n^2)}.
\]
Thus the multidegree determines the set of bases of $M(L)$ and hence the labelled matroid. This is one of the simplest instances of the general principle motivating Schubert varieties of linear matrix spaces: information about the original linear space is converted into intersection-theoretic information on its compactification.

\begin{example}\label{example: matroid Schubert}
Consider the matroid $M$ of rank $2$ on $[4]$ whose set of bases is
\[
\mathcal{B}(M) = \bigl\{
\{1,2\},\{1,3\},\{1,4\},\{2,3\},\{2,4\}
\bigr\}.
\]
Thus the only $2$-subset which is not a basis is $\{3,4\}$. A linear subspace of $\bbC^4$ defining a realisation of $M$ is 
\[
L =
\bigl\{ (x_1,x_2,x_1+x_2,x_1+x_2)
: x_1,x_2\in\mathbb{C} \bigr\}
\subseteq \mathbb{C}^4.
\]
The circuits of $M$ define the minimal linear relations
\[
z_3-z_4=0,
\qquad
z_1+z_2-z_3=0,
\qquad
z_1+z_2-z_4=0.
\]
The open embedding of $L$ into $(\mathbb{P}^1)^{\times 4}$ defines the matroid Schubert variety $Y^{\mathrm{mat}}_L$. The circuits give the equations 
\begin{align*}
F_{34} &= b_3a_4-a_3b_4,\\
F_{123} &= a_1a_2b_3 - b_1a_2a_3 -  a_1b_2a_3 ,\\
F_{124} &= a_1a_2b_4 - b_1a_2a_4 -  a_1b_2a_4.
\end{align*}
The bases of $M$ are encoded in the multidegree of $Y^{\mathrm{mat}}_L$. The class of $Y^{\mathrm{mat}}_L$ in $\CH^\bullet((\mathbb{P}^1)^{\times 4})$ is
\[ \textstyle 
[Y_L^{\mathrm{mat}}] = \sum_{B\in\mathcal{B}(M)}
\prod_{i\notin B}h_i = h_3h_4 + h_2h_4 + h_2h_3 + h_1h_4 + h_1h_3.
\]
\end{example}

\subsection{Pencils of matrices and Kronecker invariants}\label{subsec: pencils}

 A pencil of matrices is a $2$-dimensional linear subspace of matrices $L \subseteq V^* \otimes W$ that can be identified with a tensor $T \in A \otimes V^* \otimes W$ with $\dim A = 2$ via tensor flattening. Let $x_1,x_2$ be a basis of $A$, so that the linear map $T : A^* \to V^* \otimes W$ given by the contraction against the factor $A$ parameterises $L$ as an $n \times m$ matrix of linear forms in $x_1,x_2$.

It is a classical fact due to Weierstrass and Kronecker \cite{Weis67,Kron90} that pencils of matrices have normal forms: in other words, up to the action of $\GL(V) \times \GL(W)$ on $A \otimes V^* \otimes W$, a pencil of matrices is uniquely determined by a finite set of discrete and continuous invariants, today known as Kronecker invariants. We refer to \cite[Ch.~13]{Gant59} for the theory and we record here the result.

There are three types of \emph{indecomposable} pencils of matrices: the left and the right singular pencils
\[
S^{\rmleft}_{p} = {\footnotesize 
\left[\begin{array}{ccccc}
        x_1 & x_2 \\ 
         & \ddots & \ddots \\
         & & x_1 & x_2
        \end{array} \right]_{p\times (p+1)} }, \qquad  
S^{\rmright}_{p} = {\footnotesize
\left[{\begin{array}{cccc}
        x_1 & \\
        x_2 & \ddots  & \\
        & \ddots & x_1 \\
        & &  x_2       \end{array}}\right]_{(p+1)\times p} }
\]
and the Jordan pencils with generalised eigenvalue $\theta = \theta^{(0)}x_1  + \theta^{(1)} x_2 \in A$
\[ 
    J_p(\theta) = {\footnotesize
    \left[{\begin{array}{cccc}
        \theta & x_2 \\
        & \ddots & \ddots \\
        & & \ddots & x_2  \\
        & &  & \theta 
        \end{array}}\right]_{p\times p} }.
        \]
The \emph{block sum} of two pencils $T_1 \in A \otimes \bbC^{n_1} \otimes \bbC^{m_1}$ and $T_2 \in A \otimes \bbC^{n_2} \otimes \bbC^{m_2}$ is the pencil obtained by projecting the direct sum $T_1 \oplus T_2 \in (A^{\oplus 2}) \otimes \bbC^{n_1 + n_2} \otimes \bbC^{m_1 + m_2}$ on the diagonal $A \subseteq A^{\oplus 2}$; in terms of matrices of linear forms, the block sum is 
\[
T_1 \boxplus T_2 = {\footnotesize \begin{bmatrix}
T_1 & 0 \\
0 & T_2
\end{bmatrix} } \in A \otimes (\bbC^{n_1} \oplus \bbC^{n_2})  \otimes (\bbC^{m_1} \oplus \bbC^{m_2}).
\]
The structural result of \cite{Weis67,Kron90} is that every pencil of matrices $T \in A \otimes V^* \otimes W$ is, up to the action of $\GL(V) \times \GL(W)$, a block sum of indecomposable pencils; the sizes of the blocks and the generalised eigenvalues of the Jordan blocks are uniquely determined. One can further partially normalise the generalised eigenvalues using the action of $\GL(A)$ on $A$ \cite{JaJa79}, but we will not employ this normalisation. We point out, however, that if there is a single generalised eigenvalue then such eigenvalue can be assumed to be $\theta = x_1$. Moreover, note that if $L$ is block sum of copies of $J_1(x_1)$ then it does not define a pencil, but rather the single linear dimensional space of multiples of the identity.

We say that a pencil is regular if its decomposition into indecomposable pencils contains no singular blocks. We say that it is singular if it contains singular blocks. We call \textit{left} (resp. \textit{right}) \textit{singular component} the block sum of all the left (resp. right) singular blocks of the pencil. We call \textit{regular component} the block sum of all the Jordan blocks.

Let $L \subseteq V^* \otimes W$ be a pencil. The \emph{generic rank} of $L$ is the rank of its generic point as a linear map $V \to W$. There are finitely many points $X_1,\ldots , X_s\in \mathbb PL$ having rank strictly lower than the generic rank of $L$, which we call \emph{rank degenerate} points. These are given precisely by the roots of the generalised eigenvalues, hence they only depend on the regular part of the pencil;  when $L$ is regular, they are the roots of $\det(L)$. On the other hand, when $L$ is totally singular, there are no rank-degenerate points in $L$: indeed, the singular blocks $L_p$ and $R_p$ always have maximal rank unless $x_1 = x_2 = 0$.

\subsection{Grassmannians and direct sums} \label{subsec: Grassmannians and direct sums}

We give a brief overview of a standard construction which will be useful to \Cref{subsec: restriction of phi to line at infinity} and \Cref{subsec: schubert classes}. The Kronecker--Weierstrass decomposition of a pencil of matrices regards the pencil as a linear subspace of block diagonal matrices. More precisely, if
$L\subseteq V^*\otimes W$, then there are direct sum decompositions
\[
V=V_1\oplus\cdots\oplus V_r,
\qquad
W=W_1\oplus\cdots\oplus W_r,
\]
such that
\[
L\subseteq
V_1^*\otimes W_1\oplus\cdots\oplus V_r^*\otimes W_r
\subseteq V^*\otimes W
\]
with one indecomposable Kronecker block in each $V_i^* \otimes W_i$. The corresponding decomposition on Grassmannians will allow us to study several geometric features of $Y_L$ by reducing the analysis to indecomposable blocks.

Let $U$ be a vector space with a direct sum decomposition
\[
U=U_1\oplus\cdots\oplus U_r.
\]
Fix integers $q_1,\ldots,q_r$ with $q_i\leq \dim U_i$, and set $q_\bullet=q_1+\ldots+q_r$. Let $\Gr_\bullet := \Gr(q_1 , U_1) \times \ldots \times \Gr(q_r, U_r)$. There is a natural direct sum map
\begin{equation}\label{eqn: direct sum grassmannian}
\begin{matrix}
\Delta: & \Gr_\bullet & \to & \Gr(q_\bullet, U)  \\
& (E_1 \vvirg E_r) & \mapsto & E_1 \oplus \ldots \oplus E_r.
\end{matrix}
\end{equation}
The ambient space $\Wedge^{q_\bullet} U$ of the Pl\"ucker embedding of $\Gr(q_\bullet,U)$ has a direct sum decomposition in terms of the multidegrees with respect to each $U_j$; one of the summands is $\Wedge^{q_1}U_1 \ootimes \Wedge^{q_r} U_r$. The map $\Delta$ is the restriction to $\Gr_\bullet$ of the Segre embedding 
\[
\bbP \Wedge^{q_1}U_1 \times \ldots \times \bbP \Wedge^{q_r} U_r \to \bbP \Wedge^{q_\bullet} U.
\]
Let $\calS$ be the tautological bundle on $\Gr(q_\bullet,U)$, $\calS_i$ be the tautological bundle on $\Gr(q_i,U_i)$, and let $\pi_i : \Gr_\bullet \to \Gr(q_i, U)$ be the projection onto the $i$-th factor.

\begin{lemma}\label{prop: grassmannian direct sum}
The map $\Delta$ of \eqref{eqn: direct sum grassmannian} is a closed embedding. The pullback of the tautological bundle splits as
    \[
    \Delta^*\calS \simeq  \textstyle  \bigoplus_{i=1}^r\pi_i^*\calS_i.
    \]
\end{lemma}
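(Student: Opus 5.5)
The plan is to prove the two claims separately: closedness and immersion for $\Delta$ first, then the splitting of the tautological bundle.

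\emph{Closed embedding.} Since $\Gr_\bullet$ is projective and $\Gr(q_\bullet,U)$ is separated, $\Delta$ is proper, so its image is closed. It therefore suffices to show that $\Delta$ is a monomorphism (injective on $T$-points) and unramified; a proper monomorphism is a closed immersion.

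For injectivity, note that $E_i = (E_1\oplus\cdots\oplus E_r)\cap U_i$, so $E_i$ is recovered from the image. Scheme-theoretically this is phrased via the linear projections. Let $p_i : U\to U_i$ be the projections of the decomposition. On the locally closed subscheme $Z\subseteq \Gr(q_\bullet,U)$ where each $p_i(\calS)$ is a subbundle of rank $q_i$ and $\calS=\bigoplus p_i(\calS)$, the assignment $E\mapsto (p_1(E),\dots,p_r(E))$ gives an inverse to $\Delta$.

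A cleaner route is to use the Segre description already noted before the statement. $\Delta$ is the restriction of the Segre embedding $\bbP\Wedge^{q_1}U_1\times\cdots\times\bbP\Wedge^{q_r}U_r\to\bbP\Wedge^{q_\bullet}U$, followed by the inclusion of the multidegree-$(q_1,\dots,q_r)$ summand $\Wedge^{q_1}U_1\otimes\cdots\otimes\Wedge^{q_r}U_r\subseteq\Wedge^{q_\bullet}U$. I would check the compatibility on decomposable vectors, namely
\[
(e^{(1)}_1\wedge\cdots\wedge e^{(1)}_{q_1})\wedge\cdots\wedge(e^{(r)}_1\wedge\cdots)\leftrightarrow \omega_1\otimes\cdots\otimes\omega_r.
\]
With this, $\Delta$ is the composite of closed embeddings: the product of Plücker embeddings, the Segre embedding, and a linear inclusion. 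Its image lies in $\Gr(q_\bullet,U)$, and a closed embedding into $\bbP\Wedge^{q_\bullet}U$ with image inside the closed subvariety $\Gr(q_\bullet,U)$ is a closed embedding into $\Gr(q_\bullet,U)$. I will use this Segre argument as the main proof.

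\emph{Splitting.} On $\Gr_\bullet$, the bundle $\Delta^*\calS$ is the subbundle of the trivial bundle $U\otimes\calO$ whose fibre at $(E_1,\dots,E_r)$ is $E_1\oplus\cdots\oplus E_r$. Each $\pi_i^*\calS_i$ is a subbundle of $U_i\otimes\calO\subseteq U\otimes\calO$. The sum map $\bigoplus_i\pi_i^*\calS_i\to U\otimes\calO$ is an injective map of bundles, because the $U_i$ are independent, so it is fibrewise injective. Its image is exactly $\Delta^*\calS$, by the universal property of the Grassmannian: $\Delta$ is precisely the morphism classified by this rank-$q_\bullet$ subbundle. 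This gives the isomorphism.

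The main obstacle is the scheme-theoretic closed-embedding claim, as opposed to mere injectivity on points. The Segre/Plücker factorisation sidesteps it, and the only real check is the compatibility of Plücker coordinates with the multidegree decomposition of $\Wedge^{q_\bullet}U$.
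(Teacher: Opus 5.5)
Your proposal is correct and follows the same route as the paper. The closed-embedding claim comes from realising $\Delta$ as the restriction of the Segre embedding into the multidegree-$(q_1,\dots,q_r)$ summand of $\Wedge^{q_\bullet}U$, and the splitting comes from identifying $\Delta^*\calS$ with the subbundle $\bigoplus_i\pi_i^*\calS_i\subseteq U\otimes\calO$. Your universal-property argument for the splitting is slightly more complete than the paper's fibrewise remark, and the monomorphism/unramified detour at the start is unnecessary once you have the Segre factorisation.
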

\begin{proof}
    The map $\Delta$ is a closed embedding because it is the restriction of the Segre embedding to $\Gr_\bullet$. Let $(E_1,\ldots,E_r) \in \Gr_\bullet$ and let $E = E_1 \ooplus E_r \subseteq U$. By definition 
\[
\calS_E= (\calS_1)_{E_1} \ooplus (\calS_r)_{E_r}
\]
which immediately shows $\Delta^*\calS \simeq   \bigoplus_{i=1}^r \pi_i^*\calS_i$. 
\end{proof}

We record a useful consequence of \Cref{prop: grassmannian direct sum}.
\begin{corollary}\label{cor: sum of rnc}
Let $\gamma_i: \bbP^1 \to \Gr(q_i ,U_i)$ be the embedding of a rational normal curve $C_i$ of degree $\delta_i$ in the Pl\"ucker embedding of $\Gr(q_i, U_i)$. Then 
\[
\bbP^1 \xto{\gamma_1 \ttimes \gamma_r} \Gr(q_1 ,U_1) \ttimes \Gr(q_r, U_r) \xto{\Delta} \Gr(q_\bullet, U)
\]
parameterises a rational normal curve of degree $\sum_{i \in [r]}\delta_i$ in the Pl\"ucker embedding of $\Gr(q_\bullet, U)$.
\end{corollary}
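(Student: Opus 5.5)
The map $\Delta\circ(\gamma_1\times\cdots\times\gamma_r)$ is the Segre map of the Plücker images of the $\gamma_i$, restricted to the diagonal of $(\bbP^1)^{\times r}$. So the plan is to compute on the Plücker side via \Cref{prop: grassmannian direct sum}.

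First, read the Plücker composite as a single rational map from $\bbP^1$. By \Cref{prop: grassmannian direct sum}, $\Delta$ is the restriction of the Segre embedding
\[
\bbP \Wedge^{q_1}U_1 \times \ldots \times \bbP \Wedge^{q_r} U_r \to \bbP \Wedge^{q_\bullet} U,
\]
whose image lies in the summand $\Wedge^{q_1}U_1 \ootimes \Wedge^{q_r}U_r$. Each $\gamma_i$ is given by a basis of binary forms $f^{(i)}_0,\ldots,f^{(i)}_{\delta_i}$ of degree $\delta_i$ in $(s,t)$, with values in a $(\delta_i+1)$-dimensional subspace $\Lambda_i\subseteq \Wedge^{q_i}U_i$ spanned by the curve $C_i$. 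The composite therefore sends $[s:t]$ to $\bigotimes_i \gamma_i(s,t)\in \Lambda_1\ootimes \Lambda_r$. Its coordinates are the products $f^{(1)}_{j_1}\cdots f^{(r)}_{j_r}$, which are binary forms of degree $\delta:=\sum_i\delta_i$. Since each $\gamma_i(s,t)\neq 0$ for $(s,t)\neq 0$, the tensor product never vanishes, so the map is a morphism given by a base-point-free linear system of degree $\delta$. Equivalently, the pullback of $\calO(1)$ is $\bigotimes_i\gamma_i^*\calO(1)=\calO_{\bbP^1}(\delta)$; this also follows from the splitting $\Delta^*\calS\simeq\bigoplus\pi_i^*\calS_i$ by taking determinants.

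Second, I show that the linear system is complete, i.e. that the span of the products $f^{(1)}_{j_1}\cdots f^{(r)}_{j_r}$ is all of $S^\delta(\bbC^2)^*$. Since each $\{f^{(i)}_j\}_j$ spans all degree-$\delta_i$ forms, the products span $S^{\delta_1}\cdots S^{\delta_r}$. Multiplication of binary forms is surjective onto $S^{\delta}$, because monomials multiply to all monomials of degree $\delta$. So the image spans a $\bbP^\delta$, and the map is the $\delta$-th Veronese embedding of $\bbP^1$ followed by a linear isomorphism onto that span.

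Third, I conclude that the image is a rational normal curve of degree $\delta$. The composite is injective because already the first factor $\gamma_1$ is (and $\Delta$ is an embedding), but this is in any case automatic from the Veronese description. The only point needing care is the surjectivity of multiplication in the second step, which I expect to be the one small obstacle. It is, however, immediate from the monomial argument, so the proof should be short.
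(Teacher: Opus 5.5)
Your proof is correct and follows the paper's argument: the composite is the Segre map restricted to the diagonal image of $\bbP^1$, which is the Veronese embedding of degree $\sum_i \delta_i$. You also spell out a step the paper leaves implicit. Products of the complete degree-$\delta_i$ systems of binary forms span the complete degree-$\delta$ system, so the image is a linearly isomorphic copy of the Veronese curve rather than a projection of it.
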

\begin{proof}
    The Segre embedding restricted to the diagonal image of $\bbP^1 \xto{\gamma_1 \ttimes \gamma_r}\Gr(q_1 ,U_1) \ttimes \Gr(q_r, U_r)$ is the Veronese embedding of $\bbP^1$ of degree $\delta_1 + \cdots + \delta_r$. Its image is the desired rational normal curve.
\end{proof}

In the case of matrix pencils $L \subseteq V^* \otimes W$, the Kronecker-Weierstrass normal form gives a direct sum decomposition of $V$ and $W$ so that $L \subseteq (V_1^* \otimes W_1) \oplus \ldots \oplus (V_r^* \otimes W_r)$. It is immediate that the Schubert variety $Y_L$ is contained in the induced subvariety $\Gr_\bullet$. \Cref{prop: grassmannian direct sum} will guarantee that the pullback of the tautological bundle $\calS$ to $Y_L$ can be studied in terms of the sum of the pullback bundles $\calS_i$ to the Schubert varieties of the indecomposable blocks of $L$. This simple fact will be extremely useful in \Cref{subsec: schubert classes}. 

\subsection{A short review of quotient surface singularities}\label{subsec: surface singularities}
We briefly recall terminology and basics concerning surface singularities that will be used in the paper. We refer to \cite[Ch.~4]{Reid97} for details. A \emph{surface singularity} is a pair $(X,p)$ of a quasi-projective surface $X$ and a point $p \in X$: $p$ is usually a singular point of $X$, but this setting allows $p$ to be smooth as well. We consider a geometric notion of equivalence of surface singularities, slightly more restrictive than the usual one found in the literature. We say that $(X,p)$ and $(Y,q)$ are \emph{Zariski locally} isomorphic if there exist open subvarieties $X',Y'$ with $p \in X' \subseteq X$ and $q \in Y' \subseteq Y$ and an isomorphism of varieties $X' \to Y'$ mapping $p$ to $q$. This is equivalent to requiring the local rings $\calO_{X,p}$ and $\calO_{Y,q}$ to be isomorphic as $\bbC$-algebras. In the literature, one usually considers a less restrictive notion of isomorphism, where the \emph{completions} $\hat{\calO}_{X,p}$, $\hat{\calO}_{Y,q}$ are required to be isomorphic. This distinction is irrelevant for our purposes. We say that a surface singularity $(X,p)$ is \emph{normal} if the local ring $\calO_{X,p}$ is integrally closed.

Let $G \subseteq \GL_2$ be a finite subgroup. Then $G$ acts on $\bbA^2$ and the quotient $X_G = \bbA^2 / G$ is a surface. A \emph{quotient singularity} is a surface singularity $(X,p)$ that is isomorphic to $(X_G,0)$ for some finite subgroup $G \subseteq \GL_2$. In particular, quotient singularities are normal. We say that $(X,p)$ is a \emph{cyclic quotient singularity} of type $\frac{1}{r}(a,b)$ if $(X,p)$ is isomorphic to $(X_G,0)$ where $G = \mu_r$ is the multiplicative group of $r$-th roots of $1$ acting by $\zeta \cdot (u_1,u_2) = (\zeta^a u_1, \zeta^b u_2)$, for a primitive root of unity $\zeta$.

Let $(X,p)$ be a normal surface singularity. A resolution of $(X,p)$ is a proper birational morphism $\rho : \tilde{X} \to (X,p)$ such that $\tilde{X}$ is smooth and $\rho: \tilde{X} \setminus \rho^{-1}(p) \to X\setminus p$ is an isomorphism, possibly after replacing $X$ with an open neighborhood of $p$; the fiber $\rho^{-1}(p)$ is called the \emph{exceptional locus} of the resolution. We say that $\tilde{X}$ is minimal if any other resolution $\tilde{X}'$ of $(X,p)$ factors through $\tilde{X}$. Every surface singularity admits a minimal resolution, unique up to isomorphism \cite[Thm. 2.16]{Kol07}.

We say that a normal surface singularity is a \emph{Du Val singularity} if the exceptional locus of a minimal resolution $\phi: \tilde{X} \to (X,p)$ is a union of smooth rational curves of self-intersection $-2$, intersecting transversely, and whose dual graph is a Dynkin diagram of type $\mathsf{A}$, $\mathsf{D}$ or $\mathsf{E}$. There is a characterisation in terms of the group action: a normal surface singularity is Du Val if and only if it is a quotient singularity $\bbA^2 / G$ for a finite subgroup $G \subseteq \SL_2$ \cite[Rmk. 4.21]{KM98}. In particular, a cyclic quotient singularity of type $\frac{1}{r}(a,b)$ is Du Val if and only if $a+b \equiv 0 \bmod r$. The quotient singularities relevant for this work are described in the following example.
\begin{example}
The cyclic quotient singularity of type $\frac{1}{r}(1,-1)$ is a Du Val singularity of type ${\mathsf{A}}_{r-1}$. Its minimal resolution is obtained by successive blow-ups of the singular point. The exceptional locus is a chain of $r-1$ rational curves with self-intersection $-2$. 
On the other hand, the cyclic quotient singularity $\frac{1}{r}(1,1)$ is not Du Val if $r \geq 3$. Its minimal resolution has a single exceptional curve of self-intersection $-r$.
\end{example}

\subsection{Some examples of Schubert surfaces}\label{subsec:examples} 

We conclude this section by presenting a collection of examples of the Schubert varieties occurring in small dimension. These examples anticipate the phenomena studied in the rest of the paper. The very first instance of the construction is the quadric surface of $\bbP^3$.

\begin{example}[Quadric surface]\label{ex:quadric-surface}
Consider the pencil 
\[
L=J_1(x_1)\boxplus J_1(x_2) =
\left\{
{\footnotesize \begin{pmatrix}
x_1&0\\
0&x_2
\end{pmatrix} }
\mid x_1,x_2\in \bbC
\right\} \subseteq V^* \otimes W
\]
with $\dim V = \dim W = 2$. This is the linear space parameterised by a generic tensor in $\bbC^2 \otimes \bbC^2 \otimes \bbC^2$. The open embedding of $L$ in $\Gr(2,V\oplus W)$ is defined by 
\[ (x_1,x_2) \mapsto {\footnotesize \begin{pmatrix}
1 & 0 & x_1&0\\
0 & 1 & 0&x_2
\end{pmatrix}} , 
\] 
which in the Pl\"ucker coordinates is given by to $\phi(x_1,x_2) = [1:x_1:x_2:x_1x_2]$; therefore $Y_L$ is the quadric surface in $\bbP^3$. The map $\phi$ extends to a rational map $\bbP^2 \dashto \bbP^3$ with $\phi([x_0:x_1:x_2]) = [x_0^2 :x_0x_1:x_0x_2:x_1x_2]$ whose indeterminacy locus is the set $Z = \{ [0,1,0], [0,0,1] \}$. The resolution of this map blows-up the two points, making the line spanned by them  into a $(-1)$-curve. The quadric is the result of the contraction of that $(-1)$-curve to a point. This gives the classical isomorphism between the blow-up of $\bbP^2$ at two points and the blow-up of $\bbP^1 \times \bbP^1$ at one point. 

More generally, for the pencil $L=J_1(x_1)^{\boxplus e_1}\boxplus J_1(x_2)^{\boxplus e_2}$, the Schubert variety of $L$ is again $\PP^1\times\PP^1$, and the Pl\"ucker embedding of $\Gr(e_1+e_2, 2(e_1+e_2))$ restricts to the embedding given by $\calO_{\bbP^1 \times \bbP^1} (e_1,e_2)$. In this case $Y_L$ is a Segre-Veronese variety. These varieties will appear in case (1)(ii) of \Cref{prop: smooth YL}.
\end{example}

An interesting variation on \Cref{ex:quadric-surface} is when the indeterminacy locus degenerates to a local scheme of length $2$.

\begin{example}[Quadratic cone]\label{ex: length 2 scheme}
Consider the pencil
\[
L=J_2(x_1)
=
\left\{
{\footnotesize \begin{pmatrix}
x_1 & x_2\\
0   & x_1
\end{pmatrix} }
\mid x_1,x_2\in \bbC
\right\}.
\]
with $\dim V = \dim W = 2$. When regarded as a tensor in $\bbC^2 \otimes \bbC^2 \otimes \bbC^2$, this is a general tangent vector to the Segre variety. The rational map $\phi: \bbP^2 \dashto \bbP^3$ is defined by $\phi([x_0:x_1:x_2]) = [x_0^2 : x_0x_1 : x_0x_2 : x_1^2]$. The indeterminacy locus of $\phi$ is a scheme of length $2$ supported at the point $[0,0,1]$ and contained in the line at infinity $x_0=0$. The Schubert variety $Y_L$ is the quadric cone in $\bbP^3$. The resolution of $\phi$ will be studied, in a more general setting, in \Cref{subsec: smooth resolution}: the exceptional locus is a chain of two curves, with self-intersection $(-2)$ and $(-1)$. The blow-up procedure makes the line at infinity into a $(-1)$-curve. The surface $Y_L$ is the result of the contraction of the exceptional $(-2)$-curve, resulting in the $\mathsf{A}_1$-singularity at the vertex of the cone, and of the line at infinity, resulting in a smooth point.

More generally, if $L = J_k(x_1)$ consists of a single Jordan block, then the indeterminacy locus of the rational map is a local scheme $Z$ with ideal $I(Z) = (x_0,x_1^k)$. In this case, the exceptional locus of the resolution of $\phi$ is a chain of $k$ curves with self-intersection $(-2), \ldots, (-2), (-1)$. The line at infinity lifts to a curve with self-intersection $-(k-1)$. The surface $Y_L$ is obtained by the contraction of all exceptional $(-2)$-curves, resulting in the $\mathsf{A}_{k-1}$-singularity, and the contraction of the strict transform of the line at infinity, resulting in a singularity of type $\frac{1}{k-1}(1,1)$. When $k = 2,3$, this falls into case (1) of \Cref{prop: reconstruct case by case}, and if $k > 3$, it falls into case (1) of \Cref{prop: W non DuVal singularity}.
\end{example}

\begin{example}[Hirzebruch surface]\label{ex: hirzebruch}
Consider the pencil $L=J_1(x_1)\boxplus J_2(x_1)$ with $\dim V = \dim W = 3$. The rational map $\phi: \bbP^2 \dashto \bbP^5$ is given by  
\[
\phi([x_0 : x_1 : x_2]) = [x_0^3 : x_0^2x_1 : x_0^2x_2 :  x_0x_1^2 :x_0x_1x_2:x_1^3].
\]
The indeterminacy locus of $\phi$ is a local scheme $Z$ supported at the point $[0,0,1]$ whose ideal is $(x_0,x_1)(x_0,x_1^2)$. A resolution of $\phi$, as constructed in \Cref{subsec: smooth resolution}, has a chain of two exceptional curves with self-intersection $(-2),(-1)$ respectively. The blow-up makes the line at infinity into a $(-1)$-curve, and $Y_L$ is the result of the contraction of that curve. In particular, $Y_L$ is smooth. In fact, $Y_L$ is the Hirzebruch surface $\bbF_2=\PP(\calO_{\bbP^1} \oplus \calO_{\bbP^1}(2))$. The Pl\"ucker embedding of $\Gr(3,6)$ restricts to the embedding of $\bbF_2$ given by the divisor $E + 3F$, where $E$ is the negative section and $F$ is a fiber of the ruling. The image of this embedding is a rational normal scroll $S(1,3) \subseteq \bbP^5$ of degree $4$. More generally, if $L=J_1(x_1)^{\boxplus e_1}\boxplus J_2(x_1)^{\boxplus e_2}$, the Schubert variety $\YL$ is again the surface $\bbF_2$, embedded by the linear series associated to the divisor $e_2E + (e_1+2e_2)F$. When $e_2 = 1$, the image is the rational normal scroll $S(e_1,e_1+2) \subseteq \bbP^{2e_1 + 3}$. This gives rise to the family of case (1)(iii) in \Cref{prop: smooth YL}.
\end{example}

\section{Schubert varieties of matrix pencils}\label{sec: Schubert surfaces}

Throughout the rest of the paper, we focus on Schubert varieties of dimension $2$, and we refer to them as Schubert surfaces. Fix vector spaces $V,W$ with $\dim V = n$ and $\dim W = m$. Given a matrix pencil $L \subseteq V^* \otimes W$, let $Y_L$ denote the associated Schubert surface. The open embedding $L \to \Gr(n,V \oplus W)$ defines a rational map 
\begin{equation}\label{eq: phi} 
\begin{matrix}
\varphi : &  \mathbb P(\bbC \oplus L) & \dashrightarrow & \Gr(n,V\oplus W) \\
& [1:X] & \mapsto & [\Gamma_X]
\end{matrix} 
\end{equation}
and $\YL = \overline{\im\varphi}$. Fix coordinates $x_0,x_1,x_2$ on $\mathbb P^2 = \bbP (\bbC \oplus L )$ and consider on $\Gr(n, V \oplus W)$ the Pl\"ucker coordinates given by the maximal minors of the matrix $[I | X]$. Then the rational map $\phi$ is defined by the linear series $ D_n\!\left( \big[ x_0I_n | L(x_1,x_2)\big] \right)$, that is the degree $n$ component of the ideal of maximal minors of the matrix of the $n\times (n+m)$ matrix of linear forms $\big[ x_0I_n | L(x_1,x_2)\big]$. As explained in \Cref{subsec: pencils}, a pencil $L$ has a unique decomposition in terms of Kronecker invariants: we record here the notation that we will use throughout the rest of the paper.

\begin{center}

\framebox[15.7cm]{
	\begin{minipage}{15.2cm}
        \vspace{1.5ex}
    \begin{notation}\label{notation:pencils}
    The pencil $L$ has a unique decomposition into a left-singular component, a right-singular component and a regular component: $L =  S^{\rmleft}_\bullet \boxplus S^{\rmright}_\bullet \boxplus L^{\mathrm{reg}}_\bullet$. Let $\epsilon_{\#}$ and $\eta_{\#}$ be the number of left and right singular blocks, respectively. Let $\boldsymbol{\epsilon} = (\epsilon_1 \vvirg \epsilon_{\epsilon_{\#}})$ and $\boldsymbol{\eta}= (\eta_1,\ldots, \eta_{\eta_{\#}})$ be the left and right singular Kronecker invariants of $L$. In particular
    \[
    S^{\rmleft}_\bullet = S^{\rmleft}_{\epsilon_1} \boxplus \ldots \boxplus S^{\rmleft}_{\epsilon_{\epsilon_{\#}}}, \qquad
    S^{\rmright}_\bullet = S^{\rmright}_{\eta_1} \boxplus \ldots \boxplus S^{\rmright}_{\eta_{\eta_{\#}}}.
    \]
    Let $\epsilon_\bullet = \sum \epsilon_i$ be the number of rows of $S^{\rmleft}_{\bullet}$ and $\eta_\bullet = \sum \eta_i$ for the number of columns of $S^{\rmright}_\bullet$. Let $n_0= \epsilon_\bullet + \eta_\bullet + \eta_{\#}$ and $m_0 = \epsilon_\bullet + \epsilon_{\#} + \eta_\bullet$ be the number of rows and columns of the singular components. 
    
    Let $\theta_1, \ldots , \theta_s \in \mathbb C[x_1,x_2]_1$ be the $s$ distinct generalised eigenvalues of $L$: we may assume they have the form $\theta_i = x_1 - \zeta_ix_2$ for distinct $\zeta_i \in \bbC$. For every $i \in [s]$, set:
    \begin{itemize}
    \item $k_{i  1} < \ldots < k_{i  h_i}$ the distinct sizes of the Jordan blocks $J_{k_{i 1}}(\theta_i), \ldots , J_{k_{i \, h_i}}(\theta_i)$;
    \item $e_{i ,j} \geq 1$ the number of blocks $J_{k_{i j}}(\theta_i)$;
    \item $d_i := e_{i 1} + \ldots + e_{i h_i}$ the total number of Jordan blocks with eigenvalue $\theta_i$;
    \item $L_i := J_{k_{i 1}}(\theta_i)^{\boxplus e_{i 1}}\boxplus \ldots \boxplus J_{k_{i \, h_i}}(\theta_i)^{\boxplus e_{i h_i}}$ the sum of Jordan blocks with eigenvalue $\theta_i$;
    \item $n_i := e_{i 1}k_{i 1} + \ldots + e_{i h_i}k_{i \, h_i}$ be the size of $L_{i}$;
    \item $X_i := L(\zeta_i,1)$ the rank degenerate matrix at the value $\zeta_i$ with $\theta_i(\zeta_i,1) = 0$;
    \item $Z_i\subseteq \mathbb P^2=\mathbb P(\bbC \oplus L)$ the $0$-dimensional scheme supported at $[0:X_i]$ defined by the system of maximal minors of the matrix $D_{n_i}([x_0 I_{n_i} | L_i(x_1,x_2)])$.
    \end{itemize}
    \end{notation}
        \vspace{0.5mm}
        \end{minipage}}
    \end{center}

\subsection{Linear series defining $\phi$}\label{subsec: linear series} 
 In this section, we prove several results regarding the linear series $D_n([x_0I | L(x_1,x_2)])$ of maximal minors of a pencil of matrices. In particular, we prove the following result, characterising the indeterminacy locus of the rational map $\phi$ of \eqref{eq: phi}.
 
\begin{theorem}\label{theorem: linear series of YL}
Let $L$ be a matrix pencil of size $n \times m$. Then, the system of maximal minors of $\hat{L} = [x_0 I_n | L(x_1,x_2)]$ is $x_0^{\eta_{\#}} I(Z)_{n-\eta_{\#}}$ where $Z = Z_1 \cup \ldots \cup Z_s$ is the union of the $0$-dimensional scheme defined by each regular component of $L$. In particular, the scheme $Z$ is the indeterminacy locus of the rational map $\phi $ parameterising $Y_L$. Moreover, the regularity of $Z$ is
    \[
    \reg(Z) = n_1 + \ldots + n_s -1  .
    \]
\end{theorem}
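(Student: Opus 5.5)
The proof reduces to the Kronecker blocks. Write $\hat L = [x_0 I_n \mid L(x_1,x_2)]$. After the $\GL(V)\times\GL(W)$ normalisation, $\hat L$ is block diagonal up to a column permutation, with blocks $\hat B = [x_0 I_{n_B} \mid B]$, one for each Kronecker block $B$ of size $n_B\times m_B$. A maximal minor of $\hat L$ picks $n$ columns. Any choice that selects a number of columns from block $B$ different from $n_B$ leaves some block with too few columns, so that minor vanishes; equivalently, the minor is zero unless the selection is balanced blockwise. Hence the ideal of maximal minors factors as a product:
\[
D_n(\hat L) = \prod_B D_{n_B}(\hat B).
\]
Both sides are degree-$n$ pieces of ideals generated in the stated degrees. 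The theorem then follows from computing each factor and then analysing the product of the regular factors.

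\textbf{Computing the factors.} I treat each block type in turn.
\begin{itemize}
\item \textbf{Left singular block} $S^{\rmleft}_p$. Here $\hat B = [x_0 I_p \mid S^{\rmleft}_p]$. I claim $D_p(\hat B)$ is all of $\bbC[x_0,x_1,x_2]_p$. The maximal minors of $S^{\rmleft}_p$ alone already give every monomial $x_1^a x_2^{p-a}$. Mixing in identity columns gives $x_0^{c}$ times the maximal minors of submatrices of $S^{\rmleft}_p$ obtained by deleting rows, and those minors produce all monomials in $x_1,x_2$. I will check the triangular structure explicitly to confirm this.
\item \textbf{Right singular block} $S^{\rmright}_p$, of size $(p+1)\times p$. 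Since there are only $p$ columns coming from $L$, every maximal minor must use at least one column of $x_0 I$. I expect $D_{p+1}(\hat B) = x_0\,\bbC[x_0,x_1,x_2]_p$, with the same argument as in the left case applied to $S^{\rmright}_p$ with one row deleted. Taking the product over the right blocks yields the factor $x_0^{\eta_\#}$.
\item \textbf{Regular blocks.} Grouping the regular blocks by eigenvalue and applying the same blockwise product argument, the contribution is $\prod_i D_{n_i}([x_0 I \mid L_i])$. By definition this equals $\prod_i I(Z_i)_{n_i}$, provided each $I(Z_i)$ is generated in degree $\le n_i$, which I verify below.
\end{itemize}

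\textbf{The key step: product equals intersection.} I must show
\[
\prod_i I(Z_i)_{n_i} = I(Z)_{\sum_i n_i}, \qquad Z = \bigcup_i Z_i .
\]
First, the minors of $[x_0 I \mid L_i]$ are in fact the ideal $I(Z_i)$ in all degrees $\ge n_i$. After the change of coordinates $\theta_i \mapsto x_1$, the matrix $[x_0 I \mid L_i]$ is $2\times$ bidegree-homogeneous in $x_0, x_1$ up to the nilpotent $x_2$ part. Localising at $x_2 = 1$, the ideal becomes the Fitting-type ideal $\prod_j (x_0, x_1^{k_{ij}})^{e_{ij}}$; the examples in the paper ($(x_0,x_1)(x_0,x_1^2)$) agree with this. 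This is a monomial ideal in $x_0, x_1$, so $Z_i$ is a curvilinear-type monomial scheme supported at $[0:X_i]$ on the line $x_0 = 0$. It has length $\sum_j$ (explicit), and is cut out in degree $n_i$.

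For the product, the supports of the $Z_i$ are distinct points of a line, so I argue by regularity. The ideal $J = \prod_i I(Z_i)$ agrees with $I(Z)$ locally at every point, so $J$ and $I(Z)$ have the same saturation. Hence it suffices to show that $J$ is saturated in degree $N = \sum_i n_i$, i.e.\ that $\dim J_N = \binom{N+2}{2} - \operatorname{length}(Z)$. I will prove this by induction on $s$, using the exact sequences for the union with one more $Z_i$ together with the fact that all $Z_i$ lie on the first-order neighbourhood structure of the line $x_0 = 0$.

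\textbf{Regularity.} The claim $\reg(Z) = \sum_i n_i - 1$ should follow from $Z$ meeting the line $x_0 = 0$ in a scheme of length $\sum_i \max_j k_{ij}$, together with a residual sequence with respect to that line, iterated; this is the Castelnuovo-type count. The upper bound is where the induction is used. The lower bound follows because a curvilinear sub-scheme of length $\sum_i n_i$ lies on a line, and such a subscheme forces $\reg \ge \sum_i n_i - 1$.

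\textbf{Indeterminacy locus.} Once the formula $x_0^{\eta_\#} I(Z)_{n-\eta_\#}$ is established, the base-locus statement is immediate: the common factor $x_0^{\eta_\#}$ is removed, and $I(Z)$ is generated in degree $n - \eta_\# \ge \reg(Z)+1$. Thus the linear system defines $\phi$ with base scheme exactly $Z$.

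\textbf{Main obstacle.} I expect the hard part to be the equality $\prod_i I(Z_i)_{n_i} = I(Z)_N$, that is, proving that the product ideal is saturated in degree $N$; the regularity count is the most delicate piece of that argument.
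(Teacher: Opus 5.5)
The gap is exactly at the step you call the main obstacle. Your reduction to Kronecker blocks and your treatment of the singular blocks agree with the paper, but the key equality is never proved.

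\textbf{The product-versus-intersection step.} Put $J=\prod_i I(Z_i)$ and $N=\sum_i n_i$. Since each $I(Z_i)$ is generated in degrees $\le n_i$, you have $\prod_i I(Z_i)_{n_i}=J_N$. What remains is that $J$ agrees with its saturation $J^{\sat}=\bigcap_i I(Z_i)=I(Z)$ in degree $N$, i.e.\ $H^0_{\frakm}(S/J)_N=0$. This is a statement about the non-saturated product ideal, and your plan never reaches it.
\begin{itemize}
\item A bound $\reg(Z)\le N-1$ only computes $\dim I(Z)_N$; it says nothing about $\dim J_N$.
\item An induction on $s$ via exact sequences for $Z_1\cup\dots\cup Z_t$ only involves saturated ideals.
\end{itemize}
The missing ingredient is a regularity bound for products of ideals. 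The paper first shows, by a standard-monomial count for $\prod_j(x_0,x_1^{k_{ij}})^{e_{ij}}$, that this ideal is saturated with $\reg I(Z_i)=n_i$. It then applies Sidman's theorem $\reg\bigl(I(Z_1)\cdots I(Z_s)\bigr)\le\sum_i\reg I(Z_i)=N$, which applies because the $Z_i$ are zero-dimensional with distinct supports. Hence $J_t=I(Z)_t$ for all $t\ge N$, and also $\reg(Z)\le N-1$. An induction on $s$ could only work if each step used the two-factor bound $\reg(I\cdot I')\le\reg I+\reg I'$ for ideals with disjoint zero loci. That bound is precisely the nontrivial input you are missing.

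\textbf{Incorrect supporting claims.}
\begin{itemize}
\item The $Z_i$ do not lie in the first-order neighbourhood of $H_\infty$: $x_0^b\in I(Z_i)$ if and only if $b\ge d_i$.
\item The $Z_i$ are not curvilinear when $d_i\ge 2$, since all generators then lie in $\frakm^2$.
\item The trace of $Z$ on $H_\infty$ has length $\sum_i n_i$, not $\sum_i\max_j k_{ij}$, because $I(Z_i)+(x_0)=(x_0,\theta_i^{n_i})$. Your lower-bound argument in fact uses the correct value.
\end{itemize}
With the trace length corrected, and noting that $I(Z_i):x_0$ is the same product with one copy of the largest block removed, your Castelnuovo-type residual induction does give $\reg(Z)\le N-1$. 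That is a legitimate alternative route for the regularity statement, but as explained it does not determine $J_N$.

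\textbf{The regular factor.} The identification of the regular factor should not rest on ``localising at $x_2=1$'' and comparison with examples. Instead:
\begin{itemize}
\item apply your block-product argument to each single Jordan block;
\item compute $D_p([x_0I_p\mid J_p(\theta)])=\sum_{\ell}x_0^{p-\ell}D_\ell(J_p(\theta))=(x_0,\theta^p)_p$;
\item observe that $\prod_j(x_0,\theta^{k_{ij}})^{e_{ij}}$ is saturated, because after a linear change of coordinates it is extended from $\bbC[x_0,x_1]$, so $x_2$ is a nonzerodivisor modulo it.
\end{itemize}
It is therefore $I(Z_i)$.
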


The proof of \Cref{theorem: linear series of YL} is based on two technical results which can be seen as a generalisation of \cite[Lemma 3.2]{GeKe:collineations} to matrices of the form $\hat{L}$. In particular, \Cref{lemma: systems of minors} gives an explicit characterisation of the system of minors in terms of certain monomial ideals, and \Cref{lemma: linear series single eigenvalue} shows that such monomial ideals coincide with the ideal of a $0$-dimensional scheme and computes its degree and regularity.
 
\begin{lemma}\label{lemma: systems of minors}  The following hold:
\begin{enumerate}[leftmargin=*]
\item Let $S^{\rmleft}_\bullet$ be a pencil of matrices of size $\epsilon_\bullet \times (\epsilon_\bullet + \epsilon_{\#})$ consisting only of left-singular blocks. Then 
\[
D_{\epsilon_\bullet} \left(\big[x_0I | S^{\rmleft}_\bullet]\right)  = \bbC[x_0,x_1,x_2]_{\epsilon_\bullet}
\]
is the complete linear series of degree $\epsilon_\bullet$ on $\bbP^2$;
\item Let $S^{\rmright}_\bullet$ be a pencil of matrices of size $(\eta_\bullet + \eta_{\#}) \times \eta_\bullet$ consisting only of right-singular blocks. Then 
\[
D_{\eta_\bullet + \eta_{\#}} \left(\big[x_0I | S^{\rmright}_\bullet]\right)  = x_0^{\eta_{\#}}\bbC[x_0,x_1,x_2]_{\eta_\bullet}
\]
is a multiple of the complete system of degree $\eta_\bullet$ on $\bbP^2$;
\item Let $k_1 < \ldots < k_h$ and $e_1 \vvirg e_h$ be positive integers with $n = \sum e_ik_i$. Let $\theta$ be a linear form. Let $L = J_{k_1}(\theta)^{\boxplus e_1} \boxplus \ldots \boxplus J_{k_h}(\theta)^{\boxplus e_h}$ be a regular pencil with unique eigenvalue $\theta$. Then 
\[
D_{n} \left(\big[x_0 I_{n}|L(x_1,x_2)\big]\right) = \bigl((x_0,\theta^{k_{1}})^{e_{1}} \ldots (x_0, \theta^{k_h})^{e_h} \bigr)_{n}.
\]
\end{enumerate}
\end{lemma}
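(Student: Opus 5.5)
The overall strategy is to reduce all three statements to the indecomposable blocks, using the Cauchy--Binet / block structure of maximal minors. If $\hat L = [x_0 I | L]$ and $L = L' \boxplus L''$, then after permuting columns $\hat L$ is block diagonal, with blocks $[x_0 I | L']$ and $[x_0 I | L'']$. A maximal minor of a block-diagonal matrix with full-row-rank blocks is nonzero only if it selects exactly as many columns from each block as that block has rows, and it then factors as the product of the two block minors. This gives
\[
D_{n'+n''}(\hat L) = D_{n'}([x_0I|L'])\cdot D_{n''}([x_0I|L'']),
\]
so it suffices to compute the system for one indecomposable block and then multiply. For (3) the product of the ideals $(x_0,\theta^{k_i})^{e_i}$, restricted to degree $n$, requires a further saturation-type argument, since in general $(I\cdot J)_{a+b} \supseteq I_a J_b$ is only an inclusion. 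I would handle this by noting that each factor is generated in degree at most the block size, so generators of the product live in degree at most $n$. After multiplying by the appropriate power of $x_0$, the degree-$n$ part of the product ideal is spanned by the products of the degree-$k$ parts of the factors. This uses that $(x_0,\theta^k)_k$ contains $x_0\cdot(\text{all forms of degree }k-1)$ and $\theta^k$, together with $x_0(x_0,\theta^{k'})_{k'-1}\subseteq (x_0,\theta^{k'})_{k'}$.

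For a single block $S^{\rmleft}_p$, the matrix $[x_0 I_p | S^{\rmleft}_p]$ is $p\times(2p+1)$. I would show that its maximal minors realise every monomial $x_0^a x_1^b x_2^c$ with $a+b+c=p$, up to sign. The combinatorial choice is: pick $a$ rows to be covered by identity columns; the remaining rows split into consecutive runs, and on each run the columns of $S^{\rmleft}$ left over form a bidiagonal minor equal to a monomial $x_1^{b'}x_2^{c'}$ with $b'+c'$ equal to the run length and any split allowed. I would check this by induction on $p$, expanding along the last row. The result is the complete linear series. Products of complete series are complete, which gives (1).

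For $S^{\rmright}_p$, the matrix $[x_0 I_{p+1}|S^{\rmright}_p]$ is $(p+1)\times(2p+1)$. Only $p$ columns come from $S^{\rmright}$, so every maximal minor uses at least one identity column and is divisible by $x_0$. Deleting one identity column $e_j$ and using the remaining $p$ identity columns and $p$ pencil columns appropriately, I would show by the same expansion that the minors are exactly $x_0\cdot(\text{all monomials of degree } p)$. This gives (2) after multiplying over the blocks.

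For the Jordan block $J_k(\theta)$, I would change coordinates so that $\theta = x_1$; this is allowed because $\GL(A)$ acts and $x_0$ is untouched. Up to a unit change of basis, $J_k(\theta)$ is then $\theta I + x_2 N$ with $N$ nilpotent. I would show $D_k([x_0I|J_k]) = (x_0,\theta^k)_k$ by exhibiting the minors: the full pencil minor gives $\theta^k$, and mixed minors give $x_0^a\theta^b x_2^c$ for all $a\ge1$. The inclusion $\subseteq$ holds because any minor with no identity column is $\det J_k=\theta^k$. The inclusion $\supseteq$ needs every monomial $x_0^a\theta^bx_2^c$ with $a\ge1$, and I expect this to be the main obstacle. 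I would try to obtain it by choosing identity columns to delete rows so that the surviving pencil submatrix is upper bidiagonal with a prescribed number of $\theta$'s and $x_2$'s on its diagonal, as in the singular case. I would verify the construction by an explicit induction on $k$.
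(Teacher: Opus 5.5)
Your proposal is correct and follows essentially the paper's argument: you factor the maximal minors over the Kronecker blocks, then expand along the identity columns, $D_n(\hat M)=\sum_{\ell} x_0^{n-\ell}D_\ell(M)$, and use that the $\ell$-minors of each indecomposable block span all binary forms of degree $\ell$ (for all $\ell\le p$ in the singular cases, and for $\ell<k$ in the Jordan case, where the full minor is $\theta^k$). The step you expect to be the main obstacle is immediate, because the first $\ell<k$ rows of $J_k(\theta)$ are exactly $S^{\rmleft}_\ell(\theta,x_2)$ padded with zero columns; your explicit check that $\prod_j (x_0,\theta^{\tilde k_j})_{\tilde k_j}$ is the whole degree-$n$ part of the product ideal, using that each factor is generated in degree at most $\tilde k_j$, supplies a detail that the paper's proof of this lemma leaves implicit.
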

\begin{proof}
   Let $\hat{M} = [x_0 I | M(x_1,x_2)]$, where $M$ is one of the three pencils of the statement. In all three cases, after possibly reordering the columns $\hat{M}$, we may assume that $\hat{M}$ is block diagonal with blocks of the form $[x_0 I | M'(x_1,x_2) ]$ with $M'$ ranging over the blocks of $M$ in its Kronecker decomposition. The ideal of maximal minors of a block diagonal matrix is the product of the ideals of maximal minors of its blocks. Therefore it suffices to prove the statement for indecomposable blocks. Consider the identity
    \begin{equation}\label{eqn: split minors}
   D_{n} ( \hat{M} ) = \textstyle \sum_{\ell = 0}^{n} x_0^{n-\ell} D_\ell(M).
    \end{equation}
    \begin{enumerate}[leftmargin=*]
    \item If $M = S^{\rmleft}_p$, then $\hat{M}$ is a $p \times (2p+1)$ matrix. For every $q = 0 \vvirg p$, the minors of size $q$ of $S^{\rmleft}_p$ define the complete system of degree $q$ in the variables $x_1,x_2$. By \eqref{eqn: split minors}, we conclude that $D_p( \hat{M}) = \bbC[x_0,x_1,x_2]_p$. 
    \item If $M = S^{\rmright}_p$, then $\hat{M}$ is a $(p+1) \times (2p+1)$ matrix. For every $q = 0 \vvirg p$, the minors of size $q$ of $S^{\rmright}_p$ define the complete system of degree $q$ in the variables $x_1,x_2$. As a result, the last term in the summation in \eqref{eqn: split minors} is zero, and we obtain $D_{p+1}( \hat{M}) = x_0 \bbC[x_0,x_1,x_2]_p$.
    \item If $M = J_p(\theta)$, then $\hat{M}$ is a $p \times 2p$ matrix. For every $q < p$, we have $D_q(J_p(\theta)) = \bbC[x_1,x_2]_q$, whereas $D_p(J_p(\theta)) = det(J_p(\theta)) = \theta^p$. Therefore 
    \[
    D_p(\hat{L}) = \textstyle \sum_{\ell = 0}^{p-1} x_0^{p-\ell} \bbC[x_1,x_2]_\ell + \langle \theta^p \rangle = (x_0,\theta^p)_p. \qedhere
    \]
\end{enumerate}
\end{proof}

\begin{lemma}\label{lemma: linear series single eigenvalue}
    Let $\theta \in \bbC[x_1,x_2]$ be a linear form. Let $\tilde{k}_1 \leq \ldots \leq \tilde{k}_d$ be positive integers. Define the ideal
    \[
   J := (x_0,\theta^{\tilde{k}_1}) \cdots (x_0,\theta^{\tilde{k}_d}) \subseteq \bbC[x_0,x_1,x_2].
    \]
    Then $J$ is saturated, and it defines a local $0$-dimensional scheme $Z \subseteq \bbP^2$ satisfying 
    \[
   \deg Z = \textstyle \sum_{j=1}^{d} (d-j+1)\tilde{k}_i, \qquad \reg(Z) = \tilde{k}_1 + \ldots + \tilde{k}_d -1.
    \]
\end{lemma}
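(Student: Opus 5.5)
We work in coordinates adapted to the support of $Z$. Since $\theta\in\bbC[x_1,x_2]_1$ is nonzero, we may choose a linear form $y$ such that $x_0,\theta,y$ is a basis of $\bbC[x_0,x_1,x_2]_1$. Write $a=x_0$ and $b=\theta$, so that $S=\bbC[x_0,x_1,x_2]=\bbC[a,b,y]$. Each factor $(x_0,\theta^{\tilde k_j})$ is generated by monomials in $a,b$. Hence $J=J'S$, where
\[
J'=(a,b^{\tilde k_1})\cdots(a,b^{\tilde k_d})\subseteq R':=\bbC[a,b]
\]
is a monomial ideal. The plan is to reduce every claim to the combinatorics of this monomial ideal in two variables, using that $S=R'[y]$ is flat over $R'$, so that $S/J\cong (R'/J')[y]$.

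\emph{Step 1: the staircase of $J'$.} Put $K_r=\tilde k_1+\cdots+\tilde k_r$, with $K_0=0$. Expanding the product, the generators of $J'$ are the monomials $a^{d-|T|}b^{\sum_{j\in T}\tilde k_j}$ for $T\subseteq[d]$. Because the $\tilde k_j$ are sorted increasingly, for fixed $|T|=r$ the smallest $b$-exponent is $K_r$. Therefore
\[
a^ib^c\in J' \iff \text{either } i\ge d, \text{ or } i=d-r \text{ with } 1\le r\le d \text{ and } c\ge K_r .
\]
In particular $a^d\in J'$ and $b^{K_d}\in J'$. So $J'$ is $(a,b)$-primary of finite colength, and $R'/J'$ has the monomial basis
\[
\{a^{d-r}b^c : 1\le r\le d,\ 0\le c<K_r\}.
\]
I expect this step to be the main point needing care. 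One must check that no mixed choice of $T$ beats the greedy choice of the $r$ smallest $\tilde k_j$. This holds because any $T$ with $|T|=r$ satisfies $\sum_{j\in T}\tilde k_j\ge K_r$ termwise after sorting.

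\emph{Step 2: saturation and support.} By flatness, the associated primes of $S/J=(R'/J')\otimes_{R'}S$ are the extensions of those of $R'/J'$. The only associated prime of $R'/J'$ is $(a,b)$, which extends to $(a,b)S=(x_0,\theta)$. This prime is not the irrelevant ideal. Hence $J$ is saturated, and $Z=V(J)$ is a $0$-dimensional scheme supported at the single point $\{x_0=\theta=0\}$.

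\emph{Step 3: degree.} The degree of $Z$ is $\dim_\bbC R'/J'$. This is the size of the basis in Step 1, namely
\[
\sum_{r=1}^d K_r=\sum_{j=1}^d (d-j+1)\tilde k_j .
\]

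\emph{Step 4: regularity.} Since $S/J\cong(R'/J')[y]$, the Hilbert function of $Z$ is
\[
\rmHF_Z(t)=\sum_{u\le t}\dim (R'/J')_u .
\]
It reaches $\deg Z$ exactly at the top degree $\tau$ of a nonzero graded piece of $R'/J'$. For a $0$-dimensional scheme, $\reg(Z)$ equals the least $t$ with $\rmHF_Z(t)=\deg Z$, equivalently $\reg(J)-1$. The monomials of maximal degree in the basis have degree $\max_r\,(d-r+K_r-1)$. This function increases in $r$, because going from $r-1$ to $r$ changes it by $\tilde k_r-1\ge0$. So the maximum is attained at $r=d$, giving $\tau=K_d-1$ and therefore $\reg(Z)=\tilde k_1+\cdots+\tilde k_d-1$.
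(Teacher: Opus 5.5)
Your proof is correct and follows essentially the paper's route: you reduce to the monomial ideal in $x_0,\theta$, read off the staircase of standard monomials $a^{d-r}b^c$ with $c<K_r$, and obtain the degree and regularity from the Hilbert function (the paper cites Iarrobino--Kanev for the regularity criterion). The differences are minor: you prove saturation via the associated primes of the flat extension $R'\to R'[y]$ rather than the paper's direct monomial argument with $x_2^N$, and you get the regularity exactly as the top degree of $R'/J'$, whereas the paper combines an upper bound with the lower bound coming from the minimal generator $\theta^{\tilde k_1+\cdots+\tilde k_d}$.
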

\begin{proof}
Assume without loss of generality $\theta = x_1$, so that $J$ is a monomial ideal. Let $n = \tilde{k}_1 + \ldots + \tilde{k}_d$. Let $J^\sat$ be the saturation of $J$; we show $J = J^\sat$. Let $x_0^\alpha x_1^\beta x_2^\gamma \in J^\sat$. Then, for a large enough integer $N$, we have $x_0^\alpha x_1^\beta x_2^\gamma \cdot x_2^N \in J$, therefore $\beta \geq \sum_{j=1}^r \tilde{k}_{i}$ and $\alpha \geq d-r$ for a certain $r$; this implies $x_0^\alpha x_1^\beta x_2^\gamma \in J$. Therefore $J$ is saturated.

Let $Z$ be the scheme defined by $J$. Then $Z$ is $0$-dimensional and supported at $[0,0,1] \in \bbP^2$. First observe $x_1^{n} \in J$ is a minimal generator: indeed, it cannot be generated by any elements of lower degree since any other monomial generator of $J$ is a multiple of $x_0$. This shows  $\reg(Z) \geq n -1$.

Let $\rmHF_{Z}(t)=\codim J_t$ be the Hilbert function of $Z$. We complete the proof by showing  
\begin{equation}\label{eqn: HFZ}
\rmHF_{Z} (n-1) = \rmHF_{Z}(n) =  \textstyle \sum_{j=1}^{d} (d-j+1)\tilde{k}_{j} .
\end{equation}
By \cite[Thm. 1.69]{IarrKan:PowerSumsBook}, this implies $\reg(Z) = n-1$ and $\deg(Z) = \rmHF_{Z}(n)$.

Consider the ideal $J' = J \cap \bbC[x_0,x_1]$: by construction, $J'$ is the ideal of the scheme $Z$ regarded as an affine scheme on the chart $\{ x_2 \neq 0\}$ of $\bbP^2$. In particular, $\rmHF_{Z}(t) = \sum_{t'=0}^t \rmHF_{\bbC[x_0,x_1]/J'}(t')$ and it coincides with the number of standard monomials of $J'$ of degree at most $t$. Therefore \eqref{eqn: HFZ} is equivalent to the fact that $J'$ has $\sum _1^d \tilde{k}_{j}$ standard monomials, all of degree at most $n-1$. 

For $a \geq d$, the monomial $x_0^{a} x_1^{b}$ lies in $J'$. For $a < d$, then $x_0^ax_1^b \notin J'$ if and only if $b < \sum_{j=1}^{d-a} \tilde{k}_{j}$. Therefore there are $\tilde{k}_{1} + \ldots + \tilde{k}_{ d-a}$ standard monomials of the form $x_0^{a} x_1^{b}$. In particular, all standard monomials for $J'$ have degree at most $n-1$, showing $\rmHF_{\bbC[x_0,x_1]/J'}(t') = 0$ if $t \geq n$ and therefore $\reg(Z) = n-1$. Adding up the number of standard monomials for every $a$, we obtain the statement for the degree.
\end{proof}

\begin{proof}[Proof of \Cref{theorem: linear series of YL}]
As in the proof of \Cref{lemma: systems of minors}, write $\hat{L} = [x_0 I | L(x_1,x_2)]$ and, after permuting the columns of $\hat{L}$, suppose that $\hat{L}$ has a block diagonal structure. We obtain 
\[
D_n(\hat{L}) = D_{\epsilon_\bullet}([x_0I | S^{\rmleft}_\bullet ]) \cdot D_{(\eta_\bullet + \eta_{\#})}([x_0I | S^{\rmright}_\bullet ]) \cdot \prod_{1}^s D_{n_i}([x_0I_{n_i}|L_{i}]).
\]
\Cref{lemma: systems of minors} provides the description of each factor in the multiplication above. Let $\tilde{k}_1 \leq \cdots \leq \tilde{k}_{d}$ be the repeated sizes of the Jordan blocks, so that the size $k_j$ occurs $e_j$ times. We apply \Cref{lemma: linear series single eigenvalue} and obtain that the factors arising from the regular blocks coincide with $I(Z_i)_{n_i}$. Therefore, we obtain 
\begin{align*}
D_n(\hat{L}) &=  x_0^{\eta_{\#}} \cdot \bbC[x_0,x_1,x_2]_{\epsilon_\bullet + \eta_\bullet} \cdot I(Z_1)_{n_1} \cdots I(Z_s)_{n_s}  = \left( x_0^{\eta_{\#}} \cdot I(Z_1)_{n_1} \cdots I(Z_s)_{n_s} \right)_n.
\end{align*}
We claim that
\begin{equation}\label{eqn: product graded pieces}
I(Z_1)_{n_1} \cdots I(Z_s)_{n_s} = (I(Z_1) \cdots I(Z_s))_{n_1 + \ldots + n_s}.
\end{equation}
The containment $\subseteq$ is clear. To prove the other containment recall $\reg(Z_i) = n_i-1$ and write 
\[
(I(Z_1) \cdots I(Z_s))_{n_1 + \ldots + n_s} = 
\sum_{\substack{p_1 \vvirg p_s \\ 
    p_1 + \ldots + p_s = n_1 + \ldots + n_s}}
    I(Z_1)_{p_1} \cdots I(Z_s)_{p_s}.
\]
Fix $(p_1 \vvirg p_s)$ such that $p_1 + \ldots + p_s = n_1 + \ldots + n_s$. Let $\bfp_{\geq} = \{ i \in [s] : p_i \geq  n_i \}$ and let $\bfp_{<}$ be its complement in $[s]$. If $i\in \bfp_{\geq}$ then $I(Z)_{p_i} = I(Z_i)_{n_i} \cdot \CC[x_0,x_1,x_2]_{p_i-n_i}$ because $I(Z_i)$ is generated in degree at most $n_i$ by \Cref{lemma: linear series single eigenvalue}. On the other hand, if $i \in \bfp_{<}$, then $I((Z_i)_{p_i})_{n_i} \subseteq I(Z_i)_{n_i}$. We obtain 
\[
I(Z_1)_{p_1} \cdots I(Z_s)_{p_s} \subseteq \prod_{i \in \bfp_{\geq}} I(Z_i)_{n_i} \cdot \prod_{i \in \bfp_\geq} \CC[x_0,x_1,x_2]_{p_i-n_i} \cdot \prod_{i \in \bfp_{<}} I(Z_i)_{p_i} \subseteq I(Z_i)_{n_1} \cdots I(Z_s)_{n_s}.
\]
This shows the inclusion $\supseteq$ in \eqref{eqn: product graded pieces}.

Now, since $Z_1 \vvirg Z_s$ are supported at distinct points, from \cite[Theorem 1.8]{Sid02} we have
\begin{equation}\label{eqn: reg product of ideals}
\reg(I(Z_1)\cdots I(Z_s))\leq \sum_{i=1}^s \reg(I(Z_i)) =n_1 + \ldots + n_s.
\end{equation}
Since ideals agree with their saturation in degree larger than their regularity, and using again that $Z_1 \vvirg Z_s$ are supported at distinct points, setting $n_\bullet = n_1 + \ldots + n_s$, we have
\[
I(Z_1 \cup \ldots \cup Z_s)_{n_\bullet} =  \big( (I(Z_1) \cdots I(Z_s))^{\mathrm{sat}} \big)_{n_\bullet} = \big( I(Z_1) \cdots I(Z_s) \big)_{n_\bullet} = I(Z_1)_{n_1} \cdots I(Z_s)_{n_s}. 
 \]
This completes the proof that $D_n([x_0I_n|L]) = x_0^{\eta_{\#}} \cdot I(Z_1\cup \ldots \cup Z_s)_{n-\eta_{\#}}$. 

Finally, the degree is additive on unions of $0$-dimensional schemes with disjoint supports, which gives the result about $\deg(Z_1\cup \ldots \cup Z_s)$. The bound $\reg(Z_1 \cup \ldots \cup Z_s) \leq n_1 + \ldots n_s-1$ follows from the standard relation $\reg(I) = \reg(S/I) +1$ where $S$ denotes the polynomial ring. Moreover, the bound is sharp because $I(Z_1 \cup \ldots \cup Z_s)$ has a minimal generator of degree $n_1 + \ldots + n_s$.
\end{proof}

As an immediate corollary of \Cref{theorem: linear series of YL}, we obtain:
\begin{corollary}\label{rmk:YL does not see singular invariants}
Let $L,L'$ be two pencils of the same size and having the same regular component. Then $\YL$ and $Y_{L'}$ are projectively equivalent. In particular, the sizes of the individual singular blocks cannot be recovered from the geometry of $\YL$.
\end{corollary}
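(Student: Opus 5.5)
The plan is to read the statement off \Cref{theorem: linear series of YL}. That theorem says the map $\phi$ parameterising $\YL$ is given by the linear series
\[
D_n(\hat{L}) = x_0^{\eta_{\#}}\, I(Z)_{n-\eta_{\#}},
\qquad Z = Z_1\cup\cdots\cup Z_s .
\]
Here the scheme $Z$ is built only from the regular component $L^{\mathrm{reg}}_\bullet$, via the blocks $L_i$ and their eigenvalues $\theta_i$. The factor $x_0^{\eta_{\#}}$ is common to all minors, so it can be removed without changing the rational map. Hence $\phi$ is the rational map associated with the linear system $|I(Z)_{d}|$, where $d = n-\eta_{\#} = n_1+\cdots+n_s+\epsilon_\bullet+\eta_\bullet$.

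Next I would make precise how a linear system determines the embedded variety up to projective equivalence. The Pl\"ucker coordinates of $\phi$ are the maximal minors of $\hat{L}$, so they form a spanning set of the vector space $x_0^{\eta_{\#}}I(Z)_d$. Choose a basis $f_1,\dots,f_N$ of $I(Z)_d$. The linear span $\langle \YL\rangle \subseteq \bbP\Wedge^n(V\oplus W)$ is then identified with $\bbP(I(Z)_d^{*})$, and under this identification $\YL$ is the closure of the image of $[f_1:\cdots:f_N]$. For $L'$ with the same regular component we get the same $Z$ (same $\zeta_i$, same Jordan sizes), hence the same vector space $I(Z)_d$ provided the degree $d$ agrees. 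The two identifications of the spans with $\bbP(I(Z)_d^*)$ then give a linear isomorphism $\langle Y_L\rangle \to \langle Y_{L'}\rangle$ carrying $Y_L$ onto $Y_{L'}$. Extending it to an automorphism of the ambient Pl\"ucker space yields the projective equivalence.

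For the second sentence of the statement, I would exhibit two pencils of the same size with the same regular part but different individual singular block sizes. The previous paragraph then gives them projectively equivalent Schubert surfaces, so those sizes are invisible in the geometry of $\YL$.

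The main obstacle is the equality of the degree $d = n-\eta_{\#}$ for $L$ and $L'$. Since $n$ and $n_1+\cdots+n_s$ are fixed, this amounts to $\epsilon_\bullet+\eta_\bullet$ agreeing. From the size constraints $n_0 = \epsilon_\bullet+\eta_\bullet+\eta_{\#}$ and $m_0 = \epsilon_\bullet+\epsilon_{\#}+\eta_\bullet$ alone this is not automatic, so a concrete check is needed. If the trivial blocks $S^{\rmleft}_0$ and $S^{\rmright}_0$ are admitted, one can vary $\eta_{\#}$ with $n_0,m_0$ fixed. For example, $S^{\rmleft}_1\boxplus S^{\rmright}_1$ and $(S^{\rmleft}_0)^{\boxplus 3}\boxplus(S^{\rmright}_0)^{\boxplus 3}$ are both $3\times 3$ with $n_0=3$, but they have $\epsilon_\bullet+\eta_\bullet$ equal to $2$ and $0$ respectively. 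The resulting embeddings then have different degrees.

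I would therefore first try to show that the paper's conventions exclude this, for instance if trivial blocks are not allowed. Failing that, I would add the hypothesis that $\eta_{\#}$ (equivalently $\epsilon_\bullet+\eta_\bullet$) is the same for $L$ and $L'$. Under that hypothesis, the argument above shows that only the individual sizes $\epsilon_i,\eta_j$ are forgotten, which still suffices for the ``in particular'' claim. As a sanity check, when $Z=\emptyset$ the system is the complete series of degree $d$, so $\YL$ is the Veronese surface $\nu_d(\bbP^2)$, which depends only on $d$.
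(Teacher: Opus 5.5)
Your route is the paper's. There the corollary is recorded as an immediate consequence of \Cref{theorem: linear series of YL}: the minors span $x_0^{\eta_{\#}}I(Z)_{n-\eta_{\#}}$, with $Z$ depending only on the regular component. Identifying the linear span with $\bbP(I(Z)_d^{*})$ is a correct way to make ``same linear series, hence projectively equivalent images'' precise.

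Your worry about $d=n-\eta_{\#}$ is also correct. It is a gap in the paper's one-line derivation, not in your write-up: the statement as written is false. Two points in how you handle it need fixing.

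First, your example is not valid. $(S^{\rmleft}_0)^{\boxplus 3}\boxplus(S^{\rmright}_0)^{\boxplus 3}$ is the zero $3\times 3$ matrix, so it is not a pencil.

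Second, and more seriously, your first fallback cannot work. Excluding trivial blocks does not help, because the failure already occurs with nontrivial blocks only. Take $L=S^{\rmleft}_4$ and $L'=S^{\rmleft}_1\boxplus S^{\rmleft}_1\boxplus S^{\rmright}_1$. Both are $4\times 5$ with empty regular component, but $\eta_{\#}=0$ and $\eta'_{\#}=1$. By \Cref{cor: image of phi totally singular} their Schubert surfaces are $\nu_4(\bbP^2)$ and $\nu_3(\bbP^2)$. These have degrees $16$ and $9$ and linear spans $\bbP^{14}$ and $\bbP^{9}$, so they are not projectively equivalent. A square instance is $S^{\rmleft}_2\boxplus S^{\rmright}_3$ versus $(S^{\rmleft}_1)^{\boxplus 2}\boxplus(S^{\rmright}_1)^{\boxplus 2}$, both $6\times 6$, giving $\nu_5(\bbP^2)$ versus $\nu_4(\bbP^2)$. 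In general, \Cref{theorem: degree of YL} already exhibits the dependence of $\deg \YL$ on $\eta_{\#}$.

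So the extra hypothesis $\eta_{\#}=\eta'_{\#}$ is necessary, not a convenience. Given the size and the regular part, it is equivalent to $\epsilon_{\#}=\epsilon'_{\#}$, or to $\epsilon_\bullet+\eta_\bullet=\epsilon'_\bullet+\eta'_\bullet$. Under it, your argument is a complete proof.

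The ``in particular'' clause survives in this corrected form. For instance, $S^{\rmleft}_1\boxplus S^{\rmright}_3$ and $S^{\rmleft}_2\boxplus S^{\rmright}_2$ are both $5\times 5$ with $\eta_{\#}=1$, and both have Schubert surface $\nu_4(\bbP^2)$. Hence the individual singular block sizes are not visible up to projective equivalence.
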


\begin{corollary}\label{cor: image of phi totally singular}
Let $S^{\rmleft}_\bullet$ and $S^{\rmright}_\bullet$  be pencils consisting only of left or right singular blocks, with singular invariants $\bfepsilon$ and $\bfeta$, respectively. Let $L = S^{\rmleft}_\bullet \boxplus S^{\rmright}_\bullet$. Then the rational map $\phi$ is regular and $\YL$ is a Veronese surface of degree $\epsilon_\bullet + \eta_\bullet$.
\end{corollary}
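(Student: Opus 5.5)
The plan is to read the statement directly off \Cref{theorem: linear series of YL}, specialised to a pencil with no regular component. Since $L = S^{\rmleft}_\bullet \boxplus S^{\rmright}_\bullet$ has no Jordan blocks, $s=0$ and $Z=\emptyset$, so $I(Z)$ is the whole polynomial ring. Then $n = \epsilon_\bullet + \eta_\bullet + \eta_{\#}$, and the system of maximal minors of $\hat L = [x_0 I_n | L(x_1,x_2)]$ is
\[
D_n(\hat L) = x_0^{\eta_{\#}}\,\bbC[x_0,x_1,x_2]_{\epsilon_\bullet+\eta_\bullet}.
\]
The same identity also follows at once from parts (1) and (2) of \Cref{lemma: systems of minors} together with the multiplicativity of ideals of maximal minors for block diagonal matrices.

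Next, I remove the fixed component $x_0^{\eta_{\#}}$. Its vanishing locus is a divisor, not a base point, so dividing every Pl\"ucker coordinate of $\phi$ by $x_0^{\eta_{\#}}$ leaves the rational map $\phi$ unchanged. After this division, $\phi$ is given by a basis of the complete linear series $|\calO_{\bbP^2}(\epsilon_\bullet+\eta_\bullet)|$ composed with a linear map to $\bbP\Wedge^n(V\oplus W)$. This series has no base points, so $\phi$ extends to a morphism on all of $\bbP^2$, and in this sense $\phi$ is regular.

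It remains to identify the image. The Pl\"ucker coordinates span exactly $x_0^{\eta_{\#}}\bbC[x_0,x_1,x_2]_{\epsilon_\bullet+\eta_\bullet}$, so the linear map from $\bbC[x_0,x_1,x_2]_{\epsilon_\bullet+\eta_\bullet}^*$ into the Pl\"ucker space is injective on the span of the image. Hence $\phi$ is the Veronese embedding $\nu_{\epsilon_\bullet+\eta_\bullet}$ of $\bbP^2$ followed by an injective linear map, and that linear map is an isomorphism onto its image. Since $\bbP^2$ is proper, the image of $\phi$ is closed and equals $Y_L = \overline{\im\phi}$. Therefore $Y_L$ is projectively equivalent to the Veronese surface $\nu_{\epsilon_\bullet+\eta_\bullet}(\bbP^2)$, which has degree $(\epsilon_\bullet+\eta_\bullet)^2$. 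Here I read the statement's ``Veronese surface of degree $\epsilon_\bullet+\eta_\bullet$'' as the $(\epsilon_\bullet+\eta_\bullet)$-th Veronese.

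The only delicate step is the degenerate case $\epsilon_\bullet+\eta_\bullet=0$. This can occur only if $L$ consists of blocks $S^{\rmright}_0$, which have size $1\times 0$. In that case the pencil is zero and does not give a two-dimensional $L$, so this case is excluded by hypothesis. Apart from this, the argument is a direct bookkeeping consequence of the earlier results, and I do not expect a real obstacle.
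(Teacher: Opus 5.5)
Your argument is correct and is essentially the paper's proof. You specialise \Cref{theorem: linear series of YL} to $Z=\emptyset$, discard the fixed factor $x_0^{\eta_{\#}}$, and identify the complete series $\bbC[x_0,x_1,x_2]_{\epsilon_\bullet+\eta_\bullet}$ as a Veronese embedding; your extra remarks on injectivity of the linear map and closedness of the image only spell out routine details. One small slip in the degenerate case: $\epsilon_\bullet+\eta_\bullet=0$ also allows blocks $S^{\rmleft}_0$ of size $0\times 1$, but these are zero blocks too, so your conclusion that the pencil would be zero still holds.
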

\begin{proof}
\Cref{theorem: linear series of YL} guarantees that $\phi$ is regular and given by the system $x_0^{\eta_{\#}} \cdot \bbC[x_0,x_1,x_2]_{\epsilon_\bullet + \eta_\bullet}$. Therefore, the image is parameterised by the system $\bbC[x_0,x_1,x_2]_{\epsilon_\bullet + \eta_\bullet}$, which defines a Veronese surface of degree ${\epsilon_\bullet + \eta_\bullet}$.
\end{proof}

\subsection{The restriction of $\phi$ to the line at infinity}\label{subsec: restriction of phi to line at infinity} 
In this section we describe the image of the restriction of the rational map $\phi:\mathbb P^2 \dashto \YL$ to the line at infinity
\[ 
H_\infty:= \{x_0=0\} \subset \mathbb P^2=\mathbb P(\mathbb C \oplus L) .
 \]
 Since this is a line, the restriction $\phi|_{H_\infty}$ is a regular map, and its image is either a point or a rational curve in $\YL$ whose degree is computed as follows.
 
\begin{proposition}\label{prop: image of phi} 
The following holds:
\begin{enumerate}[leftmargin=*]
\item\label{prop: image of phi regular} 
If $L$ is a regular pencil, then the image of $\phi|_{H_\infty}$ is the point $[W]\in \Gr(n,V\oplus W)$. In particular, $[W] \in \YL$.
\item\label{prop: image of phi mixed}
If $L$ is a singular pencil with left and right singular invariants $\bfepsilon$ and $\bfeta$ respectively. Then the image of $\phi|_{H_\infty}: H_\infty \to \YL$ in the Pl\"ucker embedding of $\Gr(n,V \oplus W)$ is a rational normal curve $C_\infty$ of degree $\epsilon_\bullet + \eta_\bullet$.
\end{enumerate}
\end{proposition}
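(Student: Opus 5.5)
We compute the restriction of $\phi$ to $H_\infty$ directly from the description of the linear series in \Cref{theorem: linear series of YL}. The map $\phi$ is given by the maximal minors of $\hat L=[x_0 I_n\,|\,L(x_1,x_2)]$. The base locus is $x_0^{\eta_\#}$ times $I(Z)$, so we first remove the fixed factor $x_0^{\eta_\#}$. Restricting to $x_0=0$ then requires some care, because the common factor must be removed before restricting. The cleaner route is geometric: a point $[0:X]$ of $H_\infty$ with $X$ of generic rank is the limit as $t\to 0$ of $\Gamma_{X/t}$. We identify this limit with the row span of a suitable modification of $[tI\,|\,X]$, and we work block by block using the Kronecker decomposition together with \Cref{prop: grassmannian direct sum}.

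\textbf{Regular case.} For a regular $L$, a general $X\in L$ is invertible. Then $\Gamma_{X/t}$ is the row span of $[tI\,|\,X]$, equivalently of $[tX^{-1}\,|\,I]$, which tends to $[0\,|\,I]$, that is, to $[W]$. In Plücker terms, the coordinate $\det L(x_1,x_2)$ is the only minor not divisible by $x_0$. Since $\eta_\#=0$ there is no fixed factor, so on $H_\infty$ all coordinates except $\det L$ vanish. As $\det L\not\equiv 0$, $\phi|_{H_\infty}$ is the constant map to $[W]$ wherever it is defined. The map is regular on the line, so its image is $[W]$; this image lies in $Y_L$ because $Y_L$ is closed.

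\textbf{Singular case.} We reduce to blocks. By the block diagonal structure, $\phi$ factors through $\Delta$ of \eqref{eqn: direct sum grassmannian}, applied to the Schubert varieties of the block components $S^{\rmleft}_\bullet$, $S^{\rmright}_\bullet$ and $L^{\reg}$. Restricted to $H_\infty$ the map factors blockwise:
\begin{itemize}
\item the regular part is constant, equal to $[W^{\reg}]$, as shown above;
\item for $S^{\rmleft}_p$, the minors of $[0\,|\,S^{\rmleft}_p]$ restrict to the complete system of degree $p$ in $x_1,x_2$, giving a rational normal curve of degree $p$;
\item for $S^{\rmright}_p$, we first divide by $x_0$ and then restrict. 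The minors with exactly one column of $x_0I$ are the $p\times p$ minors of $S^{\rmright}_p$, which again form the complete system of degree $p$, giving a rational normal curve of degree $p$.
\end{itemize}
Each block map is defined by a linear series without base points on $H_\infty$. Hence, by \Cref{cor: sum of rnc} (a constant factor contributes degree $0$), the diagonal composite is a rational normal curve of degree $\sum\epsilon_i+\sum\eta_j=\epsilon_\bullet+\eta_\bullet$.

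\textbf{Main obstacle.} The delicate step is justifying that the blockwise restriction of $\phi$ to $H_\infty$ agrees with the restriction of the global map once the fixed component $x_0^{\eta_\#}$ has been removed. Concretely, the product of the block linear series restricted to $x_0=0$ must equal the restriction of the global series. We plan to argue this from the factorisation $D_n(\hat L)=\prod D(\text{blocks})$ used in the proof of \Cref{theorem: linear series of YL}. After removing $x_0^{\eta_\#}$, each factor restricts to $H_\infty$ without base points. The Segre map $\Delta$ is compatible with these products, so the composite is exactly $\Delta\circ(\gamma_1\times\cdots\times\gamma_r)$ on the diagonal $\bbP^1$.
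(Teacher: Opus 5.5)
Your proof is correct, but it goes through Plücker coordinates, which the paper's proof never uses.

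\emph{The paper's route.} It first proves \Cref{prop: im plus ker}, namely $\phi([0:X])=[\ker X\oplus\im X]$ for $X$ of generic rank. This comes from a limit computation along $t\mapsto[t:X]$, much like your regular case. The paper then works block by block. For left singular blocks it identifies the kernel as an explicit rational normal curve, via the kernel vector $v_0-tv_1+\cdots$. For right singular blocks it does the same with the image. Jordan blocks are bijective and contribute a constant factor. Finally it adds the degrees with \Cref{cor: sum of rnc}.

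\emph{Your route.} You restrict the maximal minors of $[x_0I\,|\,L]$ to $x_0=0$, which amounts to specialising \Cref{lemma: systems of minors} to the line at infinity. This is purely algebraic and exhibits the linear series of $\phi|_{H_\infty}$ directly. The paper's route instead describes $C_\infty$ pointwise by the subspaces $\ker X\oplus\im X$, and that description is reused later, for the eigenvalue configuration and in the higher-dimensional examples.

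\emph{The step you flag as the main obstacle is routine.} Up to sign, a maximal minor of a block-diagonal matrix is either zero or a product of maximal minors of the blocks. That is exactly the Segre map $\Delta$ in Plücker coordinates, and restriction to $x_0=0$ is a ring homomorphism.

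\emph{You can bypass the blocks entirely.} By the factorisation in the proof of \Cref{theorem: linear series of YL}, after dividing by $x_0^{\eta_{\#}}$ the series restricts on $H_\infty$ to $\theta_1^{n_1}\cdots\theta_s^{n_s}\cdot\bbC[x_1,x_2]_{\epsilon_\bullet+\eta_\bullet}$, because each $I(Z_i)_{n_i}$ restricts to $\langle\theta_i^{n_i}\rangle$. Removing the fixed factor leaves the complete system of degree $\epsilon_\bullet+\eta_\bullet$, which gives both items at once.

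\emph{One small slip.} The restricted series $\langle\det L^{\reg}\rangle$ of the regular block does have base points on $H_\infty$, namely the eigenvalue points. So ``without base points'' is wrong for that block. It is harmless, since that factor is constant.
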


We first characterise the image of $\phi$ at points $[0:X] \in H_\infty$ with $X$ of generic rank in $L$.
\begin{proposition}\label{prop: im plus ker}
Let $X\in L$ be an element of generic rank in $L$. Then 
\[
\phi([0:X])= [\ker(X) \oplus \image(X)] .
\]
\end{proposition}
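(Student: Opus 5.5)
The plan is to compute $\phi([0:X])$ as a limit along a curve in $\bbP^2 = \bbP(\bbC\oplus L)$ approaching $[0:X]$ from the affine chart $\{x_0\neq 0\}$, using that $\phi$ is regular at $[0:X]$.

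\textbf{Step 1: $\phi$ is regular at $[0:X]$.} By \Cref{theorem: linear series of YL}, after removing the fixed component $x_0^{\eta_{\#}}$, the map $\phi$ is given by the linear series $I(Z)_{n-\eta_{\#}}$. Its base locus is $Z = Z_1\cup\cdots\cup Z_s$. Each $Z_i$ is supported at $[0:X_i]$, where $X_i$ is a rank degenerate point of $L$. Since $X$ has generic rank, $[0:X]$ is none of these points, so $[0:X]\notin Z$ and $\phi$ is a morphism in a neighbourhood of $[0:X]$. Hence $\phi([0:X])$ equals $\lim_{t\to 0}\phi(\gamma(t))$ for any curve $\gamma$ with $\gamma(0) = [0:X]$, the limit being taken in the projective variety $\Gr(n,V\oplus W)$.

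\textbf{Step 2: choice of curve.} Take $\gamma(t) = [t:X] = [1:t^{-1}X]$ for $t\neq 0$. Then $\phi(\gamma(t))$ is the graph
\[
\Gamma_{t^{-1}X} = \{\, v + t^{-1}Xv : v\in V\,\} = \{\, tv + Xv : v\in V\,\}.
\]

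\textbf{Step 3: computing the limit.} Choose a complement $U$ of $\ker X$ in $V$, and a basis $u_1,\dots,u_r$ of $U$, where $r = \rank X$.
\begin{itemize}
\item For $v\in\ker X$ we get $tv\in \Gamma_{t^{-1}X}$, hence $v \in \Gamma_{t^{-1}X}$ for every $t\neq 0$. So $\ker X\subseteq \Gamma_{t^{-1}X}$ for all $t \neq 0$.
\item The vectors $tu_j + Xu_j$ also lie in $\Gamma_{t^{-1}X}$.
\end{itemize}
Together with a basis of $\ker X$, the vectors $tu_j+Xu_j$ give a basis of $\Gamma_{t^{-1}X}$, which depends polynomially on $t$. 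At $t=0$ this basis specialises to a basis of $\ker X$ together with $Xu_1,\dots,Xu_r$, which form a basis of $\image X$. These vectors remain linearly independent at $t=0$, because $\ker X\subseteq V$, $\image X\subseteq W$ and $V\cap W=0$.

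In Pl\"ucker coordinates the wedge of this basis is a polynomial curve in $\Wedge^n(V\oplus W)$. Its value at $t=0$ is the nonzero decomposable vector representing $\ker X\oplus\image X$, which has dimension $(n-r)+r=n$. Therefore
\[
\lim_{t\to0}\phi(\gamma(t)) = [\ker X\oplus\image X],
\]
and Step 1 identifies this limit with $\phi([0:X])$.

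\textbf{Main obstacle.} The only delicate point is Step 1. We must justify that evaluating $\phi$ at $[0:X]$ agrees with this limit, that is, that $[0:X]$ lies outside the indeterminacy scheme. This is exactly what the description of the base locus in \Cref{theorem: linear series of YL} provides. Once regularity is established, the limit computation in Step 3 is elementary.
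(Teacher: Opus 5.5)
Your proof is correct and follows the paper's argument: regularity of $\phi$ at $[0:X]$ comes from \Cref{theorem: linear series of YL}, and the value is then computed as the limit of $\Gamma_{t^{-1}X}$ along the line $t\mapsto[t:X]$ after splitting off $\ker X$. Your Pl\"ucker-coordinate check, that the specialised basis stays independent because $V\cap W=0$, makes explicit a step the paper handles only by taking the limits of the individual lines $[tv_i+Xv_i]$.
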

\begin{proof}
By \cref{theorem: linear series of YL}, the indeterminacy locus of $\phi$ is supported at the rank defective element on the line at infinity. In particular, since $X$ is an element of generic rank, the map $\phi$ is well defined at $[0:X]$. To compute the value $\phi([0:X])$, it suffices to compute the restriction to any line through $[0,X]$. Consider the line parameterised as 
\[ \begin{matrix}
\gamma :& \bbC &\to &\bbP( \bbC \oplus L) \\
& t &\mapsto & [t: X] = [1: t^{-1} X].
\end{matrix}
\]
Let $v_1 \vvirg v_n$ be a basis of $V$ and suppose $v_1 \vvirg v_k$ is a basis of $\ker(X)$.  For $t\neq 0$, we have 
\[
\Gamma_{t^{-1}X} = \langle v_i + t^{-1}Xv_i \mid i \in [n] \rangle = \ker(X) + \langle tv_{i} + X v_{i} \mid i = k+1 \vvirg n\rangle .
\]
We have $\lim_{t\to 0 } [tv_{i} + X v_{i}] = [Xv_i]$ as an element in $\bbP (V \oplus W)$. Therefore, 
\[
\lim_{t \to 0} \Gamma_{t^{-1}X} = \ker(X) + \langle Xv_{k+1} \vvirg Xv_n\rangle = \ker(X) + \image(X) . \qedhere
\]
\end{proof}

We point out that the argument of \Cref{prop: im plus ker} generalises beyond the case of pencils.
\begin{remark}
     We remark that the indeterminacy locus of the map $\phi$ for a general linear space $L\subseteq V^* \otimes W$ has codimension at least $1$ in the hyperplane at infinity $H_\infty \subseteq \bbP( \bbC \oplus L )$. In particular, for $X \in L$ generic, $\phi([0:X])$ is well-defined and the same argument as in the proof of \Cref{prop: im plus ker} shows $\phi([0:X]) = [\ker(X) \oplus \image(X)]$. The difference between the general setting and that of pencils is that \Cref{theorem: linear series of YL} describes precisely the indeterminacy locus of $\phi$, showing that it is supported on the locus of rank-defective matrices in $H_\infty$.
\end{remark}
The proof of \Cref{prop: image of phi} is now straightforward.
\begin{proof}[Proof of \Cref{prop: image of phi}]
\Cref{prop: image of phi regular} follows from \Cref{prop: im plus ker}: if $L$ is regular, then $\ker(X) = 0$ and $\image(X) = W$ for generic $X \in L$. In particular, $\phi([0:X]) = [W]$.

To prove \Cref{prop: image of phi mixed}, we use \Cref{prop: im plus ker} and parameterise the curve defined by 
\[ \begin{matrix}
\gamma : & \bbP L & \to & \Gr(n, V \oplus W) \\
& [X] & \mapsto & [\ker(X) \oplus \image(X)] 
\end{matrix}  \]
for $X$ generic in $L$. By the block structure of $L$, $\gamma$ is realised as a sum, in the sense of \Cref{cor: sum of rnc}, over the indecomposable blocks of $L$. We consider three different cases:

\begin{itemize}
\item If $L = S^{\rmleft}_p$ and $X = L(1,t)$ is generic, then $X$ is surjective and 
\[
\ker X = \langle v_0- tv_1 + t^2v_2 - \ldots + (-t)^p v_{p} \rangle \subseteq \bbC^{p+1}
\]
where $v_0 \vvirg v_p$ is a basis of the domain of the singular block $S^{\rmleft}_p$. Therefore $\ker X$ is a rational normal curve in $\bbP (\bbC^{p+1}) = \Gr(1,\bbC^{p+1})$.
\item If $L = S^{\rmright}_p$ and $X = L(1,t)$ is generic, then $X$ is injective and 
\[
\image X = (\ker X^{T})^\perp = \langle w_0- tw_1 + t^2w_2 - \ldots + (-t)^p w_{p} \rangle^\perp \subseteq \bbP({\bbC^{p+1}})^*
\] 
where $w_0 \vvirg w_p$ is a basis of the dual space of the codomain of the singular block $S^{\rmright}_p$. Therefore $\image(X)$ is a rational normal curve defined under the identification $\Gr(1, ({\bbC^{p+1}})^*) = \Gr(p , \bbC^{p+1})$. 
\item If $L = J_p(\theta)$ and $X = L(1,t)$ is a generic element, then $X$ is bijective.
\end{itemize}

Adding the degrees of the rational normal curves obtained from the left and right singular blocks, by \Cref{cor: sum of rnc}, we conclude.
\end{proof}

We conclude this section with a result concerning Schubert varieties of linear spaces of square matrices of any dimension. Suppose $\dim V = \dim W = n$ and let $L \subseteq V^* \otimes W$ be a linear subspace containing full rank linear maps $V \to W$. In this setting, we have $[W] \in Y_L$ and the singularity $(Y_L , [W])$ can be characterised in terms of the top collineation variety $\scrC_{n-1}(L)$, defined in \cite{GeKe:collineations} as the Zariski closure of the image of the inversion map $\mathbb{P} L \dashrightarrow \mathbb P(W^* \otimes V)$ given by $[X] \mapsto [X^{-1}]$ for any full rank $X\in L$.

\begin{proposition}\label{prop: W singularity general}
Let $n = \dim V = \dim W$. Let $L \subseteq V^* \otimes W$ be an $a$-dimensional linear space whose general element is full rank as a linear map $V \to W$. Then $[W] \in Y_L$ and the singularity $(Y_L, [W])$ is isomorphic to the singularity $(\hat{\scrC}_{n-1}(L),0)$ in the affine cone over the collineation variety $\scrC_{n-1}(L)$. In particular, the projectivised tangent cone $\bbP \TC_{[W]} Y_L$ coincides with $\scrC_{n-1}(L)$, and $[W]$ is a smooth point of $Y_L$ if and only if $\scrC_{n-1}(L)$ is a linear space.
\end{proposition}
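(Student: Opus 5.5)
The plan is to work in the affine chart of $\Gr(n,V\oplus W)$ centred at $[W]$, namely $\calU_V=\{[E] : E\cap V=0\}$. This chart is identified with $W^*\otimes V=\Hom(W,V)$ via $Y\mapsto[\Gamma'_Y]$, where $\Gamma'_Y=\{Yw+w : w\in W\}$, and $[W]$ corresponds to $Y=0$. For $X\in L$ of full rank, the graph $\Gamma_X=\{v+Xv\}$ equals $\{X^{-1}w+w : w\in W\}$. Hence in this chart the point $\phi(X)$ has coordinate $Y=X^{-1}$. We therefore set
\[
\iota: L^{\circ}\to \Hom(W,V),\qquad X\mapsto X^{-1},
\]
where $L^\circ\subseteq L$ is the open set of invertible elements. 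Then $Y_L\cap\calU_V$ is the Zariski closure of $\iota(L^\circ)$ in $\Hom(W,V)\simeq\bbA^{n^2}$.

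The key step is to show that this closure is exactly the affine cone $\hat{\scrC}_{n-1}(L)$. The image $\iota(L^\circ)$ is stable under nonzero scalars, since $(tX)^{-1}=t^{-1}X^{-1}$ and $L^\circ$ is a cone. Its closure is therefore a closed cone in $\Hom(W,V)$. Its projectivisation is the closure of $\{[X^{-1}] : X\in L^\circ\}$ in $\bbP(W^*\otimes V)$, which is $\scrC_{n-1}(L)$ by definition. A closed cone is the affine cone over its projectivisation, and the closure of a $\bbC^\times$-stable set is $\bbC^\times$-stable, so $Y_L\cap \calU_V=\hat{\scrC}_{n-1}(L)$ with $0$ included.

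The only subtle point is $0\in\hat{\scrC}_{n-1}(L)$, i.e.\ $[W]\in Y_L$. This holds because $t^{-1}X^{-1}\to 0$ as $t\to\infty$, matching the limit computation in \Cref{prop: im plus ker} with $\ker X=0$ and $\image X=W$. Since $\calU_V$ is open, this gives a Zariski local isomorphism $(Y_L,[W])\simeq(\hat{\scrC}_{n-1}(L),0)$, an isomorphism of varieties on an open neighbourhood in the sense of \Cref{subsec: surface singularities}.

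The consequences then follow formally. The tangent cone of an affine cone $\hat{X}$ over a projective variety $X$ at its vertex is $\hat X$ itself, because its ideal is homogeneous and so equals its own initial ideal. Hence $\bbP\TC_{[W]}Y_L=\scrC_{n-1}(L)$. The point $[W]$ is smooth exactly when the tangent cone is a linear space, and an affine cone is smooth at its vertex iff it is a linear subspace. So $[W]$ is smooth iff $\scrC_{n-1}(L)$ is linear. I do not expect a real obstacle here. The only care needed is the coordinate convention identifying $\calU_V$ with $\Hom(W,V)$ (transposes versus inverses), and checking that the closure of the image is reduced and irreducible, which holds because it is the closure of the image of the irreducible variety $L^\circ$.
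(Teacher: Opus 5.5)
Your proposal is correct and follows the same route as the paper: you work in the chart $\calU_V\simeq\Hom(W,V)$ around $[W]$, observe that $\phi(X)$ has coordinate $X^{-1}$ there, and identify $Y_L\cap\calU_V$ with the closure of the image of inversion, namely the affine cone $\hat{\scrC}_{n-1}(L)$. Your additional arguments fill in details the paper leaves implicit: the $\bbC^\times$-stability argument explains why that closure is exactly the cone, and the homogeneity of its ideal gives the tangent cone, which the paper instead obtains from the differential of the chart map $\phi'$.
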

\begin{proof}
First, observe that $[W] \in Y_L$, since for $X\in L$ full rank one gets $\lim_{t \to 0} \Gamma_{t^{-1}X} = [W]$. 

Let $\calU_V \subseteq \Gr(n, V \oplus W)$ be the open subset defined by $\calU_V = \{ [E] : V \cap E = \emptyset\}$ and let $U = \calU_V \cap Y_L$ which is an open affine neighborhood of $[W] \in U$. We show that $(U,[W]) \simeq (\hat{\scrC}_{n-1}(L),0)$ as an affine variety. 

Consider the graph map analogous to the map $\phi$ defining $Y_L$, and given by 
\[ \begin{matrix}
\phi' : & \Hom(W,V) & \to & \Gr(n,V \oplus W) \\
& X & \mapsto & \Gamma'_X 
\end{matrix} \]
where $\Gamma'_X$ denotes the graph of the linear map $X : W \to V$. The map $\phi'$ is an open embedding: let ${\phi'}^{-1}$ denotes its inverse on $\calU_V$. For any invertible $X \in L$, we have ${\phi'}^{-1} \phi(X) = X^{-1}$. In particular, ${\phi'}^{-1} : U \to \Hom(W,V)$ maps $[\Gamma_X]$ to $X^{-1}$ and parameterises an open subset of $\hat{\scrC}_{n-1}(L)$. Since ${\phi'}^{-1}$ is invertible, taking closures in $\calU_V$, we obtain an isomorphism $U \simeq \hat{\scrC}_{n-1}(L)$ mapping $[W]$ to $0$, as desired.

Since $\phi'$ is locally an isomorphism, its differential defines a linear isomorphism from the tangent space to $\Hom(W,V)$ at $0$ to the tangent space to $\Gr(n, V \oplus W)$ at $[W]$. Such differential maps $\scrC_{n-1}(L) = \bbP \TC_{0}\hat{\scrC}_{n-1}(L)$ to $\bbP \TC_{[W]} Y_L$. This concludes the proof.
\end{proof}

We record a consequence of \Cref{prop: W singularity general} in the case of matrix pencils. Define the map
\[ 
\begin{matrix}
\psi : & H_\infty & \to & \bbP T_{[W]} Y_L  \\
 & [0:X]  & \mapsto & \textstyle \frac{d}{du}|_{u=0} \phi([u:X])
\end{matrix}
\]
mapping a point $[0:X] \in H_\infty$ with generic $X$ to the tangent vector of the image of a line passing through $[0:X]$. 

\begin{corollary}\label{prop: W singularity}
Let $L=L_{1}\boxplus \ldots \boxplus L_{s}$ be a regular pencil of size $n$, with $L_{i}$ of size $n_i=\sum_{j \in [h_i]} e_{ij}k_{ij}$. Let $\kappa := \sum_{i \in [s]} k_{i \, h_{i}}$ be the sum of the maximal sizes of Jordan blocks, one for each eigenvalue. Then, the following holds: 
\begin{enumerate}
    \item[$(i)$] the tangent cone $\TC_{[W]}Y_L$ is a cone over a rational normal curve of degree $\kappa - 1$; \vspace{1ex}
    \item[$(ii)$] $[W]$ is a smooth point of $Y_L$ if and only if $\kappa = 2$; \vspace{1ex}
    \item[$(iii)$] if $\kappa \geq 3$, then $Y_L$ has a quotient singularity at $[W]$ of type $\frac{1}{\kappa -1}(1,1)$; \vspace{1ex}
    \item[$(iv)$] the map $\psi$ is well defined, regular and it coincides with the $(\kappa-1)$-th Veronese embedding of $H_\infty$ onto $\bbP T_{[W]} Y_L$.
\end{enumerate}
\end{corollary}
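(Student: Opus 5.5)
By \Cref{prop: W singularity general}, the singularity $(Y_L,[W])$ is isomorphic to $(\hat{\scrC}_{n-1}(L),0)$, the affine cone over the top collineation variety. So the plan is to compute $\scrC_{n-1}(L)$ explicitly for a regular pencil and read off $(i)$--$(iv)$ from it.

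Since $L$ is block diagonal in Kronecker form, inversion acts block by block. For $X=L(x_1,x_2)$ invertible we have $X^{-1}=\mathrm{adj}(X)/\det X$, and projectively we may use $\prod_i\theta_i^{n_i}\cdot X^{-1}$. On a block $J_k(\theta)$ one has $J_k(\theta)^{-1}=\sum_{j=0}^{k-1}(-x_2)^j\theta^{-j-1}N^j$ with $N$ nilpotent. Hence, after scaling, the entries of the inverse of $L_i$ span (up to the common factor $\prod_{l\ne i}\theta_l^{n_l}\cdot\theta_i^{\,n_i-k_{ih_i}}$) the forms $\theta_i^{k_{ih_i}-1-j}x_2^j$ for $j=0,\dots,k_{ih_i}-1$. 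These are all degree $k_{ih_i}-1$ binary forms divisible by no more than needed; in fact the span is $\bbC[\theta_i,x_2]_{k_{ih_i}-1}$, the full space.

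After removing the common factor $\prod_i\theta_i^{n_i-k_{ih_i}}$, the map $\bbP^1\to\scrC_{n-1}(L)$ is given by the linear system
\[
\textstyle \sum_i \Big(\prod_{l\neq i}\theta_l^{k_{lh_l}}\Big)\,\bbC[x_1,x_2]_{k_{ih_i}-1}.
\]
This system lies in degree $\kappa-1$. The key step is to show that it is the complete linear system $\bbC[x_1,x_2]_{\kappa-1}$. This is a Chinese-remainder/partial-fractions argument: the ideals $(\theta_i^{k_{ih_i}})$ are pairwise coprime, and
\[
\textstyle\dim\bigl(\prod_{l\ne i}\theta_l^{k_l}\bigr)\bbC[x_1,x_2]_{k_i-1}=k_i .
\]
These subspaces sum directly, since a relation would force $\theta_i^{k_i}$ to divide a form of degree $<k_i$. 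Their dimensions add up to $\kappa$. Hence the map is the $(\kappa-1)$-th Veronese embedding, and $\scrC_{n-1}(L)$ is a rational normal curve of degree $\kappa-1$. This dimension count is the step I expect to require the most care, specifically in justifying that the minors from the smaller Jordan blocks (and the nilpotent parts) contribute nothing beyond the multiples listed. The argument for that is that their entries are multiples of the stated forms, by lower powers of $\theta_i$ times additional factors.

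Item $(i)$ then follows from \Cref{prop: W singularity general}, since $\bbP\TC_{[W]}Y_L=\scrC_{n-1}(L)$. For $(ii)$, a rational normal curve is a linear space exactly when its degree is $1$, that is, when $\kappa=2$. (The case $\kappa=1$ is excluded, because then $L$ would be scalar.) For $(iii)$, the affine cone over a rational normal curve of degree $r=\kappa-1$ is isomorphic to $\bbA^2/\mu_r$ with $\mu_r$ acting by $\zeta\cdot(u,v)=(\zeta u,\zeta v)$, since its invariants are generated by degree-$r$ monomials. This is the cyclic quotient singularity $\frac1{\kappa-1}(1,1)$. The identification is an honest isomorphism of local rings, because the cone over a projectively normal curve is normal and we have equality rather than just closure. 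For $(iv)$, under the identification of \Cref{prop: W singularity general}, the curve $u\mapsto\phi([u:X])$ becomes $u\mapsto (X/u)^{-1}=uX^{-1}$ in the chart $\Hom(W,V)$. Its tangent direction at $u=0$ is therefore $[X^{-1}]$, so $\psi$ is the inversion map computed above, which is regular on all of $H_\infty$ and equals the Veronese map by the key step.
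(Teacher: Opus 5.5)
Your proposal is correct and follows essentially the same route as the paper. \Cref{prop: W singularity general} reduces everything to the inversion map $[X]\mapsto[X^{-1}]$, and your block-by-block computation that the adjugate entries span $\prod_i\theta_i^{n_i-k_{i\,h_i}}\cdot\bbC[x_1,x_2]_{\kappa-1}$ is exactly the paper's identity for $D_{n-1}(L)$; (ii)--(iv) are then read off in the same way. The one difference is that you prove this linear series is complete directly, via coprimality of the $\theta_i^{k_{i\,h_i}}$ and a dimension count, which gives the Veronese map at once. The paper instead states the identity without proof and then cites \cite[Thm.~3.1]{GeKe:collineations} for the degree of $\scrC_{n-1}(L)$.
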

\begin{proof}
    By \Cref{prop: W singularity general}, the tangent cone $\TC_{[W]}Y_L$ is a cone over $\scrC_{n-1} (L)$. By \cite[Thm. 3.1]{GeKe:collineations}, $\scrC_{n-1} (L)$ is a rational normal curve, of degree $n-1-p$ where $p = \deg ( \bbP L \cap \sigma_{n-2})$ is the degree of the intersection of $L$ with the variety $\sigma_{n-2}$ of linear maps of rank at most $n-2$. The degree of this $0$-dimensional scheme can be computed explicitly observing that 
    \[
\textstyle    D_{n-1} (L) = \sum_{i \in [s]} D_{n_i-1}(L_i) \prod_{i' \neq i} \det(L_{i'}) = \left(\prod_{i \in [s]} \theta_i^{n_i - k_{i \, h_i}}\right) \cdot \bbC[x_1,x_2]_{\kappa-1}.
    \]
We obtain $p = n-1 - \sum_{i \in [s]} (n_i - k_{i \, h_i}) = \kappa -1$ as desired. This proves (i).

Items (ii) and (iii) are immediate consequences of (i). Indeed, the vertex of the cone of a rational normal curve of degree $\kappa-1$ is a quotient singularity of type $\frac{1}{\kappa -1} (1,1)$.

Finally, to prove (iv), observe that the map $\psi$ is the collineation map itself: fix $X \in L$ generic and let $\gamma_X(t) = \phi([t:X]) = \Gamma_{t^{-1}X}$. By genericity, $X$ is invertible and $\Gamma_{t^{-1}X} = \Gamma'_{tX^{-1}}$. The derivative of $\gamma_X$ at $t = 0$ is therefore given by $X^{-1}$ regarded as an element of $\bbP (W^* \otimes V) = \bbP T_{[W]} \Gr(n, V \oplus W)$. This shows that $\psi: H_\infty \to \bbP \TC_{[W]} Y_L$ is defined by mapping $[X] \in H_\infty$ to $[X^{-1}] \in \bbP (W^* \otimes V)$, which coincides with the collineation map.
\end{proof}

\subsection{Schubert class of Schubert surfaces}\label{subsec: schubert classes}
In this section, we determine the expression of the class $[Y_L]$ of a pencil $L$ in the Chow ring of the Grassmannian $\Gr(n , V\oplus W)$, establishing Item 3 of \Cref{thm: main forward}. The Chow ring of $\Gr(n,V \oplus W)$ is generated by Schubert classes $\sigma_\lambda$, indexed by partitions $\lambda$ contained in the $n \times m$ rectangle, where $n = \dim V$ and $m = \dim W$. Since $Y_L$ is a surface, its class $[Y_L]$ is a $\bbZ$-linear combination of the two classes $\sigma_{(1,1)^c}$ and $\sigma_{(2)^c}$ where $\lambda^c$ is the complement of the partition $\lambda$ in the $n \times m$ rectangle. The coefficients can be expressed in terms of the sizes $\eps_\bullet$ and $\eta_\bullet$ of the singular components, and of the degree of $Y_L$ in the Pl\"ucker embedding, which is computed in \Cref{theorem: degree of YL}.
\begin{theorem}\label{thm: schubert class}
    Let $L$ be a pencil of size $n \times m$. Let $\eps_\bullet$ and $\eta_\bullet$ be the sum of the sizes of the left singular and right singular blocks of $L$. Then the class of the Schubert surface $Y_L$ in the Chow ring of $\Gr(n,V \oplus W)$ is given by
    \[
[Y_L] =  \frac{1}{2} \left( \deg Y_L + \eps_\bullet - \eta_\bullet \right) \sigma_{(2)^c} + \frac{1}{2} \left( \deg Y_L - \eps_\bullet + \eta_\bullet \right) \sigma_{(1,1)^c} .
    \]
\end{theorem}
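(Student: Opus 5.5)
The class of a surface in $\Gr(n,V\oplus W)$ is determined by its intersection numbers with the two codimension-two Schubert classes $\sigma_{(2)}$ and $\sigma_{(1,1)}$. Indeed, by duality $\sigma_{(2)}\cdot\sigma_{(2)^c}=\sigma_{(1,1)}\cdot\sigma_{(1,1)^c}=1$ and the mixed products vanish. Writing $[Y_L]=a\,\sigma_{(2)^c}+b\,\sigma_{(1,1)^c}$, we therefore have $a=\int_{Y_L}\sigma_{(2)}$ and $b=\int_{Y_L}\sigma_{(1,1)}$. Pieri gives $\sigma_1^2=\sigma_{(2)}+\sigma_{(1,1)}$, and $\sigma_1$ is the Plücker hyperplane class, so $a+b=\deg Y_L$; this is taken from \Cref{theorem: degree of YL}. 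It then remains to prove
\[
a-b=\int_{Y_L}(\sigma_{(2)}-\sigma_{(1,1)})=\eps_\bullet-\eta_\bullet .
\]

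To compute $a-b$ we express the Schubert classes through the tautological bundles. With $0\to\calS\to V\oplus W\to\calQ\to0$ we have $\sigma_{(1,1)}=c_2(\calS^*)$ and $\sigma_{(2)}=c_2(\calQ)$, up to the usual convention swapping the two; the sign of the final answer fixes which. Since $c(\calS)c(\calQ)=1$, we get $c_2(\calQ)=c_1(\calS)^2-c_2(\calS)$, and therefore
\[
\sigma_{(2)}-\sigma_{(1,1)}=c_1(\calS)^2-2c_2(\calS).
\]
So we need $\int_{Y_L}\bigl(c_1^2-2c_2\bigr)(\calS|_{Y_L})$, which equals $\mathrm{ch}_2$ up to a factor: it is the second power sum $p_2$ of the Chern roots of $\calS|_{Y_L}$.

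Next, \Cref{prop: grassmannian direct sum} is used to split this computation. Using the Kronecker decomposition $V=\bigoplus V_i$, $W=\bigoplus W_i$, the surface $Y_L$ sits in $\Gr_\bullet=\prod_i\Gr(\dim V_i,V_i\oplus W_i)$, and $\calS|_{Y_L}\simeq\bigoplus_i\pi_i^*\calS_i$. The power sum $p_2$ is additive in direct sums, so $p_2(\calS|_{Y_L})=\sum_i \pi_i^*p_2(\calS_i)$. The degree-$4$ class $p_2(\calS_i)$ on the $i$-th factor is pulled back to the surface along $\pi_i|_{Y_L}$. The image $\pi_i(Y_L)$ is the Schubert variety of the single block, which has dimension at most $2$, and we distinguish cases by this dimension. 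If the image has dimension $<2$, the pullback is zero. If $\pi_i|_{Y_L}$ is generically finite of degree $\delta_i$ onto a surface $Y_{L_i}$, the contribution is $\delta_i\int_{Y_{L_i}}p_2(\calS_i)$. So the problem is reduced to indecomposable blocks, together with bookkeeping of degrees of projections.

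For a block $S^{\rmleft}_p$, \Cref{cor: image of phi totally singular} shows that $Y_{L_i}$ is the Veronese surface $v_p(\bbP^2)$ and that the map is regular on all of $\bbP^2$. The contribution is then computed on $\bbP^2$, where the map is explicit. Restricted to $\bbP^2$, the bundle $\calS_i$ is the kernel of the map given by the transpose of $[x_0I\,|\,S^{\rmleft}_p]$; equivalently it fits into a sequence built from $\calO$'s and $\calO(-1)$'s. For instance, for $p=1$ the tautological line bundle is $\calO(-1)$ and $p_2=h^2$, integrating to $1=\eps$. In general $S^{\rmleft}_p$ should give a bundle whose $p_2$ integrates to $p$, and dually $S^{\rmright}_p$ gives $-p$, since the kernel of its map lives in the complementary Grassmannian and the roles of $\calS$ and $\calQ$ are swapped. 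For Jordan blocks $J_k(\theta)$ the image $\pi_i(Y_L)$ is contained in $\Gr(k,2k)$. Here I would check that $c_1^2-2c_2$ of $\calS_i$ integrates to $0$, using the symmetry $V_i\leftrightarrow W_i$: it exchanges $\calS$ with $\calQ^*$ and hence $\sigma_{(2)}$ with $\sigma_{(1,1)}$, and it preserves the square pencil up to transposition.

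Since $Y_L$ is the image of a single $\bbP^2$ (or its blow-up resolving $\phi$, \Cref{prop: resolution of phi}), the cleanest route is to pull everything back to that blow-up $\tilde{\bbP}^2$ and compute $\int_{\tilde{\bbP}^2}p_2(\phi^*\calS)$ block by block, because $\phi$ is birational onto $Y_L$. On $\tilde{\bbP}^2$ the direct sum decomposition holds and each summand is an explicit kernel bundle. \textbf{Main obstacle:} the Jordan contribution, where the exceptional divisors enter $c_1$ and $c_2$. I would handle it by the transposition symmetry argument rather than by direct computation; failing that, I would compute directly for $J_k$ using the chain of exceptional curves described in \Cref{ex: length 2 scheme}.
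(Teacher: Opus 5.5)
Your plan matches the paper's proof in all essentials. You get $a+b=\deg Y_L$ from Pieri and duality, and $\sigma_{(2)}-\sigma_{(1,1)}=c_1(\calS)^2-2c_2(\calS)$ from Whitney. This second Chern character is additive over the Kronecker blocks via \Cref{prop: grassmannian direct sum}. The singular blocks are handled by explicit bundles on $\bbP^2$. The Jordan blocks are killed by the involution $E\mapsto E^\perp$, which exchanges $\calS$ with $\calQ^*$ and sends a Jordan block to its transpose.

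Your reduction to blocks is in fact more careful than the paper's, which simply asserts that the $i$-th summand equals $[Y_{M_i}]\cdot(c_1(\calS_i)^2-2c_2(\calS_i))$. The degree you leave as bookkeeping is $1$: on the dense open set $Y_L^\circ\cong L$, the projection is the linear isomorphism of $L$ onto the block pencil $M_i$. Blocks $J_1$ land in the curve $\Gr(1,2)$ and contribute nothing. Pulling back to the resolution $\tilde X$, as you suggest, makes this automatic.

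The one step you have not actually carried out is the left singular block for general $p$. You check $p=1$ and then say the answer ``should'' be $p$, and describing the bundle as a kernel of the transpose is not right; it is an image. The matrix $\hat M=[x_0I\,|\,S^{\rmleft}_p]$ has rank $p$ at every point of $\bbP^2$. So its transpose is an injective bundle map $\calO_{\bbP^2}(-1)^{\oplus p}\to\calO_{\bbP^2}^{\oplus(2p+1)}$ whose fibres are the row spans. This gives $\phi^*\calS\cong\calO_{\bbP^2}(-1)^{\oplus p}$, with $c_1=-ph$ and $c_2=\binom{p}{2}h^2$, hence $\int(c_1^2-2c_2)=p^2-2\binom{p}{2}=p$. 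Since $\phi$ is the Veronese isomorphism onto $Y_{S^{\rmleft}_p}$, this is the contribution.

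With that in hand, your duality sketch for right blocks is a valid alternative to the paper's direct computation from $0\to\phi^*\calS\to\calO^{\oplus(2p+1)}\to\calO(1)^{\oplus p}\to0$. Indeed $\tau$ carries $Y_{S^{\rmright}_p}$ to $Y_{S^{\rmleft}_p}$, and $\mathrm{ch}_2(\calQ^*)=\mathrm{ch}_2(\calQ)=-\mathrm{ch}_2(\calS)$, giving $-p$.

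Finally, the convention should not be decided by the expected answer. With the standard normalisation, which the paper uses, $\sigma_{(2)}=c_2(\calQ)$ and $\sigma_{(1,1)}=c_2(\calS)$. That is what determines which coefficient goes with $\sigma_{(2)^c}$ and which with $\sigma_{(1,1)^c}$.
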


The proof of \Cref{thm: schubert class} is based on the fact that the sum of the two coefficients is determined by $\deg Y_L$ as observed in \Cref{lemma: sum coefficients}. To determine the difference of the two coefficients, we study the pullback Chern class of the tautological bundle $\calS$ of $\Gr(n,V \oplus W)$ via the parameterisation map $\phi$ of the Schubert variety of $L$. The computation can be split across the different blocks using \Cref{prop: grassmannian direct sum}. This reduces the computation of the difference $a-b$ to the case of indecomposable pencils and will complete the proof of \Cref{thm: schubert class}.

\begin{lemma}\label{lemma: sum coefficients}
Let $[Y_L] = a \sigma_{(2)^c} + b \sigma_{(1,1)^c}$ be the class of the Schubert surface $Y_L$ in the Chow ring of $\Gr(n,V \oplus W)$. Then $a+b = \deg Y_L$.
\end{lemma}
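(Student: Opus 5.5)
The plan is to compute the Plücker degree of $Y_L$ as the intersection number of $[Y_L]$ with the square of the hyperplane class $\sigma_1$. Then I read off the answer from Pieri's rule and the duality pairing in $\CH^\bullet(\Gr(n,V\oplus W))$.

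First, the Plücker embedding of $\Gr(n,V\oplus W)$ is given by the line bundle whose first Chern class is the Schubert divisor class $\sigma_1 = \sigma_{(1)}$. Since $Y_L$ is a surface, its degree in the Plücker embedding is
\[
\deg Y_L = \int_{\Gr(n,V\oplus W)} [Y_L]\cdot \sigma_1^2 = a \int \sigma_{(2)^c}\sigma_1^2 + b\int \sigma_{(1,1)^c}\sigma_1^2 .
\]

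Second, I expand $\sigma_1^2$ by Pieri's rule: $\sigma_1^2 = \sigma_{(2)} + \sigma_{(1,1)}$. This holds in any Grassmannian where both partitions fit in the $n\times m$ box. Degenerate cases ($n=1$ or $m=1$) are handled separately: there one of the two classes is absent, and the corresponding term simply drops out of the expression for $[Y_L]$.

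Third, I apply the duality theorem for Schubert classes. It says $\int \sigma_\lambda \sigma_{\mu^c} = \delta_{\lambda\mu}$ for partitions $\lambda,\mu$ of the same size. Hence $\int\sigma_{(2)^c}\sigma_1^2 = 1$ and $\int\sigma_{(1,1)^c}\sigma_1^2 = 1$, which gives $\deg Y_L = a+b$.

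The only mildly delicate point is the bookkeeping when the box is too thin for $(2)$ or $(1,1)$. In that case the corresponding complementary class does not exist, its coefficient is zero by convention, and the identity persists. There is no real obstacle here: the statement is a formal consequence of Pieri's rule and Poincaré duality on the Grassmannian.
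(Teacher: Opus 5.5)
Your proof is correct and is essentially the paper's own argument: you write the degree as $[Y_L]\cdot\sigma_1^2$, expand $\sigma_1^2=\sigma_{(2)}+\sigma_{(1,1)}$ by Pieri's rule, and use the duality pairing of complementary Schubert classes. Your remark on the thin-box cases ($n=1$ or $m=1$, where one class vanishes and its coefficient is zero) is a harmless addition that the paper leaves implicit.
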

\begin{proof}
Let $h$ be the hyperplane class of the Pl\"ucker embedding. Then the degree of $Y_L$ is the product of the class of $[Y_L]$ with $h^2$. Since the pullback to the Grassmannian of the hyperplane class of the Pl\"ucker embedding is $\sigma_1$, we obtain  
\[
\deg Y_L = [Y_L] \cdot \sigma_1^2 = (a \sigma_{(2)^c} + b \sigma_{(1,1)^c}) \cdot (\sigma_{(2)} + \sigma_{(1,1)}) = a + b
\]
where we use Pieri's rule to compute $\sigma_1^2 = \sigma_{(2)} + \sigma_{(1,1)}$ and the fact that if $|\lambda| + |\mu| = nm$ then $\sigma_\lambda \cdot \sigma_\mu = 1$ if $\lambda^c = \mu$ and $0$ otherwise.
\end{proof}

The same argument as in \Cref{lemma: sum coefficients} characterises the difference of the two coefficients as the product $[Y_L] \cdot (\sigma_{(2)} - \sigma_{(1,1)})$. The upshot is that the difference $\sigma_{(2)} - \sigma_{(1,1)}$ is, in a way, additive across different Kronecker blocks. 

\begin{proof}[Proof of \Cref{thm: schubert class}]
   Let $\calS$ be the tautological bundle of $\Gr(n,V \oplus W)$ and let $\calQ$ be the quotient bundle. By Whitney's formula \cite[Thm. 5.3]{EisHar:3264}, we have 
$c(\calS) \cdot c(\calQ) = 1$, where $c(-)$ denotes the total Chern class. Since $\sigma_{(2)} = c_2(\calQ)$ and $\sigma_{(1,1)} = c_2(\calS)$, we obtain 
\[
\sigma_{(2)} - \sigma_{(1,1)} = c_2(\calQ) - c_2(\calS) = c_1(\calS)^2 -2c_2(\calS).
\]
The latter is up to scaling the second \emph{Chern character} of $\calS$, which is additive on direct sums \cite[Sec. 5.5.2]{EisHar:3264}. By \Cref{prop: grassmannian direct sum}, we obtain 
\begin{align*}
    (a-b) = &[Y_L] \cdot (\sigma_{(2)} - \sigma_{(1,1)}) = [Y_L] \cdot (c_1(\calS)^2 -2c_2(\calS)) = \\
    &(\Delta^* [Y_L]) \cdot \Delta^* (c_1(\calS)^2 -2c_2(\calS)) = (\Delta^* [Y_L]) \cdot \textstyle \sum_{i=1}^r  (c_1(\calS_i)^2 -2c_2(\calS_i));
\end{align*}
here we use that the map $\Delta$ of \Cref{prop: grassmannian direct sum} is an embedding, together with additivity of the Chern character. Let $M_1 \vvirg M_r$ be the indecomposable Kronecker blocks of $L$, as in \Cref{prop: grassmannian direct sum}. The $i$-th summand $(\Delta^* [Y_L] ) (c_1(\calS_i)^2 -2c_2(\calS_i))$ coincides with $[Y_{M_i}] \cdot (c_1(\calS_i)^2 -2c_2(\calS_i))$. Therefore, the calculation is reduced to computing the differences $a-b$ for indecomposable pencils. 

Let $M$ be left singular of size $p \times (p+1)$. We prove $[Y_M](\sigma_2 - \sigma_{(1,1)})= p$. In this case the map $\phi: \bbP^2 \to \Gr(p, 2p+1)$ is regular; we are going to show $\phi^* \calS = \calO_{\bbP^2}(-1)^{\oplus p}$. Let $\hat{M} = [x_0 I | M]$, which has rank $p$ everywhere on $\bbP^2$. In particular, (the transpose of) $\hat{M}$ is everywhere injective, so it defines a map of vector bundles $\hat{M}^\bft : \calO_{\bbP^2}(-1)^{\oplus p} \to \calO_{\bbP^2}^{\oplus(2p+1)}$ whose image is a subbundle of rank $p$ of $ \calO_{\bbP^2}^{\oplus(2p+1)} = (V \oplus W) \otimes \calO_{\bbP^2}$. The fiber of this subbundle at $x \in \bbP^2$ is the rowspan of $\hat{M}$, that is exactly the fiber of $\calS$ at $\phi(x)$. This shows $\phi^* \calS = \calO_{\bbP^2}(-1)^{\oplus p}$, giving the total Chern class $c(\phi^* \calS) = (1-h)^p$ where $h$ is the hyperplane class of $\bbP^2$. In particular, $c_1(\phi^*\calS)  = -ph$ and $c_2(\phi^*\calS) = \binom{p}{2} h^2$. Therefore 
\[
[Y_M] (c_1(\calS)^2 -2c_2(\calS)) = (c_1(\phi^* \calS)^2 - 2 c_2(\phi^* \calS)) = p^2 - 2\binom{p}{2} = p.
\]

Let $M$ be right singular of size $(p+1) \times p$. We prove $[Y_M] (\sigma_2 - \sigma_{(1,1)}) = -p$. The same argument as above shows that the bundle map $\hat{M}: \calO_{\bbP^2}^{\oplus(2p+1)} \to \calO_{\bbP^2}(1)^{\oplus p}$ is surjective with kernel $\phi^*\calS$. We have the exact sequence $0 \to \phi^*\calS \to \calO_{\bbP^2}^{\oplus(2p+1)} \to \calO_{\bbP^2}(1)^{\oplus p} \to 0$ and by Whitney's formula $c(\calO_{\bbP^2}^{\oplus(2p+1)} ) = c(\phi^*\calS) \cdot c(\calO_{\bbP^2}(1)^{\oplus p})$. In particular, we obtain $c(\phi^*\calS) = (1+h)^{-p}$, giving $c_1(\phi^*\calS) = -ph$ and $c_2(\phi^*\calS) = \binom{p+1}{2} h^2$. Therefore
\[
[Y_M] (c_1(\calS)^2 -2c_2(\calS)) = (c_1(\phi^* \calS)^2 - 2 c_2(\phi^* \calS)) = p^2 - 2\binom{p+1}{2} = -p.
\]

Let $M$ be a Jordan pencil of size $p$; without loss of generality assume its eigenvalue is $x_1$. We prove $[Y_M] (\sigma_2 - \sigma_{(1,1)}) = 0$. Fix bases on $V$ and $W$ and use them to have an identification $V^* \simeq W$ and $W^* \simeq V$. In this way, one defines an isomorphism $\tau : \Gr(p, V \oplus W) \to \Gr(p , V^* \oplus W^*)  \simeq \Gr(p ,W \oplus V)$ by sending a subspace $E$ to its annihilator $E^\perp$. If $E = \Gamma_X$ is the graph of a map $X$ then $E^\perp$ is the graph of the map $-X^\bft$.  $\tau(J_p(x_1))$ is a Jordan pencil as $J_p(x_1)$, and in particular $[Y_M] = [Y_{\tau(M)}] = \tau^* [Y_M]$. On the other hand, $\tau^* \calS = \calQ^*$. This shows $\tau^* \sigma_{(1,1)} = \sigma_{(2)}$ and $\tau^* \sigma_{(2)} = \sigma_{(1,1)}$. Therefore,
\[
[Y_M] (\sigma_2 - \sigma_{(1,1)}) = \tau^* [Y_M] \tau^*(\sigma_2 - \sigma_{(1,1)}) = [Y_M] (\sigma_{(1,1)} - \sigma_{(2)})
\]
implying $[Y_M] (\sigma_2 - \sigma_{(1,1)}) = 0$. Adding the three contributions together, we conclude.
\end{proof}

\section{Intersection theory in Schubert surfaces}\label{sec:intersection theory}

In this section, we provide a smooth resolution of the indeterminacy of the parameterisation map $\phi : \bbP^2 \dashto Y_L$. This resolution is constructed via iterated smooth blow-ups at the reduced points supporting the indeterminacy locus $Z_1 \cup \ldots \cup Z_s$ described in \Cref{theorem: linear series of YL}. The exceptional locus of the blow-up will be a union of rational curves: their intersection products and their degree will reflect the invariants of the pencil $L$, and in turn will be the core of the reconstruction statement of \Cref{thm: reconstruction of invariants}.

\subsection{A smooth resolution of $\phi$}\label{subsec: smooth resolution}

We construct a smooth variety $\tilde{X}$ endowed with a regular map $\pi : \tilde{X} \twoheadrightarrow \bbP^2$ birational onto $\bbP^2 \setminus Z$ inducing a natural extension $\tilde{\phi} : \tilde{X} \twoheadrightarrow Y_L$, giving the commuting diagram: 
\[
\xymatrix{
\tilde{X} \ar[d]_\pi \ar[dr]^{\tilde{\phi}} & \\
\bbP^2 \ar@{-->}[r]_\phi & Y_L .
}
\]
Recall that the indeterminacy locus of $\phi$ is the zero-dimensional scheme $Z = Z_1 \cup \ldots \cup Z_s$, supported at the points $\{ x_0 = \theta_i = 0\}$ defined by the eigenvalues of the pencil. The map is given by the linear series $I(Z)_{n - \eta_{\#}}$. The variety $\tilde{X}$ is constructed as a sequence of smooth blow-ups $\tilde{X} \to \bbP^2$ after which the pullback of the linear series $I(Z)_{n - \eta_{\#}}$ becomes invertible. The construction is local over the support of $Z$, and the blow-up chains over the distinct points $\{x_0=\theta_i = 0\}$ are independent.

For every $i \in [s]$, let $k_{i1} < \ldots < k_{i \, h_i}$ be the distinct sizes of Jordan blocks with eigenvalue $\theta_i$ and let $e_{i1} \vvirg e_{i h_i}$ be their multiplicities. Recall that $I(Z) = (x_0 ,\theta_i^{k_{i1}})^{e_{i1}} \cdots (x_0,\theta_i^{k_{i \, h_i}})^{e_{i h_{i}}}$.

Starting from the point $\{ x_0 = \theta_i = 0\}$, the sequence of blow-ups $\tilde{X}$ is obtained by successive blow-ups, $k_{i \, h_i}$ times, each time at the intersection of the strict transform of the line at infinity $H_\infty$ with the most recently created exceptional divisor. 

We describe the construction in detail for a local scheme $Z$ supported at $z_0$. Set $X_0 := \bbP^2$ and $H^{(0)} :=  H_\infty$, so that $z_0 \in H_\infty$. Consider the blow-up $\pi_1:X_1:=\Bl_{z_0}X_0 \twoheadrightarrow X_0$ and denote by $A_1^{(1)}:= \pi_1^{-1}(z_0)$ the exceptional divisor, and by $H^{(1)}:=\overline{\pi_1^{-1}(H^{(0)}\setminus z_0)}$ the strict transform of the line at infinity. Set $z_1:= H^{(1)}\cap A_1^{(1)}$.

Recursively define $\pi_{i+1}: X_{i+1}:= \Bl_{z_{i}}X_{i} \twoheadrightarrow X_{i}$ to be the blow-up of $X_{i}$ with center the point $z_{i}:= H^{(i)}\cap A_i^{(i)}$, where $A_i^{(i)}:=\pi_i^{-1}(z_{i-1}) \subset X_i$ is the exceptional divisor of $\pi_i$, and $H^{(i)} := \overline{\pi_i^{-1}(H^{(i-1)}\setminus z_{i-1})}$ is the strict transform of $H^{(i-1)}$. Observe that the point $z_{i} \in X_i$ lies on $A_i^{(i)}$ but does \textit{not} lie on the strict transform $A_{i-1}^{(i)}:=\overline{\pi_i^{-1}(A_{i-1}^{(i-1)}\setminus z_{i-1})}$. 

For $i' < i-1$, let $A^{(i)}_{i'} \subset X_i$ be the strict transform of $A^{(i')}_{i'}$ via the concatenation of blow ups $\pi_i\circ \ldots \circ \pi_{i'+1}$. In particular, $z_{i-1} \notin A^{(i-1)}_{i'}$ if $i' < i-1$, therefore $\pi_i : X_i \to X_{i-1}$ restricts to an isomorphism from $A^{(i)}_{i'}$ to $A^{(i-1)}_{i'}$ whenever $i' < i-1$. \Cref{fig: iterated blowup} represents the first steps of this process.

\begin{figure}[h]
\begin{tikzpicture}[scale=0.85]
    \pgfmathsetmacro{\xbase}{10}
    
    \draw[thick] (\xbase,-2) -- (\xbase, 1.5) node[pos=1, right=3pt] {$X_0 = \bbP^2$};
    
    \draw[thick] (\xbase,-2) -- (\xbase, -1.7) node[font=\footnotesize,pos=1, right=3pt] {$H^{(0)} = H_\infty$};
    
    \node[anchor=west, font=\scriptsize, fill=white] at (\xbase,0) {$z_0$};
    \node at (\xbase,0) [circle,fill,inner sep=1pt]{};

    \pgfmathsetmacro{\xbase}{7}
    \draw[thick] (\xbase,-2) -- (\xbase, 1.5) node[pos=1, right=3pt] {$X_1$};
    \draw[thick] (\xbase,-2) -- (\xbase, -1.7) node[font=\footnotesize,pos=1, right=3pt] {$H^{(1)}$};
    \node[anchor=north west, yshift=5pt, font=\scriptsize, fill=white] at (\xbase,0) {$z_1$};
    \draw[thick, rotate around={30:(\xbase-0.2,0)},color=red]
        (\xbase+0.2, 0) -- (\xbase-0.6, 0)
        node[pos=0.7, font=\tiny, above] {$A^{(1)}_1$};
    \node at (\xbase,0.12) [circle,fill,inner sep=1pt]{};
    \draw[->, thick] (\xbase+.7,-1) -- (\xbase+2.3,-1);
    \node at (\xbase+1.4,-0.8) [inner sep=1pt]{$\pi_1$};
    
    \pgfmathsetmacro{\xbase}{4}
    \draw[thick] (\xbase,-2) -- (\xbase, 1.5) node[pos=1, right=3pt] {$X_2$};
    \draw[thick] (\xbase,-2) -- (\xbase, -1.7) node[font=\footnotesize,pos=1, right=3pt] {$H^{(2)}$};
    \node[anchor=north west, yshift=2pt, font=\scriptsize, fill=white] at (\xbase,0) {$z_2$};
    \draw[thick, rotate around={-30:(\xbase-0.2,0)},color=blue]
        (\xbase+0.2, 0) -- (\xbase-0.6, 0)
        node[pos=0.7, font=\tiny, below] {$A^{(2)}_2$};
    \draw[thick, rotate around={30:(\xbase-0.7,0)},color=red]
        (\xbase-0.3,0) -- (\xbase - 1.1, 0)
        node[pos=0.8, font=\tiny, above] {$A^{(2)}_1$};
    \node at (\xbase,-0.12) [circle,fill,inner sep=1pt]{};
    \draw[->, thick] (\xbase+.7,-1) -- (\xbase+2.3,-1);
    \node at (\xbase+1.4,-0.8) [inner sep=1pt]{$\pi_2$};

    \pgfmathsetmacro{\xbase}{1}
    \draw[thick] (\xbase,-2) -- (\xbase, 1.5) node[pos=1, right=3pt] {$X_3$};
    \draw[thick] (\xbase,-2) -- (\xbase, -1.7) node[font=\footnotesize,pos=1, right=3pt] {$H^{(3)}$};
    \node[anchor=north west, yshift=4pt, font=\scriptsize, fill=white] at (\xbase,0) {$z_3$};
    \node at (\xbase,0.12) [circle,fill,inner sep=1pt]{};
    \draw[thick, rotate around={30:(\xbase-0.2,0)}]
        (\xbase+0.2, 0) -- (\xbase-0.6, 0)
        node[pos=0.6, font=\tiny, above] {$A^{(3)}_3$};
    \draw[thick, rotate around={-30:(\xbase-0.7,0)},color=blue]
        (\xbase-0.3,0) -- (\xbase - 1.1, 0)
        node[pos=0.8, font=\tiny, below] {$A^{(3)}_2$};
    \draw[thick, rotate around={30:(\xbase-1.2,0)},color=red]
        (\xbase-0.8,0) -- (\xbase - 1.6, 0)
        node[pos=0.7, font=\tiny, above] {$A^{(3)}_1$};
    \draw[->, thick] (\xbase+.5,-1) -- (\xbase+1.8,-1);
    \node at (\xbase+1,-0.8) [inner sep=1pt]{$\pi_3$};

    \pgfmathsetmacro{\xbase}{-2}
    \draw[->, thick] (\xbase,-1) -- (\xbase+1.3,-1);
    \node at (\xbase+.6,-0.8) [inner sep=1pt]{$\pi_4$};
    \node at (\xbase-1,-1) [inner sep=1pt]{$\cdots$};

\end{tikzpicture}
\caption{The iterated blow-up $X_{m} \to X_{m-1} \to \cdots \to X_1 \to \bbP^2$.}
\label{fig: iterated blowup}
\end{figure}

Let $\tilde{X}$ be the variety obtained after performing, for every $i$, $k_{i \, h_i}$ blow-ups at the point $\{ x_0 = \theta_i = 0\}$ in the support of the indeterminacy locus. We use the notation $A_{ij}$ and $E_{ij}$, for the strict transform and the total transform in $\tilde{X}$ of the exceptional divisor of the $j$-th blow-up of the chain corresponding to eigenvalue $i$, respectively. Moreover, we denote by $\tilde H\subset \tilde X$ the pullback of the generic hyperplane $H \subset \mathbb P^2$ via the blow-up map.

The following result provides the desired smooth resolution $\tilde X$ of $\phi$.

\begin{proposition}\label{prop: resolution of phi}
The extension $\tilde{\phi} : \tilde{X} \to Y_L$ is regular. It is given by the complete linear series of the divisor 
\[ 
D := (n - \eta_{\#}) \tilde H - \sum_{i \in [s]} \sum_{j\in [k_{i \, h_i}]} \mu_{ij} E_{ij}
\quad \text{ where } \quad 
\mu_{ij} := \sum_{\{ \ell \mid k_{i \ell} \geq j \} } e_{\ell} .  
\]
In particular, $\YL$ is linearly normal.
\end{proposition}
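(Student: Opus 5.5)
The plan is to work locally over each point $\{x_0=\theta_i=0\}$ and show two things. First, the pullback to $\tilde X$ of the ideal sheaf of $Z$ is the invertible sheaf $\calO_{\tilde X}(-\sum_{j}\mu_{ij}E_{ij})$. Second, pushing this sheaf forward gives back $I(Z)$. Since the blow-up chains over distinct points are independent, it suffices to treat one eigenvalue. We may take $\theta=x_1$ and work in the affine chart $x_2=1$ with local coordinates $(x_0,t)$, where $t=x_1$. By \Cref{lemma: systems of minors} and \Cref{lemma: linear series single eigenvalue}, the local ideal there is the product $\prod_\ell (x_0,t^{k_\ell})^{e_\ell}$.

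\textbf{Step 1: one factor.} Fix a factor $(x_0,t^k)$. After $j$ blow-ups, near $z_j$ there are coordinates $(a,t)$ with $x_0=at^j$. In these coordinates $A^{(j)}_j=\{t=0\}$ and $H^{(j)}=\{a=0\}$. On the other chart of each blow-up, the pulled-back $x_0$ is a local generator of the exceptional divisor times a unit. An induction on $j$ then gives the following.
\begin{itemize}
\item For $j\le k$, the ideal $(x_0,t^k)$ has order exactly $1$ at the centre $z_{j-1}$, so each blow-up extracts one copy of the new exceptional divisor.
\item After $k$ blow-ups the ideal becomes $t^k(a,1)$, which is invertible. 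The points on the other charts are already principal.
\item For $j>k$ the centre $z_{j-1}$ lies off the support of the non-invertible part, so nothing further is extracted.
\end{itemize}
In terms of total transforms, $(x_0,t^k)\cdot\calO_{\tilde X}=\calO_{\tilde X}(-\sum_{j\le k}E_j)$. Multiplying over all factors, the coefficient of $E_{ij}$ is $\sum_{k_{i\ell}\ge j} e_{i\ell}=\mu_{ij}$. Hence $I(Z)\cdot\calO_{\tilde X}=\calO_{\tilde X}(-\sum\mu_{ij}E_{ij})$ is invertible.

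\textbf{Step 2: regularity of $\tilde\phi$.} By \Cref{theorem: linear series of YL}, $\phi$ is defined by $x_0^{\eta_\#}I(Z)_{n-\eta_\#}$; dropping the common factor $x_0^{\eta_\#}$, we use $I(Z)_{n-\eta_\#}$. Its regularity is $\reg(Z)=n_1+\dots+n_s-1$, and $n-\eta_\#=\epsilon_\bullet+\eta_\bullet+\sum n_i$, which is at least that regularity plus one. So $I(Z)_{n-\eta_\#}$ generates $I_Z(n-\eta_\#)$. By Step 1, its pullback generates $\calO_{\tilde X}(D)$, so the sections have no base points and $\tilde\phi$ is a regular morphism given by a subseries of $|D|$.

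\textbf{Step 3: completeness of the series.} This is the step I expect to be the main obstacle. We need $\pi_*\calO_{\tilde X}(-\sum\mu_{ij}E_{ij})=I_Z$. For a sequence of point blow-ups on which the pullback of the ideal becomes invertible, this pushforward is the integral closure $\overline{I_Z}$. So it suffices to show that the monomial ideal $J'=\prod(x_0,t^{\tilde k_j})$ is integrally closed. By the proof of \Cref{lemma: linear series single eigenvalue}, $x_0^at^b\in J'$ if and only if $b\ge\beta(a):=\sum_{j\le d-a}\tilde k_j$. The increments $\beta(a-1)-\beta(a)=\tilde k_{d-a+1}$ are nonincreasing in $a$, because the $\tilde k_j$ are sorted increasingly. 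So the staircase is convex and coincides with its Newton polygon, and a monomial ideal with this property is integrally closed. Then
\[
H^0(\tilde X,D)=H^0(\bbP^2,I_Z(n-\eta_\#))=I(Z)_{n-\eta_\#},
\]
using that $I(Z)$ is saturated. Thus $\tilde\phi$ is given by the complete series $|D|$.

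\textbf{Step 4: linear normality.} Pulling back sections along the surjective birational morphism $\tilde\phi$ gives an injection
\[
H^0(Y_L,\calO(1))\hookrightarrow H^0(\tilde X,D)=I(Z)_{n-\eta_\#}.
\]
This last space is spanned by the (divided) Pl\"ucker coordinates. Hence the restriction map from the Pl\"ucker linear forms onto $H^0(Y_L,\calO(1))$ is surjective, and $Y_L$ is linearly normal.
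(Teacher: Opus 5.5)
Your proof is correct and follows essentially the paper's route. The paper tracks the weak transform of the whole product ideal (\Cref{lemma: transform one point}) and applies the multiplicity rule for point blow-ups, whereas you compute the total transform of each factor $(x_0,t^k)$ separately and multiply. There is a harmless slip in that step: on the chart $t=ua$ of the $j$-th blow-up, $x_0$ pulls back to $a^ju^{j-1}$, not to a unit times the exceptional equation; but the pulled-back ideal is still principal there, which is all you use. Both arguments then obtain completeness of $|D|$ from $\pi_*\calO_{\tilde X}(-\sum\mu_{ij}E_{ij})=\overline{\calI_Z}$. The remaining differences are minor: you prove $\calI_Z$ integrally closed directly from the convexity of the staircase $\beta$ rather than citing Zariski's theorem on products of complete ideals, and you make the linear normality step explicit via injectivity of $\tilde\phi^*$ on $H^0(Y_L,\calO(1))$, which the paper leaves implicit.
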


Since the blow-up is a local operation, the proof of \Cref{prop: resolution of phi} is reduced to a local calculation along each of the exceptional chains. More precisely, we give a recursive description of the pullback of the linear series $I(Z)_{n - \eta_{\#}}$. We recall the notion of \emph{weak transform} of a scheme under a blow-up, following \cite[p.29]{HauserBlowups}. 

Let $X$ be a smooth variety, $Z \subseteq X$ be a (possibly non-reduced) scheme and let $z \in Z$ be a reduced point. Let $\pi : \Bl_z X \to X$ be the blow-up of $X$ with center $z$. The total transform of $Z$ in $\Bl_z X$ is a scheme $\pi^{-1}(Z)$ having the exceptional divisor $E$ of $\pi$ in its support. Since $E$ is Cartier, locally it has a principal ideal: the \emph{weak transform} of $Z$ in $\Bl_zX$ is the scheme $Z' \subseteq \Bl_z X$ defined as the residual scheme of $\pi^{-1}(Z)$ with respect to $E$, namely the scheme that, locally, has ideal $I(\pi^{-1}(Z)) : I(E)$.

If $Z$ is reduced, then the weak transform coincides with the strict transform of $Z$: more precisely, the strict transform is the scheme that, locally, has ideal $I(\pi^{-1}(Z)) : I(E)^{\infty}$, which coincides with $I(\pi^{-1}(Z)) : I(E)$ if $Z$ is reduced. However, if $Z$ is non-reduced, then the total transform $\pi^{-1}(Z)$ may be non-reduced as well, and in this case the weak transform retains the information of this non-reduced structure \emph{near} $E$. In our case, $Z$ is local and supported at the point $z$, and the total transform of $Z$ will consist of the exceptional divisor $E$ with an embedded prime at a specific point $z' \in E$. Keeping track of the behaviour of such embedded prime allows us to prove \Cref{prop: resolution of phi}.

The local description is given in the following result.

\begin{lemma}\label{lemma: transform one point}
    Let $I = \prod_{i \in [h]} (x_0,x_1^{k_i})^{e_i}$. Then, the weak transform of $I$ after the $\ell$-th blow-up is supported at $z_\ell$ and it is the ideal
    \[
   I_\ell = \prod_{\{ j \mid k_j > \ell \}}(\tilde{x}_0 , \tilde{x}_1^{k_j-\ell})^{e_j}
    \]
    where $\tilde{x}_0$ and $\tilde{x}_1$ are the local coordinates at $z_\ell$ of the strict transform of $H_\infty$ and the exceptional divisor of the $\ell$-th blow-up, respectively. In particular, the order of vanishing of $I_\ell$ at $z_\ell$ is $\mu_{\ell+1}$, and if $\ell \geq k_h$, then $I_\ell$ is the unit ideal.
\end{lemma}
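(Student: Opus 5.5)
Induction on $\ell$, the case $\ell=0$ being $I_0=I$ with $\tilde x_0=x_0$, $\tilde x_1=x_1$ local coordinates at $z_0=[0:0:1]$ (after dehomogenising at $x_2=1$). All we need is one blow-up step: if an ideal $I_\ell=\prod_j(\tilde x_0,\tilde x_1^{c_j})^{e_j}$ with $c_j\geq 1$ is given at $z_\ell$, where $\{\tilde x_0=0\}$ is the local equation of $H^{(\ell)}$, then its weak transform under the blow-up at $z_\ell$ is $\prod_{c_j>1}(\tilde x_0',\tilde x_1'^{\,c_j-1})^{e_j}$, supported at $z_{\ell+1}$. Here $\tilde x_0'$ and $\tilde x_1'$ are coordinates at $z_{\ell+1}$ cutting out $H^{(\ell+1)}$ and $A^{(\ell+1)}_{\ell+1}$ respectively.

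For the step, I will cover $\Bl_{z_\ell}$ by the two standard charts. On the chart $\tilde x_1=u$, $\tilde x_0=uv$, the exceptional divisor is $\{u=0\}$ and the strict transform of $\{\tilde x_0=0\}$ is $\{v=0\}$. Their intersection is the origin, and this point is $z_{\ell+1}$. There each factor pulls back to
\[
(uv,u^{c_j})=u\,(v,u^{c_j-1}).
\]
Since the product of ideals commutes with pullback and $u$ is a nonzerodivisor, the total transform is $u^{d}\prod_j(v,u^{c_j-1})^{e_j}$, where $d=\sum_j e_j$ is the multiplicity of $I_\ell$ at $z_\ell$. It remains to justify that the weak transform is obtained by dividing by $u^{d}$, i.e. the colon by $I(E)^{d}$. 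I read the definition of the weak transform in \cite{HauserBlowups} as removing $E$ with the order of $I_\ell$ at $z_\ell$; removing $E$ only once would not give an ideal of the asserted form. Note also that the factors with $c_j=1$ become $(v,1)$, the unit ideal. On the other chart, $\tilde x_0=u'$, $\tilde x_1=u'w$, each factor pulls back to $u'(1,\dots)=(u')$, so after dividing by $u'^{d}$ the ideal is the unit ideal there. Hence the weak transform is supported only at $z_{\ell+1}$, and setting $\tilde x_0'=v$, $\tilde x_1'=u$ gives the claimed shape.

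Iterating the step, the factors surviving after $\ell$ blow-ups are exactly those with $k_j>\ell$, with exponents $k_j-\ell$, which is the formula for $I_\ell$. The order of vanishing of $I_\ell$ at $z_\ell$ is the least degree of an element of the ideal. Each factor $(\tilde x_0,\tilde x_1^{c})$ with $c\geq1$ has order $1$ and initial ideal containing $\tilde x_0$, so the product has order equal to the number of factors:
\[
\sum_{k_j>\ell}e_j=\sum_{k_j\geq \ell+1}e_j=\mu_{\ell+1}.
\]
If $\ell\geq k_h$ the product is empty, so $I_\ell$ is the unit ideal.

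The main obstacle is the bookkeeping of which point is $z_{\ell+1}$ and which coordinate is which. One has to check that $z_{\ell+1}$ is the intersection of the strict transform of $H$ (namely $v=0$) with the new exceptional divisor ($u=0$), and that it avoids $A^{(\ell+1)}_\ell$. In the chart the latter is the strict transform of $\{\tilde x_1=0\}$, which is $\{w=0\}$ and lies in the other chart. This matches the construction of $\tilde X$, and it is what makes the roles of $\tilde x_0,\tilde x_1$ persist through the induction.
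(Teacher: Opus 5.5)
Your proof is correct and follows essentially the same route as the paper: induction on $\ell$, pulling back each factor $(\tilde x_0,\tilde x_1^{c_j})$ to the two standard charts of the blow-up at $z_\ell$, factoring the exceptional divisor out of every factor (so factors with $c_j=1$ become trivial and the other chart gives the unit ideal), and reading off the order $\mu_{\ell+1}$ as the number of factors. Your point that the weak transform must remove $E$ with multiplicity equal to the order of $I_\ell$ at $z_\ell$, not by a single colon with $I(E)$, is right and is the standard definition. It is also what the paper's chart computation actually does, even though the paper states the definition with a single colon.
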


\begin{proof}
We apply induction on $\ell$. For $\ell = 0$, we have $I = I_0$ and there is nothing to prove. Assume $\ell \geq 1$. Let $h'$ be such that $k_{h'} \leq \ell \leq k_{h'+1}$ for some $h' < h$, with possibly $h' = 0$ in which case assume $k_0=0$ or $h' = h$ in which case assume $k_{h'+1} = \infty$. Let $U\subseteq X_{\ell-1}$ be an affine open set containing $z_{\ell-1}$, and let $\bar{x}_0, \bar{x}_1$ be local coordinates on $U$ such that $H^{(\ell-1)}\cap U = \{\bar{x}_0 = 0\}$ and $A_{\ell-1}^{(\ell-1)}=\{\bar{x}_1 = 0\}$. Similarly, let $\tilde{U}\subseteq X_\ell$ be an affine open set containing $z_\ell$, and let $\tilde{x}_0, \tilde{x}_1$ be local coordinates on $\tilde{U}$ such that $H^{(\ell)}\cap \tilde U = \{\tilde{x}_0 = 0\}$ and $A_{\ell}^{(\ell)}\cap \tilde U = \{\tilde{x}_1 = 0\}$.

Let $\ell > 0$. By the induction hypothesis, the weak transform $Z_{\ell-1}$ is supported at $z_{\ell-1}$ and it has ideal
\[
I(Z_{\ell-1}) = \prod_{i =h''+1}^h (\bar{x}_1^{k_i - (\ell-1)}, \bar{x}_0)^{e_i}
\]
where $h'' = h'-1$ if $\ell = k_{h'}$ and $h'' = h'$ otherwise. By definition, $\Bl_{z_{\ell-1}} U$ is cut out by the equation $\bar{x}_0 u_1 - \bar{x}_1 u_0 = 0$ in $U\times \PP^1$ and it has two affine charts $B_i = \Bl_{z_{\ell-1}} U \cap \{ u_i \neq 0\}$ for $i=0,1$. Here $u_0,u_1$ are coordinates on $\bbP^1$. The exceptional divisor $A^{(\ell)}_\ell$ has equation $\bar{x_i} = 0$ on $B_i$.

The equation of the strict transform $H^{(\ell)}$ of $H^{(\ell-1)}$ is computed on the two charts as follows. On $B_0$, the equation of the total transform of $H^{(\ell)}$ is $\bar{x}_1 = 0$ so that removing the factor $\bar{x}_1 = 0$ of the exceptional divisor, we have $H^{(\ell)} \cap B_0 = \emptyset$. On $B_1$, the equation of the total transform is $\bar{x}_1 = 0$, which implies $\bar{x}_0 u_1 = 0$ and by removing the factor $\bar{x}_0 = 0$ of the exceptional divisor, we conclude $H^{(\ell)} \cap B_1 = \{ u_1 = 0 \}$. In particular, observe that $\bar{x}_1 = 0$ as well, since on $B_1$ we have $\bar{x}_1 = \bar{x}_0 u_1$.

The weak transform of the scheme $Z_{\ell-1}$ is a scheme $Z_\ell$ computed similarly. On $B_0$, setting $u_0 = 1$, we have $\bar{x}_1 = u_1\bar{x}_0$, so the ideal of the total transform of $Z_{\ell-1}$ is 
\[
\prod_{i =h''+1}^h (\bar{x}_1^{k_i - (\ell-1)}, \bar{x}_0)^{e_i} = \prod_{i =h''+1}^h ((u_1\bar{x}_0)^{k_i - (\ell-1)}, \bar{x}_0)^{e_i}.
\]
By removing the factor $\bar{x}_0 = 0$ of the exceptional divisor, we have $Z_\ell \cap B_0 = \emptyset$. On the open set $B_1$, setting $u_1 = 1$, we have $\bar{x}_0 = u_0\bar{x}_1$, so the ideal of the total transform of $Z_{\ell-1}$ is
\[
\prod_{i =h''+1}^h (\bar{x}_1^{k_i - (\ell-1)}, \bar{x}_0)^{e_i} =  \prod_{i =h''+1}^h (\bar{x}_1^{k_i - (\ell-1)}, u_0\bar{x}_1)^{e_i} .
\]
By removing the factor $\bar{x}_1 = 0$ of the exceptional divisor, we obtain 
\begin{equation}\label{eqn: weak transform of Z}
I(Z_{\ell} \cap B_1 )  = \prod_{i =h''+1}^h (\bar{x}_1^{k_i - \ell}, u_0)^{e_i}.
\end{equation}
If $\ell= k_{h''+1}$, then the first factor in \eqref{eqn: weak transform of Z} is the trivial ideal $(1)$ and the ideal in \eqref{eqn: weak transform of Z} is
\[
I(Z_{\ell} \cap B_1 ) = \prod_{i =h'+1}^h (\bar{x}_1^{k_i - \ell}, u_0)^{e_i}.
\]
Here $\bar{x}_1 = 0$ is the local equation of the exceptional divisor $A^{(\ell)}_\ell$, and $u_0 = 0$ is the local equation of the strict transform $H^{(\ell)}$. In particular, $Z_{\ell}$ is supported at $z_\ell = A^{(\ell)}_\ell \cap H^{(\ell)}$. Rename these coordinates as $\tilde{x}_1$ and $\tilde{x}_0$, respectively, to obtain the desired expression of the ideal of $Z_\ell$ in $\tilde{U}$.

The computation of the order of vanishing is straightforward: 
\[
\max \{ q \mid (\tilde{x}_0, \tilde{x}_1)^q \supseteq I(Z_{\ell} \cap B_1) \} = \sum_{i =h''+1}^h e_i = \mu_{\ell+1}.
\]
Finally, if $\ell = k_h$, then $Z_{k_h}$ is empty and the ideal $I(Z_{k_h} \cap B_1)$ is the unit ideal.
\end{proof}

The proof of \Cref{prop: resolution of phi} is obtained by pulling back the linear series defining $\phi$. We recall a general fact, see e.g. \cite[Ex. IV.2.(ii)]{Beau96}: \textit{Let $X$ be a smooth variety, $z \in X$ a point and $\beta: X' \to X$ the blow-up at $z$ with exceptional divisor $E$. If $f: X \dashto Y$ is given by a linear series $U \subseteq H^0(X , \calO_X(F))$ for a divisor $F$, then the extension $f' : X' \dashto Y$ is given by a linear series $U' \subseteq H^0(X' , \calO_{X'}(\beta^*F - qE))$, where $q$ is the multiplicity of $U$ at $z$.}

\begin{proof}[Proof of \Cref{prop: resolution of phi}]
  Applying \Cref{lemma: transform one point} at every base point of the blow-up, we obtain that $\tilde{\phi}$ is a regular extension of $\phi$. 

 We apply \cite[Ex. IV.2.(ii)]{Beau96}. In our case, $\beta: \tilde X \to X$ is $\pi: \tilde X \to \mathbb P^2$, and $F$ is the generic hyperplane $H\subset \mathbb P^2$. We use induction on the number of blow-ups. From \Cref{theorem: linear series of YL} we know $n-\eta_\# \geq \reg(Z)$, hence the ideal sheaf $\mathcal{I}_Z(n-\eta_\#)$ on $\mathbb P^2$ is globally generated, and $\varphi$ is defined by the linear system $I(Z)_{n-\eta_\#}$ inside the complete linear series $(n-\eta_\#)H$. Working locally, assume there is a single chain of blow-ups and let $D_\ell$ be the divisor defining the extension after $\ell$ blow-ups. The multiplicity is computed in \Cref{lemma: transform one point}, and we obtain
  \[
 D_{\ell+1} = D_\ell - \mu_{\ell+1} E_{\ell+1}
  \]
  where we omit the pullback via the blow-up map from the notation. 
  Over a fixed chain, and after $k_h$ blow-ups, we deduce $D_{k_h} = D_0 - \sum_{\ell=1}^{k_h} \mu_\ell E_\ell$. Adding the contribution over all chains, we obtain the desired expression 
  \[
 D = (n-\eta_{\#})\tilde H - \sum_{i \in [s]}\sum_{j \in [k_{i \, h_i}]} \mu_{ij} E_{ij}.
  \]
This shows that $\pi^\ast \mathcal{I}_Z = \mathcal{O}_{\tilde X}(-\sum \mu_{ij}E_{ij})$ and $\tilde \phi$ is given by a linear series $U' \subseteq H^0(\tilde X, \mathcal{O}_{\tilde X}(D))^\ast$.  It remains to show that the above containment is actually an equality.  By construction, $U' = H^0(\mathbb P^2, \mathcal{I}_Z(n-\eta_\#))$, identified as a subspace of $H^0(\tilde{X}, \calO_{\tilde{X}}(D))$ via the transform of the divisors in $I(Z)_{n-\eta_\#}$. 

To conclude, it suffices to show $\pi_\ast \mathcal{O}_{\tilde X}(-\sum \mu_{ij}E_{ij}) = \mathcal{I}_Z$, which would imply 
\begin{align*}
H^0(\tilde X, \mathcal{O}_{\tilde X}(D)) = H^0(\mathbb P^2, \pi_\ast\mathcal{O}_{\tilde X}(D)) = H^0(\mathbb P^2, \mathcal I_Z(n-\eta_\#)) .
\end{align*} 
To prove $\pi_\ast \mathcal{O}_{\tilde X}(-\sum \mu_{ij}E_{ij}) = \mathcal{I}_Z$, we use the general fact that $\pi_\ast \mathcal{O}_{\tilde X}(-\sum \mu_{ij}E_{ij})$ is the integral closure of the ideal sheaf $\mathcal{I}_Z$ \cite[Remark 9.6.4]{lazarsfeld2017positivity}, and we observe that $\calI_Z$ is integrally closed. This follows because locally, at each support point of $Z$, $\calI_Z$ is product of integrally closed monomial ideals in a two dimensional regular local ring; under this assumption, $\calI_Z$ is integrally closed \cite[Appendix 5, Thm. 2']{zariski2013commutative}. 
\end{proof}

We complete our study of the smooth resolution $\tilde X$ with the description of the non-trivial multiplications in the Chow ring $\CH^\bullet (\tilde{X})$. Denote by $\tilde{\zeta} = [\tilde H]$ the class of pullback of the generic hyperplane of $\mathbb P^2$ and by $[\tilde H_\infty]$ the class of the strict transform of the line at infinity $H_\infty \subset \mathbb P^2$. Denote by $\alpha_{ij}:=[A_{ij}]$ the classes of the exceptional curves, for $i \in [s]$ and $j\in [k_{i \, h_i}]$. Observe that the total transform $E_{ij}$ is the sum $E_{ij}= A_{ij} + \ldots + A_{i \, k_{i \, h_i}}$.

\begin{lemma}\label{lemma: intersections upstairs}
The Chow ring $\CH^\bullet(\tilde X)$ is generated by the classes $\tilde\zeta$ and $\alpha_{ij}$ for all $i \in [s]$ and $j \in [k_{i \, h_i}]$. Their product is given by  
\[
\alpha_{ij} \cdot \alpha_{i'j'} = \left\{\begin{array}{ll} 
-\delta_{ii'} & \text{if } j = j' = k_h \\
-2\delta_{ii'} & \text{if } j = j' < k_h \\
\delta_{ii'} & \text{if } |j-j'| = 1 \\
0 & \text{otherwise}
\end{array}\right. ,  \qquad \alpha_{ij} \cdot \tilde{\zeta} = 0, \qquad \tilde{\zeta}^2 = 1.
\]
In particular, $[E_{ij}] \cdot [E_{i'j'}] = -\delta_{ii'}\delta_{jj'}$. Moreover, $A_{ij}$ is the unique representative of its class $\alpha_{ij} \in \CH^\bullet(\tilde X)$. Finally, the class of the strict transform of the line at infinity $H_\infty$ is given by
\[ 
[\tilde H_\infty] = \tilde{\zeta} - \sum_{i \in [s]}\sum_{j \in [k_{i \, h_i}]} j \alpha_{ij} = \tilde{\zeta} - \sum_{i \in [s]}\sum_{j \in [k_{i \, h_i}]}[E_{ij}].
\]
\end{lemma}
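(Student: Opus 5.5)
The plan is to use the standard description of the Chow ring of an iterated point blow-up of a smooth surface. Each blow-up $\pi_\ell : X_\ell \to X_{\ell-1}$ at a point satisfies $\CH^\bullet(X_\ell) = \pi_\ell^*\CH^\bullet(X_{\ell-1}) \oplus \bbZ[E]$, with $E^2=-1$ and $E\cdot \pi_\ell^*(c)=0$. By induction over the whole sequence of blow-ups (all chains, $i\in[s]$), $\CH^\bullet(\tilde X)$ is generated by $\tilde\zeta$ and the total transforms $[E_{ij}]$. The relations are $\tilde\zeta^2=1$, $\tilde\zeta\cdot[E_{ij}]=0$ and $[E_{ij}]\cdot[E_{i'j'}]=-\delta_{ii'}\delta_{jj'}$; the last holds because a total transform pulled back further keeps its self-intersection, and is orthogonal to earlier pulled-back classes. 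Since $E_{ij}=A_{ij}+\ldots+A_{i\,k_{i\,h_i}}$ is unitriangular, we have $\alpha_{ij}=[E_{ij}]-[E_{i\,j+1}]$ (with $[E_{i\,k_{i\,h_i}+1}]:=0$). Hence the $\alpha_{ij}$ together with $\tilde\zeta$ also generate, and $\alpha_{ij}\cdot\tilde\zeta=0$.

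Next we compute the products directly from $\alpha_{ij}=[E_{ij}]-[E_{i\,j+1}]$ and the orthonormality of the $E$'s. For $j<k_h$ this gives $\alpha_{ij}^2=-1-1=-2$, and $\alpha_{i\,k_h}^2=-1$. For adjacent indices, $\alpha_{ij}\cdot\alpha_{i\,j+1}=-[E_{i\,j+1}]^2=1$. All other pairs give $0$, including pairs with $i\neq i'$ from different chains. This recovers the displayed table. As a geometric sanity check, each new center $z_\ell$ lies on $A_\ell^{(\ell)}$ but not on $A_{\ell-1}^{(\ell)}$, so the chain is linear. This matches the construction before \Cref{fig: iterated blowup}.

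For the class of $\tilde H_\infty$, we track the strict transform through the blow-ups. At each step the center $z_{\ell-1}$ lies on $H^{(\ell-1)}$, which is smooth there with multiplicity one. Therefore $[H^{(\ell)}]=\pi_\ell^*[H^{(\ell-1)}]-[E]$. Iterating gives $[\tilde H_\infty]=\tilde\zeta-\sum_{i,j}[E_{ij}]$. Rewriting via $[E_{ij}]=\sum_{j'\ge j}\alpha_{ij'}$, each $\alpha_{ij}$ appears $j$ times, which yields the form $\tilde\zeta-\sum j\,\alpha_{ij}$.

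The step needing the most care is uniqueness of the representative of $\alpha_{ij}$. The plan is to show $h^0(\calO_{\tilde X}(A_{ij}))=1$. Suppose $D\ge 0$ is effective with $[D]=\alpha_{ij}$. If $j=k_h$, then $A_{ij}^2=-1<0$ and $D\cdot A_{ij}=-1<0$, so $A_{ij}$ is a component of $D$. Then $D-A_{ij}$ is effective with trivial class, and since $\tilde X$ is projective, $D=A_{ij}$. If $j<k_h$, then $D\cdot A_{ij}=-2<0$, so $A_{ij}\subseteq D$ and $D-A_{ij}$ is effective of class $0$, so again $D=A_{ij}$. The general principle used is that an irreducible curve $C$ with $D\cdot C<0$ must be a component of $D$. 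I expect this step to be the main, though mild, obstacle, together with checking that numerical and rational equivalence agree here: $\tilde X$ is rational, so $\mathrm{Pic}=\CH^1$ is free and detected by the intersection form.
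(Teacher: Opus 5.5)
Your proposal is correct and is the same standard argument the paper relies on: the paper's proof just reduces to a single chain and defers to the usual blow-up description of the Chow ring (as in Eisenbud--Harris, Prop.~2.13). You spell out the details the paper leaves implicit, namely the change of basis $\alpha_{ij}=[E_{ij}]-[E_{i\,j+1}]$, the strict-transform recursion for $[\tilde H_\infty]$, and the uniqueness of $A_{ij}$ from $D\cdot A_{ij}=\alpha_{ij}^2<0$; note that this last step happens directly in $\CH^1(\tilde X)$, so no comparison with numerical equivalence is actually needed.
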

\begin{proof}
It is enough to consider the local case. The proof can be obtained by standard arguments similar to the one of \cite[Prop. 2.13]{EisHar:3264}. A more general statement is in \cite{PortelaChow}.
\end{proof}

Given $\kappa := \sum_{i \in [s]} k_{i \, h_{i}}$, the expression of $[\tilde H_\infty]$ in \Cref{lemma: intersections upstairs} yields the identities
\begin{equation}\label{eqn: intersection against H}
[\tilde H_\infty]^2 = 1-\kappa \quad \quad , \quad \quad \tilde{\zeta} \cdot [\tilde H_\infty] = 1 \quad  \quad , \quad \quad \alpha_{ij} \cdot [\tilde H_\infty] = \left\{\begin{array}{ll}
1 & \text{if $j = k_{i \, h_i}$} \\
0 & \text{otherwise}.
\end{array} \right.
\end{equation}

A consequence of \Cref{prop: resolution of phi} and \Cref{lemma: intersections upstairs} is the following result, which proves Item 2 of \Cref{thm: main forward} about the degree of $Y_L$ in Pl\"ucker embedding and yields closed formulas for the coefficients of $[Y_L]$ in \Cref{thm: schubert class}.

\begin{theorem}\label{theorem: degree of YL}
The degree of the Schubert surface $\YL \subset \Gr(n,V\oplus W)$ is 
\[
\deg(\YL) = ({n-\eta_{\#}})^2 - \sum_{i \in [s]} \sum_{j \in [k_{i \, h_i}]} \mu_{ij}^2 \ . 
\]
\end{theorem}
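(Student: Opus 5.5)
The degree will be computed as the self-intersection $D^2$ on $\tilde X$, where $D$ is the divisor from \Cref{prop: resolution of phi}. By that proposition, $\tilde\phi\colon \tilde X \to Y_L$ is a regular morphism given by the complete linear series $|D|$. The Plücker hyperplane class pulls back along $\tilde\phi$ to $D$. Since $\tilde\phi$ is birational onto its image ($\phi$ is an open embedding on the affine chart $L$, and $\tilde X \to \bbP^2$ is an isomorphism there), the projection formula gives
\[
\deg Y_L = \deg(\tilde\phi)\cdot \deg Y_L = \tilde\phi^*h \cdot \tilde\phi^*h = D^2 .
\]
So it suffices to compute $D^2$ in $\CH^\bullet(\tilde X)$.

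For this I use \Cref{lemma: intersections upstairs}. Write $D = (n-\eta_{\#})\tilde\zeta - \sum_{i,j}\mu_{ij}E_{ij}$. The lemma gives $\tilde\zeta^2 = 1$ and $\tilde\zeta\cdot\alpha_{ij}=0$, hence $\tilde\zeta\cdot E_{ij}=0$, since each $E_{ij}$ is a sum of classes $\alpha_{ij'}$. It also gives $E_{ij}\cdot E_{i'j'} = -\delta_{ii'}\delta_{jj'}$. Expanding the square, the cross terms with $\tilde\zeta$ vanish and the $E$-terms are orthogonal, so
\[
D^2 = (n-\eta_{\#})^2 - \sum_{i\in[s]}\sum_{j\in[k_{i\,h_i}]}\mu_{ij}^2 .
\]
This is the claimed formula.

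The only step needing care is the orthogonality $E_{ij}\cdot E_{ij'} = -\delta_{jj'}$ for total transforms within one chain. This is the standard fact that total transforms of exceptional divisors of successive point blow-ups are orthogonal with self-intersection $-1$, and it is recorded in \Cref{lemma: intersections upstairs}. As a sanity check, for $E_{i1} = \sum_j A_{ij}$ the self-intersection computed from the $\alpha$-matrix is $-2(k-1) - 1 + 2(k-1) = -1$, as expected. Birationality of $\tilde\phi$ is clear from the construction. I therefore expect no real obstacle beyond the bookkeeping of the total transforms $E_{ij}$ versus the strict transforms $A_{ij}$.
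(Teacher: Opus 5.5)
Your proposal is correct and follows the paper's proof: you identify $\deg Y_L$ with $D^2$ for the divisor $D$ of \Cref{prop: resolution of phi}, and you expand the square using $\tilde\zeta^2=1$, $\tilde\zeta\cdot E_{ij}=0$ and $E_{ij}\cdot E_{i'j'}=-\delta_{ii'}\delta_{jj'}$ from \Cref{lemma: intersections upstairs}. Your remark that $\tilde\phi$ is birational onto $Y_L$, so that $\deg Y_L=(\tilde\phi^*h)^2$, makes explicit a step the paper leaves implicit.
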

\begin{proof}
   Since the smooth resolution $\tilde\phi$ is defined by the complete linear system of the divisor $D$, the degree of $\YL$ coincides with the self-intersection of $D$:
   \[ \textstyle [D]^2 = (n-\eta_\#)^2[\tilde H]^2 - 2 \sum_i \sum_j (n-\eta_\#)\mu_{ij}[\tilde H] [E_{ij}] + \sum_{i_1,i_2}\sum_{j_1,j_2} \mu_{i_1 j_1}\mu_{i_2 j_2} [E_{i_1 j_1}][E_{i_2 j_2}] . \]
   The statement follows from the intersection products in \Cref{lemma: intersections upstairs}.
\end{proof}

\subsection{Exceptional curves and rational singularities}\label{subsec: exceptional curves}

In this section, we determine the image in $Y_L$ of the exceptional curves of the blow-up $\tilde{X}$ constructed in the previous section via $\tilde{\phi}: \tilde{X} \to Y_L$ and we characterise the singularities arising from this map. For $i\in [s]$ and $\ell \in [k_{i h_i}]$, recall that $A_{i\ell}$ denotes the exceptional divisor in $\tilde X$ of the $\ell$-th blow-up over the $i$-th irreducible component of $Z$.
\begin{proposition}\label{prop: contraction and embeddings}
    For $i\in [s]$, let $\ell \leq k_{h_i}$ be an integer. 
    \begin{itemize}
    \item If $\ell = k_{i j}$ for some $j$, then $\tilde{\phi}$ maps the exceptional divisor $A_{i \ell}$ to a rational normal curve of degree $e_{i j}$.
    \item If $k_{i j} < \ell < k_{i \, j+1}$, then $\tilde{\phi}$ contracts $A_{i \ell}$ to a point.
    \end{itemize}
\end{proposition}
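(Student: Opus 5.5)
The approach is to compute the degree $D\cdot A_{i\ell}$ of the restriction of the resolving divisor $D$ of \Cref{prop: resolution of phi} to each exceptional curve. We then check that $\tilde\phi|_{A_{i\ell}}$ is given by the complete linear series of $\calO_{\bbP^1}(D\cdot A_{i\ell})$. A complete series of degree $0$ contracts the curve, and a complete series of positive degree $e$ embeds $\bbP^1$ as a rational normal curve of degree $e$.

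\textbf{Step 1: the degree.} Since everything is local over the support of $Z$, fix $i$ and drop it from the notation. Using $E_j=A_j+\dots+A_{k_h}$, \Cref{lemma: intersections upstairs} gives $E_j\cdot A_\ell=-1$ if $j=\ell$, $E_j\cdot A_\ell=1$ if $j=\ell+1$, and $0$ otherwise, together with $\tilde H\cdot A_\ell=0$. Hence
\[
D\cdot A_\ell=\mu_\ell-\mu_{\ell+1},
\]
with the convention $\mu_{k_h+1}=0$. By definition $\mu_\ell=\sum_{\{j:\,k_j\ge \ell\}}e_j$, so $\mu_\ell-\mu_{\ell+1}=\sum_{\{j:\,k_j=\ell\}}e_j$. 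This equals $e_j$ if $\ell=k_j$ and $0$ if $k_j<\ell<k_{j+1}$. In the second case $\tilde\phi$ contracts $A_\ell$, since $\tilde\phi$ is a morphism given by the base-point-free complete system $|D|$, which has degree $0$ on $A_\ell$.

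\textbf{Step 2: completeness of the restriction when $\ell=k_j$.} We need the restriction map $H^0(\tilde X,\calO(D))\to H^0(A_\ell,\calO_{\bbP^1}(e_j))$ to be surjective. Then the image of $A_\ell$ is the image of $\bbP^1$ under the complete series of degree $e_j$, which is a rational normal curve of degree $e_j$, and no projection can collapse it.

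I would first try to prove surjectivity via the vanishing $H^1(\tilde X,\calO(D-A_\ell))=0$. Write $D-A_\ell=K_{\tilde X}+(D-A_\ell-K_{\tilde X})$ with $K_{\tilde X}=-3\tilde H+\sum E_{ij}$, so that
\[
D-A_\ell-K_{\tilde X}=(n-\eta_\#+3)\tilde H-\sum(\mu_{ij}+1)E_{ij}-A_\ell .
\]
One then checks that this divisor is nef and big by intersecting it with the generators $\alpha_{ij}$, with $\tilde H_\infty$, and with the lines through the points, and applies Kawamata--Viehweg vanishing.

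If this fails, the alternative is a direct computation in local coordinates, following the proof of \Cref{lemma: transform one point}. At the $\ell$-th blow-up chart, $A_\ell$ has coordinate $u_0$, and the weak transform ideal restricted to $A_\ell$ contains the factor $(u_0,\bar x_1^{0})^{e_j}$ for the blocks of size exactly $\ell$. The elements of $I(Z)_{n-\eta_\#}$ of the form $x_0^{a}\theta^{b}\cdot(\text{monomials})$, with $a+b$ chosen appropriately, restrict to the monomials $u_0^{0},\dots,u_0^{e_j}$ on $A_\ell$. This exhibits all of $H^0(\calO_{\bbP^1}(e_j))$ in the image.

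\textbf{Main obstacle.} Step 2 is the hardest part. Choosing explicit monomials of $\prod(x_0,\theta^{k_r})^{e_r}$ whose transforms realise every monomial $u_0^t$ with $0\le t\le e_j$ requires careful bookkeeping of exponents through the successive charts. I expect the explicit monomial route to be the more reliable one, because the nefness check for the vanishing argument must handle the negative curves $A_{ij}$ in the chain.
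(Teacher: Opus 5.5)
Your Step 1 is exactly the paper's argument. The identity $D\cdot A_\ell=\mu_\ell-\mu_{\ell+1}$ gives the contraction statement and shows that $A_{k_j}$ maps with total degree $e_j$. The gap is Step 2, which carries the rational-normal-curve claim and which the proposal leaves open.

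Your first route fails as stated, because $D-A_\ell-K_{\tilde X}$ is not nef in general. Take $L=J_1(x_1)\boxplus J_3(x_1)$ and $\ell=k_1=1$. Then $D=4\tilde H-2E_1-E_2-E_3$, hence $D-A_1-K_{\tilde X}=7\tilde H-4E_1-E_2-2E_3$, whose product with $A_2=E_2-E_3$ is $-1$. More generally, $(D-A_\ell-K_{\tilde X})\cdot A_{\ell+1}=-1$ whenever $\ell+1<k_h$ is not a block size. Also $(D-A_\ell-K_{\tilde X})\cdot\tilde H_\infty=\epsilon_\bullet+\eta_\bullet+3-\kappa-A_\ell\cdot\tilde H_\infty$, which is negative for regular pencils with $\kappa\ge 4$. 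So Kawamata--Viehweg does not apply in this form.

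Your second route is the one the paper takes, but your sketch looks at the wrong stage. After the $\ell$-th blow-up, the factor $(u_0,\bar x_1^{0})^{e_j}$ is the unit ideal: the blocks of size $\ell$ have already disappeared from the weak transform, which is why the bookkeeping looks hard. The missing step is to work at the center $z_{\ell-1}$ of the $\ell$-th blow-up. There, \Cref{lemma: transform one point} gives the base ideal $I=\prod_{\{j'\mid k_{j'}\ge \ell\}}(x_0,x_1^{k_{j'}-\ell+1})^{e_{j'}}$, of order $\mu_\ell$. The series restricted to the new exceptional curve $A'\simeq\bbP^1$ is the space of leading forms $(I\cap\frakm^{\mu_\ell})/(I\cap\frakm^{\mu_\ell+1})$. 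Here you use that the sections $I(Z)_{n-\eta_{\#}}$ generate the ideal locally, by global generation of $\calI_Z(n-\eta_{\#})$.

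If $\ell=k_j$, the $j$-th factor is $\frakm^{e_j}$. Every other factor has $k_{j'}-\ell+1\ge 2$ and so contributes only $x_0^{e_{j'}}$ in lowest degree. Since $I$ is monomial, its degree-$\mu_\ell$ part is exactly $x_0^{\mu_{\ell+1}}\cdot\bbC[x_0,x_1]_{e_j}$. This is the complete series of degree $e_j$ plus the fixed point $z_\ell=\{x_0=0\}\in A'$ with multiplicity $\mu_{\ell+1}$. You also dropped that fixed part when you claimed restriction to $u_0^0,\dots,u_0^{e_j}$. The later blow-ups remove the fixed point, leaving the complete series of degree $e_j$ on $A_\ell\simeq A'$, which is exactly the surjectivity you need.
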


\begin{proof}
Suppress $i$ from the notation. Let $k_1 \vvirg k_h$, $e_1 \vvirg e_h$ be the relevant regular invariants. Let $n' = n - \eta_{\#}$. For $\ell \in [k_h]$, let $E_\ell$ be the total transform in $\tilde{X}$ of the $\ell$-th exceptional divisor; in particular $E_\ell = A_\ell + \cdots + A_{k_h}$.
     
    By \Cref{prop: resolution of phi}, the map $\tilde{\phi}$ is given by the complete linear series of the divisor $D = (n-\eta_\#)\tilde H - \sum_{j \in [k_h]} \mu_{j} E_j$. From the intersection products in \Cref{lemma: intersections upstairs} we have
    \begin{align*}
        [A_{\ell}]\cdot [D] & = - \sum_{j \in [k_h]}  \mu_j [A_\ell]\cdot [E_j] = \mu_\ell - \mu_{\ell+1} = \begin{cases}
           e_{h'} &  \text{if } \ell = k_{h'} \text{ for some $h'$} \\
            0 & \text{otherwise}.
        \end{cases}
    \end{align*}
 Therefore, the image of $\tilde{\phi}|_{A_\ell}$ is a point if $k_{h'}< \ell < k_{h'+1}$, and it is a curve of degree $e_{j}$ if $\ell = k_j$. We show that in the latter case it is the rational normal curve of degree $e_j$. This is equivalent to showing that $A_{k_j}$ is mapped by the complete linear series of degree $e_j$. 

Let $X$ and $X'$ be respectively the $(\ell-1)$-th and the $\ell$-th blow-up in the considered chain. Let $z \in X$ be the center of the blow-up, and $A'$ the exceptional divisor. So $\pi: X' \to X$ maps $A'$ to $z$ and $A_\ell$ is the strict transform of $A'$ after the subsequent blow-ups. Abusing notation, let $\phi : X \dashto Y_L$ be the rational map extended to $X$ and $\phi' : X' \dashto Y_L$ its extension to $X'$. 

The base ideal of $\phi$ is $I = \prod_{\{j' \mid \ell \leq k_{j'} \}} (x_0,x_1^{k_{j'} - \ell+1})^{e_{j'}}$. By \Cref{lemma: transform one point}, $I$ has multiplicity $\mu_\ell$ at $z$: therefore, the linear series of $\phi'$ restricted to $A'$ is given by $(I \cap \frakm^{\mu_\ell}) / (I \cap \frakm^{\mu_\ell+1})$ where $\frakm$ is the maximal ideal supported at $z$. If $\ell = k_j$, then $I= (x_0,x_1)^{e_j} \cdot \prod_{\{j' \mid \ell < k_{j'}\}} (x_0,x_1^{k_{j'} - \ell+1})^{e_{j'}}$. So the component of degree $\mu_\ell$ is $x_0^{\mu_{\ell+1}} \cdot \bbC[x_0,x_1]_{e_j}$, which is the complete linear series of degree $e_j$ on $A' \simeq \bbP^1$.
\end{proof}

For $i \in [s]$ and $j \in [h_i]$, let
\begin{equation}\label{eqn: CijCurves}
    C_{ij}:=\tilde\varphi(A_{i \, k_{ij}})
\end{equation}
be the rational normal curves in $\YL$ of degree $e_{ij}$ from \Cref{prop: contraction and embeddings}. A representation of the mapping $\tilde{\phi}$ is given in \Cref{fig: mapping of exceptional divisors}.

\begin{figure}[H]
	\centering
	\begin{tikzpicture}[yscale=0.75]
		\draw[thick] (-1, 0.5) -- (1, 0.5) node[pos=0, below=5pt, font=\scriptsize] {$\tilde{H}_\infty$};
		\drawchain{0}{4}
		{{"B","G","R","K"}}
		{{"$A_4$","$A_3$","$A_2$","$A_1$"}}
		{{"G","","",""}}
		{{"0","0","0","0"}}{C}
		{{"0.75","0.75","0.75","0.75"}}
		
		\draw[->, thick] (1.5,2) -- (3.5,2);
		\node at (2.5,2.5) [inner sep=1pt]{$\tilde{\phi}$};
		
		\draw[thick] (4, 0.5) -- (6, 0.5) node[pos=0, below=5pt, font=\scriptsize] {$C_\infty$};
		\def\drawchainEvenYshift{-3pt}
		\drawchain{5}{3}
		{{"B","G","K"}}
		{{"$C_3$","$C_2$","$C_1$"}}
		{{"$p$","$q$","",""}}
		{{"0",".34","0"}}{C}
		{{"0.75","0.75",".9"}}
		
		\node at (5.35,2.35) [circle,inner sep=2pt, color={red!80!black}]{$\bullet$};
		\node at (5, 4.5) [font=\small\bfseries] {Singular};
	
		\node (w_reg) at (7.8, 0.5) [circle, fill=black, inner sep=1.5pt, label=below:{\scriptsize $[W]$}] {};
		\drawchain{7.5}{3}
		{{"B","G","K"}}
		{{"$C_3$","$C_2$","$C_1$"}}
		{{"$p'$","$q'$","",""}}
		{{"0",".34","0"}}{C}
		{{"0.75","0.75",".9"}}
		
		\node at (7.85,2.35) [circle,inner sep=2pt, color={red!80!black}]{$\bullet$};
		\node at (7.5, 4.5) [font=\small\bfseries] {Regular};
		\def\drawchainEvenYshift{0pt}
    \end{tikzpicture} 
	\caption{
		The map $\tilde{\phi}$ with indeterminacy locus $Z$, defined by the pencil invariants $(k_1,k_2,k_3) = (1,3,4)$, $(e_1,e_2,e_3) = (1,3,1)$. Here $\sum e_i k_i = 14$: $\tilde H_\infty$ is embedded into as $C_\infty$ if $L$ is singular, and it is and contracted to the point $[W]$ if $L$ is regular; $A_2$ is contracted to a point; $A_3$ is embedded as a rational normal curve of degree $3$.
	}\label{fig: mapping of exceptional divisors}
\end{figure}

The contraction of the $(-2)$-curves described in \Cref{prop: contraction and embeddings} gives rise to singular points in $Y_L$. The next result characterises these singularities and proves Item 1 of \Cref{thm: main forward}.

\begin{theorem}\label{thm: rational singularities}
For a matrix pencil $L$, the associated Schubert surface $Y_L$ has only Du Val singularities of type $\mathsf{A}$ and, if $L$ is regular, possibly a quotient singularity at $[W]$ of type $\frac{1}{\kappa -1}(1,1)$. In particular, $Y_L$ is $\bbQ$-factorial and normal.
\end{theorem}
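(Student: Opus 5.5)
The plan is to read off the singularities of $Y_L$ from the regular morphism $\tilde{\phi}:\tilde X\to Y_L$ of \Cref{prop: resolution of phi}. Since $\tilde{\phi}$ is birational and given by the complete linear series of $D$, I first show that it contracts exactly the curves with $[A]\cdot D=0$ and is an isomorphism elsewhere. By \Cref{prop: contraction and embeddings}, the contracted curves are the $A_{i\ell}$ with $k_{ij}<\ell<k_{i\,j+1}$ for some $j$, together with the $A_{i\ell}$ for $\ell<k_{i1}$ (where $\mu_\ell=\mu_{\ell+1}$). The strict transform $\tilde H_\infty$ is contracted exactly when $L$ is regular: then $D\cdot[\tilde H_\infty]=(n-\eta_\#)-\sum_i\mu_{i\,k_{i h_i}}=n-\sum_i d_i\cdot(\ldots)$, which by \Cref{prop: image of phi} equals zero, since the image of $H_\infty$ is the point $[W]$. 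Every other curve meets $D$ positively. Away from the exceptional chains and $H_\infty$, $\tilde\phi$ is the original open embedding of $L$. Hence $\tilde\phi$ is an isomorphism off the union of the contracted curves. To see that it is the contraction of that union onto a normal surface, I would use linear normality (\Cref{prop: resolution of phi}) together with the fact that $D$ is nef and big, so that $\tilde\phi$ factors through its Stein factorization. Normality of the image then has to be checked; see the last paragraph.

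Next I identify each connected contracted configuration. By \Cref{lemma: intersections upstairs}, for fixed $i$ the curves $A_{i\ell}$ with $\ell<k_{i\,h_i}$ are $(-2)$-curves forming a chain, adjacent ones meeting transversally in one point, and $A_{i\,k_{i h_i}}$ is the unique $(-1)$-curve. A maximal run of contracted indices $k_{ij}<\ell<k_{i\,j+1}$ (or $\ell<k_{i1}$) is therefore a chain of $r$ smooth rational $(-2)$-curves. Its intersection matrix is negative definite, being the $\mathsf A_r$ Cartan matrix up to sign. By Artin's contractibility criterion, this chain contracts to a rational double point of type $\mathsf A_r$, i.e.\ the cyclic quotient $\frac{1}{r+1}(1,-1)$.

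Such a chain is disjoint from $\tilde H_\infty$, because $\tilde H_\infty$ meets only the last curve $A_{i\,k_{ih_i}}$ by \eqref{eqn: intersection against H}, and that curve is never contracted. So in the regular case the remaining contracted locus is $\tilde H_\infty$ alone. It has self-intersection $1-\kappa$, and its image $[W]$ was already analysed in \Cref{prop: W singularity}: it is smooth if $\kappa=2$ and a $\frac1{\kappa-1}(1,1)$ quotient singularity if $\kappa\geq 3$. The case $\kappa\le1$ cannot occur for a genuine pencil.

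The main obstacle is to justify that $Y_L$ really is the normal contraction, rather than a non-normal image with the same exceptional data. I would handle this locally. Near an $\mathsf A$ chain, the point is that the complete linear series $|D|$ separates points and tangents off the chain, while $\tilde\phi_*\calO_{\tilde X}=\calO_{Y_L}$. I expect to deduce the latter from the vanishing $H^1(\tilde X,\calO(mD))=0$ for $m\ge1$ (Kawamata--Viehweg applied to $mD-K$, since $-K_{\tilde X}$ is relatively trivial on $(-2)$-chains) combined with linear normality. This would make $Y_L=\mathrm{Proj}\bigoplus_m H^0(mD)$, which is normal because $\tilde X$ is. At $[W]$, normality is already supplied by \Cref{prop: W singularity general}, which identifies the germ with the cone over a rational normal curve. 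Finally, quotient singularities are $\bbQ$-factorial, so $Y_L$ is $\bbQ$-factorial.
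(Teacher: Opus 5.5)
\textbf{Gap: normality of $Y_L$.} The step that fails is the normality argument, and it carries the whole theorem. Everything before it only describes the Stein factorisation $\tilde X\to Y'\to Y_L$ of $\tilde\phi$. Artin's criterion gives $\mathsf A_r$ points on $Y'$ over the $(-2)$-chains, and by Zariski's Main Theorem $\tilde X\to Y'$ is an isomorphism elsewhere. But $Y'\to Y_L$ is only finite and birational, and it is an isomorphism exactly when $Y_L$ is normal. So your earlier ``hence $\tilde\phi$ is an isomorphism off the contracted curves'' is not justified either: positivity of $D$ on the remaining curves gives finiteness, not injectivity or immersivity.

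The mechanism you propose for normality does not work. To get $Y_L=\mathrm{Proj}\bigoplus_m H^0(\tilde X,mD)$ you need $\mathrm{Sym}^m H^0(\tilde X,D)\to H^0(\tilde X,mD)$ to be surjective for $m\gg0$. Neither surjectivity in degree one (linear normality) nor the vanishing of $H^1(\tilde X,mD)$ controls these multiplication maps; regularity arguments reduce you to degree $2$, which is still not given. The implication already fails for curves. Take $|K_C+p+q|$ on a non-hyperelliptic curve of genus $g\geq 3$: it is complete, base-point-free and birational, and has $H^1=0$ in every positive degree. Yet it sends $p$ and $q$ to the same point, producing a non-normal image.

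\textbf{How to repair it.} One option along your lines is to prove normal generation of $D$ directly. Take a general smooth $C\in|D|$ on $\tilde X$. Adjunction gives $\deg D|_C-2g(C)-1=2n_{\reg}+3(\epsilon_\bullet+\eta_\bullet)-3\geq 0$, so $D|_C$ is normally generated by Castelnuovo's theorem. The sequences $0\to\mathcal{O}_{\tilde X}((m-1)D)\to\mathcal{O}_{\tilde X}(mD)\to\mathcal{O}_C(mD)\to 0$, together with $H^1(\tilde X,(m-1)D)=0$ for all $m\geq 1$, lift this to $\tilde X$. Then $Y_L$ is projectively normal, hence normal, and your Artin/Zariski description goes through.

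The paper argues locally instead. Near each contracted chain it reduces the base ideal to a power of the monomial ideal $(x_0',{x_1'}^{\,k_{j+1}-k_j})$. The image of that monomial map is a toric surface defined by a lattice polygon, hence normal, with an $\mathsf A_{k_{j+1}-k_j-1}$ point (\Cref{lemma: single step singularity}). The point $[W]$ is handled via \Cref{prop: W singularity general}, as you do.

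\textbf{Smaller corrections.} First, $D\cdot\tilde H_\infty=(n-\eta_\#)-\sum_{i,j}\mu_{ij}=\epsilon_\bullet+\eta_\bullet$, not the expression you wrote. Second, $mD-K_{\tilde X}$ is not nef in general. Adjunction gives $K_{\tilde X}\cdot\tilde H_\infty=\kappa-3$, so this divisor is negative on $\tilde H_\infty$ whenever $\kappa>3+m(\epsilon_\bullet+\eta_\bullet)$; for example, for every $m$ when $L$ is regular and $\kappa\geq 4$. For $\kappa\geq 3$, use the fractional Kawamata--Viehweg theorem with $N=mD-K_{\tilde X}-\frac{\kappa-3}{\kappa-1}\tilde H_\infty$ instead.
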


The proof of \Cref{thm: rational singularities} relies on \Cref{prop: W singularity} for the characterisation of the singularity at $[W]$, and on the following result, which provides the local model of the more general construction used in the proof. 
\begin{lemma}\label{lemma: single step singularity}
    Let $k,e$ be positive integers. $I = (x_0,x_1^k)^e \subseteq \bbC[x_0,x_1,x_2]$, $d \geq ke$ and let $N+1 = \dim I_d$. The rational map $\phi: \bbP^2 \dashto \bbP^N$ given by the linear series of $I_d$ is resolved by $k$ successive blow-ups, and the resolution $\tilde {\phi} : \tilde{X} \to \bbP^N$ contracts the chain of $k-1$ curves into an $\mathsf{A}_{k-1}$-singularity.
\end{lemma}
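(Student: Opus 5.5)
The resolution part follows from the results already proved. Apply \Cref{lemma: transform one point} with $h=1$, $k_1=k$, $e_1=e$. The weak transform after $\ell<k$ blow-ups is $(\tilde x_0,\tilde x_1^{k-\ell})^e$, supported at $z_\ell$. After $k$ blow-ups it is the unit ideal, so the linear series becomes base point free after exactly $k$ blow-ups. Since $d\ge ke=\reg+1$, the proof of \Cref{prop: resolution of phi} applies verbatim. Hence $\tilde\phi$ is given by the complete linear series of
\[
D=d\tilde H-e(E_1+\cdots+E_k),
\]
because $\mu_j=e$ for every $j\le k$.

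Next I identify which curves are contracted. By \Cref{lemma: intersections upstairs}, for $\ell<k$ we get $A_\ell\cdot D=\mu_\ell-\mu_{\ell+1}=0$, while $A_k\cdot D=e>0$. This is the computation of \Cref{prop: contraction and embeddings}. So $A_1,\dots,A_{k-1}$ are contracted. They are smooth rational curves with self-intersection $-2$, and they form a chain: consecutive curves meet transversally in one point, and the others are disjoint. Their dual graph is therefore $\mathsf A_{k-1}$. The chain is connected, so it maps to a single point $p$.

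I also need that nothing else is contracted near $p$. An irreducible curve $C$ meeting the chain is either $A_k$, or the strict transform of $H_\infty$ (which meets only $A_k$), or the strict transform of a curve in $\bbP^2$. In the last case, $D\cdot C=d\deg C-\sum e\,(\text{mult})$. This is positive because $d\ge ke$ and the multiplicities are bounded by the intersection with a line through the support. More simply, $D$ separates points away from the exceptional locus, since $\phi$ is an embedding on $\bbP^2\setminus\{x_0=0\}$, where $\phi$ is the graph map.

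The main obstacle is showing that the image singularity is exactly $\mathsf A_{k-1}$. This needs normality of the image at $p$, i.e.\ that $\tilde\phi$ factors through the contraction $X'$ of the chain and that $X'\to \tilde\phi(\tilde X)$ is an isomorphism near $p$.

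I would argue as follows. The chain of $(-2)$-curves is contractible by Artin's criterion, since its intersection matrix is negative definite. Its contraction $X'$ is normal, with a rational double point of type $\mathsf A_{k-1}$. Because $D\cdot A_\ell=0$, $D$ descends to a Cartier divisor $D'$ on $X'$, using that rational singularities are $\bbQ$-factorial and that $D$ is trivial near the chain. The morphism $\tilde\phi$ factors as $X'\to\bbP^N$, given by $|D'|$, and $D'$ is ample by Nakai--Moishezon.

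It remains to show that $X'\to \bbP^N$ is a closed embedding near $p$, i.e.\ that $|D'|$ separates tangent directions at $p$. I would first try a direct local computation. In the chart of the last blow-up, write local monomial coordinates; the sections of $I_d$ restricted near the chain are spanned by the monomials $x_0^ax_1^b$ in the integral closure. One checks that the monomials $x_0^e$, $x_0^{e-1}x_1^k$ and $x_0^{e-1}x_1\cdots$ generate the invariant ring of $\frac1k(1,-1)$, namely $u^k$, $v^k$, $uv$. Concretely, in suitable coordinates $s,t$ near $p$ with $x_0=s^k$ and $x_1=st$-type relations, the image is locally $\{XY=Z^k\}$.

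As a fallback, I would use that $\tilde\phi$ is given by a complete linear series and $Y$ is linearly normal (\Cref{prop: resolution of phi}). A finite birational map onto its image from a normal surface is then an isomorphism if the image is normal. Normality would follow from the local equation $XY=Z^k$ read off from the monomials above.
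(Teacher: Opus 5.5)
Your argument is correct in substance, but it is organised differently from the paper's proof. The paper argues globally and torically. Since $I_d$ is spanned by monomials, the image is the projective toric surface of a lattice polygon. Normality of lattice polygons \cite[Cor.~2.2.19]{CLS:ToricVars} makes the whole image normal, and the cone at the relevant vertex is recognised as an $\mathsf{A}_{k-1}$ cone \cite[Example 10.1.5]{CLS:ToricVars}.

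You work on the resolution side instead. \Cref{lemma: transform one point} and the numbers $A_\ell\cdot D=\mu_\ell-\mu_{\ell+1}$ show that precisely the $(-2)$-chain $A_1,\dots,A_{k-1}$ is contracted. You then only need the local structure of the image at the single point $p$. This makes explicit that the $k$ blow-ups induce the minimal resolution of that point, which the statement asserts but the paper's proof leaves implicit. The cost is a hand computation in place of a citation. The paper's route is shorter and gives normality of the image everywhere at once. Once your local equation is in hand, the Artin contraction, the descent of $D$ and the factorisation through $X'$ are superfluous.

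Two corrections.

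First, the local computation carries the whole argument, and your list of monomials is garbled. Here is the corrected computation.
\begin{itemize}
\item The section $x_0^ax_1^bx_2^{c}$ of $\calO(D)$ vanishes on $A_j$ to order $j(a-e)+b$.
\item Hence, for $j<k$, the only section not vanishing identically on $A_j$ is $x_0^ex_2^{d-e}$.
\item In the chart $x_2=1$, the affine chart of the image at $p$ is therefore generated by the Laurent monomials $x_0^{a-e}x_1^{b}$ with $x_0^ax_1^bx_2^{d-a-b}\in I_d$.
\item Their exponents lie in $\mathrm{cone}((1,0),(-1,k))$, whose Hilbert basis is $(1,0),(0,1),(-1,k)$.
\item This Hilbert basis is realised by $x_0^{e+1}x_2^{d-e-1}$, $x_0^{e}x_1x_2^{d-e-1}$ and $x_0^{e-1}x_1^{k}x_2^{d-e-k+1}$. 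This is exactly where $d\ge ke$ and $k\ge 2$ enter.
\end{itemize}
So the chart is $\bbC[x_0,x_1,x_1^k/x_0]\cong\bbC[X,Y,Z]/(XY-Z^k)$. This is precisely the saturation at that vertex that the paper obtains from normality of polygons.

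Second, $D'$ is not ample in general. We have $D\cdot\tilde H_\infty=d-ke$, which vanishes when $d=ke$; this happens for $J_k(x_1)$, where $\tilde H_\infty$ is contracted to $[W]$. Your local argument never uses ampleness, so simply drop that claim.
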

\begin{proof}
The map $\phi$ is a monomial map, therefore the closure of its image in $\bbP^N$ is a toric variety. On the open chart $\{x _2 \neq 0\}$, the map is given by all monomials in a suitable $2$-dimensional polytope. Since all two-dimensional polytopes are normal \cite[Cor. 2.2.19]{CLS:ToricVars}, we deduce that the image is normal. By \cite[Thm. 11.4.2]{CLS:ToricVars}, the singularity is rational. The discussion of \cite[Example 10.1.5]{CLS:ToricVars} shows that it is an $\mathsf{A}_{k-1}$-singularity.
\end{proof}

\begin{proof}[Proof of \Cref{thm: rational singularities}]
    Let $\phi : \bbP^2 \dashto Y_L$ be the rational map defined by the linear series $I(Z)_{n-\eta_{\#}}$, where $Z = Z_1 \cup \ldots \cup Z_s$ is the indeterminacy locus as in \Cref{theorem: linear series of YL}. Let $\pi : \tilde{X} \to \bbP^2$ be the resolution defined in \Cref{subsec: smooth resolution} and let $\tilde{\phi} : \tilde{X} \to Y_L$ be the regular map resolving $\phi$.

    By \Cref{prop: contraction and embeddings}, the map $\tilde{\phi}$ contracts subchains of exceptional curves to points. Fix an eigenvalue $\theta_i$, and suppress $i$ from the notation. Let $k_{1} < \cdots < k_{h}$ be the distinct sizes of the Jordan blocks, and let $A_{1} \vvirg A_{k_{h}}$ be the exceptional divisors corresponding to those blocks in $\tilde{X}$. Let $j$ be such that $k_{j} < k_{j+1}-1$, so that the subchain $A_{k_{j}+1} \vvirg A_{k_{j+1}-1}$ is contracted to a point: we show that such a point is a cyclic singularity of type $\mathsf{A}_{k_{j+1}-k_{j}-1}$. 

    To do this, we consider the parameterisation map extended to the $k_{j}$-th blow-up. Let $X'$ be the intermediate partial resolution obtained after the $k_{j}$-th blow-up. Use coordinates $x'_0,x'_1$ for the strict transform of $H_\infty$ and the exceptional divisor of $X'$ respectively. By \Cref{lemma: transform one point}, after the $k_{j}$-th blow-up, locally around the next center, the base ideal is 
    \[
    I = \prod_{j' > j} (x'_0,{x'}_1^{k_{j'}-k_j})^{e_{j'}} = (x'_0,{x'_1}^{k_{j+1}-k_j})^{e_j} \prod_{j' > j+1}(x'_0,{x'_1}^{k_{j'}-k_j})^{e_{j'}}.
    \]
    On the chart parameterised by $x'_0,x'_1$, the map $\phi' : X' \dashto Y_L$ is given by monomials in $I$ of degree at most $\delta$, where $\delta$ depends on the original degree $n - \eta_{\#}$ and on the multiplicities $\mu_1 \vvirg \mu_{k_j}$ from \Cref{prop: resolution of phi}. A direct calculation shows $\delta$ is at least as large as the largest degree of a minimal generator of $I$. We prove that the singularity of interest is equivalent to the singularity obtained by the linear series of $J = ({x'}_0,{x'}_1^{k_{j+1}-k_j})^{e_j}$. 

    To see this, let $X''$ be the result of the blow-up construction of $\bbP^2$ for $k_{j+1}$-times. Then the residual ideal $I''$ has support disjoint from the chain of rational curves that are contracted. In other words, locally around the chain of contracted curves, $X''$ is isomorphic to the variety obtained by resolving the linear series of $J$. Moreover, for a section $s$ on $X'$, the transform of $s$ on $X''$ only depends on the equations of the intermediate exceptional divisors, which are the same as those obtained from resolving $J$. 
    
    This shows that the singularity arising from the contraction is isomorphic to the singularity obtained by the ideal $J$. By \Cref{lemma: single step singularity}, this is a rational singularity of type $\mathsf{A}_{k_{j+1}-k_{j}-1}$. This shows that all singularities obtained from the contracted chains are Du Val, and in particular they are normal.

    \Cref{prop: W singularity} further shows that the singularity at $[W]$ is rational and this concludes the proof.
\end{proof}

\subsection{Intersection theory in Schubert surfaces}\label{subsec: intersection numbers}

In this section, we characterise the intersection product in the surface $Y_L$. By \Cref{thm: rational singularities}, $Y_L$ is $\bbQ$-factorial, and in particular there is a well-defined pairing on the group $\rmN_\bbQ^1(Y_L)$ of Weil divisors modulo numerical equivalence. In fact, in our setting, this group coincides with the middle cohomology group $H^1(Y_L,\bbQ)$ of $Y_L$. In particular, the results of this section characterise the intersection product in cohomology, yielding Item 4 of \Cref{thm: main forward}.

In $\rmN_\bbQ^1(\YL)$, denote by $\zeta := \tilde\varphi_\ast(\tilde \zeta)$ the image of the hyperplane class of $\mathbb P^2$, and by $\gamma_{ij}:=[C_{ij}]$ the class of the exceptional curve in $\YL$ obtained as image of the exceptional divisor $A_{i \, k_{ij}}\subset \tilde X$ from \Cref{prop: contraction and embeddings}.

\begin{theorem}\label{thm: intersections general}
Let $L = L_{\mathrm{sing}} \boxplus L_1 \boxplus \ldots \boxplus L_s$ be a pencil with $s$ regular components with distinct eigenvalues $\theta_1 \vvirg \theta_s$. For $i \in [s]$, let $k_{i 1} < \ldots < k_{i {h_i}}$ be the distinct sizes of the Jordan blocks with eigenvalue $\theta_i$. Then $\rmN^1_\bbQ(Y_L)$ is generated by the classes $\zeta$ and $\gamma_{ij}$ for $i \in [s]$ and $j \in [h_i]$. The intersection pairing is given as follows:
\begin{enumerate}
\item\label{item: products between different chains} for any two $i_1,i_2$ distinct, with $(j_1,j_2) \neq (h_{i_1},h_{i_2})$, we have
\[
\gamma_{i_1 j_1} \cdot \gamma_{i_2 j_2} = 0.
\]
\item\label{item: products along a chain} for every $i = 1 \vvirg s$ and every $(j_1,j_2) \neq (h_i,h_i)$, we have 
    \begin{align*}
    \zeta \cdot \gamma_{ij} & = 0 \text{ for every $j < h_i$} \\
    \gamma_{i j_1} \cdot \gamma_{i j_2} & = \begin{cases}
        - \frac{1}{k_{i j_1} - k_{i  j_1-1}} - \frac{1}{k_{i  j_1+1} - k_{i \, j_1}} & \text{if  $j_1=j_2$}  \\
        \frac{1}{k_{i \, j_1+1} - k_{i \, j_1}} & \text{if $j_2 = j_1+1$}\\
        0 & \text{if  $|j_1-j_2| > 1$}
        \end{cases}
\end{align*}
\item\label{item: products singular case} If $L$ is a singular pencil, then 
\[
\begin{array}{l}
    \zeta^2 = 1 \\
    \zeta \cdot \gamma_{i \, h_i} = 0
\end{array} \qquad \qquad 
\begin{array}{ll}
 \gamma_{i \, h_i}^2 =  - \frac{1}{k_{i \, h_i} - k_{i \, h_i-1}} \\
 \gamma_{i_1 h_{i_1}} \cdot \gamma_{i_2 h_{i_2}} = 0 & \text{ if $i_1 \neq i_2$}
\end{array}
\]
and
\[
[C_\infty] = \zeta - \textstyle  \sum_{i\in [s]} \sum_{j \in [h_i]} k_{i j} \gamma_{ij}.
\]
\item\label{item: products regular case} If $L$ is a regular pencil, given $\kappa := \sum_{i=1}^s k_{i \, h_i}$, then 
\[
\begin{array}{l}
    \zeta^2 = \frac{\kappa}{\kappa - 1} \\
    \zeta \cdot \gamma_{i \, h_i} = \frac{1}{\kappa -1 }
\end{array} \qquad \qquad 
\begin{array}{ll}
 \gamma_{i \, h_i}^2 = - \frac{1}{k_{i \, h_i} - k_{i \, h_i-1}} + \frac{1}{\kappa -1} \\
 \gamma_{i_1 h_{i_1}} \cdot \gamma_{i_2 h_{i_2}} = \frac{1}{\kappa -1} & \text{ if $i_1 \neq i_2$}
\end{array}
\]
and
\[
\zeta= \textstyle  \sum_{i\in [s]} \sum_{j \in [h_i]} k_{i j} \gamma_{ij}.
\]
\end{enumerate}
\end{theorem}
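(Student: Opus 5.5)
The plan is to compute every product on $Y_L$ by pulling back to the smooth resolution $\tilde{\phi}:\tilde{X}\to Y_L$ of \Cref{prop: resolution of phi}. All intersections will then be evaluated with \Cref{lemma: intersections upstairs} and \eqref{eqn: intersection against H}. Since $Y_L$ is normal and $\bbQ$-factorial (\Cref{thm: rational singularities}), Mumford's pullback is available. For a Weil divisor $C$ on $Y_L$, the class $\tilde{\phi}^*C$ is the strict transform of $C$ plus the unique $\bbQ$-combination of $\tilde{\phi}$-exceptional curves that makes it orthogonal to every exceptional curve. Then $C\cdot C' = \tilde{\phi}^*C\cdot \tilde{C}'$, where $\tilde{C}'$ is any divisor with $\tilde{\phi}_*\tilde{C}' = C'$.

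\textbf{Exceptional locus.} By \Cref{prop: contraction and embeddings} and \Cref{prop: image of phi}, the curves contracted by $\tilde{\phi}$ are of two kinds.
\begin{enumerate}[leftmargin=*]
\item The chains $A_{i,k_{ij}+1},\dots,A_{i,k_{i,j+1}-1}$, including the initial chain $A_{i1},\dots,A_{i,k_{i1}-1}$ (set $k_{i0}=0$). These consist of $(-2)$-curves.
\item In the regular case only, the curve $\tilde H_\infty$, with $\tilde H_\infty^2=1-\kappa$. It meets exactly the curves $A_{i,k_{ih_i}}$, which are not contracted.
\end{enumerate}
Different eigenvalues give disjoint configurations, and $\tilde H_\infty$ is disjoint from every contracted chain. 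Hence the orthogonality conditions decouple and can be solved one connected component at a time.

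\textbf{Generation.} $\CH^1(\tilde X)$ is generated by $\tilde\zeta$ and the $\alpha_{ij}$ (\Cref{lemma: intersections upstairs}). Pushforward is surjective on divisor classes, and it kills the contracted $\alpha$'s. Therefore $\rmN^1_\bbQ(Y_L)$ is generated by $\zeta$ and the $\gamma_{ij}$.

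\textbf{Pullback of $\gamma_{ij}$.} A $(-2)$-chain of length $r$ adjacent to $A_{i,k_{ij}}$ receives coefficients $\frac{r+1-t}{r+1}$, $t=1,\dots,r$, which decrease linearly away from $A_{i,k_{ij}}$. Intersecting with $A_{i,k_{ij}}$, such a chain contributes $\frac{r}{r+1}$ to the self-intersection. The two adjacent chains have lengths $r=k_{i,j+1}-k_{ij}-1$ and $r'=k_{ij}-k_{i,j-1}-1$.
\begin{itemize}
\item For $j<h_i$ we have $A_{i,k_{ij}}^2=-2$, so
\[
\gamma_{ij}^2=-2+\tfrac{r}{r+1}+\tfrac{r'}{r'+1}=-\tfrac{1}{r+1}-\tfrac{1}{r'+1},
\]
which is the formula in item (2).
\item For $j=h_i$ we have $A^2=-1$ and only the left chain is present, giving $\gamma_{i h_i}^2=-\frac{1}{k_{ih_i}-k_{i,h_i-1}}$.
\item For consecutive $j$, pulling back $\gamma_{ij}$ and intersecting with $A_{i,k_{i,j+1}}$ picks up only the coefficient $\frac{1}{r+1}$ on the last curve of the chain between them. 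This gives $\gamma_{ij}\cdot\gamma_{i,j+1}=\frac{1}{k_{i,j+1}-k_{ij}}$.
\item Non-adjacent $j$, and curves over different eigenvalues, give $0$.
\item $\zeta\cdot\gamma_{ij}=\tilde\zeta\cdot\tilde{\phi}^*\gamma_{ij}=0$, because $\tilde\zeta\cdot\alpha=0$.
\end{itemize}
In the singular case $\tilde\zeta$ meets no contracted curve, so $\zeta^2=1$. Pushing forward the expression for $[\tilde H_\infty]$ in \Cref{lemma: intersections upstairs} kills the contracted $\alpha$'s and gives $[C_\infty]=\zeta-\sum_{i,j}k_{ij}\gamma_{ij}$.

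\textbf{Regular case.} Here we additionally correct for $\tilde H_\infty$. Solving $(\tilde\zeta+c\tilde H_\infty)\cdot\tilde H_\infty=0$ gives $c=\frac{1}{\kappa-1}$, so
\[
\zeta^2=1+\tfrac{1}{\kappa-1}=\tfrac{\kappa}{\kappa-1},\qquad \zeta\cdot\gamma_{ih_i}=\tfrac{1}{\kappa-1}.
\]
Similarly, $\tilde{\phi}^*\gamma_{ih_i}$ acquires the extra term $\frac{1}{\kappa-1}\tilde H_\infty$. This adds $\frac{1}{\kappa-1}$ to $\gamma_{ih_i}^2$ and to each $\gamma_{i_1h_{i_1}}\cdot\gamma_{i_2h_{i_2}}$. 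Finally, $\tilde H_\infty$ is now contracted, so pushing forward its class gives $0=\zeta-\sum k_{ij}\gamma_{ij}$.

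\textbf{Main obstacle.} The delicate point is justifying Mumford's pullback formalism on $Y_L$. This needs the normality and $\bbQ$-factoriality from \Cref{thm: rational singularities}, the negative definiteness of each contracted configuration, and that $\tilde{\phi}$ is birational onto $Y_L$. The remaining work is the bookkeeping of the chain coefficients at the boundary cases $j=1$ and $j=h_i$.
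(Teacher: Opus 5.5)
Your proposal is correct and follows essentially the paper's argument: Mumford pullback to $\tilde X$ using the linearly decreasing chain coefficients of \Cref{lemma: mumford pullback of type A singularity}, the extra $\frac{1}{\kappa-1}\tilde H_\infty$ term forced by orthogonality against $\tilde H_\infty$ in the regular case, and pushing forward the expression for $[\tilde H_\infty]$ from \Cref{lemma: intersections upstairs}. The only difference is in how you get the products between distinct curves. You read $\gamma_{ij}\cdot\gamma_{i,j+1}=\frac{1}{k_{i,j+1}-k_{ij}}$ and the cross-chain products directly off the pullback coefficients. The paper instead uses the local formula at cyclic quotient singularities (\Cref{lemma: product at quotient singularity}) together with a disjointness argument, so your version is, if anything, more uniform and self-contained.
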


The proof relies on the fact that, since $\YL$ is $\bbQ$-factorial, the pullback of a Weil divisor is well defined. A curve $C \subseteq Y_L$ pulls back to a divisor  $\tilde\varphi^\ast C = \tilde C + \sum_i c_i F_i$ where $\tilde C$ is the strict transform of $C$ and $F_i$ are the irreducible components of the locus contracted by $\tilde{\phi}$. In particular, the push-pull formula \cite[Thm. 1.23]{EisHar:3264} provides the orthogonality condition
\[
\tilde\varphi^\ast C \cdot F_i = C \cdot \phi_* F_i = C \cdot 0 = 0  \quad \text{ for all $i$}. 
\]

The following two results will be useful to the proof of \Cref{thm: intersections general}.

\begin{lemma}[{\cite[Sec.\ 2.5, Eq.\ (4)]{martin2023normal}}]\label{lemma: product at quotient singularity}
Let $S$ be a $\bbQ$-factorial surface with a cyclic quotient singularity $p \in S$ of type $\frac{1}{d}(a,b)$, that is $(S,p)$ is locally isomorphic to $(\mathbb A^2/\mu_d, 0)$. Then, for any two $\bbQ$-divisors $D_1,D_2\subset S$ passing through $p$ and locally defined by two $\mu_d$-invariant polynomials $f_1,f_2 \in \bbC[\bbA^2]^{\mu_d}$, their intersection product at $p$ is the rational number
\[
    \textstyle (D_1 \cdot D_2)_p = \frac{1}{d}\dim_\bbC \frac{\mathbb C[[x,y]]}{(f_1,f_2)} \in \bbQ .
\]
\end{lemma}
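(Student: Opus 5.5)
The plan is to pass to the smooth cover $\bbA^2 \to \bbA^2/\mu_d$, compute the local intersection number there as a length, and divide by the degree $d$ of the cover. Throughout, I work in an analytic (or étale) neighbourhood of $p$ in which $(S,p)\simeq(\bbA^2/\mu_d,0)$. Let $q:\bbA^2\to\bbA^2/\mu_d$ be the quotient map. Since $\mu_d$ acts faithfully on $\bbA^2$, $q$ is finite and surjective of degree $d$: the extension $\bbC(x,y)^{\mu_d}\subseteq\bbC(x,y)$ is Galois with group $\mu_d$. Moreover, $q^{-1}(0)=\{0\}$ set-theoretically.

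The first step is to make sense of the local intersection number. The hypothesis says $D_i$ is locally the zero locus of an invariant function $f_i\in\bbC[\bbA^2]^{\mu_d}$. Such an $f_i$ is a regular function on the quotient, so each $D_i$ is actually Cartier near $p$, with local equation $f_i$. Hence $(D_1\cdot D_2)_p$ is defined as in the smooth case: it is the length of $\calO_{S,p}/(f_1,f_2)$ computed with the divisorial pullback, or equivalently Mumford's rational intersection number on the normal surface $S$. The two agree for Cartier divisors. I will use Mumford's definition, since it is the one compatible with the $\bbQ$-factorial pairing on $S$.

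The second step applies the projection formula to the finite map $q$. By definition of the pullback of Cartier divisors, $q^*D_i=\mathrm{div}(f_i)$ on $\bbA^2$. For a finite surjective morphism of normal surfaces of degree $d$, one has $q^*D_1\cdot q^*D_2=d\,(D_1\cdot D_2)$. Because $q^{-1}(p)=\{0\}$, this identity localises at the single point $0$, giving $(q^*D_1\cdot q^*D_2)_0=d\,(D_1\cdot D_2)_p$. To justify the localisation I would restrict to proper curves through $p$, or use the local form of the projection formula for proper maps in Fulton's intersection theory, where $q_*q^*$ acts as multiplication by $\deg q$ on cycles.

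The third step is the computation upstairs, where $\bbA^2$ is smooth. There the local intersection multiplicity of $\mathrm{div}(f_1)$ and $\mathrm{div}(f_2)$ at $0$ is the classical $\dim_\bbC \bbC[[x,y]]/(f_1,f_2)$. This requires $f_1,f_2$ to have no common component, which is implicit in $D_1\cdot D_2$ being a finite local number at $p$. Dividing by $d$ gives the formula.

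I expect the main obstacle to be the localisation in the second step: a global projection formula holds for compact surfaces, whereas the statement is local at $p$. My approach would be to apply the formula to each connected component of $D_1\cap D_2$ separately, using the refined intersection product of Fulton, Chapter 8, supported on $D_1\cap D_2$. The degree of $q$ restricted over the component $\{p\}$ is then $d$, because $q$ is totally ramified there. An alternative is to compute both sides on a $\mu_d$-equivariant resolution and compare.
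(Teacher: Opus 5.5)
The paper gives no proof of this lemma; it quotes it from the cited reference. Your argument is the standard one: pull back along the degree-$d$ quotient map $q$, apply the projection formula, and compute the length on the smooth cover. It is correct for the statement as written.

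The localisation step you single out works as you propose. Fulton's projection formula for Cartier divisors is an identity of classes supported on $|D_1|\cap|D_2|$ and on its preimage, so it splits over the isolated point $p$, and $q^{-1}(p)=\{0\}$.

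You can also bypass that step algebraically. The ring $R=\bbC[[x,y]]^{\mu_d}$ is a normal two-dimensional domain, hence Cohen--Macaulay, and $\bbC[[x,y]]$ is a maximal Cohen--Macaulay $R$-module of rank $d$. So for the parameters $f_1,f_2\in R$ one has $\dim_\bbC\bbC[[x,y]]/(f_1,f_2)=e((f_1,f_2);\bbC[[x,y]])=d\,e((f_1,f_2);R)=d\dim_\bbC R/(f_1,f_2)$. The last length is the local intersection number of the two Cartier divisors.

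\textbf{Scope.} As you observe, literally invariant $f_i$ make the $D_i$ Cartier at $p$, and that is the only case your proof covers. The paper, however, applies the lemma to curves that are not Cartier at $p$.
\begin{itemize}
\item In the proof of \Cref{thm: intersections general}, the curves $C_j,C_{j+1}$ through a point of type $\frac1r(1,-1)$ correspond in the local toric model to the images of the coordinate axes. Their equations $x$ and $y$ are only semi-invariant, and this is exactly what produces the value $\frac1r$.
\item Likewise, \Cref{corol: uniqueness exceptional curves} uses the lemma for arbitrary curves through $[W]$.
\end{itemize}

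Your argument extends with one extra step. Suppose $q^*D_i=\mathrm{div}(g_i)$ with $g_i$ semi-invariant. Then $g_i^d$ is invariant. Because the actions in question are free outside the origin, $q$ is \'etale over $S\setminus\{p\}$, so $g_i^d$ is a local equation of the Cartier divisor $dD_i$. Apply your result to $dD_1,dD_2$, and use bilinearity of Mumford's pairing together with $\dim_\bbC\bbC[[x,y]]/(g_1^d,g_2^d)=d^2\dim_\bbC\bbC[[x,y]]/(g_1,g_2)$. This gives the formula with $g_i$ in place of $f_i$.

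Freeness outside the origin is genuinely needed for this semi-invariant version. For $\mu_2$ acting by $(x,y)\mapsto(-x,y)$, the images of the two axes meet transversally in the smooth quotient, yet the formula applied to the equations $x,y$ would give $\frac12$.
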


We record a technical result which gives the fundamental step in the calculation required in the proof of \Cref{thm: intersections general}.

\begin{lemma}\label{lemma: mumford pullback of type A singularity}
Let $\tilde S$ be a smooth surface, let $S$ be a normal surface and let $f:\tilde S \to S$ be the contraction of a chain of $(-2)$-curves $E_1 \cup \ldots \cup E_{d-1}$ to a point $p \in S$. Let $C \subset S$ be a smooth curve passing through $p$ such that its strict transform $\tilde C = \overline{f^{-1}(C\setminus \{p\})}$ intersects either $E_1$ or $E_{d-1}$. Then the coefficients of the pullback ${\textstyle f^\ast C = \tilde C + \sum_{i \in [d-1]} c_i E_i}$ are
\begin{align*}
    c_i = \begin{cases}
    \frac{d-i}{d} & \text{if $\tilde C$  intersects $E_1$} \\
    \frac{i}{d} & \text{if $\tilde C$ intersects $E_{d-1}$} .
    \end{cases}
\end{align*}
\end{lemma}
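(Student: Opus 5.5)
The coefficients $c_i$ are determined by the orthogonality conditions recorded just before \Cref{lemma: product at quotient singularity}: since $f$ contracts each $E_k$, the push-pull formula gives $f^\ast C \cdot E_k = 0$ for every $k \in [d-1]$. This yields a linear system of $d-1$ equations in the $d-1$ unknowns $c_1 \vvirg c_{d-1}$. Its matrix is the intersection matrix of the chain, namely the negative of the Cartan matrix of type $\mathsf{A}_{d-1}$, which is negative definite and hence invertible. So the solution is unique, and it suffices to check that the proposed values satisfy the system.

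To write the system down we need $\tilde C \cdot E_k$. Since $C$ is smooth at $p$ and $\tilde C$ meets only an end of the chain, I expect $\tilde C$ to meet exactly one end curve transversally in one point. The statement assumes it meets $E_1$ (or $E_{d-1}$), so we should justify $\tilde C \cdot E_1 = 1$ and $\tilde C \cdot E_k = 0$ for $k \neq 1$. A clean route is to use the known fact that the fundamental cycle of an $\mathsf{A}_{d-1}$ singularity is $Z = E_1 + \cdots + E_{d-1}$ and that the maximal ideal pulls back to $\calO(-Z)$ near the exceptional locus. A local equation $g$ of $C$ lies in $\frakm_p \setminus \frakm_p^2$ because $C$ is smooth at $p$, and the multiplicity of the $\mathsf{A}_{d-1}$ point is $2$. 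Then $\operatorname{div}(f^\ast g) = \tilde C + \sum c_i E_i$ with $c_i \geq 1$ or at least $c_i>0$. The value $\tilde C \cdot \sum E_k = -(\sum c_i E_i)\cdot \sum E_k$ should come out as $1$, using that $\tilde C$ is smooth and meets the chain in a single point. This local identification of $\tilde C\cdot E_k$ is the step I expect to need the most care. An alternative is the toric model of \Cref{lemma: single step singularity}: the $\mathsf{A}_{d-1}$ point is $\bbA^2/\mu_d$ with weights $(1,-1)$, a smooth curve through it is locally $\{u^d = 0\}$ or $\{v^d=0\}$ (in invariant coordinates $x=u^d$, $y=v^d$, $z=uv$, a smooth curve is $x=0$ or $y=0$ up to coordinate change), and the strict transforms of the toric boundary divisors meet the end curves transversally.

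Given $\tilde C \cdot E_1 = 1$ and $\tilde C \cdot E_k = 0$ for $k > 1$, the system reads
\[
1 - 2c_1 + c_2 = 0, \qquad c_{k-1} - 2c_k + c_{k+1} = 0 \quad (1<k<d-1), \qquad c_{d-2} - 2c_{d-1} = 0 .
\]
With the convention $c_0 = 1$ and $c_d = 0$, this says $c_k$ is linear in $k$ with these endpoints, so $c_k = \frac{d-k}{d}$. The case of $E_{d-1}$ follows by relabelling $k \mapsto d-k$, giving $c_k = \frac{k}{d}$. The degenerate case $d=2$ is the single equation $1-2c_1 = 0$, which agrees with both formulas.
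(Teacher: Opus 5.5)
Your skeleton is the paper's: the orthogonality conditions $f^\ast C\cdot E_k=0$, the resulting tridiagonal system, and its solution. Your interpolation with $c_0=1$, $c_d=0$, together with uniqueness from negative definiteness of the $\mathsf{A}_{d-1}$ intersection matrix, is a tidy replacement for the paper's recursion. The paper obtains $\tilde C\cdot E_k=\delta_{1k}$ in one line from smoothness of $C$. You correctly flag this as the delicate input, but both justifications you sketch for it fail as written.

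\textbf{First route.} $C$ has no local equation at $p$. It is not Cartier there: otherwise $f^\ast C$ would have integer coefficients, contradicting $c_i=\frac{d-i}{d}$. Moreover, evaluating $\tilde C\cdot\sum_k E_k=-(\sum_i c_iE_i)\cdot\sum_k E_k$ via orthogonality just returns $c_1+c_{d-1}$, which is circular.

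\textbf{Second route.} The claim that every smooth curve through an $\mathsf{A}_{d-1}$ point is, up to coordinates, the image of a coordinate axis is false for $d\geq 4$. On $xy=z^4$, the curve $s\mapsto(s^2,s^2,s)$ is smooth, since $z$ restricts to a parameter. Its preimage in $\bbA^2$ is $\{u^2=v^2\}$, and its strict transform meets the middle curve $E_2$. This is exactly why the lemma needs the end-curve hypothesis.

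\textbf{An elementary fix.} Since $C$ is smooth, $f|_{\tilde C}\colon\tilde C\to C$ is finite and birational onto a normal curve, hence an isomorphism. So $\tilde C$ meets $f^{-1}(p)$ in a single point $\tilde p$. Pick $g\in\frakm_p$ whose restriction to $C$ is a uniformizer at $p$. Because $f^\ast g$ vanishes along every $E_k$, the divisor $\operatorname{div}(f^\ast g)-\sum_k E_k$ is effective near $\tilde p$ and does not contain $\tilde C$. Hence
\[
1=\operatorname{ord}_{\tilde p}\bigl(f^\ast g|_{\tilde C}\bigr)\geq \textstyle\sum_k \tilde C\cdot E_k .
\]
Together with $\tilde C\cdot E_1\geq 1$, this forces $\tilde C\cdot E_k=\delta_{1k}$, and the rest of your proof goes through unchanged.
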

\begin{proof}
Suppose $\tilde{C}$ intersects the chain at $E_1$. The case of $E_{d-1}$ is similar. Since $C$ is smooth, $f: \tilde{C} \to C$ is an isomorphism and we have $\tilde C \cdot E_i=\delta_{1i}$. We compute the coefficients in the pullback $f^\ast C = \tilde C + \sum c_i E_i$ by applying the orthogonality condition:
\[
    0 = f^\ast C \cdot E_\ell = \begin{cases}
    1-2c_1 + c_2 &  \text{if } \ell = 1 \\
    c_{\ell -1} -2 c_\ell + c_{\ell +1} &  \text{if } 2 \leq \ell \leq d-2 \\
    c_{d-2} - 2c_{d-1} & \text{if } \ell = d-1 .
    \end{cases}
\]
This yields an affine linear system, whose unique solution gives the values in the statement. More precisely, the second set of vanishings implies $c_{\ell} - c_{\ell-1} = c_{\ell+1}-c_\ell = c_1 -1$. Solving the recursion, starting from $c_1$, we obtain $c_{d-1} = c_1 + (d-2)(c_1-1)$. On the other hand, starting from $c_{d-1}$, we obtain $c_{d-1} = 1-c_1$. This implies $c_1 = \frac{d-1}{d}$ and applying the recursion we conclude $c_\ell = c_1 + (\ell-1)(c_1-1) = \frac{d-\ell}{d}$. 
\end{proof}

\begin{proof}[Proof of \Cref{thm: intersections general}]
    Let $\tilde{\phi}: \tilde{X} \to Y_L$ be the resolution of the rational map $\phi: \bbP^2 \dashto Y_L$ obtained by iterated blow-ups. We provide the expression of the pullback $\tilde{\phi}^* \gamma_{ij}$ in terms of the classes of the exceptional divisors of $\tilde{X}$. The generation statement follows from the fact that $\tilde\zeta$ and $\alpha_{ij}$ for $i\in [s],j\in[k_{i \, h_i}]$ generate $\rmN^1_\bbQ(\tilde X) = \CH^1_\bbQ(\tilde X)$, from \Cref{lemma: intersections upstairs}. 

    \Cref{item: products between different chains} is immediate: either two distinct chains do not intersect (in the singular case) or they only intersect at $[W]$ along the irreducible components $C_{i \, h_1}$ and $C_{i' \, h_{i'}}$ (regular case). Therefore $C_{ij} \cdot C_{i'j'} = 0$ if $i \neq i'$ and $(j,j') \neq (h_i,h_i')$

    The intersection of consecutive curves in the same chain is either smooth or a cyclic quotient singularity arising from the contraction of a chain of $(-2)$-curves, as in \Cref{lemma: mumford pullback of type A singularity}. Suppress $i$ from the notation. The multiplicity of the singularity is computed in the proof of \Cref{thm: rational singularities}: for two sizes $k_{j},k_{j+1}$ of the Jordan blocks, the point of intersection $p_j =  C_{j} \cap C_{j+1}$ is a singularity of $Y_L$ of type $\frac{1}{k_{j+1} - k_j}(1,-1)$, to be intended to be smooth if $k_{j+1} - k_j = 1$.  In particular, we have 
    \[
    \gamma_{j_1}\cdot \gamma_{j_2} = \left\{ \begin{array}{ll}
    0 & \text{if  $|j_1 - j_2| >1$} \\
    \frac{1}{k_{j_2} - k_{j_1}} &\text{if  $j_2 = j_1+1$}
    \end{array}\right.  
    \]
    where the value for $j_2 = j_1 + 1$ is computed using \Cref{lemma: product at quotient singularity}. This proves the first part of \Cref{item: products along a chain}.
    
    The self-intersections in \Cref{item: products along a chain} are computed by pulling back the classes of the exceptional curves to $\tilde{X}$. For $j < h$, each curve $C_j$ is smooth, and passes through at most two singular points of $Y_L$, which are cyclic quotient singularities arising from two disjoint contracted chains as described in \Cref{lemma: mumford pullback of type A singularity}. Since the strict transform of $C_j$ is $A_{k_j}$, we obtain  
    \[
\tilde{\phi}^* C_j = A_{k_j} + \sum_{\ell=k_{j-1}+1}^{k_j - 1} c_{j\ell} A_\ell + \sum_{\ell=k_{j}+1}^{k_{j+1} - 1} c_{j\ell} A_\ell 
    \]
    where the coefficients $c_{j\ell}$ are computed via \Cref{lemma: mumford pullback of type A singularity}.
    Applying the orthogonality condition and the intersection products in \Cref{lemma: intersections upstairs}, we obtain
    \begin{align*} 
        \gamma_{j}^2 & = [\tilde\phi^\ast C_{j}] \cdot [\tilde\phi^\ast C_{j}] = [\tilde\phi^\ast C_{j}] \cdot \alpha_{k_{j}} \\
        & = \alpha_{k_{j}}^2
        + {\textstyle \frac{k_{j}-k_{ j-1} - 1}{k_{j}-k_{ j-1}}} \alpha_{ k_{j}-1} \alpha_{ k_{j}}
        + {\textstyle \frac{k_{j+1}-k_{j}-1}{k_{ j+1}-k_{j}}} \alpha_{k_{j}+1}\alpha_{ k_{j}} = {\textstyle - \frac{1}{k_{j}-k_{j-1}} - \frac{1}{k_{j+1}-k_{j}}} . 
    \end{align*}
    We complete \Cref{item: products along a chain} observing that the condition $\zeta \cdot \gamma_{j}=0$ for all $j <h$ follows from the blow-up construction.

    Next, we address \Cref{item: products singular case}: in this case the pencil is singular. We apply \Cref{prop: image of phi}: $\tilde \varphi$ maps the divisor $\tilde H_\infty$ to a rational normal curve $C_\infty$, and the $s$ chains of exceptional curves $C_{i1}\cup \ldots \cup C_{i h_i}$ for $i \in [s]$ intersect $C_\infty$ at $s$ distinct points $C_{i,h_i} \cap C_\infty$. Therefore, 
    \[ 
    \gamma_{i_1  h_{i_1}}\cdot \gamma_{i_2 h_{i_2}}=0 \quad \forall i_1 \neq i_2 .
    \]
    To compute the self-intersections of the irreducible exceptional curves $C_{i h_i}$, we apply again \Cref{lemma: mumford pullback of type A singularity}. The curve $C_{i,h_i}$ has (at most) one singularity of $Y_L$ that is the intersection point with $C_{i \, h_i-1}$. Applying \Cref{lemma: mumford pullback of type A singularity}, and computing the intersection product, we obtain the desired value of $\gamma_{i,h_i}^2$.
    We complete \Cref{item: products singular case} by observing that the equations involving $\zeta$ are straightforward, and the expression of $C_\infty$ follows from the one of $\tilde H_\infty$ in \Cref{lemma: intersections upstairs} and from \Cref{prop: contraction and embeddings}:
    \[
    \textstyle [C_\infty] = \tilde\varphi_\ast[\tilde H_\infty] = \zeta - \sum_{i \in [s]} \sum_{j \in [k_{i h_i}]} j \tilde\varphi_\ast(\alpha_{ij}) = \zeta - \sum_{i \in [s]}\sum_{j \in [h_i]} k_{ij} \gamma_{ij} .
    \]
    
    Finally, we prove \Cref{item: products regular case}: in this case the pencil is regular. \Cref{prop: image of phi} shows that $\tilde\varphi$ contracts $\tilde H_\infty$ to the point $[W]$: in particular, all exceptional curves $C_{1 h_1}, \ldots, C_{s h_s}$ intersect at $[W]$. Therefore, for every $i \in [s]$ the pullback of $C_{i h_i}$ in $\tilde X$ is of the form
    \[ \textstyle 
    \tilde\varphi^\ast C_{i h_i} = A_{i \, k_{ih_i}} + \sum_{\ell =1}^{k_{ih_i} - k_{i \, h_i-1} -1} \frac{\ell}{k_{ih_i} - k_{i \, h_i-1}} A_{i , k_{i \, h_i-1}+\ell} + c_{i \infty }\tilde H_{\infty} .
    \]
    From the orthogonality condition and the intersection products in \eqref{eqn: intersection against H}, we obtain 
    \[ 0 = \tilde\phi^\ast C_{i h_i}\cdot \tilde H_\infty = A_{i \, k_{ih_i}}\cdot \tilde H_\infty + c_{i \infty}\tilde H_\infty ^2 = 1 + c_{i \infty}(1 - \kappa), \] 
    and therefore $\textstyle c_{i \infty} = \frac{1}{\kappa -1}$.

    The same computations as in the singular case show 
    \begin{align*}
        \gamma_{i \, h_i}^2 
        & = \alpha_{i \, k_{i h_i}}^2 + {\textstyle \frac{k_{ih_i} - k_{i \, h_i-1} -1}{k_{ih_i} - k_{i \, h_i-1}}} \alpha_{i , k_{ih_i}-1}\alpha_{i \, k_{i h_i}} + {\textstyle \frac{1}{\kappa -1}}[\tilde H_\infty]\cdot \alpha_{i \, k_{i h_i}} = - {\textstyle \frac{1}{k_{ih_i} - k_{i \, h_i-1}} + \frac{1}{\kappa -1} } .
    \end{align*}
    In a similar way, for $i_1 \neq i_2$ we have
    \begin{align*}
        \gamma_{i_1 h_{i_1}}\cdot \gamma_{i_2 h_{i_2}} & = {\textstyle \frac{1}{\kappa -1}} [\tilde H_\infty]\cdot \alpha_{i_1 , k_{i_1 h_{i_1}}} +
        {\textstyle \frac{1}{\kappa -1}} [\tilde H_\infty]\cdot \alpha_{i_2 , k_{i_2 h_{i_2}}} +
        {\textstyle \frac{1}{(\kappa -1)^2}} [\tilde H_\infty]^2 = {\textstyle \frac{1}{\kappa -1}} ,\\
        \zeta \cdot \gamma_{i h_i} & = \tilde \zeta \cdot [\tilde\phi^\ast C_{i h_i}] = {\textstyle \frac{1}{\kappa -1}}[\tilde H_\infty]\cdot \tilde \zeta = {\textstyle \frac{1}{\kappa -1}} .
    \end{align*}
    The expression of $\zeta$ in terms of the classes $\gamma_{ij}$'s is obtained by the pushforward of the expression of $[\tilde H_\infty]$ in \Cref{lemma: intersections upstairs}, which vanishes since $\tilde H_\infty$ contracts to $[W]$. Finally, we have $\tilde\varphi^\ast(\zeta)= \tilde \zeta + \frac{1}{\kappa -1}[\tilde H_\infty]$ and 
    \[ \zeta^2 = \tilde\varphi_\ast(\tilde \zeta)\cdot \zeta = \tilde \zeta \cdot \tilde\phi^\ast(\tilde\zeta) = 1 + {\textstyle \frac{1}{\kappa -1} = \frac{\kappa}{\kappa -1} } . \qedhere
    \]
    \end{proof}

A fundamental step toward the proof of \Cref{thm: reconstruction of invariants} is that the rational normal curves $C_{ij} \subseteq Y_L$ are, essentially, uniquely determined by their classes $\gamma_{ij} \in \rmN^1_{\bbQ}(Y_L)$:
\begin{corollary}\label{corol: uniqueness exceptional curves}
Let $C_{ij} \subseteq Y_L$ be the rational normal curve as defined in \ref{eqn: CijCurves}. Then the following holds:
\begin{itemize}
\item If $L$ is a singular pencil, then $C_{ij}$ is the unique irreducible curve representing the class $\gamma_{ij} = [C_{ij}] \in \rmN^1_{\bbQ}(Y_L)$.
\item If $L$ is a regular pencil and $j < h_i$, then $C_{ij}$ is the unique irreducible curve representing the class $\gamma_{ij} = [C_{ij}] \in \rmN^1_{\bbQ}(Y_L)$. 
\item If $L$ is regular and $j = h_i$, then $C_{i \, h_i}$ is the unique irreducible curve representing the class $\gamma_{i \, h_i} = [C_{i \, h_i}] \in \rmN^1_\bbQ(Y_L)$ and passing through $[W]$.
\end{itemize}
\end{corollary}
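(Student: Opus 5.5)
The plan is to show that the classes $\gamma_{ij}$ are rigid. Any irreducible curve representing such a class will turn out to have negative intersection with $C_{ij}$, so it must be a component of $C_{ij}$, hence equal to $C_{ij}$. The tool is the elementary fact that if $C'$ and $C$ are distinct irreducible curves on the $\bbQ$-factorial surface $Y_L$ (\Cref{thm: rational singularities}), then $C'\cdot C\geq 0$. This holds because the pullbacks $\tilde\phi^*C$ and $\tilde\phi^*C'$ are effective $\bbQ$-divisors on $\tilde X$ with no common component other than the contracted ones, and the orthogonality condition kills the contracted parts.

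\textbf{Case $j<h_i$, or $L$ singular.} By \Cref{thm: intersections general}, items (2) and (3), we have $\gamma_{ij}^2<0$ in every such case:
\begin{itemize}
\item for $j<h_i$, $\gamma_{ij}^2=-\frac{1}{k_{ij}-k_{i\,j-1}}-\frac{1}{k_{i\,j+1}-k_{ij}}$;
\item for $j=h_i$ with $L$ singular, $\gamma_{i\,h_i}^2=-\frac{1}{k_{i\,h_i}-k_{i\,h_i-1}}$.
\end{itemize}
Now suppose $C'$ is irreducible with $[C']=\gamma_{ij}$ and $C'\neq C_{ij}$. Then $C'\cdot C_{ij}\geq 0$, while also $C'\cdot C_{ij}=\gamma_{ij}^2<0$, a contradiction. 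Hence $C'=C_{ij}$. One should double-check that numerical equivalence suffices here. It does, since the intersection pairing is defined on $\rmN^1_\bbQ$.

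\textbf{Case $L$ regular, $j=h_i$.} Here $\gamma_{i\,h_i}^2=-\frac{1}{k_{i\,h_i}-k_{i\,h_i-1}}+\frac{1}{\kappa-1}$ may be nonnegative, for instance when $s=1$ and $\kappa=k_{h}$ with $k_h-k_{h-1}\ge\kappa-1$, or when there are several eigenvalues. So the negativity argument fails, and this is the main obstacle. This is exactly why the statement adds the condition of passing through $[W]$. The plan is to refine the argument with a local contribution at $[W]$. Let $C'\ni[W]$ be irreducible with $[C']=\gamma_{i\,h_i}$ and $C'\neq C_{i\,h_i}$. Write
\[
\tilde\phi^*C'=\tilde C'+c\,\tilde H_\infty+\sum(\text{contracted }A\text{'s}).
\]
Since $C'$ passes through $[W]$, the strict transform $\tilde C'$ meets $\tilde H_\infty$. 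Comparing with $\tilde\phi^*C_{i\,h_i}$, the coefficient $c$ is determined by the class via orthogonality, and $c=\frac{1}{\kappa-1}$ as in the proof of \Cref{thm: intersections general}. Hence $\tilde C'\cdot\tilde H_\infty=1$. I would then compute
\[
C'\cdot C_{i\,h_i}=\tilde C'\cdot\tilde\phi^*C_{i\,h_i}\;\ge\;\frac{1}{\kappa-1}\,\tilde C'\cdot\tilde H_\infty+(\text{contributions at }p_{i\,h_i-1}).
\]
The goal is to show that this forces $\tilde C'$ to meet $\tilde H_\infty$ at the same point as $A_{i\,k_{ih_i}}$. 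It then remains to compare with the tangent direction $\psi$ from \Cref{prop: W singularity}(iv): the curves through $[W]$ correspond to tangent directions on the rational normal curve $\bbP\TC_{[W]}$, and matching the class should pin down the direction. This is the step I am least sure of.

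A fallback for this case is to use the sum $\sum_i\gamma_{i\,h_i}$, or to pair $C'$ against curves $C_{i'\,h_{i'}}$ for $i'\neq i$ and against $\zeta$, in order to derive a contradiction with effectivity in $\tilde X$. Concretely, I would write $\tilde C'$ in the basis $\tilde\zeta,\alpha_{ij}$ of \Cref{lemma: intersections upstairs}. The pullback class is fixed, so the class of $\tilde C'$ is fixed as $\alpha_{i\,k_{ih_i}}$ plus possibly a multiple of $[\tilde H_\infty]$ with nonnegative adjustment. Since $A_{i\,k_{ih_i}}$ is the unique effective representative of its class in $\tilde X$ by \Cref{lemma: intersections upstairs}, we would conclude $\tilde C'=A_{i\,k_{ih_i}}$, and therefore $C'=C_{i\,h_i}$.
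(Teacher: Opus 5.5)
Your first two bullets are correct and match the paper: in those cases $\gamma_{ij}^2<0$, and two distinct irreducible curves have nonnegative intersection. The third bullet ($L$ regular, $j=h_i$) has a genuine gap.

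First, your claim that the coefficient $c$ of $\tilde H_\infty$ in $\tilde\phi^*C'$ is determined by the numerical class of $C'$, so that $c=\frac{1}{\kappa-1}$ and $\tilde C'\cdot\tilde H_\infty=1$, is false. Orthogonality against $\tilde H_\infty$ only gives $c=\frac{\tilde C'\cdot\tilde H_\infty}{\kappa-1}$. This depends on the curve $\tilde C'$, not just on $[C']$: the pullback is well defined on classes, but its splitting into strict transform plus exceptional part is not.

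After that, the argument is never finished. The tangent-direction matching via $\psi$ is not carried out. The fallback is circular: $[\tilde C']$ equals $[\tilde\phi^*C_{i\,h_i}]$ minus the unknown exceptional part of $\tilde\phi^*C'$. So concluding $[\tilde C']=\alpha_{i\,k_{ih_i}}$, and hence $\tilde C'=A_{i\,k_{ih_i}}$, assumes exactly what must be shown.

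The missing step is short, and your own displayed inequality already contains it: compare the lower bound with the actual value of $\gamma_{i\,h_i}^2$. From the proof of the intersection theorem, $\tilde\phi^*C_{i\,h_i}=A_{i\,k_{ih_i}}+(\text{chain with nonnegative coefficients})+\frac{1}{\kappa-1}\tilde H_\infty$. Since $\tilde C'$ is neither $A_{i\,k_{ih_i}}$ nor contracted, every term of $\tilde C'\cdot\tilde\phi^*C_{i\,h_i}$ is nonnegative. Since $C'\ni[W]$ and $\tilde\phi^{-1}([W])=\tilde H_\infty$, you have $\tilde C'\cdot\tilde H_\infty\geq 1$. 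Hence $C'\cdot C_{i\,h_i}=\tilde C'\cdot\tilde\phi^*C_{i\,h_i}\geq\frac{1}{\kappa-1}$.

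On the other hand, $C'\cdot C_{i\,h_i}=\gamma_{i\,h_i}^2=\frac{1}{\kappa-1}-\frac{1}{k_{i\,h_i}-k_{i\,h_i-1}}<\frac{1}{\kappa-1}$, which is a contradiction. This is the paper's proof, except that the paper gets the local bound $(D_1\cdot D_2)_{[W]}\geq\frac{1}{\kappa-1}$ from the intersection formula at the $\frac{1}{\kappa-1}(1,1)$ quotient singularity rather than from the resolution. Either route works, and no identification of tangent directions or of the class of $\tilde C'$ is needed.
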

\begin{proof}
The proof is immediate whenever $\gamma_{ij}^2 < 0$. Indeed, in this case, if $D$ is any other effective representative of the class $\gamma_{ij}$, then 
\[
0> \gamma_{ij}^2 = [D] \cdot [C_{ij}] = \deg (D \cap C) \geq 0
\]
showing that $D$ cannot exist. This proves the first and second statement.

If $D_1,D_2$ are two distinct irreducible curves passing through $[W]$, then $(D_1 \cdot D_2)_{[W]} \geq \frac{1}{\kappa - 1}$: this is a consequence of \Cref{lemma: product at quotient singularity} and \Cref{prop: W singularity}. Since $\gamma_{i \, h_i}^2 = \frac{1}{\kappa -1} - \frac{1}{k_{i \, h_i} - k_{i \, h_i-1}} < \frac{1}{\kappa - 1}$, we conclude that $C_{i \, h_i}$ is the unique irreducible curve representing the class $\gamma_{i \, h_i}$ and passing through $[W]$.
\end{proof}

\subsection{Normality properties in higher dimensional Schubert varieties}\label{sec: higher normality}
\Cref{prop: resolution of phi} and \Cref{thm: rational singularities} show that Schubert varieties of matrix pencils are linearly normal in the Pl\"ucker embedding and normal as projective varieties. The proofs fundamentally use the fact that $L$ is a matrix pencil and, in particular, that one can completely control the indeterminacy locus of the rational map $\phi$. Hence, it is natural to ask whether these normality properties extend to higher dimensional Schubert varieties of linear matrix spaces. For instance, classical Schubert varieties are normal \cite{MS87} and projectively normal in their Pl\"ucker embedding \cite{RR85}. The same two properties hold for matroid Schubert varieties: the multidegree characterisation of \cite{ardila2016closure} shows that matroid Schubert varieties are multiplicity free in the Segre variety $(\mathbb{P}^1)^{\times n}$, in the sense of \cite{Brion03}. In this setting, \cite[Thm.\ 1]{Brion03} guarantees that they are normal, Cohen-Macaulay, and projectively normal in the Segre embedding.

Instead, higher dimensional Schubert varieties of linear matrix spaces are, in general, neither normal nor linearly normal. We show this with two examples.

\begin{example}\label{example: counterexample normality}
Let $L$ be a general $4$-dimensional linear subspace of $V^* \otimes W$ for $\dim V = \dim W = 3$. The Schubert variety of $L$ is a $4$-fold $Y_L \subseteq \Gr(3, V\oplus W)$. The genericity assumption guarantees $L$ contains full rank matrices, so $[W] \in Y_L$. We show that $[W]$ is a non-normal singularity. 

By \Cref{prop: W singularity}, the singularity is isomorphic to the singularity of the affine cone over the collineation variety $\scrC_2(L)$. From \cite[Example 6.1]{GeKe:collineations} we know that in this case $\scrC_2(L)$ is a generic projection of the Veronese variety $v_2(\bbP^3)$; in particular, it is not linearly normal.

The singularity at the vertex of a cone is normal if and only if the projectivised cone is projectively normal as a projective variety \cite[Ex. 5.14]{Hartshorne}. In this case, the projectivised cone of $\YL$ at $[W]$ is not linearly normal, hence not projectively normal and we obtain that $[W]$ is a non-normal point of $Y_L$.
\end{example}

\begin{example}\label{example: counterexample linear normality}
    Let $L \subseteq V^* \otimes W$ be a general $6$-dimensional linear space, with $\dim V = 3$ and $\dim W = 4$. Then $Y_L$ is a $6$-dimensional subvariety of $\Gr(3, V\oplus W)$. We show that it is not linearly normal. 
    
    The linear series defining $\phi: \bbP (\bbC \oplus L) \dashto \Gr(3, V\oplus W) \subseteq \bbP \Wedge^3 (V \oplus W)$ is 
    \begin{equation}\label{eqn: J3 as minors}
    J_3 = D_3(L) + x_0 D_2(L) + x_0^2 D_1(L) + \langle x_0^3 \rangle
    \end{equation}    
    which has dimension (at most) $\dim J_3 = 4 + \binom{3}{2}\binom{4}{2} + 6 + 1 = 29$.

    The variety of rank one elements in $V^* \otimes W$ has codimension $6$, so the indeterminacy locus of $\phi$ is the zero locus $B = \calZ( D_3(L)) \subseteq \bbP L$: it is smooth and only consists of elements of rank $2$. In particular, $\tilde{X} = \Bl_B \bbP(\bbC \oplus L)$ is a resolution of the indeterminacy of $\phi$. The linear series $J_3$ pulls back to the divisor $3 H - E_B$ in $\tilde X$, where $H$ is the pull-back of $\mathcal{O}_{\PP^2}(1)$ and $E_B$ is the exceptional divisor over $B$. Its complete linear series is $H^0(\tilde{X}, 3H- E_B) = H^0(\bbP(\bbC \oplus L), \calI_B(3)) = I(B)_3$, and we have 
    \[
   I(B)_3 = (x_0, D_3(L)) = x_0 \cdot \bbC[x_0, \ldots, x_5]_2 + D_3(L) 
    \]
    which has dimension $32$. This shows that the regular map $\tilde{\phi}$ is defined by a $29$-dimensional subseries of a $32$-dimensional complete linear series of the divisor $3H -E_B$. We argue that $H^0(\calO_{Y_L}(1))$ coincides with this linear series as well; in particular, we will obtain that $Y_L$ is not linearly normal. To see this, one needs to show that all elements of $I(B)_3$ define a section of $\calO_{Y_L}(1)$. This amounts to showing that $I(B)_3 \bbC[J_3] \subseteq \bbC[J_3]$, where $\bbC[J_3]$ denotes the algebra generated by $J_3$. This can be verified with an explicit calculation: for every $q \in \bbC[x_1 \vvirg x_5]$, and every $f \in J_3$, one can show that $x_0qf \in J_3^2$. The calculation is a case-by-case analysis depending on which of the four summands in \eqref{eqn: J3 as minors} $f$ belongs. We omit the details of the proof, which is lengthy but not difficult. For any given explicit example, all details can be verified computationally via a computer algebra software. 
\end{example}

\subsection{The eigenvalue configuration}\label{subsec: eigenvalue configuration}

In this last section, we record a consequence of \Cref{prop: W singularity}. The support of the indeterminacy locus $Z$ defines, on $H_\infty = \bbP L \subseteq \bbP^2$, a configuration of $s$ points $[0:X_i]$. This configuration carries over to the surface $Y_L$: we record two results showing this fact. The situation for singular pencils follows directly from \Cref{prop: image of phi}.
\begin{proposition}\label{prop: eigenvalue configuration singular}
Let $L$ be a singular pencil and let $\theta_1 \vvirg \theta_s$ be its distinct generalised eigenvalues. Let $w_i \in \bbP^2$ be the point satisfying $x_0=\theta_i = 0$. Then the regular restriction $\phi|_{H_\infty} : H_\infty \to Y_L$ surjects onto $C_\infty$ and the points $\phi|_{H_\infty}(w_1) \vvirg \phi|_{H_\infty}(w_s)$ are the intersection points of $C_\infty$ with the curves $C_{i \, h_i}$ for $i \in [s]$.
\end{proposition}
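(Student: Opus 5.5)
Surjectivity onto $C_\infty$ is immediate from \Cref{prop: image of phi}. Since $L$ is singular, $\phi|_{H_\infty}$ is a regular map whose image is the rational normal curve $C_\infty$. By \Cref{theorem: linear series of YL}, the map $\phi$ has no indeterminacy along $H_\infty$ away from the points $w_i$. At the points $w_i$ the restriction to the line still extends, because it is a morphism from $\bbP^1$. The content of the statement is therefore to identify the image of $w_i$ under this extension, and then to show that these are exactly the points $C_\infty\cap C_{i\,h_i}$.

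\emph{Step 1: the intersection points.} I would work on the resolution $\tilde X$. By \Cref{lemma: intersections upstairs} and \eqref{eqn: intersection against H}, the strict transform $\tilde H_\infty$ meets the chain over $w_i$ only in the last curve $A_{i\,k_{i h_i}}$, transversally, in one point $\tilde w_i$. This is the point $z_{k_{ih_i}-1}$ in the recursive construction, lying on the final exceptional divisor.

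The isomorphism $\pi|_{\tilde H_\infty}:\tilde H_\infty\to H_\infty$ sends $\tilde w_i$ to $w_i$. Moreover, $\tilde\phi|_{\tilde H_\infty}$ is the regular extension of $\phi|_{H_\infty}$, since both are morphisms from $\bbP^1$ agreeing on a dense open set. Hence
\[
\phi|_{H_\infty}(w_i)=\tilde\phi(\tilde w_i)\in \tilde\phi(\tilde H_\infty)\cap\tilde\phi(A_{i\,k_{ih_i}})=C_\infty\cap C_{i\,h_i},
\]
using \Cref{prop: contraction and embeddings} for $\tilde\phi(A_{i\,k_{ih_i}})=C_{i\,h_i}$.

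\emph{Step 2: there are no other intersection points.} It remains to check that $C_\infty\cap C_{i\,h_i}$ consists only of this point. For this I would compute the intersection number in $Y_L$ using \Cref{thm: intersections general}\eqref{item: products singular case}. Write $[C_\infty]=\zeta-\sum k_{ij}\gamma_{ij}$ and pair it with $\gamma_{i\,h_i}$. The terms are:
\begin{itemize}
\item $\zeta\cdot\gamma_{i h_i}=0$;
\item the pairings with other chains vanish;
\item the in-chain terms are $k_{i h_i}\cdot\frac{1}{k_{ih_i}-k_{i\,h_i-1}}$ and $k_{i\,h_i-1}\cdot\frac{1}{k_{ih_i}-k_{i\,h_i-1}}$, with $\gamma_{i h_i}^2=-\frac{1}{k_{ih_i}-k_{i\,h_i-1}}$ for the first.
\end{itemize}
Hence
\[
C_\infty\cdot C_{i\,h_i}=\frac{k_{ih_i}-k_{i\,h_i-1}}{k_{ih_i}-k_{i\,h_i-1}}=1 .
\]
A direct alternative is to compute upstairs: pull back with $\tilde\phi^*C_{i h_i}$ from the proof of \Cref{thm: intersections general} and dot it with $\tilde H_\infty$. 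By \eqref{eqn: intersection against H} this also gives $1$, since $\tilde H_\infty$ meets only $A_{i\,k_{ih_i}}$.

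An intersection number of $1$ between two distinct irreducible curves forces a single transverse intersection point. Consequently $C_\infty\cap C_{i\,h_i}=\{\phi|_{H_\infty}(w_i)\}$.

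\emph{Step 3: distinctness of the points.} By \Cref{prop: image of phi} the map $\phi|_{H_\infty}$ is an isomorphism onto the rational normal curve $C_\infty$. So the points $\phi|_{H_\infty}(w_i)$ are pairwise distinct, consistent with \Cref{thm: intersections general}\eqref{item: products between different chains}.

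\emph{Main obstacle.} The delicate point is the identification in Step 1 of the extension $\phi|_{H_\infty}(w_i)$ with $\tilde\phi(\tilde w_i)$. If $\tilde H_\infty$ were contracted this would fail, but in the singular case $D\cdot\tilde H_\infty=\epsilon_\bullet+\eta_\bullet>0$. The argument also needs that $C_\infty\neq C_{i\,h_i}$, which holds because the two curves have different strict transforms on $\tilde X$ and neither is contracted.
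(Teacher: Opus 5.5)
Your proof is correct, and Step~1 is exactly the paper's argument. The paper observes that $\tilde\phi|_{\tilde H_\infty}$ is the extension of $\phi|_{H_\infty}$, so $w_i$ and $\tilde w_i=\tilde H_\infty\cap A_{i\,k_{ih_i}}$ have the same image, and this image lies on $C_\infty\cap C_{i\,h_i}$. (A harmless slip: $\tilde w_i$ is $z_{k_{ih_i}}$, not $z_{k_{ih_i}-1}$.)

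Steps~2 and~3 go beyond the paper, which takes for granted that $C_\infty\cap C_{i\,h_i}$ is a single point and that these points are distinct. Your computation $C_\infty\cdot C_{i\,h_i}=1$ is right. However, the inference ``intersection number $1$ forces a single transverse point'' needs one more remark on $Y_L$. Local intersection multiplicities at the $\mathsf{A}$-type points can be fractional (\Cref{lemma: product at quotient singularity}), so the inference only holds once you know $C_\infty$ avoids the singular locus.

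That fact is true. Every curve contracted by $\tilde\phi$ is some $A_{i\ell}$ with $\ell<k_{i\,h_i}$. By \eqref{eqn: intersection against H}, each such curve is disjoint from $\tilde H_\infty$.

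With this observation you can skip the numerical computation entirely. The set $\tilde\phi^{-1}(C_{i\,h_i})$ consists of $A_{i\,k_{ih_i}}$ together with contracted curves. Hence $\tilde H_\infty\cap\tilde\phi^{-1}(C_{i\,h_i})=\{\tilde w_i\}$, and this gives the single intersection point directly.
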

\begin{proof}
This is straightforward. The restriction of the resolution $\tilde{\phi} : \tilde{X} \to Y_L$ to the strict transform $\tilde{H}_\infty$ of $H_\infty$ coincides with the natural extension of the restriction of $\phi$ to $H_\infty$, as described in \Cref{prop: image of phi}. In particular, the intersection point between the exceptional divisor $A_{i \, h_i}$ and the strict transform $\tilde{H}_\infty$ is mapped to the same point as $w_i$. Since $C_{i \, h_i} = \tilde{\phi}(A_{i \, h_i})$ and $C_\infty = \tilde{\phi}(\tilde{H}_\infty)$, the result follows.
\end{proof}

The case of regular pencils is slightly more subtle and makes use of \Cref{prop: W singularity} and the map $\psi : H_\infty \to \bbP T_{[W]} Y_L$ defining the $(\kappa-1)$-th Veronese embedding of $H_\infty$ onto the projectivised tangent cone $\bbP \TC_{[W]}Y_L$.
\begin{proposition}\label{prop: eigenvalue configuration regular}
Let $L$ be a regular pencil and let $\theta_1 \vvirg \theta_s$ be its distinct generalised eigenvalues. Let $w_i \in \bbP^2$ be the point satisfying $x_0=\theta_i = 0$. Then the map $\psi : H_\infty \to \bbP T_{[W]} Y_L$ maps  $w_i$ to the point defined by the tangent line $\bbP (T_{[W]} C_{i \, h_i})$. 
\end{proposition}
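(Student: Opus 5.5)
Since $L$ is regular, every point of $\tilde H_\infty$ is sent to $[W]$ by $\tilde\phi$, and all curves $C_{i\,h_i}$ pass through $[W]$. The plan is to compare two tangent directions at $[W]$. The first is $\psi(w_i)$, defined via the collineation map. The second is the direction in which $C_{i\,h_i}$ leaves $[W]$, obtained from the resolution $\tilde X$. We work in the affine neighbourhood $U=\calU_V\cap Y_L$ of $[W]$. By \Cref{prop: W singularity general}, ${\phi'}^{-1}$ identifies $U$ with the affine cone $\hat{\scrC}_{n-1}(L)$, with $[W]\mapsto 0$. In these coordinates, a point $\phi([u:X])$ with $X$ invertible corresponds to $uX^{-1}$. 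Tangent directions at $[W]$ are therefore lines in the cone, and $\psi([0:X])=[X^{-1}]$. Since $\psi$ is the collineation map and it is regular by \Cref{prop: W singularity}(iv), its value at $w_i$ is the limit $\lim_{X\to X_i}[X^{-1}]=[\operatorname{adj}(X)]$, suitably rescaled. By the block-diagonal computation in the proof of \Cref{prop: W singularity}, this limit is the degree-$(\kappa-1)$ Veronese image of $w_i$.

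Next we describe $C_{i\,h_i}$ near $[W]$ in the same coordinates. We work locally at $w_i=[0:X_i]$ and suppress $i$. The chain of blow-ups is centred along $H_\infty$, at the points $z_\ell=H^{(\ell)}\cap A_\ell^{(\ell)}$. Points of $A_{k_h}$ away from $A_{k_h-1}$ are limits of arcs $\gamma(t)=[t:X(t)]$ with $X(t)=X_i+ct^{k_h}\,Y+\dots$, where $Y$ is a fixed direction in $L$ transverse to $X_i$ and $c\in\bbC$ parametrises $A_{k_h}$. Such an arc has contact of order exactly $k_h$ with $H_\infty$ at $w_i$. Its image in the cone is $tX(t)^{-1}$. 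We expand $X(t)^{-1}=\operatorname{adj}(X(t))/\det X(t)$ block by block, using the Jordan form. The generalised eigenvalue $\theta_i$ evaluated along the arc vanishes to order $k_h$ in $t$, with leading coefficient proportional to $c$. The Jordan blocks of size $k_h$ then dominate: after rescaling, the leading term of $tX(t)^{-1}$ is $f(c)\,t^{a}\operatorname{adj}(X_i)$ for some exponent $a\ge 1$ and some nonzero function $f(c)$. The terms coming from smaller blocks and from other eigenvalues are of strictly lower order in the cone coordinates, because $\operatorname{adj}(X_i)$ is supported exactly on the maximal Jordan blocks for $\theta_i$. Letting $t\to0$ shows that the point $\tilde\phi(A_{k_h}\ni c)$ lies on the line spanned by $\operatorname{adj}(X_i)$, up to corrections that vanish as $c\to\infty$, that is, as $A_{k_h}$ approaches $\tilde H_\infty$. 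Hence the tangent direction of $C_{h}$ at $[W]=\tilde\phi(A_{k_h}\cap\tilde H_\infty)$ is $[\operatorname{adj}(X_i)]=\psi(w_i)$.

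The bookkeeping is cleanest if we first reduce to a single eigenvalue. Both $\psi$ and the curves split compatibly with the block decomposition, through the direct sum map $\Delta$ of \Cref{prop: grassmannian direct sum} together with \Cref{cor: sum of rnc}. We then do the computation explicitly for $L_i=\bigoplus J_{k_j}(x_1)^{\boxplus e_j}$, with $X_i$ nilpotent and $\operatorname{adj}$ of a Jordan block concentrated in its corner entry. The main obstacle I expect is the order-of-vanishing comparison in this expansion. It must show that the limit direction along $C_{h}$ is governed by the maximal blocks, with all other contributions vanishing, so that the direction depends only on $w_i$. I would first verify this on a single block $J_k(x_1)$, where $tX(t)^{-1}$ is an explicit upper triangular matrix whose entries are powers of $1/\theta$. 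Then I would check that adjoining the blocks of other sizes only adds terms of higher order in $t$.
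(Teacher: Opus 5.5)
\textbf{There is a genuine gap in the central computation: the arcs you use do not reach $A_{i\,k_{ih_i}}$.}

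Write $k=k_{i\,h_i}$. With $x_0=t$ and $X(t)=X_i+c\,t^{k}Y+\dots$ you get $\theta_i\sim c\,t^{k}$. Such an arc meets $H_\infty$ transversally, since $x_0$ vanishes to order $1$, and it is tangent to the line $\{\theta_i=0\}$. You state the right property (contact of order $k$ with $H_\infty$), but your formula encodes the opposite one: you have swapped the roles of $x_0$ and $\theta_i$.

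For $k\ge 2$, the lift of your arc ends at the point of $A_{i1}$ corresponding to the tangent direction of $\{\theta_i=0\}$. This point is not $z_1$, and it is the same for every $c$. So the arc never sees $A_{i\,k}$. The expansion confirms this. On a maximal block, $t\,X(t)^{-1}$ has leading term $\pm c^{-k}t^{1-k^2}N^{k-1}$, which leaves the chart $\calU_V$. For instance, for $L=J_2(x_1)$ all these arcs go to the vertex of the quadric cone, not to points of $C_{11}$ near $[W]$.

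Your exponent bookkeeping is also inconsistent with the conclusion you want. If the leading exponent $a$ were $\ge 1$, every arc would tend to $0=[W]$, and you would learn nothing about points of $C_{i\,h_i}\setminus\{[W]\}$. To land on such a point, the exponent must be exactly $0$.

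Finally, $\operatorname{adj}(X_i)=0$ as soon as $\theta_i$ has at least two Jordan blocks, since $\rank X_i=n-d_i$. The object to track is the rescaled limit $M:=\lim_{y\to 0}y^{k}(X_i+yY)^{-1}$. It equals $(-1)^{k-1}$ times the direct sum of the corner matrices $N^{k-1}$ of the maximal blocks with eigenvalue $\theta_i$, and is $0$ elsewhere.

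\textbf{The repair is short.} In the chart $x_2=1$, put $y=\theta_i$, so that $X=X_i+yY$ with $Y=L(1,0)$. Use the coordinates from the proof of \Cref{lemma: transform one point}: near $\tilde w_i$ one has $x_0=u\,y^{k}$, with $A_{i\,k}=\{y=0\}$ and $\tilde H_\infty=\{u=0\}$. In the cone chart, $\tilde\phi$ is then the map $u\,y^{k}(X_i+yY)^{-1}$.
\begin{itemize}
\item On a block $J_{k'}(\theta_i)$ with $k'\le k$, this map is $u\sum_{r<k'}(-1)^r y^{k-1-r}N^r$.
\item On blocks of the other eigenvalues, it is $u\,y^{k}$ times a regular matrix.
\end{itemize}
Hence on $A_{i\,k}$ (in the affine coordinate $u$) the map $\tilde\phi$ is exactly $u\mapsto uM$. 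This is a line through $0=[W]$, attained at $u=0$, i.e. at $\tilde w_i$ rather than as ``$c\to\infty$''. There are no correction terms, and its tangent direction is $[M]$. By the same expansion, $\psi(w_i)=\lim_{y\to 0}[(X_i+yY)^{-1}]=[M]$.

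Compared with the paper: the paper asserts that $\tilde\psi(\tilde w_i)$ can be read off from the image of any curve through $\tilde w_i$ transverse to $\tilde H_\infty$, and takes that curve to be $A_{i\,k_{ih_i}}$. The corrected computation makes that step explicit. The reduction to a single eigenvalue via $\Delta$ is not needed.
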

\begin{proof}
By \Cref{prop: W singularity}, the map $\psi$ is the $(\kappa-1)$-th Veronese embedding of $H_\infty$ and it maps elements $[0:X] \in H_\infty$ to a point of $\bbP \TC_{[W]} Y_L$ uniquely determined by the tangent vector at $[W]$ of a curve $\phi(D) \subseteq Y_L$ with  $D$ transverse to $H_\infty$ and passing through $[0:X]$. Let $\pi : \tilde{X} \to \bbP^2$ be the blow-up and let $\tilde{\psi} = \psi \circ \pi|_{\tilde{H}}$. The same argument as in \Cref{prop: W singularity} guarantees $\tilde{\psi}$ maps the point of intersection $\tilde{w}_i = \tilde{H}_\infty \cap A_{i \, k_{i \, h_i}}$ to the point of $\bbP T_{[W]} Y_L$ defined by the tangent line of any curve $\tilde{\phi}(D)$ where $D$ is a curve in $\tilde{X}$ transverse to $\tilde{H}_\infty$ and passing through $\tilde{w}_i$. Choosing $D = A_{i \, k_{i \, h_i}}$ yields the desired result.
\end{proof}

\section{Recovering pencil invariants from the Schubert surface}\label{sec: recovering}

This section is devoted to the proof of \Cref{thm: reconstruction of invariants}. We prove that the regular component of a pencil $L$ is uniquely determined by the Schubert variety $Y_L$. In fact, we provide a reconstruction procedure which can be turned into an actual algorithm, using the ideas of \cite{Lazic24}. The procedure is based on the fact that the point $[W] \in Y_L$ in the regular case or the rational normal curve $C_\infty$ in the singular case can be characterised uniquely in $Y_L$, unless there are very few generalised eigenvalues. If there are few generalised eigenvalues, then we perform a case-by-case analysis based on the configuration of exceptional curves and singularities. An outline of the procedure is given in \Cref{fig: reconstruction-flowchart}.

\begin{figure}[H]
\centering
\begin{tikzpicture}[
    node distance=0.7cm and 2.2cm,
    every node/.style={
        font=\small,
        align=center
    },
    box/.style={
        rectangle,
        draw,
        rounded corners=2pt,
        minimum width=5.2cm,
        minimum height=1.1cm,
        text width=4.8cm,
        inner sep=6pt
    },
    result/.style={
        rectangle,
        draw,
        rounded corners=2pt,
        minimum width=5.5cm,
        minimum height=1.2cm,
        text width=5.1cm,
        inner sep=6pt
    },
    arrow/.style={
        -{Latex[length=2.5mm]},
        thick
    },
    label/.style={
        font=\small,
        fill=white,
        inner sep=2pt
    },
    scale=0.25
]

\node[box] (start) {\footnotesize
    Given the Schubert surface $Y_L$
};

\node[box, below=of start] (nonduval) {\footnotesize
    Does $Y_L$ have a non-Du Val singularity?
};

\node[result, right=of nonduval] (reconstructW) {\footnotesize
    $L$ is regular.\\
    The non-Du Val point is $[W]$\\
    Reconstruction via \Cref{prop: W non DuVal singularity}\\
};

\node[box, below=of nonduval] (smallcurve) {\footnotesize
    Is there a negative curve $D\subseteq Y_L$ with $C^2  <-2$?
};

\node[result, right=of smallcurve] (reconstructCinfty) {\footnotesize
    $L$ is singular.\\
    The special curve is $C_\infty$.\\
    Reconstruction via \Cref{prop: Cinfty very negative}
};

\node[result, below=of smallcurve] (smoothquestion) {\footnotesize
    Is $Y_L$ smooth?
};

\node[result, right=of smoothquestion] (smoothanswer) {\footnotesize
Reconstruction via \Cref{prop: smooth YL} \\
with rank of the Picard group and negative curves.
};

\node[result, below=of smoothquestion] (cases) {\footnotesize
Reconstruction via \Cref{prop: reconstruct case by case} \\
by the singularities and the configuration of negative curves
};

\draw[arrow] (start) -- (nonduval);

\draw[arrow] (nonduval) -- node[label, above] {Yes} (reconstructW);
\draw[arrow] (nonduval) -- node[label, right=5pt] {No} (smallcurve);

\draw[arrow] (smallcurve) -- node[label, above] {Yes} (reconstructCinfty);
\draw[arrow] (smallcurve) -- node[label, right=5pt] {No} (smoothquestion);

\draw[arrow] (smoothquestion) -- node[label, above] {Yes} (smoothanswer);
\draw[arrow] (smoothquestion) -- node[label, right=5pt] {No} (cases);

\end{tikzpicture}

\caption{Flowchart of the procedure to reconstruct the regular part $L_\bullet^{\text{reg}}$ from $\YL$.}
\label{fig: reconstruction-flowchart}
\end{figure}

The first two results of this section uniquely reconstruct the pencil $L$ under suitable assumption on the singularity at $[W]$ or on the self-intersection of the rational normal curve $C_\infty$. Throughout this section, we say that a curve $C \subseteq Y_L$ is \emph{negative} if $C^2 < 0$ and it is \emph{non-positive} if $C^2 \leq 0$ in $\rmN^1_{\bbQ}(Y_L)$. By \Cref{prop: image of phi} and \Cref{prop: contraction and embeddings}, all special curves we will consider in this section are rational normal curves in the Pl\"ucker embedding of $Y_L$; when we refer to their degree, we intend the degree in this embedding.

We recall that $\kappa := \sum_{i \in [s]} k_{i \, h_{i}}$ is the sum of the maximal sizes of Jordan blocks, one for each eigenvalue.

\begin{proposition}\label{prop: W non DuVal singularity}
Let $L$ be a pencil of size $n$. Assume $Y_L$ has a singular point $q_W$ that is a quotient singularity of type $\frac{1}{\kappa -1} (1,1)$ not of Du Val type. Then $\kappa \geq 4$, $L$ is regular and one of the following holds: \vspace{1ex}
\begin{enumerate}[leftmargin=*]
\item There are no non-positive curves passing through $q_W$. Then $L$ has a single eigenvalue, and there is an integer $e$ such that $\kappa e = n$ and $L = J_\kappa(x_1)^{\boxplus e}$. \vspace{1ex}
\item There are exactly $s$ non-positive curves $D_1 \vvirg D_s$ passing through $q_W$. In this case $L$ has exactly $s$ distinct eigenvalues $\theta_1 \vvirg \theta_s$; up to reordering, the configuration $(\theta_1 \vvirg \theta_s)$ in $\bbP^1$ is $\GL_2$-equivalent to the configuration of the tangent lines $T_{q_W}D_1 \vvirg T_{q_W}D_s$ in $\bbP(\TC_{[W]}Y_L)$, which is a rational normal curve of degree $\kappa-1$. Moreover, for every $i \in [s]$, $D_i$ is connected to a chain of $h_i-1$ other negative curves, $C_{i1}, \ldots, C_{i \, h_i-1}, C_{i \, h_i}=D_i$. Let $e_{ij} = \deg C_{ij}$ and $c_{ij} = [C_{ij}]^2$. Define recursively 
\begin{align*}
u_{i \, h_i} := {\textstyle \frac{1}{\kappa-1}} - c_{i \, h_i}  \quad , \quad u_{ij} := - c_{ij} - u_{i \, j+1} 
\end{align*}
and set $k_{i j} := \sum_{\ell=1}^{j} \frac{1}{u_{i \ell}}$. Then $k_{i j}$ are integers and for $L = L_1 \boxplus \ldots \boxplus L_s$, we have
\[
L_i = J_{k_{i 1}}(\theta_i)^{\boxplus e_{i 1}} \boxplus J_{k_{i 2}}(\theta_i)^{\boxplus e_{i 2}} \boxplus \ldots \boxplus J_{k_{i \, h_i}}(\theta_i)^{\boxplus e_{i h_i}}.
\]
\end{enumerate} 
\end{proposition}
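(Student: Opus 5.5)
First I pin down which pencils can produce the point $q_W$. By \Cref{thm: rational singularities}, every singular point of $Y_L$ is either a Du Val point of type $\mathsf{A}$ or, when $L$ is regular, the point $[W]$, where the singularity is of type $\frac{1}{\kappa-1}(1,1)$ (\Cref{prop: W singularity}). A non-Du Val singularity can therefore only occur if $L$ is regular and $q_W=[W]$. The type $\frac{1}{r}(1,1)$ is Du Val exactly when $2\equiv 0 \bmod r$, i.e. $r\in\{1,2\}$. Hence the hypothesis forces $\kappa-1\geq 3$, that is $\kappa\geq 4$.

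Next I identify the curves through $[W]$ and split into cases. Every irreducible curve in $Y_L$ is the image of a curve in $\tilde X$, and $\tilde\phi^{-1}([W])=\tilde H_\infty$. So a curve through $[W]$ is the image of a curve $D\subseteq\tilde X$ meeting $\tilde H_\infty$ and not contracted. I plan to show that the non-positive ones are exactly the $C_{i\,h_i}$ with $\gamma_{i\,h_i}^2\leq 0$.
\begin{itemize}
\item If $D$ is not an exceptional curve, write $[D]=a\tilde\zeta-\sum b_{ij}\alpha_{ij}$ with $a>0$. Then $\tilde\phi^*\tilde\phi_*D=D+c\tilde H_\infty+(\text{contracted }A\text{'s})$ with $c>0$. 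Its self-intersection equals $D\cdot\tilde\phi^*\tilde\phi_*D\geq D^2+c\,(D\cdot\tilde H_\infty)$.
\item To show this is positive, I would use that $\tilde\phi_*D$ is a curve in a surface containing the Cartier divisor $\zeta$ with $\zeta^2>0$. Alternatively, and more concretely, $\phi(D)$ is the closure of the image of an affine curve in $L$ through the $\bbC^*$-cone structure. I expect this step to be the main obstacle, and I would first try a Hodge-index argument on $\mathrm N^1_\bbQ(Y_L)$.
\end{itemize}
For the exceptional curves, \Cref{thm: intersections general} gives $\gamma_{i\,h_i}^2=\frac{1}{\kappa-1}-\frac{1}{k_{i\,h_i}-k_{i\,h_i-1}}$. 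This is non-positive exactly when $k_{i\,h_i}-k_{i\,h_i-1}\leq\kappa-1$.
\begin{itemize}
\item If $s\geq 2$, then $k_{i\,h_i}\leq\kappa-1$ for every $i$, so this always holds and all $s$ curves are non-positive: this is case (2).
\item If $s=1$ and $h\geq 2$, then again $k_h-k_{h-1}<\kappa$, so we are in case (2).
\item If $s=1$ and $h=1$, the difference is $k_1-0=\kappa$, so $\gamma^2>0$ and no non-positive curve passes through $q_W$. This gives case (1), with $L=J_\kappa(x_1)^{\boxplus e}$ and $n=\kappa e$.
\end{itemize}
The two cases are mutually exclusive because the number of non-positive curves through $q_W$ is $0$ in case (1) and $s\geq 1$ in case (2).

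In case (2), the eigenvalue configuration follows from \Cref{prop: eigenvalue configuration regular}. The map $\psi$ sends $w_i$ to $T_{[W]}C_{i\,h_i}$, and $\psi$ is the $(\kappa-1)$-th Veronese embedding onto $\bbP\TC_{[W]}Y_L$, so the tangent-line configuration is projectively equivalent to $(\theta_1,\dots,\theta_s)$. For the chains, \Cref{prop: contraction and embeddings} shows that the curves $C_{ij}$ for $j<h_i$ are attached successively to $D_i$, with $\deg C_{ij}=e_{ij}$, and \Cref{corol: uniqueness exceptional curves} lets us identify them intrinsically from their classes. Set $u_{ij}:=\frac{1}{k_{ij}-k_{i\,j-1}}$ with $k_{i0}=0$. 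Then \Cref{thm: intersections general} gives $c_{i\,h_i}=\frac{1}{\kappa-1}-u_{i\,h_i}$ and $c_{ij}=-u_{ij}-u_{i\,j+1}$. These are exactly the stated recursions, run downwards from $j=h_i$. Telescoping gives $\sum_{\ell\leq j}1/u_{i\ell}=k_{ij}$, which shows the $k_{ij}$ are integers. Finally, the multiplicities $e_{ij}$ are read off from the degrees, which recovers each $L_i$.
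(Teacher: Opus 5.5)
\textbf{Overall.} Your route is the same as the paper's proof, and the only defect is the step you flag as the main obstacle. That step is genuinely needed, the paper's proof does not supply it either, and it closes with an elementary computation.

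\textbf{What matches the paper.} You use \Cref{thm: rational singularities} together with the Du Val criterion for $\frac{1}{r}(1,1)$ to force $L$ regular, $q_W=[W]$ and $\kappa\geq 4$. You then use the sign of $\gamma_{i\,h_i}^2=\frac{1}{\kappa-1}-\frac{1}{k_{i\,h_i}-k_{i\,h_i-1}}$ to separate case (1), namely $s=1$ and $h=1$, from case (2). In case (2) you read the configuration from \Cref{prop: eigenvalue configuration regular}, the multiplicities from \Cref{prop: contraction and embeddings}, and the sizes by inverting \Cref{thm: intersections general}. Your telescoping with $u_{ij}=1/(k_{ij}-k_{i\,j-1})$ makes explicit what the paper calls ``inverting the formulas''. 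Your case analysis is also slightly sharper than the paper's: for $s\geq 2$ you show all $s$ curves $C_{i\,h_i}$ are non-positive, whereas the paper only says ``at least one''.

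\textbf{The open step.} Case (2) needs two identifications, and both fail without excluding other curves.
\begin{enumerate}
\item The $s$ non-positive curves through $q_W$ must be exactly the $C_{i\,h_i}$. This is also what makes the dichotomy (1)/(2) exhaustive.
\item The negative curves hanging off $D_i$ must be exactly $C_{i1},\dots,C_{i\,h_i-1}$.
\end{enumerate}
For this you must rule out any other non-positive curve. \Cref{corol: uniqueness exceptional curves} does not do it, since it only says $C_{ij}$ is the unique irreducible curve in its own class. As written, your proposal stops here and is incomplete.

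\textbf{How to close it.} No Hodge index argument is needed.
\begin{itemize}
\item Let $D\subseteq\tilde X$ be an irreducible curve that is neither $\tilde H_\infty$ nor exceptional. Write its class with total transforms, $[D]=a\tilde\zeta-\sum_{i,j}m_{ij}[E_{ij}]$.
\item By \Cref{lemma: intersections upstairs}, $a=D\cdot\tilde\zeta>0$, $m_{ij}=D\cdot E_{ij}\geq 0$, and $D\cdot\tilde H_\infty=a-\sum_{i,j}m_{ij}\geq 0$.
\item Hence $D^2=a^2-\sum_{i,j}m_{ij}^2\geq a^2-\bigl(\sum_{i,j}m_{ij}\bigr)^2\geq 0$.
\item The inequality is strict when $D$ meets $\tilde H_\infty$, that is, when $\tilde{\phi}(D)$ passes through $[W]$.
\item The contracted locus is $\tilde H_\infty$ together with the $(-2)$-chains, and it has negative definite intersection matrix. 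So $\tilde{\phi}^*\tilde{\phi}_*D=D+\sum_F c_F F$ with all $c_F\geq 0$.
\item Therefore $(\tilde{\phi}_*D)^2=D\cdot\tilde{\phi}^*\tilde{\phi}_*D\geq D^2$.
\end{itemize}
Consequently every curve of $Y_L$ other than the $C_{ij}$ has non-negative self-intersection, and strictly positive self-intersection if it passes through $[W]$. This is exactly what both identifications require, and it completes your argument.
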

\begin{proof}
By \Cref{thm: rational singularities}, all singularities of $Y_L$ are of Du Val type with the only possible exception of $[W]$ which is a quotient singularity of type $\frac{1}{\kappa-1}(1,1)$, as in \Cref{prop: W singularity}. Therefore, the existence of a non-Du Val singularity guarantees that $L$ is regular and that $\kappa \geq 4$. Moreover, $[W] = q_W$ and the projectivised tangent cone $\bbP \TC_{[W]}Y_L$ is a rational normal curve of degree $\kappa-1$.

By \Cref{thm: intersections general}, if there are at least two distinct eigenvalues, at least one of the divisors corresponding to Jordan blocks of maximal size is a non-positive curve passing through $[W]$. Similarly, if there is a single eigenvalue, the divisor of the maximal size Jordan block is negative unless either there is a unique block size, that is $h=1$, in which case the self-intersection is positive, or $h=2$ and $k_1=1$, in which case the self-intersection is $0$. This guarantees that if there are no non-positive curves passing through $[W]$ then all Jordan blocks have the same size and Case (1) holds.

If $L$ has more than one generalised eigenvalue or if there is one eigenvalue with multiple block sizes, then all exceptional curves passing through $[W]$ are non-positive by \Cref{thm: intersections general} and they are unique representatives of their own class by \Cref{corol: uniqueness exceptional curves}. The configuration of eigenvalues is determined by the tangent lines of such curves by \Cref{prop: eigenvalue configuration regular}. All curves along each chain are the unique representatives of their own classes by \Cref{corol: uniqueness exceptional curves}. The degrees of these curves determine the multiplicities of the Jordan blocks by \Cref{prop: contraction and embeddings}. The sizes of the blocks are determined by inverting the formulas for the intersection numbers and the result of such inversion is as described in the statement.
\end{proof}

\Cref{prop: W non DuVal singularity} has a counterpart for singular pencils, as follows.

\begin{proposition}\label{prop: Cinfty very negative}
Let $L$ be a pencil of size $n$. Assume $Y_L$ has a negative curve $C_\infty$ with self-intersection $1-\kappa < -2$. Then $\kappa \geq 4$, and $L$ is singular. There are exactly $s$ negative curves $D_1 \vvirg D_s$ intersecting $C_\infty$. In this case the regular component $L^{\reg}$ of $L$ has exactly $s$ distinct eigenvalues $\theta_1 \vvirg \theta_s$. The configuration $(\theta_1 \vvirg \theta_s)$ in $\bbP^1$ is $\GL_2$-equivalent to the configuration of intersection points $(D_i \cap C_\infty : i \in [s])$ on $C_\infty$.  Moreover, for every $i \in [s]$, $D_i$ is connected to a chain of $h_i-1$ other negative curves, $C_{i1}, \ldots , C_{i \, h_i-1}, C_{i \, h_i} = D_i$. Let $e_{ij} = \deg C_{ij}$ and $c_{ij} = [C_{ij}]^2$. Define recursively 
\begin{align*}
u_{i \, h_i} :=  
- c_{i \, h_i}  \quad , \quad u_{ij} := - c_{ij} - u_{i \, j+1} 
\end{align*}
and set $k_{i j} := \sum_{\ell=1}^{j} \frac{1}{u_{i \ell}}$. Then $k_{i j}$ are integer and for $L^{\reg} = L_1 \boxplus \ldots \boxplus L_s$, we have
\[
L_i = J_{k_{i 1}}(\theta_i)^{\boxplus e_{i 1}} \boxplus J_{k_{i 2}}(\theta_i)^{\boxplus e_{i 2}} \boxplus \ldots \boxplus J_{k_{i \, h_i}}(\theta_i)^{\boxplus e_{i h_i}}.
\]
\end{proposition}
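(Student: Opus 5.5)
The plan is to mirror the proof of \Cref{prop: W non DuVal singularity}, replacing the role of the point $[W]$ by the curve $C_\infty$. First I would show that the hypothesis forces $L$ to be singular. By \Cref{thm: rational singularities}, $Y_L$ has only Du Val singularities of type $\mathsf{A}$, plus possibly the point $[W]$ in the regular case. I would then list the negative curves of $Y_L$ using \Cref{thm: intersections general}.

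In the regular case, the only candidates are the $C_{ij}$. For $j<h_i$ one has $\gamma_{ij}^2 \geq -2$, with equality only when both neighbouring gaps equal $1$. For the last curve, $\gamma_{i\,h_i}^2 > -1$ because of the added $\frac{1}{\kappa-1}$. Any other irreducible curve $D$ avoids the exceptional chains away from finitely many points. Its preimage in $\tilde{X}$ is then a curve whose strict transform is not exceptional for $\tilde\phi$. Arguing with the pullback $\tilde\phi^*D = \tilde D + \sum c_i F_i$, with $c_i\geq 0$, I would show that $D^2 \geq \tilde D^2 + (\text{nonnegative})$. Moreover, $\tilde D$ is not among the listed negative curves of $\tilde X$, since the negative curves of $\tilde X$ are the $A_{ij}$ and $\tilde H_\infty$. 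This excludes a curve of self-intersection $<-2$, so $L$ must be singular.

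In the singular case, $C_\infty$ is the image of $\tilde H_\infty$. By \eqref{eqn: intersection against H} and the fact that $\tilde H_\infty$ meets no contracted curve (it meets only the $A_{i\,k_{i\,h_i}}$, which are not contracted), its self-intersection is $1-\kappa$. Hence $1-\kappa<-2$ gives $\kappa\geq 4$. I would also check that $C_\infty$ is the unique curve with self-intersection below $-2$: the $\gamma_{ij}^2$ in the singular case are all $\geq -2$. This shows that the curve in the hypothesis really is $C_\infty$.

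Next, by \Cref{prop: eigenvalue configuration singular}, the curves meeting $C_\infty$ are exactly the $C_{i\,h_i}$, and they meet it at the images $\phi(w_i)$. The parametrisation $\phi|_{H_\infty}\colon H_\infty\to C_\infty$ is an isomorphism onto a rational normal curve by \Cref{prop: image of phi}. It therefore transports the configuration $(w_i)=(\theta_i)$ to $(D_i\cap C_\infty)$ up to $\GL_2$. Two further points need checking here: that the $D_i$ are negative, since $\gamma_{i\,h_i}^2<0$, and that no other negative curve meets $C_\infty$, which follows from the negativity argument above. Uniqueness of each curve in its class, given by \Cref{corol: uniqueness exceptional curves}, recovers the chains $C_{i1},\dots,C_{i\,h_i}$ intrinsically as the connected chains of negative curves attached to $D_i$. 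Their degrees give $e_{ij}$ by \Cref{prop: contraction and embeddings}.

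The final step is inverting the intersection numbers. Write $g_{ij}=k_{ij}-k_{i\,j-1}$ with $k_{i0}=0$. Item (3) of \Cref{thm: intersections general} gives $c_{i\,h_i}=-1/g_{i\,h_i}$, so $u_{i\,h_i}=1/g_{i\,h_i}$. For $j<h_i$, item (2) gives $c_{ij}=-1/g_{ij}-1/g_{i\,j+1}$, so by downward induction $u_{ij}=1/g_{ij}$. Summing, $k_{ij}=\sum_{\ell\leq j}g_{i\ell}$, which recovers integer block sizes.

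I expect the main obstacle to be the claim that no curve other than $C_\infty$ and the $C_{ij}$ is negative, in particular with self-intersection $<-2$. My first attempt would be a pullback argument: a curve not contained in the exceptional locus pulls back to its strict transform plus a nonnegative combination of contracted curves. I would combine this with the observation that $\tilde X$, as an iterated blow-up of $\bbP^2$ at points of a line, has $\tilde H_\infty$ and the $A_{ij}$ as its only negative curves. This is because any other irreducible curve $\tilde D$ satisfies $\tilde D\cdot \tilde H_\infty\geq 0$, and $\tilde H_\infty+\sum E_{ij}$ is nef-like, which should give $\tilde D^2\geq 0$.
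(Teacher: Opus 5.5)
Your proposal is correct and follows the same route as the paper's proof. Both single out $C_\infty$ as the only curve with self-intersection below $-2$, using that every exceptional curve has $\gamma_{ij}^2\ge -2$. Both then read off the eigenvalue configuration from \Cref{prop: eigenvalue configuration singular} and the multiplicities from \Cref{prop: contraction and embeddings}, and recover the block sizes by inverting \Cref{thm: intersections general}, exactly as in your telescoping $u_{ij}=1/(k_{ij}-k_{i\,j-1})$.

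Your additional step, showing that no curve other than $C_\infty$ and the $C_{ij}$ is negative (which the paper uses tacitly), also goes through. Write $[\tilde D]=d\tilde\zeta-\sum m_{ij}[E_{ij}]$ with $m_{ij}=\tilde D\cdot E_{ij}\ge 0$. Then $\tilde D\cdot\tilde H_\infty\ge 0$ gives $\sum m_{ij}\le d$, hence $\tilde D^2=d^2-\sum m_{ij}^2\ge 0$.
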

\begin{proof}
By \Cref{thm: intersections general}, all the negative curves arising from the chains of exceptional curves have self-intersection $C_{ij}^2 \geq -2$. Therefore, $C_\infty$ is the only negative curve in $Y_L$ with self-intersection $C_\infty^2 = 1-\kappa < -2$. 

The negative curves intersecting $C_\infty$ are uniquely determined by \Cref{corol: uniqueness exceptional curves} and the intersection points of the negative curves with $C_\infty$ determine the eigenvalue configuration of $L^{\reg}$ by \Cref{prop: eigenvalue configuration singular}. The degrees of the curves determine the multiplicities of the Jordan blocks by \Cref{prop: contraction and embeddings}. The sizes of the blocks are determined by inverting the formulas for the intersection numbers and the result of such inversion is as described in the statement.
\end{proof}

We are left to analyze the cases where $Y_L$ has at worst Du Val singularities and negative curves with self-intersection at least $-2$. In this case, we perform a case-by-case analysis. The configuration of exceptional curves and the singularities of $Y_L$ completely determine the regular component of $L$. However, the classification when $Y_L$ is smooth becomes quite involved and we treat it separately.

\begin{proposition}\label{prop: smooth YL}
Let $L$ be a pencil of size $n$. If $Y_L$ is smooth then one of the following occurs:
\begin{enumerate}[leftmargin=*]
    \item $Y_L$ has no $(-1)$-curves. In this case, one of the following occurs:
\begin{enumerate}[label=(\roman*),leftmargin=*]
    \item $Y_L$ is abstractly isomorphic to $\bbP^2$. Then $L$ has no regular component and $Y_L$ is a Veronese variety. In this case, $Y_L$ has no non-positive curves.
    \item $Y_L$ is abstractly isomorphic to $\bbP^1 \times \bbP^1$. Then $L$ is regular with exactly two distinct eigenvalues and blocks of size $1$. The surface $Y_L$ is embedded by $\calO_{\bbP^1 \times \bbP^1}(e_1,e_2)$ if and only if $L = J_{1}(x_1)^{\boxplus e_1} \boxplus J_{1}(x_2)^{\boxplus e_2}$. In this case $Y_L$ has no negative curves.
    \item $Y_L$ is abstractly isomorphic to the Hirzebruch surface $\bbF_2 = \bbP(\calO_{\bbP^1} \oplus \calO_{\bbP^1}(2))$. Then $L$ is regular with exactly one eigenvalue and blocks of size $1$ and $2$. Let $E$ be the class of the base of $\bbF_2$ and $F$ the class of a fiber: the surface $Y_L$ is embedded by $(e_1 + 2e_2)F + e_2E$ if and only if $L = J_1(x_1)^{\boxplus e_1} \boxplus J_2(x_1)^{\boxplus e_2}$. In this case $E$ is the only negative curve of $Y_L$.
\end{enumerate} 
\item $Y_L$ is abstractly isomorphic to an iterated blow-up of $\bbP^2$ at $s$ $0$-dimensional local schemes supported on the same line. It has $s$ pairwise disjoint curves with self-intersection $-1$, $D_1 \vvirg D_s$. For every $i \in [s]$, $D_i$ is connected to a chain of $h_i$ $(-2)$-curves, $D_i = C_{i \, h_i} \vvirg C_{i1}$. Let $e_{ij} = \deg C_{ij}$. Let $\kappa := \sum_{i \in [s]} h_i$. The curves $D_i$ all intersect a curve $D_\infty$, and $D_\infty^2 = 1-\kappa$. In this case $L$ is singular, its regular component $L^{\reg} = L_1 \boxplus \ldots \boxplus L_s$ has exactly $s$ distinct eigenvalues $\theta_1 \vvirg \theta_s$, and we have
\[
L_i = J_{1}(\theta_i)^{\boxplus e_{i1}} \boxplus J_{2}(\theta_i)^{\boxplus e_{i2}} \boxplus \ldots \boxplus J_{h_i}(\theta_i)^{\boxplus e_{ih_i}}. 
\] 
Moreover, if $\kappa \geq 2$, the configuration on $D_\infty$ of the points of intersection with the $(-1)$-curves $D_1 \vvirg D_s$ is $\GL_2$-equivalent to the eigenvalue configuration $(\theta_1 \vvirg \theta_s)$ in $\bbP^1$.\\

\end{enumerate}
\end{proposition}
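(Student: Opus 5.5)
We first determine when $Y_L$ is smooth, using \Cref{thm: rational singularities}, \Cref{prop: W singularity} and the proof of \Cref{thm: rational singularities}. The singular points of $Y_L$ come from two sources. First, contracted subchains $A_{i,k_{ij}+1},\dots,A_{i,k_{i\,j+1}-1}$ produce $\mathsf{A}_{k_{i\,j+1}-k_{ij}-1}$ points. The chain below $A_{i,k_{i1}}$ should be treated as $k_{i0}=0$, since the first $k_{i1}-1$ exceptional curves are also contracted. Second, in the regular case the point $[W]$ is singular unless $\kappa=2$. Hence smoothness is equivalent to two conditions: for every $i$ the block sizes are $1,2,\dots,h_i$ (so $k_{ij}=j$), and either $L$ is singular or $\kappa=2$. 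Under $k_{ij}=j$ we get $\kappa=\sum_i h_i$, and in the regular case $\kappa=2$ leaves exactly two options: $s=2,h_1=h_2=1$, or $s=1,h_1=2$. The totally singular case ($s=0$) gives the Veronese surface by \Cref{cor: image of phi totally singular}.

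\textbf{Singular case with $s\ge1$.} Here no curve is contracted by $\tilde\phi$, since all chains have consecutive sizes and $\tilde H_\infty$ maps isomorphically to $C_\infty$. So $\tilde\phi:\tilde X\to Y_L$ is an isomorphism, and $Y_L$ is the iterated blow-up of $\bbP^2$ along $Z$, whose support lies on $H_\infty$. \Cref{lemma: intersections upstairs} gives the chains: in each chain the last curve is a $(-1)$-curve and the others are $(-2)$-curves, and $\tilde H_\infty^2=1-\kappa$. This is case (2), with $D_\infty=C_\infty$ and degrees $e_{ij}$ given by \Cref{prop: contraction and embeddings}. We must still check that the $D_i$ are the only $(-1)$-curves, so that the data is intrinsic. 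Because the statement is phrased in terms of what $L$ is, it suffices to read the invariants off the configuration of negative curves.

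\begin{itemize}
\item Reading off the invariants: $D_\infty$ is characterised as the curve meeting all $D_i$ with self-intersection $1-\kappa$. The chain lengths give $h_i$, the degrees give $e_{ij}$, and \Cref{prop: eigenvalue configuration singular} gives the eigenvalue configuration when $\kappa\ge2$.
\item Ruling out other $(-1)$-curves: when $\kappa\ge2$ we have $D_\infty^2<0$. Any other $(-1)$-curve, being $\tilde\phi$ of a curve of class $d\tilde\zeta-\sum m\alpha$, would have to be a line through one point of $Z$ or a conic through five infinitely near points. These are excluded because the centres are collinear on $H_\infty$, hence they lie on $\tilde H_\infty$ and not on any other line or conic in general position.
\end{itemize}

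This step is the main technical point. We would handle it by identifying the negative curves through the adjunction formula on a blow-up of points lying on $H_\infty$.

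\textbf{Singular case with $\kappa=1$.} Here $s=1$ and $h_1=1$, so $Y_L$ is $\bbP^2$ blown up at one point. This is $\bbF_1$, which has a $(-1)$-curve and still fits (2). Indeed, with $\kappa=1$ we get $D_\infty^2=0$, consistent with (2).

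\textbf{Regular smooth cases.} For $s=2$ with sizes $1$: by \Cref{ex:quadric-surface}, the surface is $\bbP^1\times\bbP^1$ embedded by $(e_1,e_2)$. For $s=1$ with sizes $1,2$: by \Cref{ex: hirzebruch}, the surface is $\bbF_2$ embedded by $e_2E+(e_1+2e_2)F$. The negative-curve claims follow from \Cref{thm: intersections general}. In $\bbF_2$ the curve $C_{11}$ has square $-2$ and is $E$, while $C_{12}$ has square $0$. In $\bbP^1\times\bbP^1$ both curves have square $0$.

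\textbf{Conclusion.} Finally, (1) and (2) are separated by the presence of $(-1)$-curves, and $\bbP^2$, $\bbP^1\times\bbP^1$ and $\bbF_2$ are pairwise non-isomorphic. The "if and only if" claims about embeddings then follow, since the line bundle class determines the $e_i$. For $\bbP^1\times\bbP^1$ the pair $(e_1,e_2)$ is determined up to swapping the two factors, and the swap is absorbed by $\GL_2$ exchanging $x_1$ and $x_2$.
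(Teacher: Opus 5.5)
Your core argument is the paper's. Smoothness forces the block sizes for each eigenvalue to be exactly $1,\dots,h_i$ (no contracted $(-2)$-chains, including the one before $A_{i,k_{i1}}$), and for regular $L$ it forces $\kappa=2$ by \Cref{prop: W singularity}. This leaves three surfaces: the Veronese surface, $\bbP^1\times\bbP^1$ and $\bbF_2$. For singular $L$ with $s\geq 1$, $\tilde{\phi}\colon\tilde{X}\to Y_L$ is an isomorphism, and \Cref{lemma: intersections upstairs}, \Cref{prop: contraction and embeddings} and \Cref{prop: eigenvalue configuration singular} give case (2). That part is fine.

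The step you call the main technical point is wrong as stated, although the proposition does not need it. The $D_i$ are not always the only $(-1)$-curves. When $\kappa=2$ you have $D_\infty^2=1-\kappa=-1$, so $D_\infty$ is one too. For instance, for $s=2$, $h_1=h_2=1$ the surface is $\Bl_2\bbP^2$, and its three $(-1)$-curves form the chain $D_1,D_\infty,D_2$.

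Your list of candidates is also off:
\begin{itemize}
\item the strict transform of a line through a single centre has square $0$; the degree-one $(-1)$-classes are lines through two centres;
\item there are higher-degree numerical candidates besides conics (nodal cubics through seven centres, and so on), so a degree-by-degree list does not close the argument.
\end{itemize}

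What works is to use $\tilde{H}_\infty$. Let $C$ be irreducible, not exceptional, with $C\neq\tilde{H}_\infty$ and $C^2\leq -1$. Write its class as $d\tilde{\zeta}-\sum m_{ij}[E_{ij}]$ with $d\geq 1$. Then $0\leq C\cdot\tilde{H}_\infty=d-\sum m_{ij}$. On the other hand, adjunction gives $-3d+\sum m_{ij}=K_{\tilde{X}}\cdot C=2p_a(C)-2-C^2\geq -1$. Hence $\sum m_{ij}\geq 3d-1>d$, a contradiction.

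So every negative curve is an exceptional component or $\tilde{H}_\infty$. In particular the $(-1)$-curves are exactly $D_1,\dots,D_s$, together with $D_\infty$ when $\kappa=2$. This is what you need if you want the data to be read off intrinsically.

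Similarly, that $\bbP^1\times\bbP^1$ has no negative curves and that $E$ is the only negative curve of $\bbF_2$ are standard facts about these surfaces. They do not follow from \Cref{thm: intersections general}, which only computes the squares of the specific curves $C_{ij}$.
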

\begin{proof}
Let $\kappa$ be the sum of the maximal sizes of the Jordan blocks. If $L$ is regular, \Cref{prop: W singularity} guarantees that $\kappa \leq 2$, otherwise $[W]$ is a singular point of $Y_L$. Moreover, \Cref{thm: rational singularities} guarantees that for each eigenvalue, all integers from $1$ to $h_i$ occur as sizes of a Jordan block, otherwise the contraction of a $(-2)$-curve would give rise to a singularity. 

Suppose $L$ is regular. If $L$ has two distinct eigenvalues, then the discussion above guarantees 
\[
L = J_1(x_1)^{\boxplus e_{1 , 1}} \boxplus J_1(x_2)^{\boxplus e_{2 , 1}}
\]
 with $e_{1,1},e_{2,1} > 0$. This gives rise to the case (ii) of part 1. If $L$ has a single eigenvalue, the discussion above guarantees 
 \[
 L = J_1(x_1)^{\boxplus e_1} \boxplus J_2(x_1)^{\boxplus e_2}
 \]
  with $e_1,e_2 > 0$. This gives rise to the case (iii) of part 1. 

If $L$ is singular with trivial regular component, then $Y_L$ is a Veronese surface. This is case (i) of part 1. 

Finally, if $L$ is singular and it has a nontrivial regular component, then by the smoothness assumption of $Y_L$, the extension $\tilde{\phi} : \tilde{X} \to Y_L$ constructed in \Cref{subsec: intersection numbers} is an isomorphism. By \Cref{lemma: intersections upstairs}, there are as many $(-1)$-curves as the number of distinct eigenvalues of $L$, the eigenvalue configuration defines the intersection points on $C_\infty$ as explained in \Cref{prop: eigenvalue configuration singular} and the degrees of the exceptional curves coincide with the multiplicities of the corresponding Jordan blocks as explained in \Cref{prop: contraction and embeddings}. This is case (2) of the statement.
\end{proof}

We point out that there is some overlap between the cases analyzed in \Cref{prop: Cinfty very negative} and the ones in part (2) of \Cref{prop: smooth YL}. We are left to analyze the case where $Y_L$ is singular, with only Du Val singularities and no curves with self-intersection smaller than $-2$. In this case, we perform a case-by-case analysis. Given two integers $a,b$, we use the notation $(a^b)$ to indicate a sequence $(a \vvirg a)$ with $a$ occurring $b$ times; if $b = b_1/b_2$ is a fraction, we assume $b_1$ is divisible by $b_2$.

\begin{proposition}\label{prop: reconstruct case by case}
Let $L$ be a pencil of size $n$. Assume $Y_L$ is singular with only Du Val singularities and it has no curves $C$ with $C^2 < -2$. Then $Y_L$ has at most three negative curves and one of the following occurs:
\begin{enumerate}[leftmargin=*]
\item $Y_L$ has no negative curves. Then $L$ is regular, has a unique eigenvalue, and the singularities determine the structure of the Jordan blocks:
\begin{center}
\vspace{1ex}
{\small \begin{tabular}{c|c}
    singularities &  Jordan sizes and multiplicities \\ \hline
    one $\mathsf{A}_1$ point &   $(k_1^{e_1}) = (2^{n/2})$ \\ 
    one $\mathsf{A}_1$ point and one $\mathsf{A}_2$ point  &  $(k_1^{e_1}) = (3^{n/3})$
\end{tabular} }
\vspace{1ex}
\end{center}
~
\item $Y_L$ has exactly one negative curve $D$. Then $L$ is regular, and the self-intersection of the unique negative curve determines the structure of the Jordan blocks as follows
\begin{center}
\vspace{1ex}
{\small \begin{tabular}{c|c|c}
    $D^2$ &  $\deg D$ & Jordan sizes and multiplicities \\ \hline
    $-3/2$ &  $e_1$ & $(k_1^{e_1},k_2^{e_2}) = (2^{e_1}, 3^{\frac{1}{3} (n - e_1)})$ \\
    $-1/2$ &  $e_{1,1}$ & $[(k_{1,1}^{e_{1,1}}), (k_{2,1}^{e_{2,1}})] = [(2^{e_{1,1}}), (1^{n-2e_{1,1}})]$ 
\end{tabular} }
\vspace{1ex}
\end{center}
~
\item $Y_L$ has exactly two negative curves $D_1,D_2$. Then the self-intersections and the degrees of the negative curves determine the structure of the Jordan blocks as follows
\begin{center}
\vspace{1ex}
{\small \begin{tabular}{c|c|c|c}
    $(D_1^2,D_2^2)$ &  $(\deg D_1,\deg D_2)$ & type & Jordan sizes and multiplicities \\ \hline
    $(-1/2, -1)$ & $(e,e')$ & singular & $(k_{1}^{e_1}) = (2^{e})$ \\
    $(-1/3, -2)$ & $(e,e')$ & singular & $(k_{1}^{e_1}) = (3^{e})$ \\
    $(-3/2, -1/2)$ & $(e_1,e_2)$ & regular& $ (k_1^{e_1},k_2^{e_2}) = (2^{e_1}, 3^{e_2})$ 
\end{tabular} }
\vspace{1ex}
\end{center}
~
\item $Y_L$ has exactly three negative curves $D_1,D_2,D_3$ forming a chain $(D_1,D_2,D_3)$. Then $L$ has exactly two eigenvalues and the self-intersections and the degrees of the negative curves determine the structure of the Jordan blocks as follows:
\begin{center}
\vspace{1ex}
{\small \begin{tabular}{c|c|c|c}
    $(D_1^2,D_2^2,D_3^2)$ &  $(\deg D_i, i \in [3])$ & type & Jordan sizes and multiplicities \\ \hline
    $(-3/2, -1/2, -2)$ & $(e_1,e_2,e')$ & singular & $(k_1^{e_1},k_2^{e_2}) = (1^{e_1}, 2^{e_2})$ \\ 
    $(-3/2, -1, -2)$ & $(e_1,e_2,e')$ & singular & $(k_1^{e_1},k_2^{e_2}) = (2^{e_1}, 3^{e_2})$ \\
    $(-1/2, -2, -1)$ & $(e_{1,1},e_{2,1},e')$ & singular & $[(k_{1,1}^{e_{1,1}}), (k_{2,1}^{e_{2,1}})] = [(2^{e_{1,1}}) , (1^{e_{2,1}})]$ \\
    $(-2, -2, -1)$ & $(e_1,e_2,e_3)$ & regular & $(k_1^{e_1},k_2^{e_2},k_3^{e_3}) = (1^{e_1}, 2^{e_2},3^{e_3})$ \\
    $(-2, -1/2, -1/2)$ & $(e_{1,1},e_{1,2},e_{2,1})$ & regular & $[(k_{1,1}^{e_{1,1}}, k_{1,2}^{e_{1,2}}), (k_{2,1}^{e_{2,1}})] = [(1^{e_{1,1}}, 2^{e_{1,2}}), (1^{e_{2,1}})]$ 
\end{tabular} }
\vspace{1ex}
\end{center}
~
\item $Y_L$ has exactly three negative curves $D_1,D_2,D_3$ all intersecting at the same point. Let $e_i = \deg D_i$. Then $L$ is regular with 
\[
L = J_1(x_1)^{\boxplus e_1} \boxplus J_1(x_1+x_2)^{\boxplus e_2} \boxplus J_1(x_1 + 2x_2)^{\boxplus e_3}.
\]
\end{enumerate}
\end{proposition}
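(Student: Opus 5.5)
The proof is a classification driven by the explicit intersection data of \Cref{thm: intersections general}. First, translate the hypotheses into constraints on the Kronecker invariants. Since all singularities are Du Val, \Cref{thm: rational singularities} and \Cref{prop: W singularity} give one of two situations: either $L$ is singular, or $L$ is regular with $\kappa \leq 3$, because for $\kappa \geq 4$ the point $[W]$ is the non-Du Val point $\frac{1}{\kappa-1}(1,1)$.

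In the singular case, \Cref{lemma: intersections upstairs} pushed forward gives $C_\infty^2 = [\tilde H_\infty]^2 = 1-\kappa$ when no chain is contracted onto $C_\infty$. The general statement needs the pullback computation from the proof of \Cref{thm: intersections general}. The hypothesis $C^2 \geq -2$ for every curve then forces $\kappa \leq 3$ in the singular case as well. So in all cases $\kappa = \sum_i k_{i\,h_i} \leq 3$, and only finitely many combinatorial types of the regular part remain:
\begin{itemize}
\item $s=1$ with maximal block size $k_h \in \{1,2,3\}$ and block sizes a subset of $\{1,\dots,k_h\}$ containing $k_h$;
\item $s=2$ with $(k_{1h_1},k_{2h_2}) \in \{(1,1),(2,1)\}$;
\item $s=3$ with all blocks of size $1$;
\item in the singular case, also $s=0$.
\end{itemize}

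Next, run through this finite list. For each type, use \Cref{thm: intersections general} to compute the self-intersections of $C_\infty$ (singular case) and of the $C_{ij}$. Read off the singularities from \Cref{thm: rational singularities}: an $\mathsf{A}_{k_{j+1}-k_j-1}$ point for each gap in the block sizes, including the gap from $0$ to $k_1$, and the point $[W]$ of type $\frac{1}{\kappa-1}(1,1)$, which is $\mathsf{A}_1$ when $\kappa=3$ and smooth when $\kappa = 2$. Then:
\begin{itemize}
\item discard the types where $Y_L$ is smooth, since those fall under \Cref{prop: smooth YL};
\item discard the types having a curve with square $< -2$;
\item record for each remaining type the number of negative curves, their self-intersections, their incidence pattern (chain, or concurrent at $[W]$), and their degrees $e_{ij}$ from \Cref{prop: contraction and embeddings}, with $\deg C_\infty = \epsilon_\bullet + \eta_\bullet$ from \Cref{prop: image of phi}.
\end{itemize}
For example, $L = J_2(x_1)^{\boxplus e}$ gives $\gamma^2 = -\frac12 + 1 > 0$ and a single $\mathsf{A}_1$ point, which is the first row of case (1). $J_3^{\boxplus e}$ gives an $\mathsf{A}_2$ point from the contracted chain and an $\mathsf{A}_1$ point at $[W]$, which is the second row. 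Three size-one eigenvalues give $\kappa=3$ and $\gamma_i^2 = -1 + \frac12$, with the three curves meeting at $[W]$; this is case (5), where three points of $\bbP^1$ are normalised by $\GL_2$.

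Then verify injectivity. The listed invariants must separate the surviving types, and within each type the degrees must recover the multiplicities. In each row, the multiplicities are fixed either by the degrees of the negative curves or, when no negative curve carries the information, by the size $n$ (for example $n = 2e$ or $n = 3e$). The eigenvalue configuration needs no reconstruction when $s \leq 3$, since $\GL_2$ acts transitively on triples of distinct points. Finally, the bound of at most three negative curves follows by inspection of the list.

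The main obstacle is making this enumeration airtight. Concretely, it means showing that the negative curves of $Y_L$ are exactly the curves $C_{ij}$ and $C_\infty$ appearing in the table, with no other negative curves. I would try to prove this via \Cref{corol: uniqueness exceptional curves} combined with a Picard-rank argument: $\rmN^1_\bbQ(Y_L)$ is spanned by $\zeta$ and the $\gamma_{ij}$. Any irreducible negative curve not in the boundary would pull back to a curve in $\tilde X$ meeting $\tilde\zeta$ positively, and pairing it against the nef class $D$ together with the boundary classes should rule it out. A secondary difficulty is computing $C_\infty^2$ correctly when an $\mathsf{A}$-chain is contracted adjacent to $C_\infty$, which requires applying \Cref{lemma: mumford pullback of type A singularity} to $C_\infty$ itself.
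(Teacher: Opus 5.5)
Your plan is exactly the paper's proof: force $\kappa\le 3$ (via the non-Du Val point $[W]$ for regular $L$, and via $C_\infty^2=1-\kappa$ for singular $L$), then enumerate the finitely many types with \Cref{thm: intersections general}, \Cref{thm: rational singularities} and \Cref{prop: W singularity}. Your ``secondary difficulty'' never occurs. $\tilde H_\infty$ meets only the curves $A_{i\,k_{i h_i}}$, which are never contracted, so $C_\infty$ avoids the singular locus and $C_\infty^2=1-\kappa$ for every singular pencil.

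The step you call the main obstacle is needed for ``at most three negative curves'', and your sketch leaves it unproved (the paper also takes it for granted). Pairing with $D$ is not what settles it; pairing with $\tilde H_\infty$ is. Every blow-up centre lies on a strict transform of $H_\infty$. So an irreducible curve $\Gamma\subseteq\tilde X$ other than the $A_{ij}$ and $\tilde H_\infty$ has class $d\tilde\zeta-\sum m_{ij}[E_{ij}]$ with $m_{ij}\ge 0$ and $0\le\Gamma\cdot\tilde H_\infty=d-\sum m_{ij}$. Hence $\Gamma^2=d^2-\sum m_{ij}^2\ge d^2-(\sum m_{ij})^2\ge 0$. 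The coefficients of the contracted curves in $\tilde\phi^*\tilde\phi(\Gamma)$ are nonnegative, so $\tilde\phi(\Gamma)^2=\tilde\phi^*\tilde\phi(\Gamma)\cdot\Gamma\ge\Gamma^2\ge 0$. Therefore every negative curve of $Y_L$ is some $C_{ij}$ or $C_\infty$, and the bound follows from the enumeration.

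There is a second problem: the final matching step cannot succeed as stated, because a faithful enumeration contradicts several entries of the statement.
\begin{itemize}
\item Case (2), first row: $D^2=-3/2$ comes from the regular pencil with block sizes $\{1,3\}$, where $C_1^2=-1-\frac12$ and $C_2^2=-\frac12+\frac12=0$. So the sizes are $(1^{e_1},3^{(n-e_1)/3})$.
\item Case (2), second row: the negative curve is $C_{21}$, from the size-one blocks, with $C_{21}^2=-1+\frac12$ while $C_{11}^2=-\frac12+\frac12=0$. So $\deg D=e_{2,1}$ and $e_{1,1}=(n-\deg D)/2$.
\item Case (4), first row: this is the singular pencil with sizes $(1^{e_1},3^{e_2})$. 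Sizes $\{1,2\}$ together with a singular part give a smooth $Y_L$.
\item Case (4), third row: the chain is $C_{11},C_\infty,C_{21}$, so the degrees are $(e_{1,1},e',e_{2,1})$.
\item Case (4), fourth row: $\tilde\phi^*C_3=A_3+\frac12\tilde H_\infty$ gives $C_3^2=-\frac12$, so the triple is $(-2,-2,-1/2)$.
\item Case (4), rows one, two and four have a single eigenvalue, so ``exactly two eigenvalues'' is false.
\end{itemize}
The corrected entries are still pairwise distinguishable. So your argument, once completed as above, proves the corrected proposition, and the reconstruction conclusion is unaffected.
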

\begin{proof}
By the same argument as in the proof of \Cref{prop: W non DuVal singularity} and \Cref{prop: Cinfty very negative}, we have the sum of the sizes of the maximal Jordan blocks is $\kappa \leq 3$, otherwise it would have either a non-Du Val singularity or a negative curve with self-intersection less than $-2$. Therefore, the sum of sizes of the maximal Jordan blocks is either $0,1,2,3$. 

If $\kappa = 0$, $L$ has trivial regular component and $Y_L$ is smooth.

If $\kappa = 1$, then $L$ has a single eigenvalue with maximal Jordan block of size $1$. This falls in case (2) of \Cref{prop: smooth YL}.

If $\kappa = 2$, then either $L$ is smooth or it has a single eigenvalue with only Jordan blocks of size $2$. This is the first entry in case (1) of the statement.

If $\kappa = 3$, the analysis is more complex. The sizes of the maximal Jordan blocks define one of the three possible partitions of $3$, namely $(1,1,1)$, $(2,1)$ and $(3)$. The partition $(1,1,1)$ falls in case (5) of the statement. The partition $(2,1)$ defines either the second entry of case (2) or the third entry of case (4), depending on whether the pencil is regular or singular. The partition $(3)$ defines all other cases, which have different characteristics depending on whether the pencil is regular or singular, and depending on the presence of blocks of sizes $1$ or $2$. The analysis is straightforward.

In these cases, there is no eigenvalue reconstruction, since the number of eigenvalues is at most $3$, and there is only one configuration of three points on $\bbP^1$ up to $\GL_2$-equivalence.
\end{proof}

\section{Quadratic generation of the ideal of Schubert surfaces}\label{sec: quadrics}

The ideal of classical Schubert varieties is generated by linear forms and quadrics in the Pl\"ucker embedding of the Grassmannian. In this section, we address the question whether such property holds for Schubert varieties of linear matrix spaces as well. As far as we are aware, the standard proofs of quadratic generation for classical Schubert varieties are either combinatorial \cite[Sec.~12.5]{LBflagbook} or representation-theoretic \cite[Thm.~3.11]{Ram87}. Neither settings extend to the case of Schubert varieties of linear matrix spaces, even just in the case of surfaces. 

A related framework is that of rational surfaces obtained via rational maps $\mathbb P^2 \dashto \bbP^N$ defined by a linear system of plane curves with sufficiently generic base locus. The defining ideals of such surfaces were studied in \cite{GG91, GL93} for blow-ups of $\bbP^2$ at generic configurations of points, providing sharp bounds for quadratic generation. In \cite{Ber08} a more precise description of the defining equations was given, using determinantal methods coming from the geometry of tensors. In \cite{AlzRus02}, normality and homological properties of these surfaces were studied, whereas \cite{CavCon13} investigated the role of certain genericity assumptions in the choice of the base locus. Schubert varieties of matrix pencils fit naturally into this framework, since the map $\phi : \bbP^2 \dashto Y_L$ characterises $Y_L$ exactly as the image of a rational map having the scheme $Z$ as base locus. However, the schemes arising from matrix pencils are highly special and the results developed in these works do not apply directly.

Nevertheless, we prove the following results on quadratic generation for Schubert surfaces. We say that an ideal $I$ in the polynomial ring is \textit{generated in degree $d$} if it admits a minimal system of generators of degree at most $d$.
\begin{theorem}\label{thm: quadratic generation}
Let $Y_L \subseteq \Gr(n,n+m)$ be the Schubert surface of a pencil $L$ of size $n\times m$, and $I(Y_L)$ its ideal in the Pl\"ucker embedding $\bbP \Wedge^n \bbC^{n+m}$. Then $I(Y_L)$ is generated in degree $2$.
\end{theorem}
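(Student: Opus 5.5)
Since \Cref{prop: resolution of phi} shows that $Y_L$ is linearly normal, the Plücker embedding of $Y_L$ is, up to a linear space, the embedding of $\tilde{X}$ contracted by the complete linear series $|D|$. Here $D=(n-\eta_{\#})\tilde H-\sum\mu_{ij}E_{ij}$, and $H^0(\tilde X,D)=I(Z)_{d}$ with $d=n-\eta_{\#}$. The plan is to study the homogeneous coordinate ring $R=\bigoplus_t I(Z)^{\,t}_{dt}$ of $Y_L$. I would first prove that the natural map $\mathrm{Sym}^t I(Z)_d\to H^0(\tilde X, tD)=\overline{I(Z)^t}_{dt}$ is surjective, which is projective normality. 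For this I would use that $\calI_Z$ is a product of integrally closed monomial-type ideals, as in the proof of \Cref{prop: resolution of phi}, so by Zariski's product theorem all its powers are integrally closed and $\pi_*\calO(-t\sum\mu_{ij}E_{ij})=\calI_Z^t$. Combining this with the regularity estimate $\reg(Z)=d'-1\le d-1$ from \Cref{theorem: linear series of YL}, and an argument as in \eqref{eqn: product graded pieces}, should give $(I(Z)_d)^t=(I(Z)^t)_{dt}$. Together these steps identify $R$ with the section ring of $D$.

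For quadratic generation, I would use the Castelnuovo--Mumford-type criterion of Green: if the line bundle $\calO_{\tilde X}(D)$ (equivalently, $\calO_{Y_L}(1)$) is globally generated and the Koszul cohomology $K_{1,q}$ vanishes for $q\ge2$, the ideal is generated by quadrics. Following the strategy of \cite{GG91,GL93} for blow-ups of $\bbP^2$, I would apply the Green vanishing theorem in the form $K_{p,q}(\tilde X,D)=0$ whenever $H^1(\tilde X, qD-K)$-type and $H^1(\Wedge^{p+1}M_D\otimes qD)$ vanish. The reduction is to the statement $H^1(\tilde X, M_D\otimes D)=0$, where $M_D$ is the kernel bundle of $H^0(D)\otimes\calO\to\calO(D)$. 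Pushing down to $\bbP^2$, this becomes a statement about the ideal $I(Z)$: it suffices to show that the multiplication maps $I(Z)_d\otimes I(Z)_{td}\to I(Z^{t+1})_{(t+1)d}$ are surjective, and that the syzygies of $I(Z)$ in degree $\le 2d$ control $K_{1,q}$.

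The main obstacle is this Koszul vanishing $K_{1,q}=0$ for $q\ge2$. The degree $d=n-\eta_{\#}$ may equal exactly $\reg(Z)+1=\sum n_i$ when $L$ is regular, which is the borderline case where the known sharp bounds of \cite{GL93} for generic points fail. I would try first to exploit the explicit monomial structure: locally $I(Z_i)=\prod_j(x_0,\theta_i^{k_{ij}})^{e_{ij}}$. Globally, after the torus action fixing the line $x_0=0$ and a single eigenvalue, $Y_L$ is toric, and quadratic generation follows from normality of $2$-dimensional lattice polygons of the relevant shape (every lattice polygon with edge lengths $\ge2$, or more generally every lattice polygon of a "$\ge 2$-fold" divisor, is quadratically generated by results of Koelman / Bruns--Gubeladze--Trung). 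The polygon here is $d\Delta$ minus corner cut-offs, which is a multiple of a polygon whose edges all have lattice length $\ge2$ except in small cases to be checked directly. For several eigenvalues, I would then argue by a flat degeneration colliding eigenvalues, or by a Gröbner deformation of the parameterisation to the monomial case, using upper semicontinuity of the Betti number $\beta_{1,q}$ to transfer $K_{1,q}=0$ from the toric degeneration to $Y_L$. The delicate points are checking that the degeneration is flat with reduced special fibre, which should follow from equality of Hilbert functions given by \Cref{theorem: degree of YL}, and handling the small exceptional polygons (e.g.\ Veronese, quadric cone) by hand.
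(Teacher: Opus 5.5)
Your proposal has a genuine gap. The decisive step, the vanishing of $K_{1,q}(\tilde X,D)$ for $q\ge 2$, is left as ``the main obstacle,'' and none of the tools you line up actually delivers it.

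\textbf{The Koszul reduction proves the wrong thing.} Surjectivity of the multiplication maps $I(Z)_d\otimes (I(Z)^t)_{td}\to (I(Z)^{t+1})_{(t+1)d}$, which is what $H^1(\tilde X,M_D\otimes\calO(tD))=0$ encodes, concerns $K_{0,q}$, i.e.\ projective normality. Quadrics require $H^1(\tilde X,\Wedge^2 M_D\otimes\calO(tD))=0$ for $t\ge 1$. Saying that the syzygies of $I(Z)$ ``control'' $K_{1,q}$ is not an argument.

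\textbf{The toric fallback rests on a false claim.} The polygon is not in general a dilate of a polygon with all edges of lattice length $\ge 2$. For $L=J_k(x_1)$ with $k\ge 2$, the exponents (in $x_0,x_1$) of the monomials of $(x_0,x_1^k)_k$ fill the triangle with vertices $(1,0),(k,0),(0,k)$, whose edge from $(1,0)$ to $(0,k)$ is primitive. So Bruns--Gubeladze--Trung does not apply, and normality of polygons only gives projective normality anyway. You would need Koelman's boundary-lattice-point criterion instead.

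\textbf{The several-eigenvalue reduction is the weakest part.} Semicontinuity of $\beta_{1,q}$ needs a flat family of homogeneous coordinate rings, i.e.\ constant Hilbert \emph{function}; equality of degrees does not give this. The obvious collision $\theta_i=x_1-t\zeta_i x_2$ is also the wrong family: at $t=0$ the blocks are juxtaposed, the sequences $(\mu_{ij})_j$ add, $\sum\mu^2$ jumps and the degree drops. For example, $J_2(x_1)\boxplus J_1(x_1-tx_2)$ has $\deg Y_L=6$ for $t\ne 0$ but $4$ at $t=0$, and $J_1(x_1)\boxplus J_1(x_1-tx_2)$ does not even remain a pencil. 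To repair this you would have to:
\begin{enumerate}
\item degenerate to the Jordan type obtained by adding block sizes, which merges the $\mu$-sequences and preserves degree and sectional genus;
\item prove constancy of Hilbert functions on every fibre;
\item propagate the open condition back to your $L$ via the $\GL_2$-action.
\end{enumerate}
None of this is carried out. It may be completable, but it is far heavier than necessary.

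\textbf{The missing idea is to pass to a curve, as the paper does.} Since the singularities of $Y_L$ are isolated, a general hyperplane section $C$ is smooth. It is linearly normal because $Y_L$ is and $H^1(\calO_{Y_L})=0$. Adjunction on $\tilde X$ with $K_{\tilde X}=-3\tilde H+\sum E_{ij}$ and $d=n-\eta_{\#}$ gives $\deg C=d^2-\sum\mu_{ij}^2$ and $g(C)=\binom{d-1}{2}-\sum\binom{\mu_{ij}}{2}$. Using $\sum_j\mu_{ij}=n_i$, this yields $\deg C-2g(C)-2=3d-\sum_{i,j}\mu_{ij}-4=2n_{\reg}+3(\epsilon_\bullet+\eta_\bullet)-4\ge 0$ whenever $L$ has a regular part; the totally singular case is a Veronese surface.

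Green's theorem then gives quadratic generation of $I(C)$. The lifting lemma (\Cref{lemma: lifting lemma}) transfers it to $Y_L$ using only two facts. Linear normality gives $H^1(\calI_{Y_L}(1))=0$, so the quadrics of $C$ lift. Primality of $I(Y_L)$ lets you cancel the hyperplane equation in the induction on degree.

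This route needs no projective normality, no Koszul cohomology on $\tilde X$, no toric degeneration and no case distinction by eigenvalues. Your worry about the borderline $d=\reg(Z)+1$ also disappears, since $\deg C\ge 2g(C)+2$ holds in every case.
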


The proof is built on the fact that, in the case of pencils, we can control the divisor embedding the blow-up $\tilde{X}$ of $\bbP^2$, as described in \Cref{prop: resolution of phi}. More precisely, the proof is based on the following two facts:
\begin{itemize}
\item a lifting lemma, based on \cite[Lemma 1.6]{SaintD74}: \textit{Let $Y \subseteq \bbP^N$ be a linearly normal smooth variety and let $H \subseteq \bbP^N$ be a generic hyperplane. If the ideal of $Y \cap H$ is generated by quadrics in $H$, then the ideal of $Y$ is generated by quadrics.} 
\item Green's Theorem \cite[Thm. 4.a.1]{Green84}: \textit{Let $C$ be a smooth curve of genus $g$. Let $\calL$ be a very ample line bundle on $C$ of degree $\deg \calL \geq 2g+2$. Then the ideal of the image of $C$ in the complete linear series of $\calL$ is generated by quadrics.}
\end{itemize}
In fact, the proof yields a stronger statement than \Cref{thm: quadratic generation}: we will show that $Y_L$ satisfies Green's property $N_p$ for a certain range of $p$, see \Cref{corol: Np property}.

The lifting lemma from \cite[Lemma 1.6]{SaintD74} is stated for smooth varieties but the argument is elementary and it applies in higher generality, as provided in \Cref{lemma: lifting lemma}. 
\begin{lemma}\label{lemma: lifting lemma}
Let $Y \subseteq \bbP ^N$ be an irreducible, linearly non-degenerate, linearly normal variety with the property that, for a general hyperplane $H \subseteq \bbP^N$, the ideal of the hyperplane section $I(H \cap Y)$ is generated in degree $2$. Then the ideal of $Y$ is generated in degree $2$.
\end{lemma}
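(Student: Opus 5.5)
The plan is to run the argument of Saint-Donat in its purely algebraic form. We compare graded pieces of the homogeneous coordinate ring of $Y$ with those of its hyperplane section, and use the hyperplane equation as a nonzerodivisor. Write $S=\bbC[z_0,\ldots,z_N]$, $I=I(Y)$, and let $h\in S_1$ be the equation of a general hyperplane $H$. Since $Y$ is irreducible and non-degenerate, $h\notin I$, and $I$ is prime. Hence $h$ is a nonzerodivisor on $S/I$, and
\[
(I+(h))/(h)\subseteq I(H\cap Y)\subseteq S/(h).
\]
The key point is to show that these two ideals agree in every degree $d\ge 2$, or at least that their discrepancy does not obstruct the argument. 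Given this, let $J\subseteq I$ be the ideal generated by $I_2$. We prove $I_d=J_d$ by induction on $d$.

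\textbf{Inductive step.} Take $f\in I_d$ with $d\ge 3$. Its image $\bar f$ lies in $I(H\cap Y)_d$, which by hypothesis is generated by $I(H\cap Y)_2$. If $I(H\cap Y)_2$ equals the image of $I_2$, then there is $g\in J_d$ with $f-g\in (h)$, so $f-g=h f'$ with $f'\in S_{d-1}$. Since $f-g\in I$ and $I$ is prime with $h\notin I$, we get $f'\in I_{d-1}$. By induction $f'\in J_{d-1}$ (the base cases $d\le 2$ are trivial), hence $f\in J_d$.

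\textbf{The obstacle.} The whole argument therefore reduces to the equality $I(H\cap Y)_2=\mathrm{im}(I_2)$, and this is where linear normality enters. Consider the restriction sequence
\[
0\to \calI_Y(d-1)\xrightarrow{\cdot h}\calI_Y(d)\to \calI_{Y\cap H,H}(d)\to 0.
\]
Surjectivity in degree $2$ on global sections requires $H^1(\calI_Y(1))=0$, which is exactly linear normality of $Y\subseteq\bbP^N$, given $H^0(\calO_{\bbP^N}(1))\to H^0(\calO_Y(1))$ is injective by non-degeneracy.

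\textbf{Scheme structure of $Y\cap H$.} A subtlety is that $I(H\cap Y)$ should be read as the saturated ideal of the scheme-theoretic section. For general $H$, Bertini ensures this section is reduced, since $Y$ is irreducible of dimension at least $2$; the curve case is handled directly. So it agrees with the ideal of the set $H\cap Y$.

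\textbf{Vanishing in degree $1$.} Injectivity in degree $1$ also needs care: we need $I(H\cap Y)_1=0$, and this follows from $H^1(\calI_Y)=0$, i.e.\ connectedness $h^0(\calO_Y)=1$.

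\textbf{Higher degrees.} For $d\ge3$ I avoid needing $H^1(\calI_Y(d-1))=0$ entirely. Quadratic generation of $I(H\cap Y)$ plus the degree-$2$ surjectivity already gives that $\bar f$ is a combination of images of $I_2$ with coefficients in $S/(h)$. Lifting the coefficients to $S$ gives $g$, and then the inductive step above goes through. Thus the only cohomological input is $H^1(\calI_Y(1))=0$, and the main thing to verify carefully is this degree-$2$ surjectivity and the reducedness of the general section.
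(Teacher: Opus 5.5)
Your proposal is correct and follows essentially the same route as the paper. Both arguments lift the quadratic generators of $I(H\cap Y)$ to $I(Y)_2$ using $H^1(\calI_Y(1))=0$ from the restriction sequence, then induct on degree using that $h$ is a nonzerodivisor modulo the prime ideal $I(Y)$. Your remarks that $I(H\cap Y)_1=0$ follows from $h^0(\calO_Y)=1$, and that $I(H\cap Y)$ should be read as the saturated ideal of the (generically reduced) scheme-theoretic section, make explicit two points the paper only asserts.
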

\begin{proof}
Let $S$ be the homogeneous coordinate ring of $\bbP^N$, and let $\ell$ be the defining equation of a general hyperplane $H$, so that $S/(\ell)$ is the coordinate ring of $H$. The assumption is that $I(Y \cap H)$ in $S/(\ell)$ is generated in degree $2$. The intersection $Y \cap H$ is linearly non-degenerate in $H$, so its ideal does not contain linear forms. Let $g_1 \vvirg g_r \in S/(\ell)$ be a system quadratic generators of $I(Y \cap H)$. The restriction exact sequence of $H$ gives 
\[
0 \to \calI_Y (1) \xto{\cdot \ell} \calI_Y(2) \to \calI_{Y \cap H} (2) \to 0.
\]
Since $Y$ is linearly normal, $H^1( \calI_Y(1)) = 0$, so the restriction map $H^0(\calI_Y(2)) \to H^0(\calI_{Y \cap H}(2))$ is surjective. Since $H^0(\calI_Y(2)) = I(Y)_2$ and similarly for $Y \cap H$, we have that the quadrics $g_1 \vvirg g_r$ lift to quadrics $f_1 \vvirg f_r \in I(Y)_2$. We show that these quadrics generate $I(Y)$. 

 Let $h \in I(Y)_k$. We use induction on $k$ to show that $h$ is generated by $f_1 \vvirg f_r$. If $k \leq 1$ there is nothing to prove since $Y$ is linearly non-degenerate. If $k \geq 2$, passing modulo $\ell$, we have $\bar{h} \in I(Y \cap H)$, so $\bar{h} = \bar{h}_1 g_1 + \cdots + \bar{h}_r g_r$ for certain $\bar{h}_j \in S/(\ell)$. Let $h_j \in S$ such that $\bar{h}_j = h_j \bmod (\ell)$. Then
 \[ 
 h = h_1 f_1 + \cdots + h_r f_r + \ell h'
 \]
 for some $h'$ of degree $k-1$. Since $h, f_1 \vvirg f_r \in I(Y)$, we obtain $\ell h' \in I(Y)$. Since $Y$ is irreducible and linearly non-degenerate, we obtain $h' \in I(Y)$. The induction hypothesis guarantees $h'$ is generated by $f_1 \vvirg f_r$, implying that $h$ is generated by $f_1 \vvirg f_r$ as well.
\end{proof}

The next result characterises the hyperplane sections of Schubert varieties of matrix pencils. 

\begin{proposition}\label{prop: hyperplane cut YL}
A generic hyperplane section of $Y_L$ is an irreducible, linearly normal, smooth curve $C$ with 
\begin{align*}
\deg(C) = \textstyle (n-\eta_{\#})^2 - \sum_{i,j} \mu_{ij}^2 \quad \text{ and } \quad g(C) = \textstyle  \binom{n-\eta_{\#}-1}{2} - \sum_{i,j} \binom{\mu_{ij}}{2}.
\end{align*}
\end{proposition}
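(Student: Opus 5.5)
We will work on the smooth resolution $\tilde X$ of \Cref{prop: resolution of phi} rather than on $Y_L$. A generic hyperplane section of $Y_L$ pulls back to a generic member of the complete linear system $|D|$ on $\tilde X$, with $D = (n-\eta_{\#})\tilde H - \sum_{i,j}\mu_{ij}E_{ij}$. The linear system $|D|$ is base point free, since $\tilde\phi$ is a morphism. So Bertini gives that a general $\tilde C \in |D|$ is smooth.

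Because $D$ is big, the image of $\tilde\phi$ is a surface, and Bertini's irreducibility theorem then shows $\tilde C$ is irreducible. A generic hyperplane avoids the finitely many singular points of $Y_L$ from \Cref{thm: rational singularities}. It also meets the contracted curves, namely $\tilde H_\infty$ in the regular case and the $A_{i\ell}$ with $k_{ij}<\ell<k_{i\,j+1}$, only at their images, which are points. Hence $\tilde\phi$ restricts to an isomorphism $\tilde C \to C = Y_L\cap H$, and $C$ is an irreducible smooth curve.

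The degree is $\deg C = D^2$, which is \Cref{theorem: degree of YL}. For the genus, apply adjunction on $\tilde X$. We have $K_{\tilde X} = -3\tilde H + \sum_{i,j} E_{ij}$, since each blow-up is at a smooth point and adds its total transform once. Then
\[
2g-2 = D\cdot(D+K_{\tilde X}) = (n-\eta_{\#})(n-\eta_{\#}-3) - \sum_{i,j}\mu_{ij}(\mu_{ij}-1),
\]
using $\tilde H^2=1$, $\tilde H\cdot E_{ij}=0$ and $E_{ij}\cdot E_{i'j'} = -\delta_{ii'}\delta_{jj'}$ from \Cref{lemma: intersections upstairs}. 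Rearranging gives the stated value $g = \binom{n-\eta_{\#}-1}{2} - \sum\binom{\mu_{ij}}{2}$.

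Linear normality of $C$ is the main step. Restrict to $\tilde C$ using
\[
0\to \calO_{\tilde X}\to\calO_{\tilde X}(D)\to\calO_{\tilde C}(D)\to 0 .
\]
It suffices to prove $H^1(\tilde X,\calO_{\tilde X})=0$, which holds because $\tilde X$ is a rational surface. Then $H^0(\calO_{\tilde X}(D))\to H^0(\calO_{\tilde C}(D))$ is surjective. Since $|D|$ is the complete series embedding $Y_L$ by \Cref{prop: resolution of phi}, the hyperplane section $C$ is embedded by the complete series of $\calO_C(1)$, so $C$ is linearly normal.

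The subtle point is identifying the restriction of the Plücker $\calO(1)$ to $C$ with $\calO_{\tilde C}(D)$. This holds because $\tilde C\cong C$, and $\tilde C$ avoids the exceptional locus of $\tilde\phi$ except at finitely many points mapped isomorphically. With that identification, the argument above is complete.
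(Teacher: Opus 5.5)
Your proposal is correct. The degree and the genus are obtained exactly as in the paper: $\deg C=D^2$ from \Cref{theorem: degree of YL}, and the genus comes from adjunction on $\tilde X$ with $K_{\tilde X}=-3\tilde H+\sum E_{ij}$. Where you differ is that you establish irreducibility, smoothness and linear normality on $\tilde X$, whereas the paper works downstairs on $Y_L$.

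The paper argues as follows. A generic hyperplane misses the isolated singular points of $Y_L$, so Bertini gives smoothness of $C$. Linear normality of $C$ is then simply asserted to follow from that of $Y_L$. That inference silently needs $H^1(\calO_{Y_L})=0$, since the cokernel of $H^0(\calO_{Y_L}(1))\to H^0(\calO_C(1))$ injects into $H^1(\calO_{Y_L})$. The vanishing does hold, because $Y_L$ has rational singularities and $\tilde X$ is rational, but the paper does not state it.

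Your route fills both omissions. The restriction sequence $0\to\calO_{\tilde X}\to\calO_{\tilde X}(D)\to\calO_{\tilde C}(D)\to 0$, together with $H^1(\calO_{\tilde X})=0$, supplies exactly the missing vanishing. Your Bertini irreducibility argument for a morphism with two-dimensional image covers the word ``irreducible'', which the paper never justifies.

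The price of working upstairs is that you must identify $\tilde C$ with $C$, and your wording there should be sharpened. A generic $H$ avoids the finitely many image points of the contracted curves. Hence $\tilde C=\tilde\phi^{*}H$ is \emph{disjoint} from all contracted curves, not merely meeting them at finitely many points. Indeed, if $\tilde C$ met one, it would contain the whole curve.

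The isomorphism $\tilde C\to C$ then follows in either of two ways. One is Zariski's main theorem, using normality of $Y_L$. The other is to note that $C$ lies in the smooth locus, so $C$ is smooth and the finite birational map $\tilde C\to C$ is an isomorphism. With that, $\calO_{\tilde C}(D)\cong\calO_C(1)$ and your linear normality argument goes through.
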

\begin{proof}
Let $C$ be the generic hyperplane section of $Y_L$.
Since $Y_L$ is linearly normal, $C$ is linearly normal as well. Moreover, by \Cref{thm: rational singularities}, the singularities of $Y_L$ are $0$-dimensional, so a generic hyperplane avoids them, and by Bertini's Theorem $C$ is smooth. 

The degree of $C$ equals the degree of $Y_L$, and it was computed in \Cref{theorem: degree of YL}. 

The genus of $C$ coincides with the genus of its preimage in $\tilde{X}$. Since $\tilde{\phi} : \tilde{X} \to Y_L$ is the map given by the divisor $D$ of \Cref{prop: resolution of phi}, the preimage of $C$ is the generic member of the linear series defined by $D$.
We compute the genus $g(C)$ via the adjunction formula $2g(D)-2 = D (D + K_{\tilde X})$, where $K_{\tilde X}$ is the canonical divisor of $\tilde X$, see \cite[Sec. 2.4.1]{EisHar:3264}. The canonical divisor is computed via \cite[Prop. 2.20]{EisHar:3264}: it is $K_{\tilde X} = -3 H + \sum_{i \in [s]} \sum_{j \in [k_{i \, h_i}]} E_{ij}$. Since $H \cdot E_{ij} = 0$ and $E_{ij} \cdot E_{i'j'} = -\delta_{ii'}\delta_{jj'}$, 
we obtain
\[ \textstyle
D^2 = (n-\eta_\#)^2 - \sum_{i,j} \mu_{ij}^2 \quad , \quad 
D \cdot K_{\tilde X}= -3(n-\eta_\#) + \sum_{i ,j} \mu_{ij} .
\]
Thus, $$2g(C) = (n-\eta_\#-1)(n-\eta_\#-2) -  \textstyle \sum_{ij} \mu_{ij}(\mu_{ij}-1),$$
that is $g(C) = \binom{n-\eta_\# - 1}{2} - \sum_{ij}\binom{\mu_{ij}}{2}$.
\end{proof}

We now have all the ingredients to complete the proof of \Cref{thm: quadratic generation}.

\begin{proof}[Proof of \Cref{thm: quadratic generation}]
By \Cref{prop: resolution of phi}, the Schubert surface $Y_L$ is linearly normal. Therefore, by \Cref{lemma: lifting lemma}, it suffices to prove that the ideal of its general hyperplane section is generated by quadrics. 

\Cref{prop: hyperplane cut YL} characterises the degree and the genus of the hyperplane section $C$. We prove that the ideal of $C$ is generated by quadrics by applying Green's Theorem. To do so, we need to verify that $\deg(C) \geq 2 g(C) + 2$. By \Cref{prop: hyperplane cut YL}, this is equivalent to
\[
0 \leq \deg(C) - (2g(C) +2) = 3(n-\eta_{\#}) - \textstyle \sum_{ij} \mu_{ij} -4
\]
For every fixed $i$, we have $\sum_j \mu_{ij} = n_i$ coincides with the size of the regular component with eigenvalue $\theta_i$. Therefore, Green's condition reduces to 
\begin{equation}\label{eqn: Green condition reduction}
2n_{\reg} + 3(\eps_\bullet + \eta_\bullet) -4 \geq 0,
\end{equation}
where $n_\reg$ is the size of the regular component. If $n_\reg = 0$, then $Y_L$ is a Veronese variety by \Cref{cor: image of phi totally singular}, and its ideal is generated by quadrics.  If $n_\reg = 1$, then either $\eps_\bullet$ or $\eta_\bullet$ must be positive, otherwise $\dim L$ and $L$ contains a single matrix up to scaling: in either case \eqref{eqn: Green condition reduction} is satisfied and Green's Theorem guarantees that the ideal of $C$ is generated by quadrics. If $n_{\reg} \geq 2$, \eqref{eqn: Green condition reduction} is straightforward, and again Green's Theorem guarantees that the ideal of $C$ is generated by quadrics. This concludes the proof. 
\end{proof} 

\subsection{Property $N_p$ for Schubert surfaces}
The argument in the proof of \Cref{thm: quadratic generation} provides a result concerning Green's property $N_p$. Recall that, for a variety $Y \subseteq \bbP^N$, we say:
\begin{itemize}
    \item $Y$ satisfies property $N_0$ if it is linearly normal in $\bbP^N$;
    \item $Y$ satisfies property $N_1$ if it is linearly normal and $I(Y)$ is generated in degree $2$;
    \item for $p \geq 2$, $Y$ satisfies property $N_p$ if it satisfies property $N_{p-1}$ and the $(p-1)$-th syzygy of $I(Y)$ is linear.
\end{itemize}
In particular, \Cref{thm: quadratic generation} guarantees that every Schubert surface $Y_L$ satisfies property $N_1$. The same proof has the following consequence.
\begin{corollary}\label{corol: Np property}
Let $L$ be a pencil of matrices, and let $p \leq 2 n_{\reg} + 3(\epsilon_\bullet + \eta_\bullet) - 3$. Then the Schubert surface $Y_L$ satisfies Green's property $N_p$.
\end{corollary}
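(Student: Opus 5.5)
The plan is to follow the proof of \Cref{thm: quadratic generation} and replace its two ingredients by their $N_p$ versions. The curve-side input is Green's Theorem in its full form \cite[Thm. 4.a.1]{Green84}: if $C$ is a smooth curve of genus $g$ and $\calL$ is a line bundle with $\deg \calL \geq 2g+1+p$, then $C$ embedded by the complete linear series of $\calL$ satisfies $N_p$. The surface-side input is a lifting statement for syzygies: if $Y\subseteq\bbP^N$ is linearly normal, its general hyperplane section $C=Y\cap H$ satisfies $N_p$, and $H^1(\calO_Y(k))=0$ for all $k\geq 0$, then $Y$ satisfies $N_p$. This is the standard hyperplane-section principle behind \Cref{lemma: lifting lemma}, and it is where I expect the real work to lie.

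The numerical part comes first. By \Cref{prop: hyperplane cut YL}, $C$ is smooth, irreducible and linearly normal, so it is embedded by the complete series of $\calL=\calO_C(1)$. The computation in the proof of \Cref{thm: quadratic generation} gives
\[
\deg C-2g(C)-1 = 3(n-\eta_{\#})-\textstyle\sum_{ij}\mu_{ij}-3 = 2n_{\reg}+3(\eps_\bullet+\eta_\bullet)-3,
\]
so the hypothesis $p\leq 2n_{\reg}+3(\eps_\bullet+\eta_\bullet)-3$ is exactly Green's condition $\deg C\geq 2g+1+p$. Hence $C$ satisfies $N_p$ in its linear span $H$. The degenerate case $n_{\reg}=0$ is covered separately, since the Veronese surface is already treated by \Cref{cor: image of phi totally singular}.

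For the lifting step, I will use the Koszul cohomology description: $Y$ satisfies $N_p$ if and only if $K_{i,j}(Y,\calO_Y(1))=0$ for $i\leq p$ and $j\geq 2$, computed on the section ring $R=\bigoplus_k H^0(\calO_Y(k))$. The approach is as follows.
\begin{itemize}
\item Show that $Y_L$ is projectively normal with $H^1(\calO_{Y_L}(k))=0$ for all $k\geq 0$. Since $Y_L$ has rational singularities (\Cref{thm: rational singularities}), $H^i(\calO_{Y_L}(k))=H^i(\tilde X,\calO(kD))$. On the rational surface $\tilde X$, with $D$ nef and big (it is the pullback of an ample class), Kawamata--Viehweg vanishing gives $H^1(\tilde X,kD)=0$ for $k\geq 1$, and $H^1(\calO_{\tilde X})=0$.
\item Use the exact sequence $0\to\calO_Y(k-1)\to\calO_Y(k)\to\calO_C(k)\to 0$ to deduce that $R/\ell R$ equals the section ring of $C$. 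Surjectivity onto $H^0(\calO_C(k))$ comes from the vanishing of $H^1(\calO_Y(k-1))$.
\item Since $\ell$ is a nonzerodivisor on $R$, the Koszul groups satisfy $K_{p,q}(R)\cong K_{p,q}(R/\ell R)$ as graded modules over the polynomial ring of $H$. Therefore the graded Betti numbers of $Y$ equal those of $C$, and $N_p$ transfers from $C$ to $Y$.
\end{itemize}

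The main obstacle is the projective normality of $Y_L$, together with the identification of the Betti tables of $Y$ and $C$. \Cref{prop: resolution of phi} only gives linear normality, so the vanishing of $H^1(\calO_{Y_L}(k))$ for all $k$ has to be established. I would get it from Kawamata--Viehweg vanishing on $\tilde X$ combined with the rationality of the singularities, noting that $H^1(\calO_{\tilde X})=0$ because $\tilde X$ is rational.
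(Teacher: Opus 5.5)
Your numerics (including $\deg C-2g(C)-1=2n_{\reg}+3(\eps_\bullet+\eta_\bullet)-3$) and the Koszul-cohomology hyperplane argument are fine. The gap is exactly at the step you flag as the main obstacle. Kawamata--Viehweg does not say that $H^1(\tilde X,\calO(kD))=0$ for $kD$ nef and big. It gives $H^1(\tilde X,K_{\tilde X}+N)=0$ for $N$ nef and big, so you need $kD-K_{\tilde X}$ to be nef and big, and this fails here. By \Cref{lemma: intersections upstairs}, $-K_{\tilde X}=3\tilde H-\sum_{i,j}E_{ij}=2\tilde H+\tilde H_\infty$, so $-K_{\tilde X}\cdot\tilde H_\infty=3-\kappa$. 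On the other hand, $D\cdot\tilde H_\infty=(n-\eta_{\#})-\sum_{i,j}\mu_{ij}=\eps_\bullet+\eta_\bullet$. Hence
\[
(kD-K_{\tilde X})\cdot\tilde H_\infty=k(\eps_\bullet+\eta_\bullet)+3-\kappa .
\]
This is negative for every $k$ when $L$ is regular with $\kappa\geq 4$ (e.g.\ $L=J_4(x_1)$), and negative at $k=1$ when $L$ is singular with $\kappa>3+\eps_\bullet+\eta_\bullet$. Nor is there a general principle to fall back on: on rational surfaces, nef and big classes can have $h^1\neq 0$. For example, on the blow-up of $\bbP^2$ at the twelve points cut out on a smooth cubic by a general quartic, $4H-\sum_i E_i$ is nef and big with $h^1=1$. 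So the vanishing $H^1(\calO_{Y_L}(k))=0$, on which your identification $R/\ell R=R_C$ rests, is not established by your argument.

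The vanishing is in fact true, and the gap can be closed in two ways.

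The paper never needs it. The lifting argument behind \Cref{thm: quadratic generation} (\Cref{lemma: lifting lemma}) shows that the quadrics generating $I(C)$ lift to $I(Y_L)$, so $I(C)=I(Y_L)+(\ell)$. Since $\ell$ is a nonzerodivisor on $S/I(Y_L)$, reducing a minimal free resolution of $S/I(Y_L)$ modulo $\ell$ gives a minimal free resolution of the coordinate ring of $C\subseteq H$. The two Betti tables therefore coincide, and $N_p$ passes from $C$ to $Y_L$. Since $C$ is projectively normal, this even makes $S/I(Y_L)$ Cohen--Macaulay, which recovers your vanishing a posteriori.

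If you want to keep your section-ring route, apply the $\bbQ$-divisor version of Kawamata--Viehweg to
\[
N=kD-K_{\tilde X}-a\tilde H_\infty=kD+2\tilde H+(1-a)\tilde H_\infty, \qquad 0\leq a<1,\quad (1-a)(\kappa-1)\leq 2 .
\]
Then $\lceil N\rceil=kD-K_{\tilde X}$, and the fractional part is supported on the smooth curve $\tilde H_\infty$. $N$ is nonnegative on every curve other than $\tilde H_\infty$, and $N\cdot\tilde H_\infty=k(\eps_\bullet+\eta_\bullet)+2-(1-a)(\kappa-1)\geq 0$. So $N$ is nef and big, and $H^1(\tilde X,kD)=0$ for $k\geq 1$.

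A minor point: \Cref{cor: image of phi totally singular} only identifies the Veronese surface and says nothing about $N_p$. Your general argument already covers $n_{\reg}=0$, so no separate treatment is needed.
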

\begin{proof}
As in the proof of \Cref{thm: quadratic generation}, let $C = Y_L \cap H$ be the generic hyperplane section of $Y_L$, with $H = \{\ell = 0\}$ a generic hyperplane. The same calculation as in the proof of \Cref{thm: quadratic generation} shows that 
\[
\deg(C) \geq 2g(C)+1+p.
\]
In this range, Green's Theorem \cite[Thm.\ 4.a.1]{Green84} guarantees that $C$ satisfies property $N_p$. We prove that, in our setting, this property lifts to $Y_L$. To see this, first observe that $I(C) = I(Y_L) + (\ell)$: indeed, equality holds in degree $2$ and both ideals are generated in degree $2$ by \cref{thm: quadratic generation}. Let $S_{Y_L} = S/I(Y_L)$ be the homogeneous coordinate ring of $Y_L$ and $S/I(C)$ the homogeneous coordinate ring of $C$.

Let $F_\bullet$ be a minimal free resolution of $Y_L$, that is an exact sequence of $S$-modules
\[
0 \leftarrow S/I(Y_L) \leftarrow S \leftarrow F_1 \leftarrow F_2 \leftarrow \cdots ,
\]
with $F_j$ free modules. Since $\ell$ is a non-zero divisor in $S/I(Y_L)$, tensoring by $S/(\ell)$ preserves exactness \cite[Lemma 3.15]{EisSyz}. We obtain an exact sequence of $S/(\ell)$-modules
\begin{equation}\label{eqn: min free res IC}
0 \leftarrow S/(\ell) \otimes_S S/I(Y_L) \leftarrow S/(\ell) \otimes_S S \leftarrow S/(\ell) \otimes_S F_1 \leftarrow S/(\ell) \otimes_S F_2 \leftarrow \cdots ,
\end{equation}
where
\[
S/(\ell) \otimes_S S/I(Y_L) = S / (I(Y_L) + (\ell)) = S/I(C).
\]
Therefore, \eqref{eqn: min free res IC} is a free resolution of $C$. Moreover, it is minimal: the differentials are those in the minimal free resolution of $S/I(Y_L)$ modulo $\ell$, so they all have positive degree.

As observed above, $C$ has property $N_p$, therefore the resolution in \eqref{eqn: min free res IC} is linear up to homological degree $p-1$. The same holds for the minimal free resolution of $I(Y_L)$, showing that $Y_L$ has property $N_p$.
\end{proof}

\subsection{Quadratic generation in higher dimensional cases}\label{sec: higher quadratic}

Similarly to the normality properties discussed in \Cref{sec: higher normality}, it is natural to ask whether quadratic generation, or higher Green's properties, holds for the ideal of Schubert varieties of linear spaces of dimension larger than $2$. As mentioned before, this holds for classical Schubert varieties.

Quadratic generation holds for the ideal of matroid Schubert varieties in the Segre embedding, which coincides with the Pl\"ucker embedding of the corresponding Schubert varieties of the linear space of diagonal matrices. We give an outline of the proof.
\begin{proposition}
    Let $Y^{\mathrm{mat}} \subseteq (\bbP^1)^{\times n}$ be a matroid Schubert variety and let $Y_L \subseteq \bbP^{2^n-1}$ be its image in the Segre embedding. Then $I(Y_L)$ is generated in degree $2$.
\end{proposition}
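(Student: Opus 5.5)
The plan is to use the known Gröbner basis description from \cite{ardila2016closure}. The bihomogeneous ideal of $Y^{\mathrm{mat}}_L$ in the multigraded ring $\bbC[a_1,b_1,\ldots,a_n,b_n]$ has a universal Gröbner basis given by the circuit polynomials $F_C$. Each $F_C$ is multilinear, of multidegree $\sum_{i\in C} e_i$. We must pass from this multigraded description to the homogeneous ideal of the Segre image in $\bbP^{2^n-1}$. There the coordinates $z_S$, for $S\subseteq[n]$, correspond to the monomials $\prod_{i\in S} b_i\prod_{i\notin S}a_i$. The ideal $I(Y_L)$ then consists of the Segre quadrics, which are the $2\times 2$ binomial relations cutting out $(\bbP^1)^{\times n}$, together with the pullbacks of the multihomogeneous equations of $Y^{\mathrm{mat}}_L$ raised to multidegree $(d,\ldots,d)$.

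The key step is to show that every element of $I(Y_L)$ of degree $d\ge 3$ lies in the ideal generated by the degree $2$ part. I would argue as follows. The Segre coordinate ring is the diagonal subring $\bigoplus_d R_{(d,\ldots,d)}$, where $R$ is the multigraded coordinate ring. So $I(Y_L)_d$ is identified with $J_{(d,\ldots,d)}$, where $J=I(Y^{\mathrm{mat}}_L)$, modulo the Segre relations. Since $J$ is generated by the $F_C$, every element of $J_{(d,\ldots,d)}$ is a combination $\sum_C g_C F_C$. Each term has $g_C$ of multidegree $(d,\ldots,d)-\sum_{i\in C}e_i$. I would then factor each monomial term $m F_C$ as a product of a degree $(1,\ldots,1)$ monomial and a term of multidegree $(d-1,\ldots,d-1)$. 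Concretely, $F_C$ times one monomial $a$ or $b$ from each coordinate $i\notin C$ already has multidegree $(1,\ldots,1)$ in the multilinear case. However, a linear Segre form in the $z_S$ cannot vanish on $Y_L$ unless $L$ is special, so these $(1,\ldots,1)$ pieces are exactly the degree $1$ part. That is consistent with the statement, since linear forms are allowed among the generators of degree at most $2$. It follows that $m F_C = (\text{degree-}1\text{ element of } I(Y_L))\cdot(\text{monomial of degree } d-1)$ in the Segre ring. Hence $I(Y_L)$ is generated by the Segre quadrics plus linear forms $F_C\cdot\prod_{i\notin C}c_i$, where each $c_i\in\{a_i,b_i\}$.

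The main obstacle is making this factorisation rigorous. Concretely, we need the diagonal Veronese-type subring to behave well, meaning we must lift the multigraded statement ``$J$ is generated in multidegrees $\le(1,\ldots,1)$'' to the Segre ring. I would handle it in two moves. First, invoke the fact that the Segre ring of $(\bbP^1)^{\times n}$ is presented by quadrics (the classical result for Segre varieties). Second, note that for any ideal $J$ generated in multidegrees bounded by $(1,\ldots,1)$, the diagonal part $\bigoplus_d J_{(d,\ldots,d)}$ is generated as an ideal of the Segre ring by $J_{(1,\ldots,1)}$. This holds because the polynomial ring is multigraded-generated in degree $(1,\ldots,1)$ multiples: any monomial of multidegree $(d,\ldots,d)-\deg F_C$ times $F_C$ splits into a $(1,\ldots,1)$ factor containing $F_C$ and a $(d-1,\ldots,d-1)$ monomial. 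Combining these two moves gives generation in degree at most $2$.

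An alternative route, if the above is too delicate, is the Brion multiplicity-free argument. Under that argument $Y^{\mathrm{mat}}_L$ degenerates flatly to a reduced union of Schubert-type products of points and $\bbP^1$'s. Their Segre images are cut out by quadrics and linear forms, and the universal Gröbner basis gives a squarefree initial ideal; we would then conclude by upper semicontinuity of Betti numbers applied in reverse.
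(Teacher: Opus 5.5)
Your argument is correct and is essentially the paper's proof: identify the Segre coordinate ring with the diagonal subalgebra, use that the circuit generators $F_C$ are multilinear so that each term $mF_C$ of multidegree $(d,\ldots,d)$ factors as $\bigl(F_C\prod_{i\notin C}c_i\bigr)\cdot m'$ with $m'$ a diagonal monomial of degree $d-1$, and conclude $I(Y_L)=I_{\Seg}+(I(Y_L)_1)$. Your aside that linear forms cannot vanish on $Y_L$ unless $L$ is special is wrong, since every circuit produces such linear forms as soon as $L\neq\bbC^n$; however, it plays no role in the argument, and the Brion-degeneration alternative is not needed.
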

\begin{proof}
    In \cite{ardila2016closure} the multihomogeneous ideal of the matroid Schubert varieties in the multigraded coordinate ring of $(\mathbb{P}^1)^{\times n}$ is given in terms of the circuits of the matroid: let $J_{\mathrm{circuit}} \subseteq \bbC[(\bbP^1)^{\times n}]$ be this ideal. Here $\bbC[(\bbP^1)^{\times n}]$ denotes the homogeneous coordinate ring of $(\bbP^1)^{\times n}$, that is a polynomial ring in $2n$ variables with a $\bbZ^n$-multigrading. The generators of $J_{\mathrm{circuit}}$ are multilinear: their degree in the multigrading of $\bbC[(\bbP^1)^{\times n}]$ is a vector of $0$'s and $1$'s. Let $N := 2^n-1$ and let $\bbC[\bbP^N]$ be the homogeneous coordinate ring of the ambient space of the Segre embedding of $(\mathbb{P}^1)^{\times n}$. Let $I_\Seg$ be the ideal of the Segre variety, which is generated by quadrics. 

    The Segre embedding $(\mathbb{P}^1)^{\times n} \to \bbP^N$ defines an isomorphism $S / I_\Seg \to \bbC[(\bbP^1)^{\times n}]_{\bbZ(1 \vvirg 1)}$, where $\bbC[(\bbP^1)^{\times n}]_{\bbZ(1 \vvirg 1)} = \bigoplus_{m \in \bbZ} \bbC[(\bbP^1)^{\times n}]_{(m \vvirg m)}$ is the \emph{diagonal subalgebra} of $\bbC[(\bbP^1)^{\times n}]$. By construction, $I(Y_L) / I_\Seg$ is the preimage of $J_{\mathrm{circuit}} \cap \bbC[(\bbP^1)^{\times n}]_{\bbZ(1 \vvirg 1)}$.

    Given a multilinear generator $f \in J_{\mathrm{circuit}}$ of degree $\bfe \in \bbZ^n$, consider the $(2^{n- |\bfe|})$-dimensional subspace $f \cdot \bbC[(\bbP^1)^{\times n}]_{(1 \vvirg 1) - \bfe} \subseteq J_{\mathrm{circuit}} \cap \bbC[(\bbP^1)^{\times n}]_{(1 \vvirg 1)}$. It is easy to prove that $J_{\mathrm{circuit}} \cap \bbC[(\bbP^1)^{\times n}]_{\bbZ(1 \vvirg 1)}$ is generated by all elements constructed in this way. Since they have multidegree $(1 \vvirg 1)$, these equations all pullback in $S / I_\Seg$ to linear equations. This shows $I(Y_L) = I_\Seg + (I(Y_L)_1)$, thus it is generated in degree $2$.
\end{proof}
In general, the ideals of higher dimensional Schubert varieties of linear matrix spaces are not generated by quadrics. One can verify that already the ideal of the Schubert variety in \Cref{example: counterexample linear normality} has minimal generators in degree $3$. However, in that case the Schubert variety is cut out by quadrics scheme-theoretically. In general, even set-theoretically quadratic generation can fail as shown in the following example.
\begin{example}\label{example: counterexample quadratic ideal}
Let $\dim V = \dim W = 4$ and let $L \subseteq V^* \otimes W$ be a general $7$-dimensional linear space. We show that $Y_L \subseteq \Gr(4, V \oplus W)$ is not set-theoretically cut out by quadrics. To see this, we determine a linear subspace $\Sigma \subseteq \bbP \Wedge^4( V \oplus W)$ such that $Y_L \cap \Sigma$ is a cubic surface contained in a $\bbP^3$: if $Y_L$ was cut out by quadrics, then its linear sections would be cut out by quadrics as well, yielding a contradiction. 

The linear space $\Sigma$ is described by a Schubert condition on $\Gr(4, V \oplus W)$. Let $U \subseteq W$ be a general $3$-dimensional subspace and let $\Sigma_U = \{ [E] \in \Gr(4, V \oplus W) : U \subseteq E \subseteq U \oplus V\}$. This is a classical Schubert variety whose associated partition is $(4,4,4,1)$: in particular, it defines a $\bbP^3 \subseteq \Gr(4, V \oplus W)$ parameterised by $\bbP V \to \Sigma_U$ sending $v \mapsto U \oplus \langle v \rangle$. 

We prove that $Y_L \cap \Sigma_U$ is a cubic surface. First, we show $Y_L \cap \Sigma_U$ contains a cubic surface. Set $L_U = L \cap (V^* \otimes U)$: by genericity $\dim L_U = 3$. For every $A \in L_U$, we have $\rank A \leq 3$: if equality holds, then $\lim_{t \to \infty} \Gamma_{tA} = U \oplus \ker A \in Y_L \cap \Sigma_U$ as in \Cref{prop: im plus ker}. This parameterises a surface 
\[
S_U := \overline{ \{ U \oplus \ker A : A \in L, \im A = U\} }\subseteq Y_L \cap \Sigma_U . \]
Observe that $\deg S_U = 3$ in the Pl\"ucker embedding of $\Gr(4,V \oplus W)$: for a fixed basis $A_1,A_2,A_3$ of $L_U$, the surface $S_U$ is parameterised by $v\in \bbP V$ such that $A_1v,A_2v,A_3v$ are linearly dependent elements of $U$. Since $\dim L_U = \dim U = 3$, this is equivalent to the vanishing of a $3 \times 3$ determinant. This shows $S_U \subseteq Y_L \cap \Sigma_U$.

For the inclusion $Y_L \cap \Sigma_U \subseteq S_U$, consider $[E] \in Y_L \cap \Sigma_U$ and write $E = U \oplus \langle v \rangle$. Since $[E] \in Y_L$, there is a curve $A(t)$ in $L$ with $E = \lim_{t \to \infty} \Gamma_{A(t)}$. Let $[0:A_0] \in \bbP(\bbC \oplus L)$ be the limit point $[0:A_0] = \lim_{t \to \infty} [1 : A(t)]$. Therefore, $A(t) = t^k A_0 +  o(t^{k})$; since $A(t) \in L$, we have $A_0 \in L$ as well. The same calculation as above shows $\im A_0 \subseteq U$ and $v \in \ker A_0$. This shows $E = U \oplus \ker A_0$, so $[E] \in S_U$ and $\YL \cap \Sigma_U=S_U$.
\end{example}

{\small \textbf{Acknowledgements.} 
We warmly thank Mateusz Micha{\l}ek for the many valuable and inspiring discussions that have accompanied this work. 
We thank Enrica Floris for suggestions on the geometry of blow-ups, Simon Telen for conversations on toric methods, Laurent Bus\'e, Luca Chiantini, Giorgio Ottaviani and Kristian Ranestad for references and discussions on defining equations of images of blow-ups, and Viktoriia Borovik for discussions on Gr\"obner bases and triangulations.
Part of this work was realized during the \textit{Research in Pairs program at CIRM Trento in July 2025: we thank CIRM and FBK for financial support and for providing an excellent working environment during our research visit}. 
V.G. and F.G. thank MPI-MiS Leipzig for the hospitality during the Chow Lectures 2023, where discussions with Mateusz Micha{\l}ek and June E. Huh further inspired this work.
V.G. and F.G. benefited from mutual visits at the MPI-CBG Dresden and the IMT in Toulouse, whom they thank for the support and hospitality.
V.G. is member of the Italian group GNSAGA-INdAM. }

{
\bibliographystyle{alphaurl}
\bibliography{Schubert_pencil.bib}
}

\end{document}